\documentclass[11pt]{article}

 \usepackage{amscd,amsmath,amsfonts,amssymb,amsthm,cite,mathrsfs,color} 
\usepackage{dsfont} \usepackage{leftidx}
\usepackage{tikz} \usepackage{enumerate}  \usepackage{hyperref}
 \usepackage{appendix}

\usepackage{geometry}

\usepackage{graphicx} \usepackage{xcolor}

\numberwithin{equation}{section} % Equations numbered according to sections

% -----------------------------------------

\usepackage{amsthm}

\theoremstyle{plain}
\newtheorem{theorem}{Theorem}[section]
\newtheorem{lemma}[theorem]{Lemma}
\newtheorem{corollary}[theorem]{Corollary}
\newtheorem{proposition}[theorem]{Proposition}

\theoremstyle{definition}
\newtheorem{definition}[theorem]{Definition}

\newtheorem{remark}[theorem]{Remark}%[section]

\usepackage{hyperref}
 
\begin{document}

\title{\bf{ Phase transition of eigenvalues in deformed Ginibre ensembles
%Duality in deformed Ginibre ensembles
}}

\author{
Dang-Zheng Liu\footnotemark[1] ~ and     Lu Zhang\footnotemark[2]}
\renewcommand{\thefootnote}{\fnsymbol{footnote}}
\footnotetext[1]{CAS Key Laboratory of Wu Wen-Tsun Mathematics, School of Mathematical Sciences, University of Science and Technology of China, Hefei 230026, P.R.~China. E-mail: dzliu@ustc.edu.cn}
\footnotetext[2]{School of Mathematical Sciences, University of Science and Technology of China, Hefei 230026, P.R.~China. E-mail: zl123456@mail.ustc.edu.cn}

 \maketitle
%--------------------------------------------------------------------------------------------------
 \begin{abstract}

Consider a  random matrix  of size $N$ as  an additive deformation of  the complex  Ginibre ensemble   under  a deterministic    matrix $X_0$ with a  finite rank, independent of $N$. When  some eigenvalues  of  $X_0$   separate  from  the unit disk,  outlier eigenvalues  may appear  asymptotically in the  same locations,  and   their fluctuations   exhibit  surprising phenomena that highly  depend on  the Jordan canonical form  of $X_0$. These  findings   are  largely due to  Benaych-Georges and Rochet \cite{BR},  Bordenave and Capitaine \cite{BC16}, and Tao \cite{Ta13}.      When all  eigenvalues of $X_0$ lie inside  the unit disk, we prove that local eigenvalue statistics at the spectral edge form  a  new class  of   
determinantal point processes, for which   correlation  kernels are  characterized in terms of the repeated erfc integrals.  This thus  completes   a non-Hermitian analogue    of the BBP  phase  transition  in Random Matrix Theory.  Similar results  hold for the deformed real quaternion Ginibre ensemble.  
 \end{abstract}

%\date{\today}

%\keywords{Random Matrix Theory}  non-Hermitian random matrices, local statistics, Phase transition. Duality

\tableofcontents

\section{Introduction}

\subsection{Deformed non-Hermitian random matrices}%Numerical stability
 Unlike eigenvalues  of Hermitian matrices,  those of non-Hermitian matrices  may  demonstrate instability both in theory and practice:  small perturbations in a large matrix can  result in  large fluctuations of eigenvalues.     As a canonical example, 
consider  the standard nilpotent matrix of size $N$ and    a modification to the $(N,1)$ entry 
\begin{equation*}\label{J0}
J_0=\begin{bmatrix}
0 & \mathbb{I}_{N-1}   \\
0   &   0
\end{bmatrix}, \quad J_{\varepsilon}=\begin{bmatrix}
0 & \mathbb{I}_{N-1}   \\
\varepsilon   &   0
\end{bmatrix},
\end{equation*} 
where  $\mathbb{I}_{N}$ denotes  an identity matrix  of size $N$ and   $\varepsilon>0$.  Obviously,  $J_0$   has only zero  eigenvalue with multiplicity $N$, however, 
$J_{\varepsilon}$    has  $N$ eigenvalues at the $N^{th}$ roots  $\varepsilon^{1/N} e^{\mathrm{i}j/N}$, $j=0,1,\ldots, N-1$ (see e.g.  \cite[Chapter 11]{BS10} or \cite[Chapter 2.8]{Tao12}). Thus,  even for exponentially small  values of $\varepsilon$, say  $2^{-N}$,   the eigenvalues of $J_{\varepsilon}$   can be far from those of  $J_{0}$ and     are indeed distributed uniformly  on the circle  of radius $1/2$.   
In practice, 
   letting  $U_N$ be a Haar-distributed orthogonal 
   matrix and 
asking a computer  to calculate the eigenvalues of  a unitarily equivalent matrix $U_{N} J_{0} U_{N}^{*}$,  as observed by Bordenave and Capitaine,  
 a really amazing phenomenon occurs: for large $N$, say $N=2000$, a lot of   eigenvalues wander over the unit circle;   see  \cite [Figure 1.1]{BC16}.
 
  One explanation  for  this  phenomenon can be  through the notion of 
    {\it pseudospectrum},  see a  comprehensive treatise    \cite{TE} by Trefethen and Embree. An alternative explanation,   as suggested by Bordenave and Capitaine \cite{BC16}, 
 in the spirit of von Neumann and Goldstine \cite{VG}, Spielman and Teng \cite{ST} and Edelman and Rao \cite{ER},  is to approximate numerical rounding errors by introducing randomness and study  spectra of a  randomly perturbed matrix 
 \begin{equation} \label{defmat}X:=X_0 +\frac{1}{\sqrt{N}}G. \end{equation}
 Here  $X_0$   is a deterministic  matrix, and    $G=[g_{ij}]_{i,j=1}^N$ is a random  matrix  whose entries   are i.i.d.  %independent and  identically distributed  (i. i.d.) 
 complex (real or quaternion) random variables with mean 0 and variance 1.  When  $G$  takes a  matrix-valued Brownian motion, \eqref{defmat}  becomes a dynamical random matrix  and remains something of a mystery; see  \cite{BD,BGN14,Sn}.

 In the non-perturbative  case,  that is,  $X_0=0$ in \eqref{defmat},  the study of  non-Hermitian random matrices  was first initiated  by  Ginibre  \cite{Gi}  for those  matrices with i.i.d.  real, complex and quaternion Gaussian entries (real, complex and quaternion Ginibre ensembles in the literature), and then was extended to  i.i.d.  entries.   At a macroscopic  level,     the limit spectral   measure was proven     under certain  moment assumptions  to be    the famous \textit{circular law}, which  is   a uniform measure on the unit disk,   after  a long list of works including Bai \cite{Ba},  Girko \cite{Gir},   G\"{o}tze and Tikhomirov \cite{GT}, Pan and Zhou  \cite{PZ},  and Tao and Vu \cite{TV10}. In particular, Tao and Vu \cite{TV10} established the law with the minimum assumption that  entries have finite variance.  See a recent survey \cite{BC12} and references therein for more details. 
  However, at  a   microscopic level,     local  eigenvalue statistics  in the bulk and at the  soft edge  of the spectrum were  revealed    first  for three  Ginibre ensembles   \cite{BS, FH,Ka},  with the help of  exact eigenvalue  correlations.     These Ginibre statistics  are
  conjectured to hold true even  for    i.i.d.   random matrices,   although  the proofs  seems much more difficult  than  the Hermitian analogy.  Later  in \cite{TV15},  Tao and Vu  established  a four moment match theorem  in the bulk and at the  soft edge, % as a non-Hermitian version of  their previous work   \cite{TV11},  
  both for  real and complex cases.  Very recently,  
    Cipolloni,  Erd\H{o}s and  Schr\H{o}der  remove the four moment
matching condition from \cite{TV15} and prove   the edge universality for any   random matrix  with  i.i.d.  entries  under the assumption of higher moments \cite{CES},   both in real and complex cases. But their  method   doesn't  seem to work in the bulk.  For more details about local universality  of  non-Hermitian random matrices,  see \cite{CES,TV15} and references  therein.

 For the deformed model  \eqref{defmat},    the  circular law still holds  under a  small low rank perturbation from $X_0$, see   \cite[Corollary 1.12]{TV10} as the culmination of work by many authors and an  the excellent survey [26] for further details. 
 Particularly   when  $X_0$ has bounded rank and bounded operator norm,    however,  such a perturbation can  create outliers  outside the unit disk  \cite{Ta13}.  Indeed,  %A N has bounded rank and bounded operator norm and
 when  the entries of $G$  have finite fourth moment, Tao  proved that outlier eigenvalues outside the unit disk are stable in the sense that outliers of  the perturbed matrix $X$   and  the deterministic $X_0$   coincide asymptotically \cite[Theorem 1.7]{Ta13}.  Since then,  this outlier phenomenon has been  extensively  studied  in \cite{BC16,BZ,BR,COW,OR}.  Furthermore, 
 the fluctuations of  outlier eigenvalues are  investigated respectively by 
 Benaych-Georges and Rochet   for   deformed invariantly matrix ensembles \cite{BR},  and   %in the sense of the single ring theorem,
 by Bordenave and Capitaine for deformed i.i.d. random matrices \cite{BC16}.   These fluctuations,  due to non-Hermitian structure,  become much more complicated, and indeed  highly depend on the shape of the Jordan canonical form of the perturbation.

 As mentioned previously, when  the strength of the added perturbation is above a certain threshold (supercritical regime), extreme eigenvalues will stay away from the bulk   and have been understood  well \cite{Ta13,BR,BC16}.  Otherwise,   they stick to the bulk, but  what would happen?      
 Much less  is known about  extreme eigenvalues of non-Hermitian random matrices  in later case.     This phenomenon has been well  studied for deformed  Hermitian random matrices, and   is now called  as  BBP phase transition after the work of  Baik, Ben~Arous and P{\'e}ch{\'e}  \cite{BBP}.  Our  main goal  in this paper is to understand  what effect  the finite rank  perturbation has on extreme eigenvalues  of the deformed model \eqref{defmat} in the subcritical and critical regimes,  and together with outlier fluctuations,   further to     complete a non-Hermitian analogue of  the  BBP phase transition.

 .

%\subsection{Matrix-valued Brownian motion/Deformed random matrices}

\subsection{Main results}

We will begin our study by specifying two  of the three deformed Ginibre ensembles- random matrices with all entries complex, or real quaternion. The independent elements are taken to be distributed as independent Gaussians with the same variance,  but possibly with the different  means.
 
 \begin{definition}  \label{GinU} 
A random  complex  
$N\times N$ matrix  $X$,    is said to belong to the deformed complex  Ginibre ensemble with mean matrix $X_0={\rm diag}\left( A_0,0_{N-r} \right)$,  where $A_0$ is a $r\times r$ constant  matrix,  denoted by GinUE$_{N}(A_0)$,  if  the joint probability density function for  matrix entries  is given by   
\begin{equation}\label{model}
P_{N}(A_0;X)=    \Big(\frac{N}{\pi}\Big)^{N^2}\
%\exp\!\left\{
e^{ -N {\rm Tr} (X-X_0)(X-X_0)^*}.
 %\right\},
\end{equation}
\end{definition}

% Definition 1.1 (Ginibre [21], 1965). A random matrix X ∈ Rn×n is said to belong to the real Ginibre ensemble (GinOE) if its entries are independently chosen with pdf’s
%1 −1x2
%√ e 2 jk, 1≤j,k≤n.
%2π
%Equivalently, the joint pdf of all the independent entries equals

%For \begin{equation}X_N=\frac{1}{\sqrt{N}}G_N+{\rm diag}\left( A_0,0_{N-r} \right),\label{Ginmean}\end{equation} where $A_0=\left[ a_{kl} \right]$ is a constant matrix of size $r\times r$ and $G_N$ is a complex Ginibre matrix of size $N\times N$.

Throughout the present  paper,  a  real quaternion  matrix  of size $N$ will  always be identified as its complex representation, that is,  a complex $2N\times 2N$ matrix  $X$   that   satisfies the relation
$X\mathbb{J}_N=\mathbb{J}_N\overline{X}$, where 
\begin{equation}\label{JN}
\mathbb{J}_N=\begin{bmatrix}
0_N & \mathbb{I}_N   \\
-\mathbb{I}_N    &   0_N
\end{bmatrix}. 
\end{equation} 
In this case it has the form 

\begin{equation}\label{42rep} X=\begin{bmatrix}
X^{(1)}  &  X^{(2)}  \\  -\overline{X^{(2)}}  &  \overline{X^{(1)}}
\end{bmatrix}, \end{equation}
where both $X^{(1)}$ and  $X^{(2)}$ are   $N\times N$ complex matrices; see  e.g.  \cite{Me,Wi}. 

 \begin{definition}  \label{GinS} 
A  random complex  $2N\times 2N$ matrix X of the form \eqref{42rep} is said to belong to the deformed  quaternion Ginibre ensemble with mean matrix $X_0$, 
 denoted by GinSE$_{N}(A_0)$,  if  the joint probability density function for  matrix entries  is given by   
\begin{equation}\label{4model}
P_{N,4}(A_0;X)=    \Big(\frac{2N}{\pi}\Big)^{2N^2}\
e^{ -N {\rm Tr} (X-X_0)(X-X_0)^*},
\end{equation}
where 
\begin{equation}\label{A04deno}
X_0=\begin{bmatrix} X_0^{(1)} & X_0^{(2)} \\ -\overline{X_0^{(2)}} & \overline{X_0^{(1)}}  \end{bmatrix}, \quad A_0=\begin{bmatrix} A_{0,1} & A_{0,2} \\ -\overline{A_{0,2}} & \overline{A_{0,1}}  \end{bmatrix},
\end{equation}
and $X_0^{(i)}={\rm diag}\left( A_{0,i},0_{N-r} \right)$ with $A_{0,i}$  a $r\times r$ constant matrix for $i=1,2$.
\end{definition}

% Unlike the GinUE and GinSE ensembles in  which the eigenvalues   are    exactly calculated  and form  a determinantal point process,  except for the trivial case  of  $r=N$ and  $A_0 =c \mathbb{I}_N$  for some $c$,  the eigenvalue density  of   the GinUE$_{N}(A_0)$ ensemble seems rather complicated.  However,  in some sense  there   exists a  useful functional form   via which  the corresponding correlation functions can be expressed in terms of autocorrelations of characteristic polynomials. 
 
 The following two propositions not only provide  joint density functions for eigenvalues of the GinUE$_{N}(A_0)$ and GinSE$_{N}(A_0)$ ensembles, but also relate eigenvalue correlation functions (see e.g. \cite{Me} for definition) to   auto-correlation functions  of characteristic polynomials.   They play a crucial   role  in  proving    local eigenvalue statistics for the  deformed Ginibre ensembles,   and  are believed to 
  have great potential applications which will be embodied in  subsequent papers in this series.
 
\begin{proposition}\label{intrep}
The eigenvalue density    of  the $\mathrm{GinUE}_{N}(A_0)$  matrix  $X$ % chosen from the $\mathrm{GinUE}_{N}(A_0)$    ensemble
  equals 
 \begin{equation}\label{densityeigen0}
\begin{aligned}
&f_{N}\left( A_0; z_1,\cdots,z_{N}\right)=\frac{1}{Z_{N,r}}
e^{-N\sum_{k=1}^{N}\left| z_k \right|^2} \prod_{1\leq i<j\leq N}\left| z_i-z_j \right|^2 \int \delta\left( V_1^*V_1-\mathbb{I}_r \right)  \\
&\times 
\exp\bigg\{ 
N  
\sum_{i=1}^N\left( z_i\overline{(V_{1}A_0 V_{1}^* )_{i,i}}+\overline{z_i}(V_{1}A_0 V_{1}^* )_{i,i}\right)
-N\sum_{1\leq j\leq i\leq N}|(V_{1}A_0 V_{1}^* )_{i,j}|^2
\bigg\}{\rm d}V_1,
\end{aligned}
\end{equation}
where $z_1, \ldots,z_N\in \mathbb{C}$,  $V_1$ is an $N\times r$ matrix and  the normalization constant 
\begin{equation}\label{correZNr}
Z_{N,r}= 
\pi^{N+\frac{1}{2}r(2N-r+1)}  N^{-\frac{1}{2}N(N+1)}  N!  \prod_{k=1}^{N-r-1}  k !.
\end{equation}
 %and $[b_{ij}]=V_1 A_{0} V_{1}^*$. %\begin{equation}\label{bijdenotion}
Moreover, when  $r+n\leq N$, the $n$-point correlation function    is  given by  
\begin{small}
\begin{equation}\label{algebraequa}
\begin{aligned}
&R_{N}^{(n)}\left( A_0; z_1,\cdots,z_{n} \right)=\frac{1}{C_N}%\frac{Z_{N-n,r}}{Z_{N,r}}\frac{N!}{(N-n)!} \Big( \frac{N-n}{N} \Big)^{\frac{1}{2}(N-n)(N+1+n)}
 e^{-N
\sum_{k=1}^{n}\left| z_k \right|^2} 
\prod_{1\leq i<j\leq n}\left| z_i-z_j \right|^2
\\
&\times  %e^{-N\sum_{k=1}^{n}\left| z_k \right|^2} 
 \int_{\mathcal{M}_{n,r}} \big(\det(\mathbb{I}_r-Q^*Q) \big)^{N-n-r}
e^{Nh(Q)}\,
{ {\mathbb{E}}_{\mathrm{ GinUE}_{N-n}( \widetilde{A}_0)}}\Big[\prod_{i=1}^n
\Big| \det\Big( \sqrt{\frac{N}{N-n}}z_i-\widetilde{X} \Big) \Big|^2\Big] {\rm d}Q,
\end{aligned}
\end{equation}
\end{small}
%where $ { \huge{ \widetilde{X}_{N-n} }}$ is defined as  in \eqref{Ginmean} but with mean matrix ${\rm diag}( \widetilde{A}_0,0_{N-n-r}) $,  
  %expectation in the integral is with respect to $G_N$; And
where  
 \begin{equation*}\label{Qintegraldomain}
\mathcal{M}_{n,r}=\left\{ Q\in \mathbb{C}^{n\times r} \big| Q^*Q\leq\mathbb{I}_r\right\},
\end{equation*}
\begin{equation} 
C_N= \pi^{n(r+1)} N^{-\frac{1}{2}n(n+1)} (N-n)^{-n(N-n)}     \prod_{k=N-n-r}^{N-r-1}  k !,
\end{equation}
\begin{equation}\label{OmegaQdenotion}
h(Q)%{\rm Tr}\left( \widetilde{A}_0\widetilde{A}_0^* \right)
%\sum_{i=1}^n \sum_{k,l=1}^rq_{ik}\overline{q_{il}} (\overline{z_i} a_{kl}+ z_{i} \overline{a_{lk}} )- \\
%& \sum_{j=1}^n\sum_{l_1,l_2,k_1,k_2=1}^r \overline{q_{jl_1}}q_{jl_2} a_{k_1l_1}\overline{a_{k_2l_2}} \Big( \delta_{k_1,k_2}-\sum_{i=1}^{j-1} q_{ik_1}\overline{q_{ik_2}} \Big).
%\sum_{i=1}^n\sum_{l_1,l_2,k_1,k_2}^r
%q_{ik_1}\overline{q_{ik_2}}\Big( \delta_{l_1,l_2}-\sum_{j=1}^i\overline{q_{jl_1}}q_{jl_2} \Big)
%a_{k_1l_1}\overline{a_{k_2l_2}}
= %{\rm Tr}\big( QA_0 Q^* Z^*+ QA_{0}^*  Q^* Z\big)
\sum_{i=1}^n\left( z_i\overline{(QA_0 Q^* )_{i,i}}+\overline{z_i}(QA_0 Q^* )_{i,i}\right)+
\sum_{1\leq i<j\leq n}|(QA_0 Q^* )_{i,j}|^2 -{\rm Tr}\big(Q^* Q A_{0}^* A_0\big),
\end{equation} 
and
\begin{equation}\label{DeltaA0}
%\left| \Delta_{n}(Z)\right|^2=\prod_{i<j}^{n}\left| z_i-z_j \right|^2,\quad \Delta=\mathbb{I}_r-Q^*Q,\quad 
\widetilde{A}_0=\sqrt{N/(N-n)}\sqrt{\mathbb{I}_r-Q^*Q}A_0\sqrt{\mathbb{I}_r-Q^*Q}.
\end{equation}
%\begin{equation}\label{fiidenotion}
%f_{ii}=\sum_{k,l=1}^rq_{ik}\overline{q_{il}}a_{kl},\quad i=1,\cdots,n.
%\end{equation}
\end{proposition}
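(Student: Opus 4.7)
The plan is to reduce the matrix integral over $X$ to one over its Schur decomposition: first to obtain the full eigenvalue density, then to extract the $n$-point correlation by integrating out the last $N-n$ eigenvalues and recognizing the result as an expectation over a smaller Ginibre ensemble.

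\textbf{Step 1 (eigenvalue density).} I would apply the Schur decomposition $X=U(Z+T)U^*$ with $U\in U(N)$, $Z=\operatorname{diag}(z_1,\ldots,z_N)$ and $T$ strictly upper triangular, giving the standard Jacobian $dX=\prod_{i<j}|z_i-z_j|^2\,dZ\,dT\,d\mu(U)$. Since $X_0=\operatorname{diag}(A_0,0_{N-r})$ has rank $r$, the coupling $-N\operatorname{Tr}(X_0X^*+X_0^*X)$ depends on $U$ only through its first $r$ columns, an $N\times r$ isometry $V_1$ with $V_1^*V_1=\mathbb{I}_r$; setting $M:=V_1A_0V_1^*$, the $T$-dependence is Gaussian, and completing the square integrates the strictly upper triangular entries to a constant times $\exp\{N\sum_{i<j}|M_{ij}|^2\}$. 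Combining with the constant $-N\operatorname{Tr}(A_0A_0^*)=-N\sum_{i,j}|M_{ij}|^2$ (which uses $V_1^*V_1=\mathbb{I}_r$) produces the quadratic $-N\sum_{1\leq j\leq i\leq N}|M_{ij}|^2$ in \eqref{densityeigen0}, and a Haar integral over the remaining $N-r$ columns of $U$ delivers the normalization $Z_{N,r}$.

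\textbf{Step 2 ($n$-point correlation).} Starting from $R_N^{(n)}=\frac{N!}{(N-n)!}\int f_N\,dz_{n+1}\cdots dz_N$, I factor the full Vandermonde as
\begin{equation*}
\prod_{1\leq i<j\leq N}|z_i-z_j|^2=\prod_{1\leq i<j\leq n}|z_i-z_j|^2\cdot\prod_{i=1}^n\prod_{j=n+1}^N|z_i-z_j|^2\cdot\prod_{n<i<j\leq N}|z_i-z_j|^2,
\end{equation*}
the middle cross factor being $\prod_{i=1}^n|\det(z_i\mathbb{I}_{N-n}-\operatorname{diag}(z_{n+1},\ldots,z_N))|^2$, the source of the characteristic polynomial product. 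I then partition $V_1$ into its top $n\times r$ block $Q$ and bottom $(N-n)\times r$ block $W$; the constraint $V_1^*V_1=\mathbb{I}_r$ becomes $W^*W=\mathbb{I}_r-Q^*Q$, forcing $Q\in\mathcal{M}_{n,r}$, and the polar factorization $W=U(\mathbb{I}_r-Q^*Q)^{1/2}$ with $U$ an $(N-n)\times r$ Stiefel frame produces, from the delta-measure, the Jacobian factor $(\det(\mathbb{I}_r-Q^*Q))^{N-n-r}$. Next I rescale $z_{n+j}=\sqrt{N/(N-n)}\,\tilde z_j$, so that $WA_0W^*=U\sqrt{(N-n)/N}\,\widetilde{A}_0\,U^*$ with $\widetilde{A}_0$ as in \eqref{DeltaA0}; then $(\tilde z_\cdot,U)$ are Schur data for a $\mathrm{GinUE}_{N-n}(\widetilde{A}_0)$ matrix $\widetilde X$, and by Step 1 run in reverse, the $W$-dependent exponent together with $\prod_{n<i<j}|z_i-z_j|^2$ and the $U$-integral reassemble into the density $f_{N-n}(\widetilde{A}_0;\tilde z_\cdot)$, while the cross Vandermonde becomes $\prod_i|\det(\sqrt{N/(N-n)}z_i-\widetilde X)|^2$. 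The residual $Q$-dependent exponent, after collecting the off-diagonal block contributions of $V_1A_0V_1^*$ together with the $Q$-part $-N\operatorname{Tr}(Q^*QA_0^*A_0)$ of $-N\operatorname{Tr}(A_0A_0^*)$, should collapse to $Nh(Q)$ as in \eqref{OmegaQdenotion}, while bookkeeping the rescaling Jacobian and the two normalizations delivers $C_N$.

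\textbf{Main obstacle.} The delicate point is the algebraic reorganization that decouples the $Q$- and $W$-dependent pieces of the exponent. The off-diagonal block of $V_1A_0V_1^*$ simultaneously couples $Q$, $W$, and all of $z_1,\ldots,z_N$; one must verify, using $Q^*Q+W^*W=\mathbb{I}_r$, that every such cross-term either cancels or recombines exactly into the summands of $h(Q)$, leaving the precise integrand required to identify the reduced ensemble $\mathrm{GinUE}_{N-n}(\widetilde{A}_0)$ together with the characteristic polynomial insertions.
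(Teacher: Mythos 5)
Your proposal follows essentially the same route as the paper's proof: Schur decomposition with the Gaussian integral over the strictly upper triangular part, the split $V_1=\left[\begin{smallmatrix}Q\\ W\end{smallmatrix}\right]$ with the substitution $W=\widetilde V_1(\mathbb{I}_r-Q^*Q)^{1/2}$ producing the Jacobian $(\det(\mathbb{I}_r-Q^*Q))^{N-n-r}$, and the reassembly into a $\mathrm{GinUE}_{N-n}(\widetilde A_0)$ average of characteristic polynomials; moreover your "main obstacle" is resolved exactly by the mechanism you name, namely the completeness relation for the columns of $V_1$ (writing $\sum_{i=j}^{N}v_{ik_1}\overline{v_{ik_2}}=\delta_{k_1,k_2}-\sum_{i=1}^{j-1}v_{ik_1}\overline{v_{ik_2}}$ for each column $j\leq n$), which converts the $Q$--$W$ cross terms into ${\rm Tr}(Q^*QA_0^*A_0)$ and $-\sum_{i<j\leq n}|(QA_0Q^*)_{i,j}|^2$. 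The only slip is the direction of the rescaling, which should read $\tilde z_j=\sqrt{N/(N-n)}\,z_{n+j}$ so that $N|z_{n+j}|^2=(N-n)|\tilde z_j|^2$ (the reciprocal of what you wrote), though your subsequent identities are consistent with the correct convention.
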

%N{\rm Tr}\left( \hat{A}_0\hat{A}_0^* \right)+

\begin{proposition}\label{4intrep}
The eigenvalue density    of  the $\mathrm{GinSE}_{N}(A_0)$  matrix  $X$ % chosen from the $\mathrm{GinUE}_{N}(A_0)$    ensemble
  equals 
\begin{small}
 \begin{equation}\label{4densityeigen0}
\begin{aligned}
f_{N,4}( A_0; &z_1,\cdots,z_{N})=\frac{1}{Z_{N,r;4}}
e^{-2N\sum_{k=1}^{N}\left| z_k \right|^2} \prod_{i=1}^N\left| z_i-\overline{z_i} \right|^2
\prod_{1\leq i<j\leq N}\left| z_i-z_j \right|^2 \left| z_i-\overline{z_j} \right|^2        \\
\times &
 \int \delta\left( V_1^*V_1-\mathbb{I}_{2r} \right)  
\exp\bigg\{ 
2N\sum_{i=1}^N\left( z_i\overline{(V_{1}A_0 V_{1}^* )_{i,i}}+\overline{z_i}(V_{1}A_0 V_{1}^* )_{i,i}\right)
\bigg\}                  \\
&\times \exp\bigg\{-2N\sum_{1\leq j \leq i\leq N}\left( \left|(V_{1}A_0 V_{1}^* )_{i,j}\right|^2+
\left|(V_{1}A_0 V_{1}^* )_{i,j+N}\right|^2
\right)
\bigg\}{\rm d}V_{11}{\rm d}V_{21},
\end{aligned}
\end{equation}
\end{small}
where $\Im z_1,\cdots,\Im z_N\geq 0$, the normalization constant 
\begin{equation}\label{4correZNr}
Z_{N,r;4}= (2N)^{-2N(N+1)}
\pi^{N(2r+1)+r-r^2} N!  \prod_{k=1}^{N-r} (2k-1)!, 
\end{equation}
and  for  two  $N\times r$  complex matrices $V_{11}$ and  $V_{21}$,
 \begin{equation*} V_1=\begin{bmatrix} V_{11} & V_{21} \\ -\overline{V_{21}} & \overline{V_{11}}  \end{bmatrix}.
 \end{equation*} 
Moreover, when  $r+n\leq N$, the $n$-point correlation function is given by  
\begin{small}
\begin{equation}\label{4algebraequa}
\begin{aligned}
&R_{N,4}^{(n)}\left( A_0; z_1,\cdots,z_{n} \right)=\frac{1}{C_{N,4}}
 e^{-2N\sum_{k=1}^{n}\left| z_k \right|^2} \prod_{i=1}^N\left| z_i-\overline{z_i} \right|^2
\prod_{1\leq i<j\leq N}\left| z_i-z_j \right|^2 \left| z_i-\overline{z_j} \right|^2
\\
&\times 
 \int_{\mathcal{M}_{n,r;4}} \big(\det(\mathbb{I}_{2r}-Q^*Q) \big)^{N-n-r+\frac{1}{2}}
e^{Nh_4(Q)}\
{ {\mathbb{E}}_{\mathrm{ GinSE}_{N-n}( \widetilde{A}_0)}}\Big[\prod_{i=1}^n
\Big| \det\Big( \sqrt{\frac{N}{N-n}}z_i-\widetilde{X} \Big) \Big|^2\Big] {\rm d}Q,
\end{aligned}
\end{equation}
\end{small}
%where $ { \huge{ \widetilde{X}_{N-n} }}$ is defined as  in \eqref{Ginmean} but with mean matrix ${\rm diag}( \widetilde{A}_0,0_{N-n-r}) $,  
  %expectation in the integral is with respect to $G_N$; And
where  $dQ=dQ_{1}dQ_{2}$,  
 \begin{equation*}\label{4Qintegraldomain}
%Q=\begin{bmatrix} Q_1 & Q_2 \\ -\overline{Q_2} & \overline{Q_1}  \end{bmatrix},
%\quad
\mathcal{M}_{n,r;4}=\left\{ Q=\begin{bmatrix} Q_1 & Q_2 \\ -\overline{Q_2} & \overline{Q_1}  \end{bmatrix}
  \big| Q_1,\ Q_2\in \mathbb{C}^{n\times r} \,\&\,  Q^*Q\leq\mathbb{I}_{2r}\right\},
\end{equation*}
\begin{equation} \label{CN4cor}
C_{N,4}= \pi^{n(2r+1)} N^{-n(n+1)} (N-n)^{-2n(N-n)} 2^{-2nN+n^2-n} \prod_{k=N-n-r+1}^{N-r} (2k-1)!,
\end{equation}
and 
\begin{equation}\label{4OmegaQdenotion}
\begin{aligned}
h_4(Q)= 
&2\sum_{i=1}^n\left( z_i\overline{(QA_0 Q^* )_{i,i}}+\overline{z_i}(QA_0 Q^* )_{i,i}\right)-{\rm Tr}\big(Q^* Q A_{0}^* A_0\big)
\\
&+2\sum_{1\leq i<j\leq n}\left( |(QA_0 Q^* )_{i,j}|^2 
+|(QA_0 Q^* )_{i,j+n}|^2
\right),
\end{aligned}
\end{equation}
\begin{equation}\label{4DeltaA0}
\widetilde{A}_0=\sqrt{N/(N-n)}\sqrt{\mathbb{I}_{2r}-Q^*Q}A_0\sqrt{\mathbb{I}_{2r}-Q^*Q}.
\end{equation}
\end{proposition}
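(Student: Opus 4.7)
The plan is to mirror the strategy that yields the GinUE formulas of Proposition \ref{intrep}, now carried through with the quaternion Schur decomposition. The key step writes $X = V T V^*$ where $V$ is a $2N\times 2N$ symplectic unitary ($V\mathbb{J}_N = \mathbb{J}_N \overline{V}$ and $V^*V = \mathbb{I}_{2N}$) and $T$ is block upper triangular whose $2\times 2$ diagonal blocks carry the conjugate eigenpairs $(z_k,\overline{z_k})$ with $\Im z_k \ge 0$, while the strictly upper triangular entries form a free Gaussian part that I denote by $S$.

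First I would derive \eqref{4densityeigen0}. The Jacobian of the quaternion Schur decomposition (cf.\ Mehta \cite{Me}) produces the symplectic Vandermonde $\prod_i |z_i - \overline{z_i}|^2 \prod_{i<j} |z_i - z_j|^2 |z_i - \overline{z_j}|^2$. Since $X_0$ has rank $2r$ in its complex representation, it interacts only with the first $2r$ columns of $V$; denote this block by $V_1 \in \mathbb{C}^{2N\times 2r}$, which inherits the quaternion block form displayed in the statement and satisfies $V_1^*V_1 = \mathbb{I}_{2r}$. Expanding $\mathrm{Tr}\,(X - X_0)(X - X_0)^*$ after this change of variables splits into three pieces: (i) a pure eigenvalue contribution $\sum_k |z_k|^2$ with weight $2N$; (ii) a Gaussian quadratic in $S$, whose integration yields the explicit powers of $\pi$ and of $2N$ that will enter $Z_{N,r;4}$; (iii) the boundary terms involving $V_1 A_0 V_1^*$, which after separating real parts along the diagonal and square magnitudes off the diagonal — and accounting for the two off-diagonal blocks $(V_1 A_0 V_1^*)_{i,j}$ and $(V_1 A_0 V_1^*)_{i,j+N}$ forced by the quaternion symmetry — produce exactly the exponent displayed in \eqref{4densityeigen0}. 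Collecting constants from the Schur Jacobian, the $S$-integration and the volume of the symplectic complement to $V_1$ pins down $Z_{N,r;4}$.

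Next, to obtain \eqref{4algebraequa}, I would integrate out the last $N-n$ eigenvalues and correspondingly reorganise the symplectic integration over $V_1$. Partition $V_1 = \begin{bmatrix} Q \\ W \end{bmatrix}$, where $Q$ collects the first $2n$ rows (inheriting the quaternion block form so that $Q \in \mathcal{M}_{n,r;4}$) and $W$ collects the last $2(N-n)$ rows. The constraint $V_1^*V_1 = \mathbb{I}_{2r}$ becomes $W^*W = \mathbb{I}_{2r} - Q^*Q$, and writing $W = U\sqrt{\mathbb{I}_{2r} - Q^*Q}$ with $U$ a symplectic $2(N-n) \times 2r$ frame turns the variables $(U, z_{n+1}, \dots, z_N)$ into the natural variables of a $\mathrm{GinSE}_{N-n}(\widetilde A_0)$ ensemble with $\widetilde A_0$ as in \eqref{4DeltaA0}, after the rescaling $z \mapsto \sqrt{N/(N-n)}\,z$. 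The residual Vandermonde factors $\prod_{i\le n < j} |z_i - z_j|^2 |z_i - \overline{z_j}|^2$ combine with that density to yield the product $\prod_{i=1}^n |\det(\sqrt{N/(N-n)}\,z_i - \widetilde X)|^2$ inside the expectation. The Jacobian of $W \mapsto (U, Q)$ together with the reduction onto the $U$-orbit produces the weight $(\det(\mathbb{I}_{2r} - Q^*Q))^{N-n-r+\frac12}$; the half-integer exponent is the hallmark of the quaternion (as opposed to complex) Stiefel measure, and all remaining $Q$-dependent pieces of the quadratic form assemble into $h_4(Q)$ of \eqref{4OmegaQdenotion}.

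The main technical obstacle will be the careful bookkeeping of the quaternion measure and Jacobians. One must correctly compute the volume of the quotient $\mathrm{Sp}(2(N-n))/\mathrm{Sp}(2(N-n-r))$, verify that the rescaling absorbs all $N/(N-n)$ factors consistently, and confirm that the interplay of the $2\times 2$ quaternion blocks with the Schur decomposition produces precisely the combinations $|(\cdot)_{i,j}|^2 + |(\cdot)_{i,j+n}|^2$ in $h_4(Q)$ together with the correct powers of $2$, $\pi$ and $N-n$ in $C_{N,4}$. Once these constants and sign conventions are pinned down, the remainder is a faithful transcription of the GinUE derivation, and \eqref{4algebraequa}–\eqref{CN4cor} follow by direct assembly.
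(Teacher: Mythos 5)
Your proposal follows essentially the same route as the paper's proof: the quaternion Schur decomposition with the symplectic Vandermonde Jacobian, Gaussian integration of the strictly upper triangular part, restriction of $X_0$ to the frame $V_1$, and then for the correlations the partition of $V_1$ into a top block $Q$ and a bottom block $V_D$ with the substitution $V_D=\widetilde V_D\sqrt{\mathbb{I}_{2r}-Q^*Q}$ producing the weight $(\det(\mathbb{I}_{2r}-Q^*Q))^{N-n-r+\frac12}$ and the identification of the residual integral as a $\mathrm{GinSE}_{N-n}(\widetilde A_0)$ expectation of characteristic polynomials. The only items you defer to ``bookkeeping'' — integrating out the cross blocks $T_{i;12}$, the volume formula for $\int\delta(V_1^*V_1-\mathbb{I}_{2r})\,{\rm d}V_{11}{\rm d}V_{21}$, and the Mehta-type eigenvalue integral fixing $Z_{N,r;4}$ — are exactly the ingredients the paper uses to pin down the constants, so the plan is sound.
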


To state  our main results, we need to introduce  the  Jordan canonical  form of $A_0$. Given  a complex number $\theta$,  
a Jordan block  $R_{p}( \theta)$    is  a $p\times p$  upper triangular matrix of the form
\begin{small}
\begin{equation}\label{1Rpirealedge}
R_{p}\left( \theta\right)= 
%\theta  \mathbb{I}_{N}   +
%\begin{bmatrix}
%0 & \mathbb{I}_{N-1}   \\
%0   &   0
%\end{bmatrix}
\begin{bmatrix}
\theta & 1 & \cdots & 0 \\  & \ddots & \ddots & \vdots \\
&& \ddots & 1 \\ &&& \theta
\end{bmatrix}. 
\end{equation}
\end{small}For  a nonnegative integer  $\gamma$,  let  $\theta_1, \ldots, \theta_\gamma$  be (distinct) eigenvalues of $A_0$,  introduce a positive integer $\alpha_i$   and some   positive integers  
%$\alpha_1, \ldots, \alpha_{\gamma}$,  let
\begin{equation} \label{porder} p_{i,1}<p_{i,2}<\cdots <p_{i,\alpha_i}, \end{equation}
which correspond to the distinct sizes of the blocks associated with $\theta_i$, such that  $R_{p_{i,j}}( \theta_i)$ appears  $\beta_{i,j}$ times for $j=1, \ldots, \alpha_i$.   Hence, for a certain complex   invertible matrix $P$,     the Jordan canonical  form as
  a direct sum of block diagonal matrices exists for $\beta=2$
\begin{equation}\label{J2detail}
A_0=PJ_2P^{-1},\quad
J_2=\bigoplus_{i=1}^\gamma  \bigoplus_{j=1}^{\alpha_{i}}  
\overbrace{ R_{p_{i,j}}\left(\theta_i \right)   \bigoplus \cdots \bigoplus  R_{p_{i,j}}\left(\theta_i \right)
}^{\beta_{i,j}\ \mathrm{blocks}},
% \overbrace{\begin{bmatrix}
%R_{p_{i,j}}\left(\theta_i \right) && \\ & \ddots & \\ && R_{p_{i,j}}\left(\theta_i \right)
%\end{bmatrix}}^{\beta_{i,j}\ blocks}  
 \end{equation} where   $\theta_1, \ldots, \theta_{\gamma} \in \mathbb{C}$,   
 while    for $\beta=4$ 
%$P$ is  an $r\times r$ invertible matrix  and $\theta_1, \ldots, \theta_{\gamma}$  are complex numbers.
\begin{equation}\label{J4detail} A_0=P\begin{bmatrix}
J_2 & \\ & \overline{J}_2
\end{bmatrix} P^{-1}, \quad  P=\begin{bmatrix}
P_1  &  P_2  \\  -\overline{P_2}  &  \overline{P_1}
\end{bmatrix},  \end{equation} where  $\Im \theta_1,\cdots,\Im \theta_\gamma\geq 0$; see  e.g. \cite{Wi} for the quaternion  Jordan decomposition.   To find out precisely what affect  some eigenvalues of $A_0$ have  on those of the deformed ensembles, we will suppose that  given an integer $0\leq m\leq \gamma$ and  $z_0 \in \mathbb{C}$, 
\begin{equation}\label{correbeta2edgethetaexpression}
 \theta_1=\cdots = \theta_m=z_0 \ \mbox{and}    \                                      
 \theta_i  \neq z_0,    \forall
i=m+1,\cdots,\gamma.
\end{equation}
%$P$ is  an $ 2r\times 2r$ invertible matrix which has the structure in \eqref{42rep}.

We also need to define  two kinds of  functions,   depending on a real parameter  $n>-1$.   One is defined as 
\begin{equation} \label{IEF}
\mathrm{IE}_{n}(z)= \frac{1}{\sqrt{2\pi} \Gamma(n+1)}
\int_0^{\infty}v^n 
e^{  
-\frac{1}{2}(
v+z)^2}{\rm d}v,
\end{equation}
while  as a limit  of    $n>-1$ from  above 
\begin{equation} 
\mathrm{IE}_{-1}(z)= \frac{1}{\sqrt{2\pi}  }
e^{  -\frac{1}{2}z^2}.
\end{equation}
With integers  $n\geq -1$, these are  indeed  related to  repeated integrals of the complementary error function,   denoted by $ \mathrm{i^{n}}\mathrm{erfc}(z)$,   as follows 
  $$\mathrm{IE}_{n}(z)=2^{\frac{1}{2}n-1}  \mathrm{i^{n}}\mathrm{erfc}\Big(\frac{z}{\sqrt{2}} \Big).$$
Thus,  we have   recursive  relations  and series expansions  
\begin{equation*} 
\mathrm{IE}_{n}(z)=  
\int_0^{\infty}\mathrm{IE}_{n}(z+v){\rm d}v,
\end{equation*} and 
\begin{equation} 
\mathrm{IE}_{n}(z) =2^{-1-\frac{1}{2}n}\sum_{k=0}^{\infty} \frac{(-\sqrt{2}z)^k}{k! \Gamma(1+\frac{1}{2}(n-k))}, 
\end{equation}
see  \cite[(7.18.2) \&  (7.18.6) ]{Ol}.   The other is   a double integral given by 
\begin{equation} \label{fn2}
f_n(z,w)=\frac{1}{2\pi}e^{ \frac{1}{2}( z+w)^2}
\int_0^{\infty}
e^{-\frac{1}{2} v^2}\sinh (v(z-{w}))\int_{\frac{1}{\sqrt{2}}v}^{\infty}(u^2-v^2)^n
e^{-\frac{1}{2}(u+z+w)^2}{\rm d}u{\rm d}v.
 \end{equation}
 
%It's ready to state our   main result in the present paper.  

At the  complex  and  real edges  of the spectrum,   two new kernels emerge and   are defined respectively as 
\begin{equation} \label{correkernel}
K_{n}\left( \hat{z},\hat{w} \right)=\sqrt{\frac{2}{\pi}}\Gamma(n+1) e^{ \frac{1}{2}( \hat{z} + \overline{\hat{w}})^2} \sqrt{
\mathrm{IE}_{n-1}( -\hat{z}- \overline{\hat{z}})
\mathrm{IE}_{n-1}( -\hat{w}- \overline{\hat{w}})
 }  \,\mathrm{IE}_{n}\big(\hat{z}+\overline{\hat{w}}\big),
%=
%\sqrt{\omega\left( z_0^{-1}\hat{z} \right)\omega\left( z_0^{-1}\hat{w} \right)}
%\frac{1}{\pi}e^{-\frac{1}{2}\left( \left| \hat{z} \right|^2+\left| \hat{w} \right|^2 \right)
%+\hat{z}\overline{\hat{w}}}              \\
%\times \frac{1}{\sqrt{2\pi}}
%\int_0^{+\infty}v^t 
%e^{  
%-\frac{1}{2}(
%v+z_0^{-1}\hat{z}+\overline{{z_0}^{-1}\hat{w}})^2}{\rm d}v,
\end{equation}
and \begin{equation} \label{correkernel4R}
K^{(\rm re)}_{n}\left( \hat{z},\hat{w} \right)=
 \sqrt{
\mathrm{IE}_{2n-1}( -\hat{z}- \overline{\hat{z}})
\mathrm{IE}_{2n-1}( -\hat{w}- \overline{\hat{w}})
 }  f_n(\hat{z}, \hat{w}). 
\end{equation}
Now, we are ready to  formulate  our main results concerning  scaling limits of  correlation functions at the edge  in the subcritical and critical regimes. 

\begin{theorem}\label{2-complex-correlation}
For the $ {\mathrm{GinUE}}_{N}(A_0)$ ensemble with the assumptions  \eqref{J2detail} and  \eqref{correbeta2edgethetaexpression}
on $A_0$, if $|z_0|=1$, then 
%for $ z_0 \in \mathbb{C}_{+} $,  
as $N\to \infty$ scaled eigenvalue correlations  hold uniformly for   all 
$\hat{z}_{1}, \ldots, \hat{z}_{n} $
in a compact subset of $\mathbb{C}$
\begin{equation} \label{corre2edgecomplex}
\frac{1}{N^n}R_N^{(n)}\Big( A_0;z_0+ \frac{\hat{z}_1}{\sqrt{N}},\cdots, z_0+ \frac{\hat{z}_n}{\sqrt{N}} \Big)=
\det\left(
\big[
K_{t}\left(z_0^{-1} \hat{z}_i, z_0^{-1}\hat{z}_j \big)
\right]_{i,j=1}^n
\right)
+O\big( N^{-\frac{1}{4}} \big),
\end{equation}
where $t=\sum_{i=1}^m\sum_{j=1}^{\alpha_i}\beta_{i,j}$ .

%\begin{equation}\label{omegahatz}
%\omega\left( \hat{z} \right)=\frac{1}{\Gamma(t)}\int_0^{+\infty}v^{t-1} e^{
%-\frac{1}{2}v^2+(
%\hat{z}+\overline{\hat{z}})v}{\rm d}v.
%\end{equation}
%\begin{equation}\label{correbeta2kappa}
%\kappa=\max\left\{1/2,p_{1,1},\cdots,p_{m,1} \right\},
%\end{equation} 
\end{theorem}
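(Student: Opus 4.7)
The plan is to begin from the exact integral representation for $R_N^{(n)}$ provided by Proposition~\ref{intrep}, substitute the edge scaling $z_i = z_0 + \hat{z}_i/\sqrt{N}$, and carry out a Laplace-type asymptotic analysis of the $Q$-integral jointly with the edge asymptotics of the inner characteristic polynomial expectation $\mathbb{E}_{\mathrm{GinUE}_{N-n}(\widetilde{A}_0)}[\prod_i|\det(\sqrt{N/(N-n)}\,z_i - \widetilde{X})|^2]$. The four main ingredients are (i) a Schur-complement reduction of this expectation, (ii) a Taylor expansion of the combined action $(N-n-r)\log\det(\mathbb{I}_r - Q^*Q) + Nh(Q)$ about $Q=0$, (iii) identification of the correct scale of $Q$ from the Jordan data of $A_0$, and (iv) evaluation of the resulting Gaussian--quartic integral to extract $K_t$.

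For step (i), partition $\widetilde{X}$ into $r$ and $N-n-r$ blocks and Schur-factor $\det(a - \widetilde{X}) = \det(a - G_{22})\det(aI_r - \widetilde{A}_{r\times r} - G_{11} - G_{12}(a - G_{22})^{-1}G_{21})$ with $G$ an undeformed complex Ginibre of size $N-n$. At $a = z_0 + \hat{z}/\sqrt{N}$ with $|z_0|=1$ the first factor is at the edge of an undeformed $(N-n-r)\times(N-n-r)$ Ginibre; its squared expectation admits the classical Ginibre edge asymptotic built out of $\mathrm{IE}_{-1}$ and $\mathrm{IE}_{0}$, and the product over the $n$ spectral points yields a determinantal kernel of type $K_0$ via an Andr\'eief-type expansion. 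The rank-$r$ Schur-complement factor supplies the higher-$t$ corrections once it is coupled to the $Q$-integration.

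For steps (ii)--(iii), put $A_0$ in its Jordan form $J_2$ and decompose $Q$ column-wise along the blocks $R_{p_{i,j}}(\theta_i)$. The quadratic-in-$Q$ part of the combined action, on the diagonal directions of the block associated with $\theta_i$, equals $-N|z_0 - \theta_i|^2 \sum_{k,\ell}|Q^{(i,j,\ell)}_{k,\cdot}|^2 + O(1)$. When $\theta_i \neq z_0$ the coefficient $|z_0 - \theta_i|^2$ is strictly positive, so those columns concentrate at the standard Gaussian scale $N^{-1/2}$ and contribute only a multiplicative constant absorbed in the normalisation. When $\theta_i = z_0$ the quadratic coefficient vanishes and one must retain the quartic term $-\tfrac{N}{2}\mathrm{tr}((Q^*Q)^2)$ coming from the second-order expansion of $\log\det(\mathbb{I}_r - Q^*Q)$; this pins the scale of the $z_0$-block columns of $Q$ to $Q \sim N^{-1/4}$. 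The super-diagonal (nilpotent) entries of each $R_{p_{i,j}}(z_0)$ generate only cross terms $Q_{k,\cdot}\bar{Q}_{k+1,\cdot}$ between adjacent columns of the same Jordan block, and these are of strictly smaller order under the $N^{-1/4}$ rescaling; consequently the individual block sizes $p_{i,j}$ drop out of the limit and only the total count $t = \sum_{i=1}^m\sum_{j=1}^{\alpha_i}\beta_{i,j}$ of Jordan blocks based at $z_0$ survives.

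For step (iv), after the $N^{-1/4}$-rescaling of the $t$ surviving columns of $Q$, the remaining $t$-fold Gaussian--quartic integral is evaluated in closed form in terms of the repeated-erfc integrals $\mathrm{IE}_{n}$, and combining with the edge expectation of step (i) yields the kernel $K_t$ of \eqref{correkernel} in the rotated variables $z_0^{-1}\hat{z}_i$; the rotation factor $z_0^{-1}$ implements the alignment of the outward normal at $z_0 \in \partial\mathbb{D}$ with the positive real axis on which $K_t$ is defined. The determinantal product structure $\det[K_t(z_0^{-1}\hat z_i, z_0^{-1}\hat z_j)]$ then arises from the Andr\'eief-type product structure on both the edge expectation and the $Q$-integral. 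The $O(N^{-1/4})$ uniform remainder on compacts follows from (a) exponential tail confinement of $Q$ to an $N^{-1/4+\delta}$ neighbourhood of the saddle, (b) uniform third-order Taylor expansion of $h(Q)$ and $\log\det(\mathbb{I}_r - Q^*Q)$, and (c) quantitative Ginibre edge asymptotics. \emph{The hard part} is step (iii): showing cleanly that the nilpotent cross terms in the action are genuinely subleading under the $N^{-1/4}$ rescaling, so that only the count $t$---and not the individual block sizes $p_{i,j}$---governs the limit. This is a delicate cancellation reflecting the intrinsic non-normality of the Jordan structure at $z_0$.
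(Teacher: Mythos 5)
Your global architecture (Laplace analysis of the $Q$-integral from Proposition~\ref{intrep}, soft/stiff mode splitting dictated by the Jordan structure at $z_0$, then a Gaussian--quartic limit integral producing $K_t$) matches the paper's, but there are two genuine gaps.

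First, your step (i) does not actually compute the inner expectation. After Schur-factoring $\det(a-\widetilde X)=\det(a-G_{22})\det\bigl(aI_r-\widetilde A_0-G_{11}-G_{12}(a-G_{22})^{-1}G_{21}\bigr)$, the two factors share the randomness of $G$, so $\mathbb{E}\bigl[\prod_i|\det(\cdots)|^2\bigr]$ does not factor into ``classical Ginibre edge autocorrelation $\times$ rank-$r$ correction,'' and you give no mechanism for controlling the coupling. The paper's route is entirely different here: the duality formula (Theorem~\ref{dualities}, Corollary~\ref{characteristic}) converts the full expectation \emph{exactly} into a finite-dimensional integral over an $n\times n$ complex matrix $Y$, whose edge asymptotics (Theorem~\ref{2-complex}(i) with $\hat\theta_i=0$, via Lemma~\ref{determinantcalculationbeta2edge}) supply the factor $(\det Y^*Y)^t$ and, after the SVD/HCIZ/Andr\'eief evaluation \eqref{corremid1}, the determinant $\det[\mathrm{IE}_t(\cdot)]$. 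Without the duality (or an equivalent exact identity such as supersymmetry or orthogonal polynomials), your step (i) is a heuristic, not a proof.

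Second, the soft-mode identification in step (iii) is wrong as stated. For a block $R_p(z_0)$ the quadratic part of the action is $N\,\mathrm{Tr}\bigl(Q\,R_p(0)^*U R_p(0)\,Q^*\bigr)$, which does \emph{not} vanish on the whole block: it is positive semidefinite with kernel exactly the first-column direction. If, as you propose, all $p$ columns of the block are rescaled by $N^{-1/4}$, this term is of order $N^{1/2}$ and diverges --- it is not a ``strictly smaller order cross term.'' The correct mechanism (the paper's $Q_L$/$Q_R$ split and the identity $\mathrm{Tr}(Q\widetilde U Q^*)=\mathrm{Tr}(Q_R\widetilde U^{(22)}Q_R^*)$ following from \eqref{UtildeUrelation1}) is that only the first column of each Jordan block is a zero mode (hence $N^{-1/4}$), the remaining $p_{i,j}-1$ columns are confined at $N^{-1/2}$, and the quadratic cross terms between the two groups vanish \emph{identically}, not merely to subleading order. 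This is also what makes the count $t$ (rather than $\sum p_{i,j}\beta_{i,j}$) appear; your accounting, taken literally, would give a soft space of the wrong dimension. A further, more minor, omission: restricting the $Q$-integral to a neighbourhood of $Q=0$ requires a global maximum principle for the phase (the paper's Lemma~\ref{maximumlemma}, proved by a nontrivial Schur-complement induction), which you assert as ``exponential tail confinement'' without argument.
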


\begin{theorem}\label{4-complex-correlation}
For the $ {\mathrm{GinSE}}_{N}(A_0)$ ensemble with the assumptions  \eqref{J4detail} and  \eqref{correbeta2edgethetaexpression}
on $A_0$,  
as $N\to \infty$  scaled eigenvalue correlations  hold uniformly for   all 
$\hat{z}_{1}, \ldots, \hat{z}_{n} $
in a compact subset of $\mathbb{C}$.
For  $z_0=\pm 1$,  
\begin{equation} \label{corre4edgeR}
\begin{aligned}
&\frac{1}{N^n}R_{N,4}^{(n)}
\Big( A_0;z_0+ \frac{\hat{z}_1}{\sqrt{2N}},\cdots, z_0+ \frac{\hat{z}_n}{\sqrt{2N}} \Big)= \prod_{k=1}^n\big(z_0^{-1} \overline{\hat{z}_k}-z_0^{-1}\hat{z}_k \big)
\\
&\times \,{\rm Pf}\bigg(
 \begin{bmatrix}
K^{(\rm re)}_{t}(z_0^{-1}\hat{z}_i,z_0^{-1}\hat{z}_j) & K^{(\rm re)}_{t}(
z_0^{-1}\hat{z}_i,z_0^{-1}\overline{\hat{z}_j})
 \\
K^{(\rm re)}_{t}(z_0^{-1}\overline{\hat{z}_i},z_0^{-1}\hat{z}_j) 
&
K^{(\rm re)}_{t}(z_0^{-1}\overline{\hat{z}_i},z_0^{-1}\overline{\hat{z}_j})
\end{bmatrix}_{i,j=1}^n
\bigg)
+O\big( N^{-\frac{1}{4}} \big),
\end{aligned}
\end{equation}
while  for   $\Im z_0>0$ and $|z_0|=1$,     
\begin{equation} \label{corre4edgecomplex}
\frac{1}{N^n}R_{N,4}^{(n)}\Big( A_0;z_0+ \frac{\hat{z}_1}{\sqrt{2N}},\cdots, z_0+ \frac{\hat{z}_n}{\sqrt{2N}} \Big)=
\det\big(
\big[
K_{t}\left( z_0^{-1}\hat{z}_i,z_0^{-1}\hat{z}_j \big)
\right]_{i,j=1}^n
\big)
+O\big( N^{-\frac{1}{4}} \big),
\end{equation}
where $t=\sum_{i=1}^m\sum_{j=1}^{\alpha_i}\beta_{i,j}$.
\end{theorem}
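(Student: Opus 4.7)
The plan is to start from the exact formula for the $n$-point correlation function given in Proposition \ref{4intrep} and perform a coupled asymptotic analysis of the $Q$-integral over $\mathcal{M}_{n,r;4}$ together with the inner characteristic-polynomial moment. Inserting the edge scaling $z_i = z_0 + \hat z_i/\sqrt{2N}$, one observes that the factor $(\det(\mathbb{I}_{2r}-Q^*Q))^{N-n-r+\frac12}$ forces $Q^*Q = O(1/N)$, so I rescale $Q = \widetilde Q/\sqrt{N}$ and expand the full $Q$-integrand (including $h_4$ and the determinantal weight) to the relevant Gaussian order in $\widetilde Q$. Using the Jordan decomposition $A_0 = P\,\mathrm{diag}(J_2,\overline J_2)\,P^{-1}$ from \eqref{J4detail}, I change variables $\widetilde Q \mapsto \widetilde Q P$ so the entries $(QA_0 Q^*)_{i,j}$ factor through the individual Jordan blocks $R_{p_{i,j}}(\theta_i)$; only the columns of $\widetilde Q P$ coupled to blocks with $\theta_i = z_0$ contribute at the critical Gaussian scale, while blocks with $\theta_i \neq z_0$ produce exponentially small corrections. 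This is the structural reason why the single parameter $t = \sum_{i=1}^m \sum_{j=1}^{\alpha_i} \beta_{i,j}$, the total multiplicity of $z_0$ in the Jordan form, governs the limit kernel.

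The second ingredient is the asymptotic expansion of the inner moment $\mathbb{E}_{\mathrm{GinSE}_{N-n}(\widetilde A_0)}\bigl[\prod_{i=1}^n |\det(\sqrt{N/(N-n)}\, z_i - \widetilde X)|^2\bigr]$, where $\widetilde A_0$ is given by \eqref{4DeltaA0}. After the rescaling $Q = \widetilde Q/\sqrt{N}$ one has $\widetilde A_0 = A_0 + O(1/N)$, and because the deformation has fixed rank $r$ while $N \to \infty$, this moment agrees to leading order with the undeformed quaternion Ginibre moment, the residual $O(1/N)$ perturbative correction being absorbed into the $O(N^{-1/4})$ error. The undeformed moment admits a Pfaffian formula in terms of skew-orthogonal Hermite-type polynomials whose large-$N$ edge asymptotics on the unit circle generate precisely the double integral $f_n(z,w)$ of \eqref{fn2} in the real-edge case and the exponential Gaussian profile entering \eqref{correkernel} otherwise.

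Splicing the two asymptotics, the Vandermonde-type prefactor $\prod |z_i-\overline z_i|^2 \prod|z_i-z_j|^2|z_i-\overline z_j|^2$ is Taylor-expanded at $z_0$ and either contributes the $(z_0^{-1}\overline{\hat z_k}-z_0^{-1}\hat z_k)$ factors (when $z_0 = \pm 1$) or collapses via exponential decay to the complex-case prefactor (when $\Im z_0>0$). In the first case, both $z_i$ and $\overline z_i$ lie within distance $O(N^{-1/2})$ of $z_0$, all four entries of the $2\times 2$ Pfaffian block survive, and one obtains \eqref{corre4edgeR} with the kernel \eqref{correkernel4R}. In the second case, the conjugate point $\overline z_0$ is at macroscopic distance from $z_0$, the off-diagonal Pfaffian blocks decay like $e^{-cN}$, and the Pfaffian degenerates to an $n\times n$ determinant identical to the complex kernel $K_t$ from Theorem \ref{2-complex-correlation}, which gives \eqref{corre4edgecomplex}.

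The main obstacle will be controlling the coupled saddle-point expansion uniformly in the rescaled variable $\widetilde Q$ while simultaneously handling the characteristic-polynomial moment: one must justify exchanging the Laplace-type asymptotics on $\mathcal{M}_{n,r;4}$ with the Pfaffian edge expansion of the undeformed moment, and quantify the $O(N^{-1/4})$ remainder at the next order of both expansions. A subsidiary technical difficulty is verifying that Jordan blocks of different sizes $p_{i,j}$ attached to the same eigenvalue $\theta_i = z_0$ decouple in the limit, so that only the total count $t$ — not the finer partition data $\{(p_{i,j},\beta_{i,j})\}$ — enters the kernel index; this mirrors the complex case and reflects the rigidity of the Hermite kernel structure at a critical edge point.
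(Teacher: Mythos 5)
Your proposal contains two gaps that are each fatal on their own.

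First, the scaling of $Q$. You claim the factor $(\det(\mathbb{I}_{2r}-Q^*Q))^{N-n-r+\frac12}$ forces $Q^*Q=O(1/N)$ and rescale uniformly $Q=\widetilde Q/\sqrt N$, expanding "to the relevant Gaussian order." This misses the degenerate directions. After conjugating by $P$ the effective exponent is $-N\widetilde f_4(Q)+\sqrt N\,\widetilde h_4(Q)$ with
\begin{equation*}
\widetilde f_4(Q)=2\,{\rm Tr}\big(Q_R\widetilde D^{(11)}Q_R^*\big)+\tfrac12\sum_{i=1}^{2n}\big(Q_L\hat D^{(1,1)}Q_L^*\big)_{i,i}^2+O\big(\|Q_R\|\|Q\|^3+\|Q\|^6\big),
\end{equation*}
where $Q_L$ collects the columns of $Q$ indexed by the \emph{first} columns of the Jordan blocks attached to $z_0$. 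Because $|z_0|=1$, the quadratic form in those directions cancels exactly and the leading term is quartic, so $Q_L$ must be scaled as $N^{-1/4}$ (the paper uses $Q_R\to N^{-1/2}Q_R$, $Q_L\to N^{-1/4}Q_L(\hat D^{(1,1)})^{-1/2}$). The resulting integral is genuinely non-Gaussian and produces the normalization $\mathrm{IE}_{2t-1}(-\hat z-\overline{\hat z})$ in the kernel \eqref{correkernel4R}; a uniform Gaussian scaling erases it.

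Second, the inner moment. You replace $\mathbb{E}_{\mathrm{GinSE}_{N-n}(\widetilde A_0)}[\,\cdot\,]$ by the \emph{undeformed} quaternion Ginibre moment, arguing that $\widetilde A_0=A_0+O(1/N)$ and the rank is fixed. But $A_0$ has eigenvalue $z_0$ with $|z_0|=1$, i.e.\ the perturbation sits exactly at the critical point, and there it changes the leading order of the moment — this is the content of Theorem \ref{4-complex}. Concretely, the paper converts the moment via the duality formula (Corollary \ref{characteristic}) into a $2n\times 2n$ symmetric $Y$-integral whose Pfaffian integrand, analyzed in Lemma \ref{4lemma}, contributes the weight $(\det(YY^*))^{t}$; this weight is what turns $f_0$ into $f_t$ in \eqref{fn2}--\eqref{correkernel4R}. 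Your approximation yields $t=0$ there, i.e.\ the unperturbed Ginibre edge kernel, so the theorem's entire $t$-dependence would be lost. (Relatedly, the paper never uses skew-orthogonal polynomial asymptotics; the whole point of Section 2 is to avoid them via duality, and the two Laplace analyses — in $Q$ and in $Y$ — must be carried out jointly with the error terms of Lemma \ref{4lemma} tracked in both $\|Y\|$ and $\|Q\|$.) Your qualitative account of why $z_0=\pm1$ gives a Pfaffian and $\Im z_0>0$ a determinant is in the right spirit, but the mechanism in the paper is that the diagonal blocks $Y_{11},Y_{22}$ of the dual variable become massive Gaussian modes that integrate out when $\Im z_0>0$, not an $e^{-cN}$ decay of off-diagonal Pfaffian blocks.
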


Theorem \ref{2-complex-correlation} and  Theorem \ref{4-complex-correlation} show that local statistics near the edge $z_0$ only depends on the geometric multiplicity of the  eigenvalue $z_0$ of the added perturbation matrix $A_0$, that is,  the number of blocks associated with it. Together with fluctuations of outlier eigenvalues due to    Benaych-Georges\,\&\,Rochet \cite{BR}  and Bordenave\,\&\,Capitaine \cite{BC16} in the complex case,   this thus  completes  a phase transition for  complex eigenvalues in the deformed complex Ginibre ensemble.   It can be regarded as   a non-Hermitian analogue    of the BBP  phase  transition at the soft edge  of the spectrum 
   in Hermitian  Random Matrix Theory (RMT for short), first observed  by   Baik, Ben~Arous and P{\'e}ch{\'e}  for non-null complex sample covariance matrices \cite{BBP}.  
More precisely, extreme eigenvalues display distinct universal  patterns in  three  spectral regimes  created  by the perturbation: (I) In the subcritical regime where   all eigenvalues of $A_0$  lie in the open unit disk,  local statistics  is governed by the Ginibre edge  statistics; (II) In the critical regime where  some eigenvalues  are  on the unit circle and all others lie   inside  the open unit disk,  local statistics  is 
 completely characterized by  an infinite sequence of new kernels; (III) In the supercritical regime where  some  eigenvalues  stay away from the  unit disk,   outlier eigenvalues converge to roots of the eigenvalues of certain  finite random matrices, see  \cite[Theorem 2.10]{BR} and   \cite[Theorem 1.6]{BC16} for a more detailed description.

The remainder of  this paper is  organized as follows. In Section \ref{duality}, we  prove that  the ensemble  averages of certain observables admit   duality  identities   between different  Gaussian matrix ensembles.  In Section \ref{autocorrealtionI} and Section  \ref{autocorrealtionII},  we study auto-correlation functions of characteristic polynomials  in the deformed  complex and quaternion 
 Ginibre ensembles,  and further analyze   their scaling limits.  In Section \ref{proofs}, we provide   proofs of the main results,    Proposition \ref{intrep}, Proposition \ref{4intrep},  Theorem  \ref{2-complex-correlation}, and  Theorem  \ref{4-complex-correlation}. Finally, Section \ref{conclusion} contains some interpolating point processes.

\section{Duality in non-Hermitian RMT} \label{duality}% Proof of Theorem \ref{dualities}

\subsection{Notation} \label{sectnotation}

By convention,  the conjugate, transpose,  and  conjugate transpose of a complex matrix $A$  are denoted by  $\overline{A}$,  $A^t$ and $A^*$,  respectively.   Besides, the following symbols may be used in this and subsequent sections.   %The four families of bounded classical domains, e
%Throughout this paper,  a  real quaternion $N\times N$ matrix  will be   always identified as its complex representation, that is,  a complex $2N\times 2N$ matrix  $X$    which  satisfies the relation
%$XJ_N=J_N\overline{X}$, where 
%\begin{equation}\label{JN}
%\mathbb{J}_N=\begin{bmatrix}
%0_N & \mathbb{I}_N   \\
%-\mathbb{I}_N    &   0_N
%\end{bmatrix}. 
%\end{equation} 

\begin{itemize}
\item   {\bf Pfaffian}
For a $2m\times 2m$  complex anti-symmetric matrix  $A=[a_{i,j}]$,  the  Pfaffian  
\begin{equation*}
{\rm Pf}(A)=\frac{1}{2^m m!}\sum_{\sigma\in S_{2m}}{\rm sgn}(\sigma) \prod_{i=1}^m a_{\sigma(2i-1),\sigma(2i)}.
\end{equation*}
 Property 1: 
\begin{equation*}\label{Pfaffian1}
{\rm Pf}(BAB^t)=\det(B)\,{\rm Pf}(A),
\end{equation*}
where $B$ is a $2m\times 2m$ complex matrix.  
Property 2:  
$$   
{\rm Pf}(A\otimes C)=(-1)^{\frac{1}{2}mq(q-1)}
\left( {\rm Pf}(A) \right)^q\left( \det(C) \right)^m,
  $$
  where   $C$ is any $q\times q$ complex symmetric matrix; see  
\cite[Theorem 4.2]{LC85}.

\item   {\bf Tensor}
 The tensor product, or Kronecker product,  of an $m\times n$  matrix  $A=[a_{ij}]$  and  a $p\times q$ matrix  $B$ is a block matrix $$
A \otimes B=\left[\begin{smallmatrix}
a_{11}B   &\cdots  &  a_{1n} B   \\ \vdots  & \ddots &   \vdots   \\
a_{m1}B   &\cdots  &  a_{mn} B   
\end{smallmatrix}\right].$$
When $m=n$ and $p=q$,  $A \otimes B$ is similar to  $B \otimes A$ by a permutation matrix.  
  
  \item   {\bf Trace}
  The trace of a square matrix  with real, complex or  real quaternion elements (corresponding to Dyson index $\beta=1, 2, 4$)  is redefined  as 
  $${\rm Tr_{\beta}}:=\begin{cases}
{\rm Tr}, & \beta=1,2; \\  
\frac{1}{2}{\rm Tr}, & \beta=4.
\end{cases}$$
  
  \item   {\bf  Norm}
     For a complex matrix $M$,  define the Hilbert-Schmidt norm  as $\|M\|:=\sqrt{{\rm{Tr}}(MM^*)}$.     $M=O(A_N)$ for some (matrix) sequence $A_N$  means that each  element  has the same order as  that     of $ A_N$. For two matrices $M_1$ and $M_2$, $\|M_1,M_2\|=\sqrt{{\rm Tr}(M_{1}M_{1}^*)+{\rm Tr}(M_{2}M_{2}^*)}$.

\item   {\bf  Index \& Submatrix}
%\begin{note}
Denote by   $I_{1}$ ($I_{2}$)  an   index set consisting of  numbers corresponding to   first (last)  columns of all 
 blocks  
 $R_{p_{i,j}}\left(\theta_i \right)$    from the Jordan form  $J_2$  in  \eqref{J2detail}  where  $ 1\leq i \leq m, 1\leq j \leq \alpha_i$, unless otherwise specified.  Both  have  the same  cardinality $\sum_{i=1}^m\sum_{j=1}^{\alpha_i}\beta_{i,j}$.
 For  a matrix $U$,     put   $U_{I_{1}}$ as a sub-matrix  by  keeping   the same rows and columns  with  index  in $I_1$.
% \end{note}
 
\end{itemize}
\subsection{Duality formulae}

Our objects in considerations in this section are additive and multiplicative  deformations of  the Ginibre ensembles.    
\begin{definition}  \label{Ginb} 
A random real, complex or  real quaternion   
$N\times N$ matrix  $X$,  denoted by Dyson  index $\beta=1, 2, 4$ respectively,   is said to belong to the deformed Ginibre ensemble,   if  the joint density function for  matrix entries  is given by   
\begin{equation}\label{model}
P_{N,\beta}(\tau;X,X_0)=\frac{1}{Z_{N,\beta}}
\exp\!\left\{ -\frac{N}{\tau} {\rm Tr_{\beta}} \Big(\Sigma^{-1}(X-X_0)\Gamma^{-1}(X-X_0)^* \Big)\right\},
\end{equation}
with the normalization constant  
\begin{equation}\label{normalisation}
Z_{N,\beta}=\begin{cases}
\left(\det(\Sigma\Gamma)\right)^{\beta N/2} \left(\pi\tau/N\right)^{\beta N^2/2}, & \beta=1,2; \\  
\left(\det (\Sigma\Gamma)\right)^{N} \left( \pi\tau/N \right)^{2N^2}, & \beta=4.
\end{cases}
\end{equation}
Here $\tau>0$, correspondingly $\Sigma$,  $\Gamma$ and $X_0$  are   real, complex or real quaternion $N\times N $   matrices, and both $\Sigma$ and   $\Gamma$  are positive definite. %Denote by Gin$\beta$E, dGin$\beta$E. %When $\Sigma=\Gamma=\mathbb{I}_N$, Gin$\beta$E
\end{definition}

In RMT  there exist certain dual relations between $\beta$ and $4/\beta$  matrix  ensembles, say,  the Gaussian ensembles;  see  \cite{bh, bh01,De} and references therein.  In fact, there exist also   analogous  dual  relationships in non-Hermitian RMT.   Some special cases have been studied by Forrester and Rains  \cite{FR09}, 
Burda et al.  \cite{BGN14,BGN15}, 
Grela \cite{Gr}.  For this, we need to introduce some dual models associated with  the Ginibre ensembles.   Related  to  those are  three types of matrix spaces with complex entries:  symmetric matrices of size $K\times K$,    rectangular matrices of size $K_2\times K_1$,  antisymmetric  matrices  of size $K\times K$. For convenience, let's  endow them with Dyson index $\beta=1,2,4$ respectively,   since the associated  bounded    domains of complex matrices     correspond to   three  of six types of Cartan's    classical    domains  with Dyson circular $\beta$-ensembles as  boundaries; see \cite{Hua}.  

\begin{definition}  \label{dualdef}  A random    complex symmetric $K\times K$ matrix,  complex   $K_2\times K_1$ matrix or  
complex antisymmetric $K\times K$ matrix $Y$,  denoted by Dyson  index $\beta=1, 2, 4$ respectively,     is said to be the dual Ginibre ensemble with mean $Y_0$, if   the joint density function is given  by 
\begin{equation}\label{dualdensity}
\widehat{P}_{K,\beta}(\tau;Y,Y_0)=\frac{1}{\widehat{Z}_{K,\beta}}\exp\left\{-\frac{N}{\tau} {\rm Tr_{4/\beta}}(Y-Y_0)(Y-Y_0)^*\right\},
\end{equation}
with  the normalization constant
\begin{equation}\label{Zdual}
\widehat{Z}_{K,\beta}=\begin{cases}
2^{K}\left( \pi\tau/N\right)^{K(K+1)/2}, & \beta=1; \\ 
 \left( \pi\tau/N\right)^{K_1K_2}, & \beta=2;   \\
\left( \pi\tau/(2N)\right)^{K(K-1)/2}, & \beta=4.
\end{cases}
\end{equation}

%\begin{equation}\label{dualdensity}
%\widehat{P}_{K_1,K_2,\beta}(\tau;Y,Y_0)=\frac{1}{Z_{K_1,K_2,\beta}}\exp\left\{-\frac{N}{\tau} {\rm Tr_{4/\beta}}(Y-Y_0)(Y-Y_0)^*\right\},
%\end{equation}
%where  $K=K_1+K_2$ and  the normalization constant
%\begin{equation}\label{Zdual}
%Z_{K_1,K_2,\beta}=\begin{cases}
%2^{K}\left( \pi\tau/N\right)^{K(K+1)/2}, & \beta=1; \\ 
% \left( \pi\tau/N\right)^{K_1K_2}, & \beta=2;   \\
%\left( \pi\tau/(2N)\right)^{K(K-1)/2}, & \beta=4.
%\end{cases}
%\end{equation}

\end{definition}

%Note that  when   $\beta=1, 4$  in Definition  \ref{dualdef}        the  densities actually   depend  only on  $K$.  This misusing is mainly for We   so that we can think  assume  $K_2=0$ notation with 
%We concentrate ourself on three ensembles of random matrices of size $N\times N$, whose joint density of matrix entries can be written as

%In our studies of Hermitian random matrices in Chapters 1 and 3, a total of ten distinct ensembles were iden- tified on the basis of symmetry constraints. Furthermore, these were shown to be identical to the Hermitian part of the ten infinite families of symmetric spaces. The task of classifying non-Hermitian random matrices according to symmetries has been undertaken in [59], [379].

%--------------------------------------------------------------------------------------------------
%\subsection{Quantities needed to be considered}

Also, we need to introduce certain  observables.  \begin{definition}  \label{Ob}  
For a  real, complex or  real quaternion   
$N\times N$ matrix  $X$ ($\beta=1, 2, 4$) ,  and  accordingly    for  a  complex antisymmetric $K\times K$, complex   $K_2\times K_1$ ($K_1\leq K_2$), 
or complex symmetric $K\times K$ matrix  $Y$ but with a dual index $4/\beta$,  define observables  $Q_\beta(A;X,Y)$ as follows. 
\begin{equation}\label{Q1}
Q_1(A;X,Y):=(-1)^{KN(KN-1)/2}\times    \\
{\rm Pf}
\begin{bmatrix}
Y\otimes \Sigma & A-\mathbb{I}_{K}\otimes X     \\
-A^t+\mathbb{I}_{K}\otimes X^t & Y^*\otimes \Gamma
\end{bmatrix},
\end{equation}
 
\begin{equation}\label{Q2}
Q_2(A;X,Y):=
\det
\begin{bmatrix}
A_1-\mathbb{I}_{K_1}\otimes X & -Y^*\otimes \Sigma     \\
  Y\otimes \Gamma        &              A_2 -     \mathbb{I}_{K_2}\otimes X^*
\end{bmatrix}, 
%A=\begin{bmatrix}
%A_1  & 0    \\
%0    &              A_2
%\end{bmatrix},
\end{equation}
and \begin{equation}\label{Q4}
Q_4(A;X,Y):=(-1)^{KN}
{\rm Pf}
\begin{bmatrix}
iY\otimes \left( \Sigma\mathbb{J}_N \right) & A-\mathbb{I}_{K}\otimes X    \\
-A^t+\mathbb{I}_{K}\otimes X^t & iY^*\otimes \left( \mathbb{J}_N\Gamma \right)
\end{bmatrix},
\end{equation}
 where   a complex square matrix $A$ has order  $KN$ and $2KN$   for $\beta=1, 4$, while 
$A=\mathrm{diag}(A_1, A_2)$  
%$$ A=\begin{bmatrix}
%A_1  &   \\   &  A_2
%\end{bmatrix},   \quad A_1: K_1 N \times K_1 N,  A_2: K_2 N \times K_2 N  $$
with square matrices  $A_1$, $A_2$  of  order $K_1 N$ and $K_2N$ for $\beta=2$. 

Note that the  case of $\beta=2$ has essentially   been introduced in  \cite{BGN14, Gr}, where $\Sigma=\Gamma=\mathbb{I}_{N}$, $K_1=K_2$,  and $A_1, A_2$ are assumed to  be  tensor products of two matrices; see \cite[eqn (36)]{Gr}.

 Duality formulas  between the Ginibre ensembles and the dual Ginibre ensembles 
 given  in Definition  \ref{dualdef} can be stated as follows.

\end{definition}

\begin{theorem} \label{dualities}
With the same notations as in Definitions \ref{Ginb},  \ref{dualdef} and \ref{Ob},  one has 
\begin{equation}\label{dualequation}
\int Q_{\beta}(A;X,Y_0)P_{N,\beta}(\tau;X,X_0){\rm d}X=\int Q_{\beta}(A;X_0,Y) \widehat{P}_{K,4/\beta}(\tau;Y,Y_0){\rm d}Y,
\end{equation}
where $dX$ and $dY$ denote Lebesgue  measure on associated matrix spaces.
\end{theorem}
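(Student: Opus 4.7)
The identity \eqref{dualequation} is a supersymmetric-type duality; both sides should reduce, after a Grassmann integral representation of $Q_\beta$, to the \emph{same} fermionic integral in which both $X_0$ and $Y_0$ appear as linear source terms and a common quartic Grassmann self-interaction is present. The plan is therefore to show that (i) starting from the left-hand side, the Gaussian $X$-integration produces this canonical fermionic integral, and (ii) starting from the right-hand side, the Gaussian $Y$-integration (with the dual weight $\widehat{P}_{K,4/\beta}$) produces the same integral.

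Concretely, for $\beta=2$ I would first represent
\[
Q_{2}(A;X,Y)=\int \exp\!\Big\{-\bar{\eta}\, M(X,Y)\, \eta\Big\}\, d\bar{\eta}\, d\eta,
\]
where $M(X,Y)$ is the block matrix of \eqref{Q2}, using the standard identity $\det M=\int e^{-\bar{\eta}M\eta}\, d\bar{\eta}\, d\eta$. Both the $X$- and $Y$-dependence enter linearly in the exponent (from the diagonal blocks $\mathbb{I}\otimes X$, $\mathbb{I}\otimes X^{*}$ and the off-diagonal blocks $Y\otimes\Gamma$, $-Y^{*}\otimes\Sigma$). Plugging into the LHS, the Gaussian integration against $P_{N,2}(\tau;X,X_{0})$ completes the square in $X$ and produces (a) a linear $X_{0}$-coupling to Grassmann bilinears, which repackages into the $X_{0}$-dependent part of $M$, and (b) a quartic Grassmann term proportional to $\tfrac{\tau}{N}$ and built from $\Sigma$ and $\Gamma$.

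The result is a pure Grassmann integral with $A$, $X_{0}$, $Y_{0}$ entering as linear source terms and a quartic self-interaction. Running the same argument on the RHS instead --- now integrating out $Y$ in the representation $Q_{2}(A;X_{0},Y)=\int e^{-\bar{\eta}M(X_{0},Y)\eta}d\bar{\eta}d\eta$ against $\widehat{P}_{K,2}(\tau;Y,Y_{0})$ --- will produce exactly the same fermionic integral, by design of the dual model in Definition~\ref{dualdef}: the covariance of $\widehat{P}_{K,2}$ is precisely the one whose Hubbard--Stratonovich linearization reproduces the quartic Grassmann term coming from the $X$-integration, and the $Y_{0}$-shift is what inserts the $Y_{0}$-linear term that was already present in $M$. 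For $\beta=1,4$ the plan is structurally identical but uses the Pfaffian identity $\mathrm{Pf}(M)=\int e^{-\tfrac{1}{2}\eta^{t}M\eta}\, d\eta$ for antisymmetric $M$ in place of the determinant identity; the real ($\beta=1$) or real-quaternion ($\beta=4$) structure of $X$ forces the quartic Grassmann term to lie in the antisymmetric ($\beta=1$) or symmetric ($\beta=4$) subspace of $K\times K$ complex matrices, which is exactly where the dual $Y$ takes values.

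The main obstacle is bookkeeping: matching normalizations exactly (including the $\tau$-dependent constants $Z_{N,\beta}$ and $\widehat{Z}_{K,\beta}$ of \eqref{normalisation} and \eqref{Zdual}), tracking Pfaffian signs through block permutations and tensor products (Property~2 from Section~\ref{sectnotation} will be invoked repeatedly), and verifying that the quartic Grassmann bilinear lands in the correct symmetry class so that the dual Gaussian integration matches precisely. The $i\mathbb{J}_{N}$ factors appearing in \eqref{Q4} and the sign prefactors $(-1)^{KN(KN-1)/2}$ in \eqref{Q1}, $(-1)^{KN}$ in \eqref{Q4} are telltale signs that the Pfaffian sign and symplectic structure accounting is delicate, and will have to be reproduced with care for each of $\beta=1,2,4$ separately.
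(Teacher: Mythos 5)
Your route is viable but genuinely different from the paper's. You propose the classical supersymmetric derivation: write $Q_\beta$ as a Grassmann (Berezin) integral via $\det M=\int e^{-\bar\eta M\eta}$ or $\mathrm{Pf}(M)=\int e^{-\frac12\eta^tM\eta}$, integrate out the Gaussian $X$ to produce a quartic Grassmann self-interaction, and recognize the dual Gaussian over $Y$ as the Hubbard--Stratonovich linearization of that quartic term. This is essentially the method of Grela \cite{Gr} (cited in the paper for the special case $\beta=2$, $\Sigma=\Gamma=\mathbb{I}_N$), extended to $\beta=1,4$. The paper deliberately avoids SUSY: after reducing to $\Sigma=\Gamma=\mathbb{I}$, it observes that $P_{N,\beta}(\tau;X,X_0)$ solves a heat equation in $\tau$, integrates by parts to move $\Delta_X$ onto $Q_\beta$, and proves the purely algebraic identity $\Delta_{\beta,X}Q_\beta=c_\beta\,\Delta_{Y_0}Q_\beta$ by differentiating determinants and Pfaffians entrywise (Proposition \ref{differentiationPfaffian}) and matching the resulting signed sums of minors and sub-Pfaffians. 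The averaged observable then solves a heat equation in $Y_0$ with initial datum $Q_\beta(A;X_0,Y_0)$, and the right-hand side of \eqref{dualequation} is just the heat-kernel representation of the solution. The heat-equation argument buys a uniform, sign-transparent treatment of all three symmetry classes and handles general $\Sigma,\Gamma$ cleanly; your SUSY route, when carried out, makes the origin of the dual symmetry class (antisymmetric $Y$ for $\beta=1$, symmetric for $\beta=4$) conceptually visible.

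That said, what you have is a plan, and the decisive step is asserted rather than executed: you need the Fierz-type rearrangement showing that the quartic Grassmann form $\frac{\tau}{N}\sum_{a,b}(\bar\eta\cdots\eta)_{ab}(\bar\eta\cdots\eta)_{ba}$ produced by the $X$-integration is \emph{identical} to the quartic form produced by completing the square in the dual $Y$-integration with covariance \eqref{dualdensity}, in the correct symmetry class and with matching normalizations $Z_{N,\beta}$, $\widehat{Z}_{K,4/\beta}$. For $\beta=1,4$ this rearrangement, together with the $\mathrm{i}\mathbb{J}_N$ insertions in \eqref{Q4} and the global sign prefactors, is precisely where the work lies; until it is written out the argument is not yet a proof, even though there is good reason (the $\beta=2$ precedent) to believe it goes through.
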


%--------------------------------------------------------------------------------------------------
%\subsection{A few applications}
%\subsubsection{Autocorrelations of characteristic polynomials}
One immediate application of the duality formula \eqref{dualequation}  lies in  calculating autocorrelation  functions   of characteristic polynomials in the deformed ensembles. It  is believed to have  great interest   in its own right.  Another application   in the matrix-valued Brownian motion will be presented somewhere else \cite{LZ}.

  \begin{corollary}\label{characteristic}
 Let  $Z={\rm diag}\left( z_1,\cdots,z_{K_1} \right)$ and $W
={\rm diag}\left(w_1,\cdots, {w_{K_2}}\right)$ with  complex diagonal entries,  with the same notations as in Definitions \ref{Ginb},  \ref{dualdef} and \ref{Ob},
   then   
\begin{equation}\label{Charac}
\begin{aligned}
&\int \prod_{i=1}^{K_1}\det\left(z_i-X\right)\prod_{j=1}^{K_2}\det\left(\overline{w_j}-X^*\right) \,P_{N,\beta}(\tau;X,X_0){\rm d}X \\
&=\int Q_{\beta}(A;X_0,Y) \widehat{P}_{K,4/\beta}(\tau;Y,0){\rm d}Y,
\end{aligned}
\end{equation}
where  $K=K_1+K_2$ and 
\begin{equation} \label{A124}A=\begin{cases}
{\rm diag}\left( Z\otimes \mathbb{I}_N,W^*\otimes \mathbb{I}_N \right), & \beta=1, 2; \\ 
{\rm diag}\left( Z\otimes \mathbb{I}_{2N},W^*\otimes \mathbb{I}_{2N} \right), & \beta=4.
\end{cases}
\end{equation}
\end{corollary}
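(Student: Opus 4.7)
The plan is to obtain the identity \eqref{Charac} as an immediate specialization of the duality formula \eqref{dualequation} in Theorem \ref{dualities}, by setting $Y_0=0$ and then evaluating the observable $Q_\beta(A;X,0)$ explicitly for the special choice of $A$ prescribed in \eqref{A124}. Once one shows that, with this $A$,
\begin{equation*}
Q_\beta(A;X,0) = \prod_{i=1}^{K_1}\det(z_i-X)\prod_{j=1}^{K_2}\det(\overline{w_j}-X^*),
\end{equation*}
the left-hand side of \eqref{Charac} coincides with $\int Q_\beta(A;X,0)\, P_{N,\beta}(\tau;X,X_0)\,{\rm d}X$, and Theorem \ref{dualities} produces the right-hand side at once.

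For $\beta=2$, when $Y=0$ the off-diagonal blocks in \eqref{Q2} vanish and $Q_2(A;X,0)$ factors into a product of block determinants. Since $A_1=Z\otimes\mathbb{I}_N$ and $A_2=W^*\otimes\mathbb{I}_N$, the matrix $A_1-\mathbb{I}_{K_1}\otimes X$ is block diagonal with blocks $z_i\mathbb{I}_N-X$, yielding $\prod_i\det(z_i-X)$; analogously for $A_2$. For $\beta=1,4$, setting $Y=0$ in \eqref{Q1} and \eqref{Q4} collapses the block Pfaffian to the standard identity
\begin{equation*}
{\rm Pf}\begin{bmatrix} 0 & B \\ -B^t & 0 \end{bmatrix}=(-1)^{m(m-1)/2}\det(B),
\end{equation*}
applied with $m=KN$ ($\beta=1$) or $m=2KN$ ($\beta=4$) and $B=A-\mathbb{I}_K\otimes X$. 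A short parity check shows that the prefactors $(-1)^{KN(KN-1)/2}$ and $(-1)^{KN}$ present in the definition of $Q_\beta$ are precisely those produced by the Pfaffian identity, so that in every case $Q_\beta(A;X,0)=\det(A-\mathbb{I}_K\otimes X)$. The tensor-product structure of $A$ in \eqref{A124} then gives the block-diagonal factorization into $\prod_i\det(z_i-X)\prod_j\det(\overline{w_j}-X)$ (and $\det(\overline{w_j}-X)$ coincides with $\det(\overline{w_j}-X^*)$, since for $\beta=1$ we have $X^*=X^t$ and for $\beta=4$ the eigenvalues of $X$ come in complex conjugate pairs so $\det(w-X)=\det(w-X^*)$).

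The only step requiring any real care is the sign accounting for $\beta=1,4$: one has to verify that the two explicit sign prefactors in \eqref{Q1}, \eqref{Q4} combine correctly with $(-1)^{m(m-1)/2}$ from the Pfaffian reduction to yield $+\det$ (so that the product of characteristic polynomials appears with its natural positive sign), and that for $\beta=4$ the identification $\det(\overline{w_j}\mathbb{I}_{2N}-X)=\det(\overline{w_j}-X^*)$ is valid at the level of the $2N\times 2N$ complex representation. Both are elementary parity and conjugation checks, and once verified the corollary follows directly from Theorem \ref{dualities}.
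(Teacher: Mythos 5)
Your proposal is correct and follows exactly the paper's own route: specialize the duality formula of Theorem \ref{dualities} to $Y_0=0$ and reduce $Q_\beta(A;X,0)$ to $\det(A-\mathbb{I}_K\otimes X)$ via the Pfaffian identity \eqref{Pfaffian0}, after which the block-diagonal choice of $A$ in \eqref{A124} yields the product of characteristic polynomials. Your explicit sign verification (that $(-1)^{KN(KN-1)/2}$ for $\beta=1$ and $(-1)^{KN}$ for $\beta=4$ cancel the Pfaffian prefactors) and the conjugation check for $\beta=1,4$ are exactly the details the paper leaves implicit, so nothing is missing.
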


%This section is devoted to proofs of both  Corollary \ref{characteristic}.

\begin{proof}%[Proof of Corollary \ref {characteristic}]
Choose $Y_0$ as a zero matrix  in \eqref{dualequation} and use the identity for  the Pfaffian 
%\cite{Ha15}
\begin{equation}\label{Pfaffian0}
{\rm Pf}\begin{bmatrix} 0_n & A \\ -A^t & 0_n \end{bmatrix}=(-1)^{n(n-1)/2}\det A,
\end{equation}
where A is a complex  $n\times n$ matrix,  we can rewrite the product of the determinants  as $Q_\beta$ up to some sign.  This thus completes the proof.
 % Note also that when $\beta=1$, $X$ is real and when $\beta=4$, if $\lambda$ is an eigenvalue of X, then $\overline{\lambda}$ is also an eigenvalue of X. Putting all these facts together we get (\ref{Charac}).  
\end{proof} 

\begin{remark}  Some special cases of  Corollary  \ref{characteristic}  have been studied  by Afanasiev\cite{Af}, Akemann-Phillips-Sommers \cite{APS},  Grela \cite{Gr}, Forrester-Rains \cite{FR09}, Fyodorov \cite{Fy18} and so on.  In the case of the  real Ginibre ensemble, that is,  $\beta=1$,   $\Sigma=\Gamma=\mathbb{I}_N, X_0=0$,   see   \cite{APS} for  the product of two characteristic polynomials and  for the product of arbitrarily  finite characteristic polynomials  \cite{Af}.   In the case of $\beta=2$,    $\Sigma=\Gamma=\mathbb{I}_N$ and $K_1=K_2$,  or  general $\Sigma, \Gamma$ and $K_1=K_2=1$,   see   \cite{Gr}.  For $\beta=1,2,4$  but with $X_0=0, \Gamma=\mathbb{I}_N$ and general $\Sigma$, see   \cite{FR09} for  the  moments of characteristic polynomials.   Besides,  for the  deformed  complex Ginibre ensemble given  in Definition  \ref{Ginb},  the average ratio of two  ``generalized" characteristic polynomials   and  the spectral density has  been exactly evaluated  as a  double integral   \cite{GG16};   for certain rank-one deviation from the  real Ginibre ensemble  the average modulus of the  characteristic polynomial  is   also evaluated  \cite[Appendix B]{BFI}.    These very  interesting results are expected to hold in  the more general  ensemble as in  Definition  \ref{Ginb}.
%  has already given the case for $\beta=2$;  \cite{Af20} gives a formula of expectation  of products of characteristic polynomials for random matrices whose entries are i.i.d. real random variables with zero mean, unit variance and some extra restrictions.
\end{remark}

\subsection{Diffusion method without SUSY}% Proof of Theorem \ref{dualities}

\subsubsection{Two differential identities}
Supersymmetry  seems    nowadays indispensable for many problems in RMT,  particularly for  the first finding     of dual  formulas,  see e.g.   \cite{bh, bh01}  for  Hermitian RMT and  \cite{APS,BGN14,BGN15,Gr}  for non-Hermitian RMT.    For the  deformed complex Ginibre ensemble in  Definition  \ref{Ginb} 
but with    $\Sigma=\Gamma=\mathbb{I}_N$,      Grela   in \cite{Gr}   combines the heat equation  method (also  referred to as  diffusion method, see e.g.  \cite{Gr}) and supersymmetry (SUSY) technique  to obtain  Corollary  \ref{characteristic}.     Here,   instead of  SUSY,  we turn to make full  use of  elementary  matrix  operations 
    to  give a new derivation of  the Grela's duality formula,  and further  complete the proof  in the  real and  quaternion cases.     This  new idea  has many potential applications both  in Hermitian  and non-Hermitian  matrix ensembles, some of which will be given in the future study.  
In order to verify  the  duality formulas we need  two differential identities  related to  the determinants and Pfaffians.  

 For a matrix-valued function   $A:=A(x)=\left[ a_{ij}(x) \right]_{i,j=1}^n$ of variable $x$ without symmetry,  and   for some $1\leq p\leq n$ deleting $p$ rows and $p$ columns  respectively indexed by
  $I=\left\{ i_1,\cdots,i_p \right\}$ and  $J=\left\{ j_1,\cdots,j_p \right\}$ as  two subsets of   $ \{1,2,\ldots,n\}$, the resulting sub-matrix is denoted by $A\left[ I;J \right]$. 
By  definition of determinant it's easy to find an identity \begin{equation}\label{differentiationdeterminant}
\frac{{\rm d}}{{\rm d}x} \det\!\left( A \right)
=\sum_{i,j=1}^n (-1)^{i+j}\frac{{\rm d}a_{ij}(x)}{{\rm d}x}
\det\!\left( A[i;j] \right).
\end{equation}
For the  Pfaffian, we  have a similar identity.
\begin{proposition}\label{differentiationPfaffian}
Let $A=\left[ a_{ij}(x) \right]_{i,j=1}^{2m}$ be an antisymmetric  complex matrix, whose entries are differentiable functions of variable $x$, then
\begin{equation}\label{differentiationPfequa}
\frac{{\rm d}}{{\rm d}x}\, {\rm Pf}\!\left( A \right) 
=\sum_{1\leq i<j\leq 2m} (-1)^{i+j+1}\frac{{\rm d}a_{ij}(x)}{{\rm d}x}\,
{\rm Pf}\!\left( A\left[ i,j;i,j \right] \right).
\end{equation}
\end{proposition}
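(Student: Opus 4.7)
The plan is to reduce the identity to a Pfaffian analogue of the cofactor expansion and then invoke the ordinary chain rule. Regard the independent upper-triangular entries $\{a_{ij}\}_{1\le i<j\le 2m}$ as the free variables and $A=A(x)$ as a smooth curve in this $m(2m-1)$-dimensional affine space (with $a_{ji}=-a_{ij}$ and $a_{ii}=0$). The usual multivariable chain rule then gives
\[
\frac{d}{dx}\,\mathrm{Pf}(A)=\sum_{1\le i<j\le 2m}\frac{\partial\,\mathrm{Pf}(A)}{\partial a_{ij}}\,\frac{da_{ij}(x)}{dx},
\]
so it suffices to establish the Pfaffian Laplace-type identity
\[
\frac{\partial\,\mathrm{Pf}(A)}{\partial a_{ij}}=(-1)^{i+j+1}\,\mathrm{Pf}\!\bigl(A[i,j;\,i,j]\bigr),\qquad 1\le i<j\le 2m.
\]

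To prove this partial-derivative formula I would start from the matching-sum definition of the Pfaffian,
\[
\mathrm{Pf}(A)=\sum_{M}\mathrm{sgn}(M)\prod_{\{p,q\}\in M,\ p<q}a_{pq},
\]
where $M$ ranges over perfect matchings of $\{1,\ldots,2m\}$ and $\mathrm{sgn}(M)$ is the signature of the permutation that lists the pairs of $M$ in canonical order (smaller element first within each pair, and pairs ordered by their smaller element). Since each monomial is multilinear in the entries, $\partial/\partial a_{ij}$ picks out precisely the matchings containing the pair $\{i,j\}$; any such matching factors uniquely as $\{i,j\}\sqcup M'$ with $M'$ a perfect matching of $\{1,\ldots,2m\}\setminus\{i,j\}$, and the resulting product over $M'$ rebuilds a monomial of $\mathrm{Pf}(A[i,j;\,i,j])$.

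The only real step is the sign comparison. One shows that the permutation sending $(i,j,k_1,\ldots,k_{2m-2})$, where $k_1<\cdots<k_{2m-2}$ is the increasing enumeration of $\{1,\ldots,2m\}\setminus\{i,j\}$, to $(1,2,\ldots,2m)$ has signature $(-1)^{i+j+1}$: moving $i$ to the first position costs $i-1$ transpositions, and then moving $j$ past the remaining smaller elements now occupying positions $2,\ldots,j-1$ costs a further $j-2$ transpositions, for a total signature $(-1)^{(i-1)+(j-2)}=(-1)^{i+j+1}$. Factoring this common sign out of the sub-sum over $M'$ produces the partial-derivative identity, and substituting back into the chain-rule expression yields \eqref{differentiationPfequa}. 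I expect this sign count to be the only delicate part of the proof; apart from it, the argument is purely formal and closely parallels the derivation of \eqref{differentiationdeterminant}.
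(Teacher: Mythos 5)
Your proof is correct and follows essentially the same route as the paper: both arguments reduce the identity via the multivariable chain rule to the Pfaffian cofactor expansion $\partial\,\mathrm{Pf}(A)/\partial a_{ij}=(-1)^{i+j+1}\mathrm{Pf}(A[i,j;i,j])$, and your sign count $(-1)^{(i-1)+(j-2)}=(-1)^{i+j+1}$ checks out. The only difference is one of emphasis — the paper asserts the cofactor expansion ``by definition of the Pfaffian'' and instead writes out the chain-rule step in detail over the canonically ordered matchings, whereas you do the reverse — but the content is the same.
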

\begin{proof} Introduce  independent variables $\left\{ x_{ij}: 1\leq i<j\leq 2m \right\}$ and   treat 
$a_{ij}=-a_{ji}=a_{ij}( x_{ij})$ 
 as a function of  $x_{ij}$ whenever $i<j$, while all $a_{ii}=0$. 
%Then derivatives of ${\rm Pf}\left( \left[ a_{ij}(x) \right]_{i,j=1}^{2m} \right)$ only concern with one matrix element. That is,
By definition of the Pfaffian,
\begin{equation*}\label{diffPfsingle}
\frac{\partial}{\partial x_{ij}} {\rm Pf}\left( [ a_{kl}(x_{kl}) ]_{k,l=1}^{2m} \right) 
=(-1)^{i+j+1}\frac{{\rm d}a_{ij}(x_{ij})}{{\rm d}x_{ij}}{\rm Pf}\big( \left[ a_{kl}(x_{kl}) \right]_{k,l\neq i,j } \big),
\end{equation*}
so it suffices to prove
\begin{equation}\label{diffPfmulti}
\frac{{\rm d}}{{\rm d}x} {\rm Pf}\left( A\right) =\bigg(
\sum_{i<j}\frac{\partial}{\partial x_{ij}}
{\rm Pf}\left(  [ a_{kl}(x_{kl}) ]_{k,l=1}^{2m}  \right)
\bigg)  \bigg |_{\mathrm{all}\,x_{ij}=x}.%_{i,j=1}^{2m}  
\end{equation}
Taking the derivative with respect to  $x_{ij}$   on both sides     \begin{equation*}
{\rm Pf}\left(  [ a_{kl}(x_{kl}) ]_{k,l=1}^{2m}  \right)=
\sum_{\sigma\in A_{2m}}{\rm sgn}(\sigma)\prod_{k=1}^m a_{\sigma(2k-1)\sigma(2k)}
\left(
x_{\sigma(2k-1)\sigma(2k)}
\right),
\end{equation*}
where
\begin{equation*}
A_{2m}=\left\{
\sigma\in S_{2m}:\sigma(1)<\sigma(3)<\cdots<\sigma(2m-1),    \sigma(2k-1)<\sigma(2k),  \forall k%=1,\cdots,m
\right\},
\end{equation*}
and summing them together,  we have
\begin{small}
\begin{equation*}
\begin{aligned}&\sum_{i<j}\frac{\partial}{\partial x_{ij}}   
{\rm Pf}\left(  [ a_{kl}(x_{kl}) ]_{k,l=1}^{2m}  \right)=
\sum_{\sigma\in A_{2m}}{\rm sgn}(\sigma)\sum_{i<j}\frac{\partial}{\partial x_{ij}}\left(\prod_{k=1}^m a_{\sigma(2k-1)\sigma(2k)}
(x_{\sigma(2k-1)\sigma(2k)})
\right)        \\
&=\sum_{\sigma\in A_{2m}}{\rm sgn}(\sigma)\sum_{k=1}^m
\frac{{\rm d}a_{\sigma(2k-1)\sigma(2k)}(x_{\sigma(2k-1)\sigma(2k)})}{{\rm d}x_{\sigma(2k-1)\sigma(2k)}}
\prod_{l=1,l\not=k}^m a_{\sigma(2l-1)\sigma(2l)}(
x_{\sigma(2l-1)\sigma(2l)}
).
\end{aligned}
\end{equation*}
\end{small}
%Hence,
%\begin{equation}\label{RHSmultidiffPf}
%\begin{aligned}
%&\left(
%\sum_{i<j}\frac{\partial}{\partial x_{ij}}\left[ {\rm Pf}\left( \left[ a_{ij} \right]_{i,j=1}^{2m} \right) \right]
%\right)_{x_{ij}=x,i<j}      \\
%&= 
%\sum_{\sigma\in A_{2m}}{\rm sgn}(\sigma)\sum_{k=1}^m
%\frac{{\rm d}a_{\sigma(2k-1),\sigma(2k)}(x)}{{\rm d}x}
%\prod_{l=1,l\not=k}^m a_{\sigma(2l-1),\sigma(2l)}(x).
%\end{aligned}
%\end{equation}
On the right hand side,  set  $x_{ij}=x$ for all $i<j$,  \eqref{diffPfmulti}  immediately follows from the definition of the Pfaffian. This thus completes the proof. \end{proof}

The Laplace operator    
\begin{equation}\label{diffrealviacomplex}
\frac{\partial^2}{\partial x^2}+\frac{\partial^2}{\partial y^2}
=4\frac{\partial^2}{\partial z \partial \overline{z}}
\end{equation}
will be used frequently, 
where  $z=x+{\rm i}y$ and  $\overline{z}=x-{\rm i}y$.
%--------------------------------------------------------------------------------------------------
 
\subsubsection{Proof of Theorem \ref{dualities}: real case}
On the 
left-hand  side   of \eqref{dualequation} use  
 change of variables $X$ to $\Sigma^{1/2}X\Gamma^{1/2}$, while  on both sides   of \eqref{dualequation}  replace nonrandom matrices $X_0$ and $ A$  by  $\Sigma^{1/2}X_0\Gamma^{1/2}$
 and  $\left(\mathbb{I}_{K} \otimes \Sigma^{1/2}\right) A \left(\mathbb{I}_{ K} \otimes \Gamma^{1/2}\right)$,
 respectively,   divide  both sides    by $\sqrt{\det (\Sigma\Gamma)}$,  we then  see  that  the resulting duality identity is independent of  $\Sigma$ and $\Gamma$.
Without loss of generality, we may assume
$ \Sigma=\Gamma=\mathbb{I}_N$.

Write 
\begin{equation}\label{Average2}
\overline{Q_1}(\tau;A,Y_0):=\int Q_1(A;X,Y_0) P_{N,1}(\tau;X,X_0) {\rm d}X,
\end{equation}
noting that when $ \Sigma=\Gamma=\mathbb{I}_N$  the density given in \eqref{model}   satisfies the heat equation 
\begin{equation}\label{OHeat2}
{\partial \tau}P_{N,1}(\tau;X,X_0)=\frac{1}{4N}\Delta_{1,X} P_{N,2}(\tau;X,X_0),\quad
\Delta_{1,X}:=\sum_{a,b=1}^N \partial_{ x_{a,b}}^2,
\end{equation}
using integration by parts   one  finds  
\begin{equation}\label{heatproducre}
\begin{aligned}
{\partial \tau}\overline{Q_1}(\tau;A,Y_0)     &  =\int   Q_1(A;X,Y_0) {\partial \tau}P_{N,1}(\tau;X,X_0)   {\rm d}X          \\
&=\frac{1}{4N} \int    Q_1(A;X,Y_0) \Big( \Delta_{1,X} P_{N,1}(\tau;X,X_0) \Big) {\rm d}X     \\
&=\frac{1}{4N} \int    \Big( \Delta_{1,X}   Q_1(A;X,Y_0)  \Big)  P_{N,1}(\tau;X,X_0) {\rm d}X.
\end{aligned}
\end{equation}

Rewrite the  antisymmetric   complex  matrix  $Y_0$ as   $Y_0=[y_{j,k}]=[a_{j,k}+{\rm i}  b_{j,k}]$, and let 
$$  \Delta_{Y_0}:=\sum_{1\leq j<k\leq K}  \big (\partial_{a_{j,k}}^2+\partial_{  b_{j,k}}^2\big). $$
We claim that 
\begin{equation} \label{VIPid1}
 \Delta_{1,X}   Q_1(A;X,Y_0) = \frac{1}{2}\Delta_{Y_0} Q_1(A;X,Y_0).
\end{equation}
If so,   one obtains  a heat equation  from  \eqref{heatproducre} that 
\begin{equation*}
{\partial \tau}\overline{Q_1}(\tau;A,Y_0) =\frac{1}{8N}\Delta_{Y_0}  \overline{Q_1}(\tau;A,Y_0). 
\end{equation*}
Together with  the  initial boundary  condition
\begin{equation*}\label{heatinitial1}
\overline{Q_1}(0;A,Y_0)=Q_1(A;X_0,Y_0),
\end{equation*}
the solution is thus given by 
\begin{equation*}\label{final1}
\overline{Q_1}(\tau;A,Y_0)=\int Q_1(A;X_0,Y) \widehat{P}_{K,4}(\tau;Y,Y_0)  {\rm d}Y,
\end{equation*}
from which   the desired result  immediately  follows.

The remaining task is to verify  the identity \eqref{VIPid1}. Put 
\begin{equation*}
\widetilde{T}_1=\begin{bmatrix}
Y_0\otimes \mathbb{I}_N & A-\mathbb{I}_{K}\otimes X     \\
-A^t+\mathbb{I}_{K}\otimes X^t & Y_0^*\otimes \mathbb{I}_N
\end{bmatrix},
\end{equation*}
 one sees from  \eqref{diffrealviacomplex}  that it suffices to prove
\begin{equation}\label{diffdualQ1}
 \Delta_{1,X}   {\rm Pf}\left( \widetilde{T}_1 \right) = 2\sum_{1\leq\alpha<\beta\leq K}
 \frac{\partial^2}{\partial y_{\alpha,\beta}\partial \overline{y_{\alpha,\beta}}}{\rm Pf}\left( \widetilde{T}_1 \right).
\end{equation}
By  use of \eqref{differentiationPfequa},    for $1\leq a,b\leq N$ one has
\begin{equation*}
\partial_{x_{a,b}}{\rm Pf}\left( \widetilde{T}_1 \right)=(-1)^{KN+a+b}\sum_{\alpha=1}^K
{\rm Pf}\left( \widetilde{T}_1\left[
I_{0,1};I_{0,1}
\right] 
\right),
\end{equation*}
where
\begin{equation*}
I_{0,1}=\left\{
(\alpha-1)N+a,(K+\alpha-1)N+b
\right\}.
\end{equation*}
Further use leads to 
\begin{small}
\begin{equation*}
\partial_{x_{a,b}}{\rm Pf}\left( \widetilde{T}_1\left[
I_{0,1};I_{0,1}
\right] 
\right)=(-1)^{KN+a+b+1}\sum_{\beta\not=\alpha}^K{\rm Pf}\left( \widetilde{T}_1\left[
I_1;I_1
\right] 
\right),
\end{equation*}
\end{small}
where 
\begin{equation*}\label{beta1idenotion}
I_1=\left\{
(\alpha-1)N+a,(\beta-1)N+a,(\alpha+K-1)N+b,(\beta+K-1)N+b
\right\}.
\end{equation*}
Hence,
\begin{equation*}\label{beta1DeltaX}
\sum_{a,b=1}^N\partial_{x_{ab}}^2{\rm Pf}\left( \widetilde{T}_1 \right)=
-2\sum_{1\leq\alpha<\beta\leq K}\sum_{a,b=1}^N {\rm Pf}\left( \widetilde{T}_1\left[
I_1;I_1
\right] 
\right).
\end{equation*}
On the other hand, for $\alpha<\beta$ it's easy to get 
\begin{equation*}\label{beta1DeltaY}
\frac{\partial^2}{\partial y_{\alpha,\beta}\partial \overline{y_{\alpha,\beta}}}{\rm Pf}\left( \widetilde{T}_1 \right)
=-\sum_{a,b=1}^N {\rm Pf}\left( \widetilde{T}_1\left[
I_1;I_1
\right] 
\right),
\end{equation*}
from which the summation over $\alpha$ and $\beta$  implies  \eqref{diffdualQ1}.  This thus completes the proof.

\subsubsection{Proof of Theorem \ref{dualities}: quaternion case}

We proceed  in a  similar way as in the real  case.  On the 
left-hand  side   of \eqref{dualequation} use  
 change of variables $X$ to $\Sigma^{1/2}X\Gamma^{1/2}$, while  on both sides   of \eqref{dualequation}  replace nonrandom matrices $X_0$ and $ A$  by  $\Sigma^{1/2}X_0\Gamma^{1/2}$
 and  $\left(\mathbb{I}_{K} \otimes \Sigma^{1/2}\right) A \left(\mathbb{I}_{ K} \otimes \Gamma^{1/2}\right)$,
 respectively,   divide  both sides    by $\sqrt{\det (\Sigma\Gamma)}$,  we then  see 
from the relations 
$$ \Sigma\mathbb{J}_N=\Sigma^{1/2}\mathbb{J}_N \big( \Sigma^{1/2} \big)^t, \qquad \mathbb{J}_N\Gamma=\big( \Gamma^{1/2}\big)^t \mathbb{J}_N\Gamma^{1/2}$$ 
  that  the resulting duality identity is independent of  $\Sigma$ and $\Gamma$.
Without loss of generality, we may assume
$ \Sigma=\Gamma=\mathbb{I}_{2N}$.

Write 
\begin{equation}\label{Average2}
\overline{Q_4}(\tau;A,Y_0):=\int Q_4(A;X,Y_0) P_{N,4}(\tau;X,X_0) {\rm d}X,
\end{equation}
where $X$ is given in \eqref{42rep} with $X^{(j)}=[x^{(j)}_{a,b}]$ ($j=1,2$),
noting that when $ \Sigma=\Gamma=\mathbb{I}_{2N}$  the density given in \eqref{model}   satisfies the heat equation 
\begin{equation}\label{OHeat4}
{\partial \tau}P_{N,4}(\tau;X,X_0)=\frac{1}{4N}\Delta_{4,X} P_{N,4}(\tau;X,X_0),\quad \Delta_{4,X}:=\sum_{j=1}^2\sum_{a,b=1}^N \big(\partial_{\Re x^{(j)}_{a,b}}^2+\partial_{\Im x^{(j)}_{a,b}}^2\big).
%+\partial_{\Re x^{(2)}_{a,b}}^2+\partial_{\Im x^{(2)}_{a,b}}^2),
\end{equation}
By  integration by parts   one  finds  
\begin{equation}\label{heatproducre4}
\begin{aligned}
{\partial \tau}\overline{Q_4}(\tau;A,Y_0)     &  =\int   Q_4(A;X,Y_0) {\partial \tau}P_{N,4}(\tau;X,X_0)   {\rm d}X          \\
&=\frac{1}{4N} \int    Q_4(A;X,Y_0) \Big( \Delta_{4,X} P_{N,1}(\tau;X,X_0) \Big) {\rm d}X     \\
&=\frac{1}{4N} \int    \Big( \Delta_{4,X}   Q_4(A;X,Y_0)  \Big)  P_{N,4}(\tau;X,X_0) {\rm d}X.
\end{aligned}
\end{equation}

Rewrite the complex  symmetric     matrix   $Y_0$ as   $Y_0=[y_{jk}]=[a_{j,k}+{\rm i}  b_{j,k}]$, and let 
$$  \Delta_{Y_0}:=\sum_{1\leq j\leq k\leq K}  \big (\partial_{a_{j,k}}^2+\partial_{  b_{j,k}}^2\big)  +\sum_{ j=1}^{K}  \big (\partial_{a_{j,j}}^2+\partial_{b_{j,j}}^2\big). $$
We claim that 
\begin{equation} \label{VIPid4}
 \Delta_{4,X}   Q_4(A;X,Y_0) = \Delta_{Y_0} Q_4(A;X,Y_0).
\end{equation}
If so,   one obtains  a heat equation  from  \eqref{heatproducre4} that 
\begin{equation*}
{\partial \tau}\overline{Q_4}(\tau;A,Y_0) =\frac{1}{4N}\Delta_{Y_0}  \overline{Q_4}(\tau;A,Y_0). 
\end{equation*}
Together with  the  initial boundary  condition
\begin{equation*}\label{heatinitial1}
\overline{Q_4}(0;A,Y_0)=Q_4(A;X_0,Y_0),
\end{equation*}
the solution is thus given by 
\begin{equation*}\label{final4}
\overline{Q_4}(\tau;A,Y_0)=\int Q_4(A;X_0,Y) \widehat{P}_{K,1}(\tau;Y,Y_0)  {\rm d}Y,
\end{equation*}
from which   the desired result  immediately  follows.

The remaining task is to verify  the identity \eqref{VIPid4}. Put 
\begin{equation*}
\widetilde{T}_4=\begin{bmatrix}
{\rm i}Y_0\otimes \mathbb{J}_N & A-\mathbb{I}_{K}\otimes X     \\
-A^t+\mathbb{I}_{K}\otimes X^t & {\rm i}Y_0^*\otimes \mathbb{J}_N
\end{bmatrix},
\end{equation*}
one sees from  \eqref{diffrealviacomplex}   that  it suffices to prove
\begin{equation}\label{diffdualQ4}
\begin{aligned}
& \sum_{a,b=1}^N\left( \frac{\partial^2}{\partial x_{a,b}^{(1)}\partial \overline{x_{a,b}^{(1)}}}
 +\frac{\partial^2}{\partial x_{a,b}^{(2)}\partial \overline{x_{a,b}^{(2)}}} \right)
    {\rm Pf}\left( \widetilde{T}_4 \right) =   \\
&\left( \sum_{\alpha<\beta}  
\frac{\partial^2}{\partial y_{\alpha,\beta}\partial \overline{y_{\alpha,\beta}}}+
2\sum_{\alpha=1}^K \frac{\partial^2}{\partial y_{\alpha,\alpha}\partial \overline{y_{\alpha,\alpha}}}
\right)
 {\rm Pf}\left( \widetilde{T}_4 \right).
\end{aligned}
\end{equation}
By use of  \eqref{differentiationPfequa}, for $\alpha<\beta$ one has 
\begin{equation*}
\frac{\partial}{\partial y_{\alpha,\beta}}{\rm Pf}\left( \widetilde{T}_4 \right)
=\sum_{a=1}^N (-1)^{N}{\rm i}\left\{ -{\rm Pf} \left(\widetilde{T}_4\left[
I_{4,0};I_{4,0}
\right]
\right)+       
{\rm Pf} \left(\widetilde{T}_4\left[
J_{4,0};J_{4,0} 
\right]
\right)
\right\},
\end{equation*}
where
\begin{equation*}
I_{4,0}=\left\{ 2(\alpha-1)N+a,(2\beta-1)N+a \right\}, \quad
J_{4,0}=\left\{ (2\alpha-1)N+a,2(\beta-1)N+a \right\},
\end{equation*}
and further 
\begin{small}
\begin{equation*}\label{beta4DeltaY}
\begin{aligned}
\frac{\partial^2}{\partial y_{\alpha,\beta}\partial \overline{y_{\alpha,\beta}}}{\rm Pf}\left( \widetilde{T}_4 \right)
&=\sum_{a,b=1}^N \left\{ {\rm Pf}\left( \widetilde{T}_4\left[
J_{\alpha\beta}^4;J_{\alpha\beta}^4
\right] 
\right)
-{\rm Pf}\left( \widetilde{T}_4\left[
I_{\alpha\beta}^4;I_{\alpha\beta}^4
\right] 
\right)
\right\}  \\
&+\sum_{a,b=1}^N \left\{
{\rm Pf}\left( \widetilde{T}_4\left[
J_{\beta\alpha}^4;J_{\beta\alpha}^4
\right] 
\right)-{\rm Pf}\left( \widetilde{T}_4\left[
I_{\beta\alpha}^4;I_{\beta\alpha}^4
\right] 
\right)
\right\},
\end{aligned}
\end{equation*}
\end{small}
where 
\begin{small}
\begin{equation*}\label{beta4idenotion}
\begin{aligned}
&I_{\alpha\beta}^4=\left\{
2(\alpha-1)N+a,(2\beta-1)N+a,2(\alpha+K-1)N+b,(2\beta+2K-1)N+b
\right\}, \\
&J_{\alpha\beta}^4=\left\{
2(\alpha-1)N+a,(2\beta-1)N+a,(2\alpha+2K-1)N+b,2(\beta+K-1)N+b
\right\}.
\end{aligned}
\end{equation*}
\end{small}
For $1\leq \alpha \leq K$, one has
\begin{small}
\begin{equation*}\label{beta4DeltaY1}
\frac{\partial^2}{\partial y_{\alpha,\alpha}\partial \overline{y_{\alpha,\alpha}}}{\rm Pf}\left( \widetilde{T}_4 \right)
=-\sum_{a,b=1}^N 
{\rm Pf}\left( \widetilde{T}_4\left[
I_{\alpha\alpha}^4;I_{\alpha\alpha}^4
\right] 
\right).
\end{equation*}
\end{small}
Hence,
\begin{small}
\begin{equation*}\label{beta4DeltaY2}
\begin{aligned}
&\left(
\sum_{\alpha<\beta}\frac{\partial^2}{\partial y_{\alpha,\beta}\partial \overline{y_{\alpha,\beta}}}
+2\sum_{\alpha=1}^K\frac{\partial^2}{\partial y_{\alpha,\alpha}\partial \overline{y_{\alpha,\alpha}}}
\right)
{\rm Pf}\left( \widetilde{T}_4 \right)=
-2\sum_{a,b=1}^N\sum_{\alpha=1}^K{\rm Pf}\left( \widetilde{T}_4\left[
I_{\alpha\alpha}^4;I_{\alpha\alpha}^4
\right] 
\right)   \\
&+\sum_{a,b=1}^N\sum_{\alpha\not=\beta}\left\{{\rm Pf}\left( \widetilde{T}_4\left[
J_{\alpha\beta}^4;J_{\alpha\beta}^4
\right] 
\right)- {\rm Pf}\left( \widetilde{T}_4\left[
I_{\alpha\beta}^4;I_{\alpha\beta}^4
\right] 
\right) \right\}.
\end{aligned}
\end{equation*}
\end{small}
On the other hand, for $1\leq a,b\leq N$ one gets 
\begin{equation*}\label{beta4DeltaX1}
\frac{\partial^2}{\partial x_{a,b}^{(1)}\partial \overline{x_{a,b}^{(1)}}}
{\rm Pf}\left( \widetilde{T}_4 \right)=
-\sum_{\alpha,\beta=1}^K{\rm Pf}\left( \widetilde{T}_4\left[
I_{\alpha\beta}^4;I_{\alpha\beta}^4
\right] 
\right)
\end{equation*}
and
\begin{small}
\begin{equation*}\label{beta4DeltaX2}
\frac{\partial^2}{\partial x_{a,b}^{(2)}\partial \overline{x_{a,b}^{(2)}}}
{\rm Pf}\left( \widetilde{T}_4 \right)=
-\sum_{\alpha=1}^K{\rm Pf}\left( \widetilde{T}_4\left[
I_{\alpha\alpha}^4;I_{\alpha\alpha}^4
\right] 
\right)+\sum_{\alpha\not=\beta}{\rm Pf}\left( \widetilde{T}_4\left[
J_{\alpha\beta}^4;J_{\alpha\beta}^4
\right] 
\right).
\end{equation*}
\end{small}
from which the summation over $a$ and $b$  implies  \eqref{diffdualQ4}.  This thus completes the proof.

\subsubsection{Proof of Theorem \ref{dualities}: complex case}

On the 
left-hand  side   of \eqref{dualequation} use  
 change of variables $X$ to $\Sigma^{1/2}X\Gamma^{1/2}$, while  on both sides   of \eqref{dualequation}  replace nonrandom matrices $X_0, A_1$ and $ A_2$  by  $\Sigma^{1/2}X_0\Gamma^{1/2}$, $\left(\mathbb{I}_{K_1} \otimes \Sigma^{1/2}\right)A_1\left(\mathbb{I}_{K_1} \otimes \Gamma^{1/2}\right)$ and  $\left(\mathbb{I}_{K_2} \otimes \Gamma^{1/2}\right)A_2\left(\mathbb{I}_{K_{2}} \otimes \Sigma^{1/2}\right)$ respectively,   divide  both sides    by $\det (\Sigma\Gamma)$,  we then  see  the resulting duality identity is independent of  $\Sigma$ and $\Gamma$.
Without loss of generality, we may assume
$ \Sigma=\Gamma=\mathbb{I}_N$ and general $A$.

Write 
\begin{equation}\label{Average2}
\overline{Q_2}(\tau;A,Y_0):=\int Q_2(A;X,Y_0) P_{N,2}(\tau;X,X_0) {\rm d}X,
\end{equation}
noting that when $ \Sigma=\Gamma=\mathbb{I}_N$  the density given in \eqref{model}   satisfies the heat equation 
\begin{equation}\label{OHeat2}
{\partial \tau}P_{N,2}(\tau;X,X_0)=\frac{1}{4N}\Delta_{2,X} P_{N,2}(\tau;X,X_0),\quad
\Delta_{2,X}:=\sum_{a,b=1}^N (\partial_{\Re x_{a,b}}^2+\partial_{\Im x_{a,b}}^2),
\end{equation}
using integration by parts   one  finds  
\begin{equation}\label{heatproducre2}
\begin{aligned}
&{\partial \tau}\overline{Q_2}(\tau;A,Y_0)       =\int   Q_2(A;X,Y_0) {\partial \tau}P_{N,2}(\tau;X,X_0)   {\rm d}X          \\
&=\frac{1}{4N} \int    Q_2(A;X,Y_0) \Big( \Delta_{2,X} P_{N,2}(\tau;X,X_0) \Big) {\rm d}X     \\
&=\frac{1}{4N} \int    \Big( \Delta_{2,X}   Q_2(A;X,Y_0)  \Big)  P_{N,2}(\tau;X,X_0) {\rm d}X.
\end{aligned}
\end{equation}

Rewrite the $K_2 \times K_1$ complex  matrix  $Y_0$ as   $Y_0=[y_{j,k}]=[a_{j,k}+{\rm i}  b_{j,k}]$, and let

$$  \Delta_{Y_0}:=\sum_{j=1}^{K_2}  \sum_{k=1}^{K_1} \big (\partial_{a_{j,k}}^2+\partial_{  b_{j,k}}^2\big). $$
We claim that 
\begin{equation} \label{VIPid}
 \Delta_{2,X}   Q_2(A;X,Y_0) = \Delta_{Y_0} Q_2(A;X,Y_0).
\end{equation}
If so,   one obtains  a heat equation  from  \eqref{heatproducre2} that 
\begin{equation*}
{\partial \tau}\overline{Q_2}(\tau;A,Y_0) =\frac{1}{4N}\Delta_{Y_0}  \overline{Q_2}(\tau;A,Y_0).
 \end{equation*}
Together with  the  initial boundary  condition
\begin{equation*}\label{heatinitial2}
\overline{Q_2}(0;A,Y_0)=Q_2(A;X_0,Y_0),
\end{equation*}
the unique solution is thus  given by 
\begin{equation*}\label{final2}
\overline{Q_2}(\tau;A,Y_0)=\int Q_2(A;X_0,Y) \widehat{P}_{K,2}(\tau;Y,Y_0)  {\rm d}Y,
\end{equation*}
from which   the desired result   follows.

The remaining task is to verify  the identity \eqref{VIPid}. Denote
\begin{equation*}
\widetilde{T}_2=\begin{bmatrix}
A_1-\mathbb{I}_{K_1}\otimes X & -Y_0^*\otimes \mathbb{I}_N     \\
Y_0\otimes \mathbb{I}_N & A_2-\mathbb{I}_{K_2}\otimes X^*
\end{bmatrix},
\end{equation*}
by use of \eqref{diffrealviacomplex}, it suffices to prove
\begin{equation}\label{diffdualQ2}
\sum_{a,b=1}^N\frac{\partial^2}{\partial x_{ab}\partial \overline{x_{ab}}}
\det\left( \widetilde{T}_2 \right) = \sum_{\alpha=1}^{K_2}\sum_{\beta=1}^{K_1}
 \frac{\partial^2}{\partial y_{\alpha,\beta}\partial \overline{y_{\alpha,\beta}}}
\det\left( \widetilde{T}_2 \right).
\end{equation}
One uses \eqref{differentiationdeterminant} to obtain 
\begin{equation*}\label{beta2DeltaX}
\sum_{a,b=1}^N\frac{\partial^2 }{\partial x_{ab}\partial \overline{x_{ab}}} \det\left( \widetilde{T}_2 \right) 
=\sum_{a,b=1}^N\sum_{\alpha=1}^{K_2}\sum_{\beta=1}^{K_1}
\det\left( \widetilde{T}_2\left[
I_2;J_2
\right] \right),
\end{equation*}
where
\begin{equation*}\label{beta2ijdenotion}
I_2=\left\{ (\beta-1)N+a,(K_1+\alpha-1)N+b \right\}, \quad
J_2=\left\{ (\beta-1)N+b,(K_1+\alpha-1)N+a \right\}.
\end{equation*}
On the other hand, for $1\leq \alpha \leq K_2$ and $1\leq \beta \leq K_1$ simple calculation shows 
\begin{equation*}\label{beta2DeltaY1}
\frac{\partial }{\partial y_{\alpha,\beta}}  \det\left( \widetilde{T}_2 \right) =\sum_{b=1}^N
(-1)^{K_1N+(\alpha+\beta)N}\det\left(
\widetilde{T}_2\left[ I_{2,0};J_{2,0} \right]
\right),
\end{equation*}
where
\begin{equation*}
I_{2,0}=\left\{ (K_1+\alpha-1)N+b \right\},\quad J_{2,0}=\left\{ (\beta-1)N+b  \right\},
\end{equation*}
and 
\begin{equation*}\label{beta2DeltaY2}
\frac{\partial }{\partial
 \overline{y_{\alpha,\beta}}}  \det\left(\widetilde{T}_2
\left[ I_{2,0};J_{2,0} \right]
\right) =\sum_{a=1}^N
(-1)^{K_1N+(\alpha+\beta)N}\det\left( \widetilde{T}_2\left[
I_{2};J_2
\right] \right).
\end{equation*}
So it's easy to obtain \eqref{diffdualQ2}.  The proof is thus complete.

\section{Autocorrelations of characteristic polynomials I}  \label{autocorrealtionI}

  \subsection{Scaling limits}
  
%  Using  the same notations as in Corollary  \ref{characteristic},    let 
%  Let 
% $z_1,\cdots,z_{K_1},w_1,\cdots,w_{K_2}$
%  be complex numbers,  with the same notations as in
   
 In this section we just consider   additive deformations of the Ginibre ensembles, that is,   the density  $\,P_{N,\beta}(\tau;X,X_0)$ given  in  Definition \ref{Ginb} but with $\Sigma=\Gamma= \mathbb{I}_N$.    For convenience,  we will use the same notation    in subsequent sections and  assume that $K_2=K_1$ unless otherwise specified.  For two diagonal matrices  $Z={\rm diag}\left( z_1,\cdots,z_{K_1} \right)$ and $W
={\rm diag}\left(w_1,\cdots, {w_{K_1}}\right)$,  at time $\tau=2/\beta$ let      
\begin{equation} \label{CharacDenotion}
F^{(\beta)}_{K_1}(Z,W)=\int \prod_{i=1}^{K_1}\big(\det\left(z_i-X\right) \det\left(\overline{w_i}-X^*\right) \big)\,P_{N,\beta}(2/\beta;X,X_0){\rm d}X
%&=\int Q_{\beta}(A;X_0,Y) \widehat{P}_{K,4/\beta}(\tau;Y,0){\rm d}Y,
\end{equation}
be autocorrelation functions of characteristic polynomials.  In the present paper  we focus on complex and quaternion ensembles, and leave the real case in the forthcoming paper \cite{LZ}.  

 To state main results in this section, we need to introduce four matrix integrals. 
Let 
\begin{equation}\label{F11denotion}
\Omega_{11}=\Omega_{22}^{*}=\mathbb{I}_{K_1}\otimes
\Big[
\bigoplus_{i=1}^m\left(
 \phi
_i \mathbb{I}_{\beta_{i,1}}
\bigoplus 0_{\sum_{j=2}^{\alpha_i}\beta_{i,j}}
\right)
\Big],  
%\quad s_i=\sum_{j=2}^{\alpha_i}\beta_{ij};
\end{equation}
%\begin{equation}\label{F11denotion}
%F_{dd}=\mathbb{I}_{K_d}\otimes
%\left[
%\bigoplus_{i=1}^m\left(
%\left( -\hat{\theta}_i \right)^{p_{i1}}\mathbb{I}_{\beta_{i1}}
%\bigoplus 0_{s_i}
%\right)
%\right],\quad
%s_i=\sum_{j=2}^{\alpha_i}\beta_{ij};
%\end{equation}
 and 
\begin{equation}\label{F21F12denotion}
\Omega_{21}=Y\otimes   (P^*P)_{I_1}  % U[I_{1}^c]
,\quad
\Omega_{12}=-Y^*\otimes  ((P^*P)^{-1})_{I_2},
%\quad\widetilde{U}=\left[ u_{ij} \right]_{(i,j)\in I\times I},\quad\widetilde{V}=\left[ v_{ij} \right]_{(i,j)\in J\times J}.
\end{equation}
for nonnegative integers $\alpha, {\beta}_1, \ldots, {\beta}_m,  \hat{\beta}_1, \ldots, \hat{\beta}_m$,  define matrix integrals over the space of $K_1\times K_1$ complex matrices   
\begin{small}
\begin{multline}\label{I2GI}
I^{(2)}(\phi_1,\ldots,\phi_m;\hat{Z},\hat{W})=
 \int \det\begin{bmatrix}
\Omega_{11} & \Omega_{12} \\ \Omega_{21} & \Omega_{22}
\end{bmatrix}\times
  \\
\exp\Big\{-\frac{1}{2}{\rm Tr}(YY^*)^2 -{\rm Tr}\big(\hat{Z}Y^*Y\big)-{\rm Tr}\big(\hat{W}^*YY^*\big)\Big\} {\rm d}Y,
\end{multline}
\end{small}
%\begin{small}
%\begin{multline}\label{I2MI}
%I^{(2)}(\theta_1,\ldots,\theta_m;\hat{Z},\hat{W})=\prod_{i=1}^{m} \theta_{i}^{ (K_2-K_1)
% \beta_{i}} \int {\det}^{\alpha}\!\big(Y^* Y\big)
% \prod_{i=1}^{m} 
%   {\det}^{{\beta}_{i}}\! \Big(\theta_{i}^2+YY^*\Big)
%  % \left( S^{(2)}(  {\theta}_{i}  ;Y_{12}) \right)
%  \\
%\exp\!\Big\{-{\rm Tr}\big((YY^*)^2\big) -{\rm Tr}\big(\hat{Z}YY^*\big)-{\rm Tr}\big(\hat{W}^*Y^*Y\big)\Big\} {\rm d}Y,
%\end{multline}
%\end{small}
 and
 \begin{multline} \label{1real}
  I^{(2,\mathrm{o})}(a,\alpha;\hat{Z},\hat{W},x_1,\cdots,x_m)= \int 
 \prod_{i=1}^{m} 
   {\det}^{{\beta}_{i}}\! 
   \begin{bmatrix}
  (x_{i}\mathbb{I}_{K_1}  -\hat{Z})^{\xi}
 & -Y  \\ Y^* &
   ( \overline{x_{i}}\mathbb{I}_{K_1}  - \overline{\hat{W}})^{\xi}
\end{bmatrix}
  \\ \times  
  {\det}^{\hat{\beta}_{i}}\! 
 \begin{bmatrix}
  x_{i}^{p_{i,1}}\mathbb{I}_{K_1}  
 & -Y  \\ Y^* &
   \overline{x_{i}}^{p_{i,1}}\mathbb{I}_{K_1} 
\end{bmatrix}  
  {\det}^{\alpha}\!\big(Y^* Y\big)
e^{-a{\rm Tr}(YY^*)} {\rm d}Y,
\end{multline}
where 
\begin{equation}\label{beta's1realout}
\beta_{i,>}=\displaystyle\sum_{j=2}^{\alpha_i}  \beta_{i,j},\quad
\beta_i+\hat{\beta}_i=\beta_{i,1},
\end{equation}
while $\beta_{i}=  \beta_{i,1}$ if $ p_{i,1}=\xi$, otherwise  $\beta_{i}=0$.

Also  set
\begin{equation}\label{4UV}
\begin{aligned}
&U={\rm i}P^t\mathbb{J}_r P:=\begin{bmatrix} U_1 & U_2 \\ -\overline{U}_2 & \overline{U}_1 \end{bmatrix},\quad
V=U^{-1}:=%{\rm i}P^{-1}\mathbb{J}_r\left( P^t \right)^{-1},  
\begin{bmatrix} V_1 & V_2 \\ -\overline{V}_2 & \overline{V}_1 \end{bmatrix},
\end{aligned}
\end{equation}
%\begin{remark}\label{UVDrelation}
%Denote 
%$$ D:=P^*P:=\begin{bmatrix}
%D_1 & D_2  \\ -\overline{D}_2 & \overline{D}_1
%\end{bmatrix},\quad
%D^{-1}:=\begin{bmatrix}
%D_1^{(-1)} & D_2^{(-1)}  \\ -\overline{D^{(-1)}_2} & \overline{D^{(-1)}_1}
%\end{bmatrix} $$
%which will be used in the proof of Theorem \ref{4-complex-correlation}, then from the properties of real quaternion matrices $PJ_N=J_N\overline{P}$, we get the relation between $U$ and $D$:
%\begin{equation}\label{UVDrelationequa}
%\begin{aligned}
%&\mathbb{J}_rU=-\mathrm{i}D,\quad
%\overline{U}_2=\mathrm{i}D_1,\ \ \overline{U}_1=-\mathrm{i}D_2;     \\
%&V\mathbb{J}_r=-\mathrm{i}D^{-1},\quad
%V_2=\mathrm{i}D_1^{(-1)},\ \ V_1=-\mathrm{i}D_2^{(-1)}.
%\end{aligned}
%\end{equation}
%\end{remark}
and  for a $2K_1\times 2K_1$ complex matrix Y and $t\times t$ complex matrices $A,B$, put
\begin{equation}\label{4RRdeno}
\begin{aligned}
R\left( Y;A,B \right)=\begin{bmatrix}
Y\otimes A & \\ & Y^*\otimes \overline{B}
\end{bmatrix},
\end{aligned}
\end{equation}

\begin{equation}\label{4RSdeno}
\begin{aligned}
S\left( Y \right)=\begin{bmatrix}
\Omega_{11}^{(\mathrm{re})} & Y\otimes \left(V_2\right)_{I_2} \\ -Y^*\otimes \overline{\left(U_2\right)_{I_1}} & -\overline{\Omega_{11}^{(\mathrm{re})}}
\end{bmatrix},
\Omega_{11}^{(\mathrm{re})}=\mathbb{I}_{2K_1}\otimes \Big( 
\bigoplus_{i=1}^m\big( \phi_i\mathbb{I}_{\beta_{i,1}}\bigoplus 0_{\sum_{j=2}^{\alpha_i}\beta_{i,j}} \big) \Big).
\end{aligned}
\end{equation}
The other two matrix integrals  are  
\begin{small}
\begin{multline}\label{I4GI}
I^{(4)}(\phi_1,\ldots,\phi_m;\hat{Z},\hat{W})=
 \int \det\begin{bmatrix}
\Omega_{11} & Y\otimes \left(V_2\right)_{I_2} \\ -Y^*\otimes \overline{\left(U_2\right)_{I_1}} & -\Omega_{11}^*
\end{bmatrix}
  \\
\times \exp\Big\{-{\rm Tr}(YY^*)^2 -{\rm Tr}\big(\hat{Z}YY^*\big)-{\rm Tr}\big(\hat{W}^*Y^*Y\big)\Big\} {\rm d}Y,
\end{multline}
\end{small}
over the space of $K_1\times K_1$ complex matrices, and   
\begin{small}
\begin{multline}\label{I4GIR}
I^{(4,\mathrm{re})}(\phi_1,\ldots,\phi_m;\hat{\Lambda})=
 \int {\rm Pf}\begin{bmatrix}
R\left( Y;\left(V_1\right)_{I_2},\left(U_1\right)_{I_1} \right) & S\left( Y \right)  
\\  
-S\left( Y \right)^t  &  R\left( Y^*;\left(U_1\right)_{I_1},\left(V_1\right)_{I_2} \right)
\end{bmatrix}\\ \times
\exp\Big\{-\frac{1}{2}{\rm Tr}(YY^*)^2 -{\rm Tr}\big(\hat{\Lambda}YY^*\big)\Big\} {\rm d}Y,
\end{multline}
\end{small} over the space of $2K_1\times 2K_1$ complex symmetric matrices.

Let $z_0$  be a spectral parameter and $\omega$ be a positive   exponent, which will be determined later.   Introduce scaled variables   as follows
\begin{small}
\begin{equation}\label{Symbol}
%Z=z_0\mathbb{I}_{K_1} +N^{-\omega}\hat{Z},\quad  W=z_0\mathbb{I}_{K_1} +N^{-\omega}\hat{W}.
\Lambda=\begin{bmatrix}
Z & \\ & W^*
\end{bmatrix}:=Z_0+N^{-\omega}\hat{\Lambda},\quad 
Z_0=\begin{bmatrix}
z_0\mathbb{I}_{K_1} & \\ & \overline{z_0} \mathbb{I}_{K_1}
\end{bmatrix}, \quad
\hat{\Lambda}=\begin{bmatrix}
\hat{Z} & \\ & \hat{W}^*
\end{bmatrix}.
\end{equation}
\end{small}

Our main results in this section are as follows.
\begin{theorem}\label{2-complex} For the $ {\mathrm{GinUE}}_{N}(A_0)$ ensemble with the assumption  \eqref{J2detail}, 
given an integer $0\leq m\leq \gamma$,  assume that    $ \theta_{j}\neq z_0$ is fixed  for all $j=m+1, \ldots, \gamma$ and  %m+1}, \ldots, \theta_{\gamma}$ are fixed and                              
\begin{equation}\label{beta2edgethetaexpression}
\begin{aligned}
&\theta_i=z_0+N^{-\chi_i}\hat{\theta}_i,\quad
i=1,2,\cdots,m.            
\end{aligned}
\end{equation}
 With    $Z$ and $W$ given in  \eqref{Symbol}, 
 %for $ z_0 \in \mathbb{C}_{+} $,  
as $N\to \infty$  the following  hold   uniformly for  
$\hat{\Lambda}$
%all $z_j$ and $w_j$ 
in a compact subset of $\mathbb{C}^{2K_{1}}$.

%$\hat{\Lambda}$ in a compact set of $\mathbb{C}^K$.

(i)  When  $|z_0|=1$, let  $\omega=\frac{1}{2}$, $\chi_i=\frac{1}{4p_{i,1}}$, 
 then 
\begin{multline} \label{2edgecomplex}
F^{(2)}_{K_1}(Z,W)=C_N^{(2)} D^{(2)}
N^{-\frac{1}{2}K_{1}^2- \frac{1}{2}  \sum_{i=1}^{m } \sum_{j=1}^{\alpha_{i}}  K_{1}\beta_{i,j}  }  e^{\sqrt{N}{\rm Tr}(z_0^{-1}\hat{Z}+\overline{z_{0}^{-1}\hat{W}})}
 e^{ -\frac{1}{2}{\rm Tr}\big(z_{0}^{-2}\hat{Z}^2+\overline{z_{0}^{-2}\hat{W}^2}\big) }
%\exp\!\Big\{ \sqrt{N}{\rm Tr}(Z_0^{-1}\hat{\Lambda})-\frac{1}{2}{\rm Tr}(Z_0^{-2}\hat{\Lambda}^2) \Big\}
\\  \times 
\left(  I^{(2)}\left(  
( -\hat{\theta}_1)^{p_{1,1}},\cdots,( -\hat{\theta}_m )^{p_{m,1}};
z_0^{-1}\hat{Z},z_0^{-1}\hat{W}
\right)+O\big( N^{-\frac{1}{4\kappa}} \big) \right),
\end{multline}
%TBA 
%With the same notations as in Definitions \ref{Ginb},  \ref{dualdef} and \ref{Ob}, 
% 
%\begin{equation}\label{}
%\int Q_{\beta}(A;X,Y_0)P_{N,\beta}(\tau;X,X_0){\rm d}X=\int Q_{\beta}(A;X_0,Y) \widehat{P}_{K,4/\beta}(\tau;Y,Y_0){\rm d}Y,
%\end{equation}
%where $dX$ and $dY$ denote Lebesgue  measure on associated matrix spaces.
where  $ \kappa=\max\left\{1/2,p_{1,1},\cdots,p_{m,1} \right\}$, $C_N^{(2)}$ 
and $D^{(2)}$ are  given in  \eqref{CNbeta} and \eqref{D2-2edge}  below respectively. 

%\begin{equation}\label{beta2kappa}
%\kappa=\max\left\{ 1/2,p_{i,1},\cdots,p_{i,m} \right\},
%\end{equation} 

(ii) When  $|z_0|>1$ and $m\geq 1$, let  $\omega=\frac{1}{2\xi}$ and $\chi_i=\frac{1}{2p_{i,1}}$ for $1\leq i\leq m$, where $\xi=\min\left\{ p_{1,1}, \ldots, p_{m,1} \right\}$, if $P$ in   \eqref{J2detail} is a unitary  matrix,  then 
\begin{multline} \label{2outcomplex}
F^{(2)}_{K_1}(Z,W)=C_N^{(2)} D^{(2,\mathrm{o})}
N^{-K_{1}^2- K_1 \sum_{i=1}^{m} (\beta_{i,>}+\beta_i+\hat{\beta}_i)} e^{ h_{2,0}(\Lambda)}
\\  \times 
\Big(  I^{(2,\mathrm{o})}\big(1-|z_0|^{-2},\sum_{i=1}^m \beta_{i,>};
\hat{W}^*,\hat{Z}^*,\overline{{\hat{\theta}}_{1}} ,\ldots, \overline{{\hat{\theta}}_{m}}\big)+O\big( N^{-\frac{1}{2\xi}} \big) \Big),
\end{multline}
%TBA 
%With the same notations as in Definitions \ref{Ginb},  \ref{dualdef} and \ref{Ob}, 
% 
%\begin{equation}\label{}
%\int Q_{\beta}(A;X,Y_0)P_{N,\beta}(\tau;X,X_0){\rm d}X=\int Q_{\beta}(A;X_0,Y) \widehat{P}_{K,4/\beta}(\tau;Y,Y_0){\rm d}Y,
%\end{equation}
%where $dX$ and $dY$ denote Lebesgue  measure on associated matrix spaces.
where   $C_N^{(2)}$,   $D^{(2,\mathrm{o})}$ and $h_{2,0}(\Lambda)$,   are  given in  \eqref{CNbeta},  \eqref{D2-2outlier} and \eqref{h2Lambdabeta2outlier} below respectively, and   $\beta_{i,>},  \beta_{i}, \hat{\beta}_i$ are given in \eqref{beta's1realout}.

\end{theorem}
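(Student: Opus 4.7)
The plan is to apply Corollary~\ref{characteristic} with $\beta=2$, $\Sigma=\Gamma=\mathbb{I}_N$, $\tau=1$, and $K_2=K_1$, converting $F^{(2)}_{K_1}(Z,W)$ into a finite-dimensional integral
\[
F^{(2)}_{K_1}(Z,W)=(N/\pi)^{K_1^2}\int Q_2\bigl(A;X_0,Y\bigr)\,e^{-N\mathrm{Tr}(YY^*)}\,{\rm d}Y
\]
over $K_1\times K_1$ complex matrices $Y$, where $Q_2$ is the $2K_1N\times 2K_1N$ determinant \eqref{Q2} with $A=\mathrm{diag}(Z\otimes\mathbb{I}_N,W^*\otimes\mathbb{I}_N)$. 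Using the Jordan decomposition $X_0=\mathrm{diag}(PJ_2P^{-1},0_{N-r})$, a similarity by $\mathbb{I}_{K_1}\otimes P$ combined with Schur complementation along the $(N-r)$-dimensional zero subspace split the determinant as a product of a \emph{bulk} factor, equal after simplification to an $(N-r)$-th power of a determinant $\det(ZW^*+Y^*Y\otimes\mathbb{I})$, and a \emph{Jordan} factor of fixed size whose entries are polynomials in $z_i,\overline{w_j},\theta_i$ and entries of $Y$, with coefficients drawn from $(P^*P)_{I_1}$ and $(P^*P)^{-1}_{I_2}$.

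In case (i), $|z_0|=1$ cancels the quadratic contribution: combining $-N\mathrm{Tr}(YY^*)$ with $(N-r)\log\det\bigl(\mathbb{I}+(z_0\overline{z_0})^{-1}Y^*Y+\cdots\bigr)$ produces, after the rescaling $Y\mapsto N^{-1/4}Y$, the quartic weight $-\tfrac{1}{2}\mathrm{Tr}(YY^*)^2$. The $\omega=\tfrac{1}{2}$ shifts of $Z,W$ yield the linear terms $\mathrm{Tr}(\hat ZY^*Y)+\mathrm{Tr}(\hat W^*YY^*)$ and, after extracting the constant-in-$Y$ contribution, the prefactor $e^{\sqrt N\,\mathrm{Tr}(z_0^{-1}\hat Z+\overline{z_0^{-1}\hat W})}\,e^{-\tfrac{1}{2}\mathrm{Tr}(z_0^{-2}\hat Z^2+\overline{z_0^{-2}\hat W^2})}$. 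In the Jordan factor, the smallest block size $p_{i,1}$ in each chain dictates the balance $\chi_i=1/(4p_{i,1})$, so that $(z_i-\theta_i)^{p_{i,1}}\sim(-N^{-1/4}\hat\theta_i)^{p_{i,1}}$ survives as the $\phi_i=(-\hat\theta_i)^{p_{i,1}}$ entries of $\Omega_{11}$ in \eqref{F11denotion}; the larger blocks $p_{i,j}>p_{i,1}$ contribute zero sub-blocks, and the off-diagonal structure precisely yields $\Omega_{12},\Omega_{21}$ of \eqref{F21F12denotion}, producing $I^{(2)}$ of \eqref{I2GI}. Collecting all Jacobians and normalizations assembles $C_N^{(2)}D^{(2)}$, while the quartic Taylor remainder from $\log\det$ gives the error $O(N^{-1/(4\kappa)})$.

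In case (ii), $|z_0|>1$ leaves a nontrivial quadratic action $e^{-N(1-|z_0|^{-2})\mathrm{Tr}(YY^*)}$, so the natural scale is $Y=O(N^{-1/(2\xi)})$ with $\xi=\min_i p_{i,1}$. The unitarity hypothesis on $P$ collapses $(P^*P)_{I_1}=(P^*P)^{-1}_{I_2}=\mathbb{I}$, reducing the Jordan factor to a product of $2\times 2$ block determinants indexed by Jordan blocks. Under the outlier scaling $\theta_i=z_0+N^{-1/(2p_{i,1})}\hat\theta_i$, the blocks of size $p_{i,1}$ produce the first two determinant factors of \eqref{1real}, the subleading blocks $p_{i,j}>p_{i,1}$ contribute factors $\det^{\beta_{i,j}}(Y^*Y)$ that combine into $\det^\alpha(Y^*Y)$ with $\alpha=\sum_i\beta_{i,>}$, and the non-resonant eigenvalues $\theta_j\neq z_0$ produce the smooth prefactor $e^{h_{2,0}(\Lambda)}$ by Taylor expansion of their bulk contribution. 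The main obstacle will be the combinatorial bookkeeping across multiple Jordan blocks of different sizes within the same eigenspace and the clean extraction of the submatrix structures $(P^*P)_{I_j}$ from the Jordan factor; secondarily, in case (i) one needs a uniform quartic-remainder estimate for $\log\det(\mathbb{I}+N^{-1/2}M)$ on compacta of $\hat\Lambda$, and in case (ii) one must verify that under unitarity the off-diagonal $Y\otimes(\cdots)$ blocks assemble precisely into the Wishart-type integrand of $I^{(2,\mathrm{o})}$.
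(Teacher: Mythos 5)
Your outline reproduces the paper's strategy: apply the $\beta=2$ duality (Corollary~\ref{characteristic}) to write $F^{(2)}_{K_1}$ as a $K_1\times K_1$ matrix integral over $Y$, factor $Q_2$ via the Jordan form of $A_0$ into a bulk power $(\det A_2)^{N-r}$ and a finite ``Jordan'' determinant $\det(B^{(2)}_{11})$, run a Laplace argument near $Y=0$, and perform the block-by-block column-replacement analysis of the Jordan factor that singles out the smallest block size $p_{i,1}$ in each chain and extracts the $\Omega_{11},\Omega_{12},\Omega_{21}$ structure of $I^{(2)}$. That matches Proposition~\ref{propbetterform}, Proposition~\ref{Analysisform}, Lemma~\ref{determinantcalculationbeta2edge} and Lemma~\ref{complexbeta2Jordan}.

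A few points would derail the execution as you have written it, however. First, you go directly from the matrix-integral representation to a rescaling of $Y$, but Laplace's method needs a localization step: one must first show the integral concentrates on $\mathrm{Tr}(YY^*)\leq\delta$, which the paper obtains from the Hadamard--Fischer inequality applied to $\det(A_2A_2^*)$ together with the strict minimum of $x-\log(1+|z_0|^{-2}x)$ at $x=0$ (Proposition~\ref{Analysisform}); without that, the tail is uncontrolled. Second, in case (ii) the natural $Y$-scale is $N^{-1/2}$, not $N^{-1/(2\xi)}$: the quadratic action $N(1-|z_0|^{-2})\mathrm{Tr}(YY^*)$ has coefficient $\Theta(N)$ for $|z_0|>1$, and $\omega=\tfrac{1}{2\xi}$ governs only the $\hat\Lambda$-shift of $Z,W$, not $Y$. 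Third, the prefactor $e^{h_{2,0}(\Lambda)}$ does not come from the non-resonant eigenvalues $\theta_j\neq z_0$; it is the truncated Taylor series of $Nh_2(Y,\Lambda)=N\log\det(\mathbb{I}_{2K_1}+N^{-\omega}H_2)$, retaining the terms $N^{1-j/(2\xi)}\mathrm{Tr}(Z_0^{-j}\hat\Lambda^j)$ for $j\leq 2\xi$ (see \eqref{h2Lambdabeta2outlier}); the non-resonant $\theta_j$ only enter the constant prefactor $D^{(2,\mathrm{o})}$. Fourth, in case (i) the dominant error $O(N^{-1/(4\kappa)})$ is produced by the Jordan-block column-replacement expansion (the exponent constraint \eqref{errorsumcondition} in Lemma~\ref{determinantcalculationbeta2edge}), not the $\log\det$ Taylor remainder, which only contributes $O(N^{-1/4})$. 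These corrections do not change the skeleton, but each one matters for the bookkeeping you rightly flag as the main difficulty.
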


%%%%%%%%%%%%%%%%%%%%%%%%%%%%%%%%%%%%%%%

%%%%%%%%%%%%%%%%%%%%%%%%%%%%%%%%%%%%%%%%%%%%%%%%%%%%%%%%%%%%%%%%%%%%%%%%%%%%%%%%%%%%%%%%%%%%%%%%%%%%%%%%

\begin{theorem}\label{4-complex} For the $ {\mathrm{GinSE}}_{N}(A_0)$ ensemble with the assumption  \eqref{J4detail}, 
 given an integer $0\leq m\leq \gamma$, assume that $ \theta_{j}\neq z_0$ is fixed  for all $j=m+1, \ldots, \gamma$ and  %m+1}, \ldots, \theta_{\gamma}$ are fixed and                              
\begin{equation}\label{beta4edgethetaexpression}
\begin{aligned}
&\theta_i=z_0+N^{-\frac{1}{4p_{i,1}}}\hat{\theta}_i,\quad
i=1,2,\cdots,m.            
\end{aligned}
\end{equation}
 With    $Z$ and $W$ given in  \eqref{Symbol} 
 where  $\omega=\frac{1}{2}$,   let   $ \kappa=\max\left\{1/2,p_{1,1},\cdots,p_{m,1} \right\}$ and $t=\sum_{i=1}^m\sum_{j=1}^{\alpha_i}\beta_{i,j}$, 
as $N\to \infty$  the following  hold   uniformly for  
$\hat{\Lambda}$
in a compact subset of $\mathbb{C}^{2K_1}$.

(i)  When   $ z_0=\pm 1$, 
 then 
\begin{multline} \label{4edgeR}
F^{(4)}_{K_1}(Z,W)=\frac{1}{\widehat{Z}_{2K_1,1}} D^{(4,\mathrm{re})}
N^{-\frac{1}{2}K_1(2K_1+1)-K_1t }  e^{2z_0^{-1}\sqrt{N}{\rm Tr}(\hat{Z}+\overline{\hat{W}})}
 e^{ -{\rm Tr}\big(\hat{Z}^2+\overline{\hat{W}^2}\big) }
\\  \times 
\left(  I^{(4,\mathrm{re})}\left(  
( -\hat{\theta}_1)^{p_{1,1}},\cdots,( -\hat{\theta}_m )^{p_{m,1}};
2z_0^{-1}\hat{\Lambda}
\right)+O\big( N^{-\frac{1}{4\kappa}} \big) \right),
\end{multline}
where $\widehat{Z}_{2K_1,1}$ 
and $D^{(4,\mathrm{re})}$ are  given in  \eqref{Zdual} and \eqref{D-4Redge} respectively. 

(ii)  When   $ \Im z_0 >0$ and   $|z_0|=1$,   
 then 
\begin{multline} \label{4edgecomplex}
F^{(4)}_{K_1}(Z,W)=C_N^{(4)} D^{(4)}
N^{-(K_1+1)K_1-\frac{1}{2}(K_1^2+K_1t) }  e^{2\sqrt{N}{\rm Tr}(z_0^{-1}\hat{Z}+\overline{z_{0}^{-1}\hat{W}})}
\\ \times
2^{-(K_1-1)K_1}\left( \frac{\pi}{\left| z_0-\overline{z}_0 \right|} \right)^{(K_1+1)K_1}
 e^{ -{\rm Tr}\big(z_{0}^{-2}\hat{Z}^2+\overline{z_{0}^{-2}\hat{W}^2}\big) }
\\  \times 
\left(  I^{(4)}\left(  
( -\hat{\theta}_1)^{p_{1,1}},\cdots,( -\hat{\theta}_m )^{p_{m,1}};
2z_0^{-1}\hat{Z},2z_0^{-1}\hat{W}
\right)+O\big( N^{-\frac{1}{4\kappa}} \big) \right),
\end{multline}
where $C_N^{(4)}$ 
and $D^{(4)}$ are  given in  \eqref{CNbeta} and \eqref{D-4edge}  below respectively.

\end{theorem}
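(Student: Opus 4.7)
The plan is to invoke the duality formula of Corollary \ref{characteristic} with $\beta=4$, which rewrites $F^{(4)}_{K_1}(Z,W)$ as an integral of the Pfaffian $Q_4(A;X_0,Y)$ from \eqref{Q4} against the dual Ginibre weight $\widehat{P}_{2K_1,1}(1/2;Y,0)$ on the space of $2K_1\times 2K_1$ complex symmetric matrices $Y$, where $A={\rm diag}(Z\otimes\mathbb{I}_{2N},\,W^*\otimes\mathbb{I}_{2N})$. This is the essential dimensional reduction: the integration domain is now finite-dimensional and $N$ enters only through the parameters $\Lambda$ and $X_0$.

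Next, I would exploit the Jordan decomposition $A_0=P\,{\rm diag}(J_2,\overline{J}_2)\,P^{-1}$ of \eqref{J4detail}. Using ${\rm Pf}(BAB^t)=\det(B)\,{\rm Pf}(A)$, a block conjugation built from $P^{-1}$ replaces $\mathbb{I}_{2K_1}\otimes X_0$ by $\mathbb{I}_{2K_1}\otimes{\rm diag}(J_2,\overline{J}_2)$, while simultaneously transforming $\mathbb{J}_N$ into $U=iP^t\mathbb{J}_rP$ restricted to the index sets $I_1,I_2$; this accounts precisely for the appearance of $U_1,U_2,V_1,V_2$ from \eqref{4UV} in the kernel \eqref{I4GIR}. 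The Pfaffian then factors across Jordan blocks. Blocks $R_{p_{i,j}}(\theta_i)$ with $\theta_i\neq z_0$ fixed ($i>m$) contribute $O(1)$ factors that combine into the prefactors $D^{(4,{\rm re})}$, $D^{(4)}$. For the critical blocks with $\theta_i=z_0+N^{-1/(4p_{i,1})}\hat{\theta}_i$, Taylor expanding gives leading contributions $(-\hat{\theta}_i)^{p_{i,1}}$, which serve as the arguments of the $I^{(4,\cdot)}$ integrals.

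Then I would perform a saddle-point analysis on the $Y$-integral. With $\Lambda=Z_0+N^{-1/2}\hat{\Lambda}$ and the rescaling $Y\mapsto N^{-1/4}Y$ appropriate at the edge, the Gaussian factor $e^{-2N{\rm Tr}(YY^*)}$ combined with the subleading terms of the block-determinant expansion produces the quartic weight $e^{-\frac{1}{2}{\rm Tr}(YY^*)^2}$ and linear couplings $-{\rm Tr}(\hat\Lambda YY^*)$ of \eqref{I4GI} and \eqref{I4GIR}, while the quadratic terms in $\hat{Z},\hat{W}$ give the Gaussian envelope. In case (ii), $\Im z_0>0$ with $|z_0|=1$, the quadratic form decouples the contribution of $Y$ from its complex conjugate, the Pfaffian collapses to the determinant in $I^{(4)}$, and the envelope $e^{-{\rm Tr}(z_0^{-2}\hat{Z}^2+\overline{z_0^{-2}\hat{W}^2})}$ emerges. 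In case (i), $z_0=\pm1$, the complex conjugation symmetry keeps $Y$ and $\overline{Y}$ coupled, so the Pfaffian structure survives and matches $I^{(4,{\rm re})}$ of \eqref{I4GIR}, with envelope $e^{-{\rm Tr}(\hat{Z}^2+\overline{\hat{W}^2})}$. The explicit $N$-powers in the prefactor come from the rescaling Jacobian, the $N$-dependence in $\widehat{Z}_{2K_1,1}$, and the $(-\hat{\theta}_i)^{p_{i,1}}$ contributions.

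The principal obstacle I expect is establishing the uniform error $O(N^{-1/(4\kappa)})$ with $\kappa=\max\{1/2,p_{1,1},\ldots,p_{m,1}\}$. The rate is dictated by the largest Jordan block at $z_0$, which is rescaled by the smallest power of $N$ and therefore produces the slowest-decaying Taylor remainder in the block-determinant expansion. Controlling this uniformly in compact $\hat\Lambda$ requires expanding to the correct order and bounding the tail contribution of the $Y$-integral, where polynomial growth from the Pfaffian must be dominated by Gaussian decay. A secondary subtlety, specific to case (i), is verifying that the Pfaffian does not degenerate or produce spurious sign cancellations from the factor $iY\otimes\mathbb{J}_N$ under the real-edge constraint $z_0=\pm 1$; this is handled by carefully tracking signs through the $P$-conjugation.
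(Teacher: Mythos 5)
Your overall route coincides with the paper's: the duality of Corollary \ref{characteristic} reduces $F^{(4)}_{K_1}$ to an integral over $2K_1\times 2K_1$ complex symmetric $Y$, conjugation by $P$ and tensor reordering expose the Jordan structure, and a Laplace analysis near $Y=0$ finishes the job. There are, however, two places where the sketch as written would not go through. First, in case (ii) the uniform rescaling $Y\mapsto N^{-1/4}Y$ is incorrect. Writing $Y$ in $K_1\times K_1$ blocks $Y_{11},Y_{12},Y_{22}$, the phase function $f_4(Y,Z_0)$ of \eqref{fbetaYZ0} has leading part $(1-z_0^{-2}){\rm Tr}(Y_{11}Y_{11}^*)+(1-\overline{z_0}^{-2}){\rm Tr}(Y_{22}Y_{22}^*)+{\rm Tr}\big((Y_{12}Y_{12}^*)^2\big)$: the quadratic form degenerates only on the off-diagonal block. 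One must therefore scale $Y_{12}\mapsto N^{-1/4}Y_{12}$ but $Y_{11},Y_{22}\mapsto N^{-1/2}Y_{11},N^{-1/2}Y_{22}$ and integrate the diagonal blocks out as genuine Gaussians; this anisotropic scaling is exactly what produces the factor $2^{-(K_1-1)K_1}\left(\pi/\left|z_0-\overline{z}_0\right|\right)^{(K_1+1)K_1}$ and the exponent $-(K_1+1)K_1-\frac{1}{2}(K_1^2+K_1t)$ in \eqref{4edgecomplex}, neither of which your proposal accounts for. Only in case (i), $z_0=\pm1$, does the entire quadratic part cancel so that the uniform $N^{-1/4}$ scaling applies.

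Second, the assertion that ``the Pfaffian then factors across Jordan blocks'' is not literally true: after the $P$-conjugation the blocks remain coupled both through $Y$ (via $Y\otimes V_2$ and $Y^*\otimes\overline{U_2}$) and through the non-block-diagonal matrices $U=\mathrm{i}P^t\mathbb{J}_rP$ and $V=U^{-1}$. Extracting the leading term requires the Laplace-type Pfaffian expansion over pairs of index sets (the paper's Lemma \ref{4lemma}), the identification of the admissible sets with first/last columns of the critical Jordan blocks, and an inductive verification that the signs $\epsilon_r(I,J)$ recombine into $\epsilon_t(I',J')$ of the reduced Pfaffian. It is this column-replacement bookkeeping, not the $(-\hat{\theta}_i)^{p_{i,1}}$ arguments, that fixes the powers $N^{-K_1t}$ (resp.\ $N^{-\frac{1}{2}K_1t}$) and the error exponent $\frac{1}{4\kappa}$; your sketch delegates it to ``carefully tracking signs'' without supplying the mechanism.
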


We remark that in the deformed complex Ginibre case,  except for scaling limits in  the subcritical and critical regimes we also consider the supercritical regime in a special case where  $P$ in   \eqref{J2detail} is assumed to be  a unitary  matrix.  For general $P$,  calculations become much more complicated either  in the deformed complex or  quaternion  Ginibre cases. These will be left in the forthcoming paper \cite{LZ}.

 \subsection{General procedure}

In this subsection   we are devoted to obtaining partial  asymptotic results  which serve  as preparation for the complete proofs of limiting results above; see   Proposition  \ref{Analysisform} below.

By  Corollary \ref{characteristic}, autocorrelation functions defined in   \eqref{CharacDenotion} 
can be rewritten as

\begin{equation}\label{fKLbetaduality}
F^{(\beta)}_{K_1}(Z,W)=\int Q_{\beta}(A;X_0,Y) \hat{P}_{K,4/\beta}(2/\beta;Y,0){\rm d}Y,
\end{equation}
where $ A=\Lambda\otimes \mathbb{I}_N $ for $\beta=1,2$ and  $ A=\Lambda \otimes \mathbb{I}_{2N} $ for $\beta=4$.

To simplify the  right hand side of \eqref{fKLbetaduality}, 
we  need the  following well-known property for tensor product (see e.g. \cite[eqn(6)]{HS81} ) to evaluate $Q_{\beta}$.
\begin{proposition}\label{tensorproperty}
For  a $p\times q$  matrix  A and an $m\times n$ matrix   B,  there exist  permutation matrices $\mathbb{I}_{p,m}$,  which only depend on  $p, m$ and satisfy  $\mathbb{I}_{p,m}^{-1}=\mathbb{I}_{m,p}=\mathbb{I}_{p,m}^t$, such that 
%\begin{small}
%\begin{equation}\label{permutationinvrse}
%\mathbb{I}_{p,m}^{-1}=\mathbb{I}_{m,p}=\mathbb{I}_{p,m}^t.
%\end{equation}
%\end{small}

\begin{equation}\label{tensorpropertyequation}
\mathbb{I}_{p,m}\left( A\otimes B \right)\mathbb{I}_{q,n}^{-1}=B\otimes A.
\end{equation}
  \end{proposition}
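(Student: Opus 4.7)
The plan is to construct $\mathbb{I}_{p,m}$ explicitly as the commutation (or perfect-shuffle) permutation, verify its elementary properties at the level of basis vectors, and then check the main identity by evaluating both sides on elementary tensors.

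First, I would fix the convention that $\mathbb{R}^{pm}$ is identified with $\mathbb{R}^{p}\otimes\mathbb{R}^{m}$ via the lexicographic pairing $(i,a)\mapsto (i-1)m+a$, with standard bases $\{e_i\}_{i=1}^{p}$ and $\{f_a\}_{a=1}^{m}$. Then I would define $\mathbb{I}_{p,m}$ as the unique linear map on $\mathbb{R}^{p}\otimes\mathbb{R}^{m}$ satisfying
$$\mathbb{I}_{p,m}\,(e_i\otimes f_a)=f_a\otimes e_i,\qquad 1\le i\le p,\ 1\le a\le m.$$
Because this permutes an orthonormal basis, $\mathbb{I}_{p,m}$ is automatically a permutation matrix, so $\mathbb{I}_{p,m}^{-1}=\mathbb{I}_{p,m}^{t}$. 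The identity $\mathbb{I}_{p,m}^{t}=\mathbb{I}_{m,p}$ then drops out from inner products: for arbitrary basis vectors one checks
$$\langle f_b\otimes e_j,\,\mathbb{I}_{p,m}(e_i\otimes f_a)\rangle=\delta_{ab}\delta_{ij}=\langle e_i\otimes f_a,\,\mathbb{I}_{m,p}(f_b\otimes e_j)\rangle,$$
so both matrices have the same entries.

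Next, I would verify the tensor-swap identity by applying both sides to an arbitrary elementary tensor $e_k\otimes f_c\in\mathbb{R}^{q}\otimes\mathbb{R}^{n}$. Using the defining bilinearity $(A\otimes B)(e_k\otimes f_c)=(Ae_k)\otimes(Bf_c)$, together with $\mathbb{I}_{q,n}^{-1}(e_k\otimes f_c)=f_c\otimes e_k$ from the previous step and the defining property of $\mathbb{I}_{p,m}$, the left-hand side becomes $(Bf_c)\otimes(Ae_k)$; the right-hand side yields the same tensor directly from $(B\otimes A)(f_c\otimes e_k)=(Bf_c)\otimes(Ae_k)$. Since the elementary tensors span the whole space, the matrix identity \eqref{tensorpropertyequation} follows.

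This is a classical fact about the commutation matrix, so there is no real obstacle beyond index bookkeeping. The only point requiring care is to fix a single convention for how a double index $(i,a)$ encodes a position in $\{1,\ldots,pm\}$ and to apply it consistently when translating between the tensor-product picture above and the explicit block-matrix description of $A\otimes B$ used in Section~\ref{sectnotation}; once the convention is fixed, the verification is mechanical.
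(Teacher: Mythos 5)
Your construction is correct, and it is worth noting that the paper itself offers no proof of Proposition \ref{tensorproperty}: it simply cites \cite[eqn (6)]{HS81}, so any self-contained argument already goes beyond what the text supplies. Defining $\mathbb{I}_{p,m}$ on the standard basis by $e_i\otimes f_a\mapsto f_a\otimes e_i$, observing that a basis permutation is automatically orthogonal, checking $\mathbb{I}_{p,m}^{t}=\mathbb{I}_{m,p}$ entrywise, and then testing \eqref{tensorpropertyequation} on elementary tensors is exactly the classical commutation-matrix (vec-permutation) argument of Henderson--Searle, and it is the right level of generality for the rectangular case needed here.

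One small directional slip should be repaired before the verification is airtight. With your convention, $\mathbb{I}_{q,n}$ is the map on $\mathbb{R}^{q}\otimes\mathbb{R}^{n}$ sending $e_k\otimes f_c\mapsto f_c\otimes e_k$, so its inverse $\mathbb{I}_{q,n}^{-1}=\mathbb{I}_{n,q}$ has domain $\mathbb{R}^{n}\otimes\mathbb{R}^{q}$ and sends $f_c\otimes e_k\mapsto e_k\otimes f_c$; the displayed line ``$\mathbb{I}_{q,n}^{-1}(e_k\otimes f_c)=f_c\otimes e_k$'' therefore describes $\mathbb{I}_{q,n}$ itself rather than its inverse, and the test vector for \eqref{tensorpropertyequation} should be $f_c\otimes e_k\in\mathbb{R}^{n}\otimes\mathbb{R}^{q}$ (the domain of $B\otimes A$), not $e_k\otimes f_c$. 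The cleanest fix is to verify the equivalent identity $\mathbb{I}_{p,m}\left(A\otimes B\right)=\left(B\otimes A\right)\mathbb{I}_{q,n}$ on $e_k\otimes f_c$: the left side gives $\mathbb{I}_{p,m}\bigl((Ae_k)\otimes(Bf_c)\bigr)=(Bf_c)\otimes(Ae_k)$ by linearity of the defining relation, and the right side gives $(B\otimes A)(f_c\otimes e_k)=(Bf_c)\otimes(Ae_k)$ as well. This is precisely the index bookkeeping you flag at the end; once it is fixed the proof is complete.
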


Next,  we interchange the order of two matrices in the tensor product and further perform some elementary matrix operations  on  $Q_{\beta}$.
For $\beta=2$,  %similar as in $\beta=1$ 
we have 
\begin{small}
\begin{equation}\label{Q2trasform0}
Q_2=\det(B_{11}^{(2)})\left[ \det(A_2) \right]^{N-r},\quad A_2:=\begin{bmatrix}
Z & -Y^* \\ Y & W^*
\end{bmatrix},
\end{equation}
\end{small}
where
\begin{small}
\begin{equation}\label{B112denotion}
B_{11}^{(2)}=\begin{bmatrix}
\mathbb{I}_r \otimes Z-A_0\otimes \mathbb{I}_{K_1}   &  -\mathbb{I}_r \otimes Y^*  \\
\mathbb{I}_r\otimes Y  &  \mathbb{I}_r \otimes W^*-A_{0}^*\otimes \mathbb{I}_{K_1}
\end{bmatrix}.
\end{equation}
\end{small}

For $\beta=4$, in view of \eqref{A04deno}, we introduce an  extra  error term  $O( \Xi ) $ for  non-negative $ \Xi$,  which will be  specified  somewhere, and 
  set 
$$ A_0=P\left( \begin{bmatrix}
J_2 & \\ & \overline{J}_2
\end{bmatrix}+O( \Xi ) \right)P^{-1}.$$
 Recall the definition in \eqref{4UV}, we see  from elementary matrix operations  that 
\begin{equation}\label{Q4transform1}
Q_4={\rm Pf}( \widetilde{B} )\left( \det(A_4) \right)^{N-r},\quad A_4:=\begin{bmatrix}
{\rm i}Y & \Lambda \\ -\Lambda & -{\rm i}Y^*
\end{bmatrix},
\end{equation}
where
\begin{small}
$$ \widetilde{B}=\begin{bmatrix}
V\otimes Y  &   \mathbb{I}_{2r}\otimes \Lambda-\bigg( \begin{bmatrix}
J_2 & \\ & \overline{J}_2
\end{bmatrix}+O\big( \Xi \big)\bigg)\otimes \mathbb{I}_{2K_1}  \\
-\bigg( \mathbb{I}_{2r}\otimes \Lambda-\bigg( \begin{bmatrix}
J_2 & \\ & \overline{J}_2
\end{bmatrix}+O\big( \Xi \big)\bigg)\otimes \mathbb{I}_{2K_1} \bigg)^t  &  U\otimes Y^*
\end{bmatrix}. $$
\end{small}
Reversing  the order of  tensor products and again perform elementary matrix operations, we get  from \eqref{tensorpropertyequation} that
\begin{equation}\label{Q4transform2}
Q_4=\left( \det(A_4) \right)^{N-r} {\rm Pf}\, \begin{bmatrix}
R & S \\ -S^t & T
\end{bmatrix}, 
\end{equation}
where
\begin{equation}\label{RSTdeno}
\begin{aligned}
&  R=\begin{bmatrix} Y\otimes V_1 & O\left( \Xi \right)
\\ O\left(  \Xi \right) &  Y^*\otimes \overline{U}_1  \end{bmatrix},\quad
T=\begin{bmatrix} Y^*\otimes U_1 & O\left( \Xi \right)
\\ O\left( \Xi \right) &  Y\otimes \overline{V}_1  \end{bmatrix},             \\
&S=\begin{bmatrix}   \Lambda\otimes \mathbb{I}_r-\mathbb{I}_{2K_1}\otimes J_2
+O\left( \Xi  \right)    & Y\otimes V_2
\\      -Y^*\otimes \overline{U}_2  &  -\Lambda\otimes \mathbb{I}_r+\mathbb{I}_{2K_1}\otimes J_2^*+
O\left( \Xi \right) \end{bmatrix}.
\end{aligned}
\end{equation}

In summary,  we have \begin{proposition}\label{propbetterform}
With $A_{\beta}$  given in \eqref{Q2trasform0} and  \eqref{Q4transform1},  and with  
$B_{11}^{(2)}$  in \eqref{B112denotion} and $\begin{bmatrix}
R & S \\ -S^t & T
\end{bmatrix}$ in
 \eqref{RSTdeno}, 
let
\begin{equation}\label{f0KLbetaexpression}
f_{\beta}^{(0)}(Y)={\rm Tr}(YY^*)-\log\det(A_{\beta}),\quad \beta=2,4,
\end{equation}
and 
\begin{small}
\begin{equation}\label{gbetaYLambda}
g_{\beta}(Y,\Lambda)=\begin{cases}
\det(B_{11}^{(2)})\left( \det(A_{2}) \right)^{-r}, & \beta=2; \\
{\rm Pf}\begin{bmatrix}
R & S \\ -S^t & T
\end{bmatrix}\left( \det(A_{4}) \right)^{-r}, & \beta=4.
\end{cases}
\end{equation}
\end{small}
Then 
for $\beta=2,4$,
\begin{equation}\label{beta24Restrictionequation}
F_{K_1}^{(\beta)}(Z,W)=
 \frac{1}{\widehat{Z}_{2K_1,4/\beta} }
 \int    g_{\beta}(Y,\Lambda) \, e^{ -N f_{\beta}^{(0)}(Y)  } {\rm d}Y,
\end{equation}
where $Y$ is a  complex matrix of size $K_1$ when  $\beta=2$,  while  Y is a complex symmetric matrix of size ${2K_1}$ when $\beta=4$. %Here $K=2K_1$.
\end{proposition}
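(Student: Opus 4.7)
The plan is to reduce the statement to a collection of three ingredients, only the first of which invokes the duality machinery of the paper.

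First, I would apply Corollary~\ref{characteristic} with $\tau = 2/\beta$ to rewrite
\[
F^{(\beta)}_{K_1}(Z, W) = \int Q_\beta(A; X_0, Y)\, \widehat P_{K, 4/\beta}(2/\beta; Y, 0)\, {\rm d}Y,
\]
with $A = \Lambda \otimes \mathbb{I}_N$ for $\beta = 2$ and $A = \Lambda \otimes \mathbb{I}_{2N}$ for $\beta = 4$. The point of duality is to trade the $N^2$-dimensional integration over $X$ for an integration over a dual variable $Y$ of size independent of $N$.

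Second, I would invoke the factorizations
\[
Q_2 = \det\!\bigl(B_{11}^{(2)}\bigr)\,\bigl(\det A_2\bigr)^{N-r},
\qquad
Q_4 = \bigl(\det A_4\bigr)^{N-r}\,{\rm Pf}\begin{bmatrix} R & S \\ -S^t & T \end{bmatrix},
\]
derived in \eqref{Q2trasform0} and \eqref{Q4transform2}. These exploit the block structure $X_0 = {\rm diag}(A_0, 0_{N-r})$ together with the Jordan decomposition of $A_0$: Proposition~\ref{tensorproperty} permits a reordering of the tensor factors so that the piece acting on the $(N-r)$-dimensional trivial block factors as $(\det A_\beta)^{N-r}$, while the piece acting on the $r$-dimensional $A_0$-block furnishes $B_{11}^{(2)}$ (complex case) or the Pfaffian in \eqref{RSTdeno} (quaternion case). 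In the quaternion case the similarity transformation by $P$ in \eqref{J4detail} is absorbed using the identity ${\rm Pf}(BAB^t) = \det(B)\,{\rm Pf}(A)$, producing the auxiliary matrices $U, V$ defined in \eqref{4UV}.

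Third, I would insert the explicit Gaussian form of the dual density. From Definition~\ref{dualdef}, the convention for ${\rm Tr}_{4/\beta}$, and the choice $\tau = 2/\beta$, one verifies in both cases that $\widehat P_{2K_1, 4/\beta}(2/\beta; Y, 0) = \widehat Z_{2K_1, 4/\beta}^{-1}\,\exp\bigl(-N\,{\rm Tr}(YY^*)\bigr)$. Splitting $(\det A_\beta)^{N-r} = (\det A_\beta)^{-r} \cdot e^{N \log \det A_\beta}$, the factor $(\det A_\beta)^{-r}$ is absorbed into $g_\beta(Y,\Lambda)$ as specified in \eqref{gbetaYLambda}, while the exponential combines with $-N\,{\rm Tr}(YY^*)$ to yield $-N f_\beta^{(0)}(Y)$. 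Collecting the pieces gives \eqref{beta24Restrictionequation}. The main obstacle lies in step two, specifically the quaternion case: the transition from the raw form \eqref{Q4transform1} to the clean form \eqref{Q4transform2} is where the quaternion Jordan structure, the Pfaffian similarity identity, and the tensor reordering of Proposition~\ref{tensorproperty} must cooperate in producing the block form with $R, S, T$. Once this is in hand, the proposition follows by purely algebraic assembly.
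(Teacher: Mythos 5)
Your proposal is correct and follows essentially the same route as the paper: duality via Corollary~\ref{characteristic} at $\tau=2/\beta$, the algebraic factorizations $Q_2=\det(B_{11}^{(2)})(\det A_2)^{N-r}$ and $Q_4=(\det A_4)^{N-r}\,{\rm Pf}\left[\begin{smallmatrix} R & S \\ -S^t & T\end{smallmatrix}\right]$ obtained by tensor reordering and elementary block operations (with the Pfaffian similarity identity absorbing $P$ in the quaternion case), and finally the observation that the dual Gaussian weight reduces to $\widehat Z_{2K_1,4/\beta}^{-1}e^{-N{\rm Tr}(YY^*)}$ so that $(\det A_\beta)^N e^{-N{\rm Tr}(YY^*)}=e^{-Nf_\beta^{(0)}(Y)}$. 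This is exactly the assembly the paper performs in Section~\ref{autocorrealtionI} leading up to the proposition.
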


Whenever  $|z_0|\geq 1$,  we will use Laplace's  method to  prove that the overwhelming contribution to the integral    comes   from 
a small  neighborhood of the zero matrix. 
\begin{proposition}\label{Analysisform}
If  $|z_0|\geq 1$, then for any $\delta>0$ there exists $\Delta^{(\beta)}>0$ such that
\begin{equation}\label{analysisform}
F_{K_1}^{(\beta)}(Z,W)=C_N^{(\beta)}\left(I_{\delta,N}^{(\beta)}+O\big( e^{-\frac{1}{2}N\Delta^{(\beta)}} \big) \right),
\end{equation}
with
\begin{equation}\label{Ideltabeta}
I_{\delta,N}^{(\beta)}=\int\limits_{{\rm Tr}(YY^*)\leq \delta}\ g_{\beta}(Y,\Lambda)
\exp\!\left\{ -Nf_{\beta}(Y,Z_0)+Nh_{\beta}(Y) \right\} {\rm d}Y,
\end{equation}
and
\begin{equation}\label{CNbeta}
C_N^{(\beta)}= 
\frac{1}{\widehat{Z}_{2K_1,4/\beta} }\left( \det Z_0 \right)^{\frac{\beta N}{2}}.
\end{equation}
Here 
\begin{small}
\begin{equation}\label{fbetaYZ0}
f_{\beta}(Y,Z_0)=\begin{cases}
{\rm Tr}(YY^*)-\log\det\left( \mathbb{I}_{K_1}+\left| z_0 \right|^{-2}YY^* \right), & \beta=2; \\ 
{\rm Tr}(YY^*)-\log\det\left( \mathbb{I}_{2K_1}+YZ_0^{-1}Y^*Z_0^{-1} \right), & \beta=4,
\end{cases}
\end{equation}
\end{small}
and 
\begin{equation}\label{hbetaYLambda}
h_{\beta}(Y)= 
\log\det\left( \mathbb{I}_{\beta K_1}+N^{-\omega}H_{\beta} \right),
\end{equation}
with 
\begin{small}
\begin{equation}\label{Cbeta}
H_{\beta}:=H_{\beta}(Y)=\begin{cases} 
\begin{bmatrix}
z_0\mathbb{I}_{K_1}  & -Y^*  \\Y & \overline{z_0}\mathbb{I}_{K_1}
\end{bmatrix}^{-1}
\begin{bmatrix}
\hat{Z} & \\ & \hat{W}^*
\end{bmatrix}, & \beta=2; \\  
\begin{bmatrix}
{\rm i}Y  & Z_0  \\-Z_0 & -{\rm i}Y^*
\end{bmatrix}^{-1}
\begin{bmatrix}
0 & \hat{\Lambda}  \\ -\hat{\Lambda} & 0
\end{bmatrix}, & \beta=4.
\end{cases}
\end{equation}
\end{small}
\end{proposition}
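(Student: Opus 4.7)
The plan is to start from the integral representation \eqref{beta24Restrictionequation} of Proposition \ref{propbetterform}, algebraically separate out the $Y$-independent normalization $C_N^{(\beta)}$ and the perturbation factor $h_\beta(Y)$ from $\det(A_\beta)$, then establish nonnegativity and coercivity of the resulting action $f_\beta(Y,Z_0)$ when $|z_0|\ge 1$, and finally truncate via a Laplace-type argument.

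\textbf{Algebraic extraction.} First I would substitute $\Lambda=Z_0+N^{-\omega}\hat\Lambda$ into $A_2$ and $A_4$ and perform elementary block manipulations. A Schur-complement factorization gives $\det(A_\beta)=\det(A_\beta|_{\Lambda=Z_0})\cdot\det(\mathbb{I}+N^{-\omega}H_\beta)$ with $H_\beta$ exactly as in \eqref{Cbeta}. Simple block-determinant identities then yield $\det(A_2|_{Z_0})=|z_0|^{2K_1}\det(\mathbb{I}_{K_1}+|z_0|^{-2}YY^*)$ and, by factoring $\mathrm{diag}(Z_0,-Z_0)$ out of $A_4|_{Z_0}$, $\det(A_4|_{Z_0})=(\det Z_0)^{2}\det(\mathbb{I}_{2K_1}+YZ_0^{-1}Y^*Z_0^{-1})$. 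Raising to the $N$-th power and collecting the $Y$-independent constant produces the prefactor $(\det Z_0)^{\beta N/2}$ that defines $C_N^{(\beta)}$, while the remaining two logarithms assemble into $-Nf_\beta(Y,Z_0)+Nh_\beta(Y)$ as specified in \eqref{fbetaYZ0}--\eqref{hbetaYLambda}.

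\textbf{Coercivity and Laplace truncation.} Next I would verify $f_\beta(Y,Z_0)\ge 0$ with equality iff $Y=0$, and $f_\beta(Y,Z_0)\sim\mathrm{Tr}(YY^*)$ as $\|Y\|\to\infty$, which guarantees the existence of a strictly positive constant $\Delta^{(\beta)}:=\inf\{f_\beta(Y,Z_0):\mathrm{Tr}(YY^*)\ge\delta\}$. For $\beta=2$, letting $\sigma_i$ denote the singular values of $Y$ reduces this to the scalar inequality $\varphi(s):=s-\log(1+s/|z_0|^2)\ge 0$, which follows from $\varphi(0)=0$, $\varphi'(0)=1-|z_0|^{-2}\ge 0$, and $\varphi''(s)>0$. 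The $\beta=4$ case is treated analogously after a conjugation by $Z_0^{1/2}$ that turns $YZ_0^{-1}Y^*Z_0^{-1}$ into a matrix whose singular values come from a Takagi decomposition of $\widetilde Y:=Z_0^{-1/2}YZ_0^{-1/2}$. One also verifies uniformly in $Y$ that $\|H_\beta(Y)\|$ is bounded (the inverses appearing in \eqref{Cbeta} are dominated by $1/\max(|z_0|,\|Y\|)$), so $Nh_\beta(Y)=O(N^{1-\omega})=o(N)$; $g_\beta$ is polynomial of fixed degree. The Laplace argument then splits $\{\mathrm{Tr}(YY^*)>\delta\}$ into a compact shell $\delta\le\mathrm{Tr}(YY^*)\le M$, controlled by $f_\beta\ge\Delta^{(\beta)}$, and a far tail $\mathrm{Tr}(YY^*)>M$, where the linear growth of $f_\beta$ absorbs the polynomial $g_\beta$; both contributions are $O(e^{-N\Delta^{(\beta)}/2})$, giving \eqref{analysisform}.

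\textbf{Main obstacle.} The hardest point is verifying the nonnegativity and coercivity of $f_4(Y,Z_0)$ on the space of complex-symmetric matrices when $Z_0=\mathrm{diag}(z_0\mathbb{I}_{K_1},\bar z_0\mathbb{I}_{K_1})$ is non-scalar: one cannot directly diagonalize $YZ_0^{-1}Y^*Z_0^{-1}$ against $YY^*$, so the reduction must pass through the Takagi decomposition of the conjugated matrix $\widetilde Y$ together with a careful use of $|z_0|\ge 1$ to relate $\|\widetilde Y\|$ and $\|Y\|$. A secondary technical issue is obtaining a uniform (in $Y$) bound on $H_\beta(Y)$, which requires the explicit block-inversion formulas for the $2\times 2$ block matrices inside \eqref{Cbeta} and a separate analysis for small and large $\|Y\|$; this is routine but must be done with care to ensure the error $Nh_\beta$ is genuinely $o(N)$ across the entire integration domain.
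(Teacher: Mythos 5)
Your overall architecture (exact factorization $\det(A_\beta)=\det(A_\beta|_{\Lambda=Z_0})\det(\mathbb{I}+N^{-\omega}H_\beta)$ to extract $C_N^{(\beta)}$, $f_\beta$ and $h_\beta$, followed by a Laplace truncation) matches the paper, and your $\beta=2$ argument is sound: there $A_2|_{\Lambda=Z_0}\,(A_2|_{\Lambda=Z_0})^*=|z_0|^2\mathbb{I}+\mathrm{diag}(Y^*Y,\,YY^*)$ is block diagonal, so $H_2$ really is uniformly bounded and $f_2$ is real and coercive via the SVD. The genuine gap is in the $\beta=4$ case with $\Im z_0\neq 0$, and it hits both of your key claims. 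First, the uniform bound on $H_4(Y)$ fails: take $K_1=1$, $z_0=\mathrm{i}$, $Y=\mathrm{diag}(y,0)$ with $|y|=1$; then $\det(\mathbb{I}+YZ_0^{-1}Y^*Z_0^{-1})=1+z_0^{-2}|y|^2=0$, so $A_4|_{\Lambda=Z_0}$ is singular at points with $\mathrm{Tr}(YY^*)$ of order one, $H_4$ blows up there, and $Nh_4(Y)=O(N^{1-\omega})$ cannot hold uniformly on the tail region. Second, your coercivity reduction does not go through: $f_4(Y,Z_0)$ is genuinely complex-valued, and conjugating by $Z_0^{1/2}$ does not produce a matrix of the form $\widetilde{Y}\widetilde{Y}^*$, because $Z_0^{-1/2}Y^*Z_0^{-1/2}$ is \emph{not} the adjoint of $Z_0^{-1/2}YZ_0^{-1/2}$ when $z_0\notin\mathbb{R}$ (the adjoint introduces $\overline{Z_0}^{-1/2}$). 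Indeed the paper's explicit computation \eqref{tildeA1detailedform} shows the diagonal blocks of $YZ_0^{-1}Y^*Z_0^{-1}$ are $z_0^{-2}Y_{11}Y_{11}^*+Y_{12}Y_{12}^*$ etc., whose spectrum is not nonnegative, so no Takagi decomposition controls $\Re f_4$ directly.

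The paper avoids both problems by never using the $f=f_\beta-h_\beta$ split on the tail: it bounds $|\det A_\beta|$ there via the Hadamard--Fischer inequality applied to $A_\beta A_\beta^*$, whose diagonal blocks are the honestly positive definite matrices $Y^*Y+ZZ^*$ and $YY^*+W^*W$ (resp.\ $Y^*Y+\Lambda^*\Lambda$ and $YY^*+\Lambda\Lambda^*$ for $\beta=4$). This yields $\Re\{f_\beta^{(0)}(Y)-f_\beta^{(0)}(0)\}\ge \mathrm{Tr}(YY^*)-\log\det(\mathbb{I}+|z_0|^{-2}YY^*)+O(N^{-\omega})$, a real, coercive quantity handled by the scalar inequality $x-\log(1+x|z_0|^{-2})\ge 0$; the identity $f_\beta^{(0)}=f_\beta-h_\beta-\log\det Z_0$ is then only invoked on the ball $\mathrm{Tr}(YY^*)\le\delta$, where Lemma \ref{Cbetacrudebound} gives $H_\beta=O(1)$. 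If you want to salvage your route for $\beta=4$, you would need a Weyl-majorization argument ($\prod_{i\le k}|\lambda_i(M)|\le\prod_{i\le k}\sigma_i(M)$ together with $\prod_{i\le k}\sigma_i(YZ_0^{-1}Y^*Z_0^{-1})\le\prod_{i\le k}|z_0|^{-2}\sigma_i(Y)^2$) in place of the Takagi step, and you would still have to bound the tail using the full $\Lambda$ rather than the $Z_0$-plus-perturbation split. A minor further slip: $g_\beta$ is not polynomial (it carries the factor $(\det A_\beta)^{-r}$), though this is absorbed into $e^{-N_0 f_\beta^{(0)}}$ for any fixed $N_0>r$ exactly as in the paper.
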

\begin{proof}
We just give  a detailed proof   in the case of  $\beta=2$ since  similar argument applies to  $\beta=4$.

Noting  that 
\begin{equation}\label{A1A1}
A_2A_2^*=\begin{bmatrix}
Y^*Y+ZZ^* & ZY^*-Y^*W \\ YZ^*-W^*Y & YY^*+W^*W
\end{bmatrix},
\end{equation}
we use 
the Hadamard-Fischer inequality (see e.g. \cite[Theorem 7.8.5]{HJ})  to obtain  
\begin{equation*}\label{Fischer1}
\det(A_2A_2^*)\leq \det\left( YY^*+W^*W \right)\det\left( Y^*Y+ZZ^* \right).
\end{equation*} 
%\begin{small}
%\begin{equation*}
%\Re \left\{ f_1^{(0)}(Y)-f_1^{(0)}(0) \right\}\geq {\rm Tr}YY^*-
%\log\det\left( \mathbb{I}_{K}+YY^*(\Lambda\Lambda^*)^{-1} \right),
%\end{equation*}
%\end{small}
Recall  $f_{\beta}^{(0)}(Y)$ defined in  \eqref{f0KLbetaexpression}, 
we   arrive at 
\begin{small}
\begin{equation*}\label{Fischer1}
\begin{aligned}
&
\Re \left\{ f_2^{(0)}(Y)-f_2^{(0)}(0) \right\}\geq  
{\rm Tr}YY^*-\frac{1}{2}
\log\det\left( \mathbb{I}_{K_1}+Y^*Y(ZZ^*)^{-1} \right)     \\
&-\frac{1}{2}
\log\det\left( \mathbb{I}_{K_1}+YY^*(W^*W)^{-1} \right)
={\rm Tr}(YY^*)-\log\det\left( \mathbb{I}_{K_1}+\left| z_0 \right|^{-2}YY^*\right) \\
& -\frac{1}{2}
\log\det\left( \mathbb{I}_{K_1}+S_{1}S_{2}\right)-\frac{1}{2}
\log\det\left( \mathbb{I}_{K_1}+S_{3}S_{4}\right)\\
\end{aligned}
\end{equation*}
\end{small}
where 
$$
\begin{aligned}
&S_{1}=\left(\mathbb{I}_{K_1}+\left | z_0 \right |^{-2}YY^* \right)^{-1} YY^*, \quad 
S_2=(W^*W)^{-1}-\left| z_0 \right|^{-2} \mathbb{I}_{K_1}.     \\
&S_{3}=\left(\mathbb{I}_{K_1}+\left | z_0 \right |^{-2}Y^*Y \right)^{-1} Y^*Y, \quad 
S_4=(ZZ^*)^{-1}-\left| z_0 \right|^{-2} \mathbb{I}_{K_1}. 
\end{aligned}
$$
It is easy to see that  $S_1$ is  a positive definite  matrix with all eigenvalues less than one and   $S_2=O(N^{-\omega})$,   so for sufficiently large $N$
\begin{equation*}
\log\det\left( \mathbb{I}_{K_1}+S_{1}S_{2}\right) =O\big({\rm Tr}(S_{1}S_{2})\big)=O(N^{-\omega}).
\end{equation*}
Similarly,
\begin{equation*}
\log\det\left( \mathbb{I}_{K_1}+S_{3}S_{4}\right) =O\big({\rm Tr}(S_{3}S_{4})\big)=O(N^{-\omega}),
\end{equation*}
and we  further have 
\begin{equation*}
\begin{aligned}
&
\Re \left\{ f_2^{(0)}(Y)-f_2^{(0)}(0) \right\}\geq  
{\rm Tr}(YY^*)-\log\det\left( \mathbb{I}_{K_1}+\left| z_0 \right|^{-2}YY^*\right) +O(N^{-\omega}).
%&\quad -\log\det\left( \mathbb{I}_{K}+
%\left( \mathbb{I}_{K}+\left| z_0 \right|^{-2}YY^* \right)^{-1} YY^*\big((\Lambda\Lambda^*)^{-1}- \left| z_0 \right|^{-2}\big)\right)\\
%&={\rm Tr}YY^*-\log\det\left( \mathbb{I}_{K}+\left| z_0 \right|^{-2}YY^*\right) +O(N^{-\omega}).
\end{aligned}
\end{equation*}

%\begin{equation}
%(\Lambda\Lambda^*)^{-1}=\left| z_0 \right|^{-2}\mathbb{I}_{K}+O(N^{-\omega}),
%\end{equation}
%Since $YY^*$ is Hermitian, we have unitary decomposition:
%\begin{small}
%\begin{equation}\label{Unitarydecomposition}
%YY^*=UDU^*,\quad U\in \mathcal{U}(\widetilde{K}),\quad D={\rm diag}\left( d_1,\cdots,d_{\widetilde{K}} \right)\geq 0.
%\end{equation}
%\end{small}
%Hence
%\begin{small}
%\begin{equation*}
%\left( \mathbb{I}_{\widetilde{K}}+\left| z_0 \right|^{-2}YY^* \right)^{-1}YY^*=U\left( \mathbb{I}_{\widetilde{K}}+\left| z_0 \right|^{-2}D \right)^{-1}DU^*=O(1),
%\end{equation*}
%\end{small}
%as $\left( \mathbb{I}_{\widetilde{K}}+\left| z_0 \right|^{-2}D \right)^{-1}D$ is $O(1)$ and $\mathcal{U}(\widetilde{K})$
%is compact.
%
%Therefore,
%\begin{small}
%\begin{equation*}
%\begin{aligned}
%&\Re \left[ f_1^{(0)}(Y)-f_1^{(0)}(0) \right]\geq 
%{\rm Tr}YY^*-\log\det\left[ \mathbb{I}_{\widetilde{K}}+\left| z_0 \right|^{-2}YY^*\right]
%-(1-\frac{s}{N})\log\det\left[ \mathbb{I}_{\widetilde{K}}+O(N^{-\omega}) \right] \\
%&=\sum_{i=1}^{\widetilde{K}}\left[ d_i-\log(1+\frac{d_i}{\left| z_0 \right|^2}) \right]+O(N^{-\omega}).
%\end{aligned}
%\end{equation*}
%\end{small}

By the simple fact that $x-\log(1+x\left| z_0 \right|^{-2})\geq 0$ with equality if and only if $x=0$, whenever $\left| z_0 \right|\geq 1$,  use of  the singular value decomposition for $Y$   shows us  that  there exists $\Delta^{(2)}>0$  such that for sufficiently large $N$ and for  $  {\rm Tr}(YY^*)\geq \delta$, 
\begin{small}
\begin{equation}\label{f10Ylowerbound}
\Re \left\{f_2^{(0)}(Y)-f_2^{(0)}(0) \right\}\geq 2\Delta^{(2)}.
\end{equation}
\end{small}

%Next, we observe that
%\begin{small}
%\begin{equation}\label{B111bound}
%\left| {\rm Pf}(B_{11}^{(1)}) \right|=O(P(d_i)),
%\end{equation}
%\end{small}
%$P(d_i)$ is a polynomial of $\left( d_1,d_2,\cdots,d_{\widetilde{K}} \right)$ of finite degree, its coefficients are independent of N. The Jacobian of transformations from Y to $(U,D)$ is a polynomial of 
%
%\noindent 
%$\left( \sqrt{d_1},\sqrt{d_2},\cdots,\sqrt{d_{\widetilde{K}}} \right)$ with finite degree.

Observe   that  the factor  $ \det(B_{11}^{(2)})$ in  $g_{2}(Y,\Lambda)$ is a polynomial  of matrix entries and  the other 
$\left( \det(A_2) \right)^{-r}$ can be absorbed, 
    therefore,  we can take a large   $N_0>r$ such that for some constant independent of  $\delta$ and $N$
\begin{small}
\begin{equation*}
\int\limits_{{\rm Tr}YY^*\geq \delta}\ 
\left| g_{2}(Y,\Lambda)  \exp\left\{ -N_0\left( f_2^{(0)}(Y)-f_2^{(0)}(0) \right) \right\} \right| {\rm d}Y \leq C.
\end{equation*}
\end{small} 
Together with \eqref{f10Ylowerbound}   we obtain  
\begin{small}
\begin{equation*}
\begin{aligned}
\int\limits_{{\rm Tr}(YY^*)\geq \delta}\ 
\left| g_{2}(Y,\Lambda)  \exp\left\{ -N\left( f_2^{(0)}(Y)-f_2^{(0)}(0) \right) \right\} \right| {\rm d}Y \leq   C e^{-(N-N_0)\Delta^{(2)} }=O(e^{-N\Delta^{(2)}}).
\end{aligned}
\end{equation*}
\end{small}

Finally,  it is easy to verify    
\begin{equation*}
 f_2^{(0)}(Y)= f_2(Y,Z_0)-h_{2}(Y)-\log\det(Z_0). 
\end{equation*}
On the other hand,  one can derive from $\Lambda=Z_0+N^{-\omega}\hat{\Lambda}$ that 
 $$\exp\left\{- N\big(-\log\det(Z_0)+f_2^{(0)}(0)\big)\right\}=\exp\left\{O(N^{1-\omega})\right\}.$$
The desired result immediately follows from the simple observation 
\begin{equation*}\label{experrorcontrol}
O\left(e^{-N\Delta^{(2)}+O(N^{1-\omega})}\right)=O\left(e^{-\frac{1}{2}N\Delta^{(2)}}\right).
\end{equation*}
\end{proof}

%In view of transformations of $Q_{\beta}$, we can get better expressions of $f_{K,L}^{(\beta)}$:
%
%For $\beta=1$, without loss of generality, let $N-s$ be even. Then we have:
%\begin{small}
%\begin{equation}\label{fKL1expression1}
%f_{K,L}^{(1)}=s_0^{(1)}C_{K,L,4}\int {\rm d}Y\ {\rm Pf}(B_{11}^{(1)})\exp
%\left\{ -\frac{N}{2}f_1^{(0)}(Y) \right\},
%\end{equation}
%\end{small}
%For $\beta=2,4$,
%\begin{small}
%\begin{equation}\label{fKL24expression1}
%f_{K,L}^{(\beta)}=C_{K,L,4/ \beta}\int {\rm d}Y\ {\rm Pf}(B_{11}^{(\beta)})\exp
%\left\{ -Nf_{\beta}^{(0)}(Y) \right\}.
%\end{equation}
%\end{small}
%where

%Here we give a general lemma:  the Hadamard-Fischer inequality
%\begin{lemma}\label{Fischer}
%Suppose that the partioned Hermitian matrix $H=\begin{bmatrix} A & B \\ B^* & C  \end{bmatrix} $ is semi-positive definite, then
%\begin{equation*}
%\det H\leq (\det A)(\det C).
%\end{equation*}
%\end{lemma}

The following bound   on $H_{\beta}$   will be used in the subsequent sections.
\begin{lemma}\label{Cbetacrudebound}
For a sufficiently small $\delta>0$,  with $H_{\beta}$ defined in \eqref{Cbeta} where  $\beta=2,4$,  for any  given nonzero  complex number $z_0$ we have 
\begin{small}
\begin{equation}\label{Cbetacrudequation}
H_{\beta}=O(1),
\end{equation}
\end{small}
whenever  $ {\rm Tr}(YY^*)\leq \delta$.
\end{lemma}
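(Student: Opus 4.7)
The plan is a direct perturbation argument: in both cases $\beta=2,4$ the matrix $H_\beta$ is a product of an inverse of some matrix $M_\beta(Y)$ with a ``numerator'' matrix, and the task reduces to showing that (i) the numerator is bounded uniformly, and (ii) $M_\beta(Y)$ is a small perturbation of an invertible constant matrix. The hypothesis $\mathrm{Tr}(YY^*)\le \delta$ controls the operator norm via $\|Y\|\le \sqrt{\mathrm{Tr}(YY^*)}\le\sqrt\delta$, so for $\delta$ sufficiently small a Neumann series argument (or equivalently, continuity of matrix inversion at an invertible point) yields $M_\beta(Y)^{-1}=O(1)$. The numerator is $O(1)$ because $\hat\Lambda$ ranges over a compact subset, so its blocks $\hat Z,\hat W^*$ are uniformly bounded.

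For $\beta=2$, I would observe that
\[
M_2(Y):=\begin{bmatrix}z_0\mathbb{I}_{K_1} & -Y^*\\ Y & \overline{z_0}\mathbb{I}_{K_1}\end{bmatrix}
=M_2(0)+E_2,\qquad
M_2(0)=\mathrm{diag}(z_0\mathbb{I}_{K_1},\overline{z_0}\mathbb{I}_{K_1}),
\]
with $M_2(0)^{-1}=\mathrm{diag}(z_0^{-1}\mathbb{I}_{K_1},\overline{z_0}^{-1}\mathbb{I}_{K_1})$ well-defined since $z_0\ne 0$, and $\|E_2\|\le\|Y\|\le\sqrt\delta$. Hence for $\delta$ small enough,
\[
M_2(Y)^{-1}=\bigl(\mathbb{I}+M_2(0)^{-1}E_2\bigr)^{-1}M_2(0)^{-1}=O(1),
\]
and multiplying by $\mathrm{diag}(\hat Z,\hat W^*)=O(1)$ gives $H_2=O(1)$.

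For $\beta=4$, I would apply exactly the same idea to
\[
M_4(Y):=\begin{bmatrix}\mathrm{i}Y & Z_0\\ -Z_0 & -\mathrm{i}Y^*\end{bmatrix}
=M_4(0)+E_4,\qquad
M_4(0)=\begin{bmatrix}0 & Z_0\\ -Z_0 & 0\end{bmatrix},
\]
whose inverse $M_4(0)^{-1}=\bigl[\begin{smallmatrix}0 & -Z_0^{-1}\\ Z_0^{-1} & 0\end{smallmatrix}\bigr]$ is well-defined and bounded since $z_0\ne 0$ (so $Z_0$ is invertible). The perturbation satisfies $\|E_4\|\le\|Y\|\le\sqrt\delta$, so the Neumann series again produces $M_4(Y)^{-1}=O(1)$. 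Since the compact-subset hypothesis on $\hat\Lambda$ gives $\bigl\|\bigl[\begin{smallmatrix}0 & \hat\Lambda\\ -\hat\Lambda & 0\end{smallmatrix}\bigr]\bigr\|=O(1)$, we conclude $H_4=O(1)$.

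There is no genuine obstacle here: the statement is essentially a continuity-of-inversion remark, and the only assumption really used is that $z_0\ne 0$ (which is automatic since $|z_0|\ge 1$ in the regime of interest, but in fact not needed for this particular lemma). The role of the lemma is purely technical: it will be used in subsequent sections to expand $\log\det(\mathbb{I}+N^{-\omega}H_\beta)$ in the small parameter $N^{-\omega}$, and the crude $O(1)$ bound is all that is required to legitimize truncating the Taylor series.
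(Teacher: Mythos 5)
Your proof is correct and takes a cleaner, more abstract route than the paper. The paper first computes an explicit block-inverse formula for $H_\beta$ via Schur complements (its equation \eqref{Cbetadetailed}), then bounds each block entry separately by expanding, e.g., $\bigl(z_0\mathbb{I}_{K_1}+\overline{z_0}^{-1}Y^*Y\bigr)^{-1}=z_0^{-1}\mathbb{I}_{K_1}+O(\delta)$. You instead invoke continuity of matrix inversion at the invertible constant matrix $M_\beta(0)$ directly, via a Neumann series on the full $2\beta K_1/2 \times 2\beta K_1/2$ matrix. Both use the same underlying estimate $Y=O(\sqrt\delta)$; yours avoids computing the block inverse entirely. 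The trade-off is that the paper needs the explicit block form \eqref{Cbetadetailed} anyway for the subsequent asymptotic expansions (see the derivations of \eqref{TrH2}--\eqref{TrH2H2} and their $\beta=4$ analogues), so for the paper it is not wasted work. For the purposes of proving only this crude $O(1)$ bound, your argument is the more economical one. One small point worth being explicit about: the paper's convention for $\|\cdot\|$ is Hilbert--Schmidt (so $\|Y\|=\sqrt{\mathrm{Tr}(YY^*)}\le\sqrt\delta$), and to run the Neumann series you want an operator-norm bound on $E_\beta$; this follows immediately since the operator norm is dominated by the Hilbert--Schmidt norm, and in fact the operator norm of $E_\beta$ equals that of $Y$ by a direct calculation.
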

\begin{proof}
Simple manipulation gives rise to 
\begin{small}
\begin{equation}\label{Cbetadetailed}
H_{\beta}=\begin{cases}
\begin{bmatrix}
\left( z_0\mathbb{I}_{K_1}+\overline{z_0}^{-1}Y^*Y \right)^{-1}\hat{Z}  & 
z_0^{-1}Y^*\left( \overline{z_0}\mathbb{I}_{K_1}+z_0^{-1}YY^* \right)^{-1}\hat{W}^*  
\\
-\overline{z_0}^{-1}Y\left( z_0\mathbb{I}_{K_1}+\overline{z_0}^{-1}Y^*Y \right)^{-1}\hat{Z} & 
\left( \overline{z_0}\mathbb{I}_{K_1}+z_0^{-1}YY^* \right)^{-1}\hat{W}^*
\end{bmatrix}, & \beta=2; \\  
\begin{bmatrix}
\left( Z_0+Y^*Z_0^{-1}Y \right)^{-1}\hat{\Lambda}  & -{\rm i}Z_0^{-1}Y^*\left( Z_0+Y Z_0^{-1}Y^* \right)^{-1}\hat{\Lambda}
\\
-{\rm i}Z_0^{-1}Y\left( Z_0+Y^*Z_0^{-1}Y \right)^{-1}\hat{\Lambda} & \left( Z_0+Y Z_0^{-1}Y^* \right)^{-1}\hat{\Lambda}
\end{bmatrix}, & \beta=4.
\end{cases}
\end{equation}
\end{small}

We just focus on  the case of $\beta=2$ since $\beta=4$ is similar.
With \eqref{Cbetadetailed} in mind,  we see  from  ${\rm Tr}(YY)^*\leq \delta$ that $Y^*Y=O(\delta).$  
For a sufficiently small $\delta>0$,
\begin{equation*}
\left( z_0\mathbb{I}_{K_1}+\overline{z_0}^{-1}Y^*Y \right)^{-1}
=z_0^{-1}\mathbb{I}_{K_1}+O(\delta),
\end{equation*}
so 
\begin{small}
\begin{equation*}
\left( z_0\mathbb{I}_{K_1}+\overline{z_0}^{-1}Y^*Y \right)^{-1}\hat{Z} =O(1),\ 
-\overline{z_0}^{-1}Y\left( z_0\mathbb{I}_{K_1}+\overline{z_0}^{-1}Y^*Y \right)^{-1}\hat{Z}=O(1).
\end{equation*}
\end{small}
Similarly, we can obtain 
\begin{small}
\begin{equation*}
\left( \overline{z_0}\mathbb{I}_{K_1}+z_0^{-1}YY^* \right)^{-1}\hat{W}^*=O(1),\ 
z_0^{-1}Y^*\left( \overline{z_0}\mathbb{I}_{K_1}+z_0^{-1}YY^* \right)^{-1}\hat{W}^* =O(1).
\end{equation*}
\end{small}
This thus completes the proof.
\end{proof}

 %%%%%%%%%%%%%%%%%%%%%%%%%%%%%%%%%%delete
 %%%%%%%%%%%%%%%%%%%%%%%%%%%
 
The remaining task is  to    find    exact approximations by  doing 
Taylor expansions for matrix-variable functions   
$f_{\beta}(Y,Z_0)$, $h_{\beta}(Y,\Lambda)$ and $g_{\beta}(Y,\Lambda)$, given in Proposition \ref{Analysisform}, 
 in the region $\left\{Y:  {\rm Tr}(YY^*)\leq \delta \right\}$.  It will be left in  subsequent Section \ref{autocorrealtionII}.

%%%%%%%%%%%%%%%%%%%%%%%%%%%%%%%%%%%%%%%%%%%%%%%%%%%%%%%%%%%%%%%%%%%%%%%%%%%%%%%%%%%%%%%%%%%%%%%%%%%%%%%%%

%%%%%%%%%%%%%%%%%%%%%%%%%%%%%%%%%%%%%%%%%%%%%%%%%%%%%%%%%%%%%%%%%%%%%%%%%%%%%%%%%%%%%

 \section{Autocorrelations of characteristic polynomials II} \label{autocorrealtionII}

\subsection{Deformed GinUE ensemble}%Deformed complex Ginibre ensemble}
In order to provide the proof of  Theorem \ref{2-complex}  we need  the following two key lemmas.

%We focus on local  properties for autocorrelation functions of characteristic polynomials at the edge of the spectrum and beyond.
%At the edge   $|z_0|=1$, let $\omega=\frac{1}{2}$, $\chi_i=\frac{1}{4p_{i1}}.$ 

%%%%%%%%%%%%%%%%%%%%%%%%%%%%%%%%%%%%%%%%%%%%%%%%%%%%%%%%%%%%%%%%%%%%%%%%%%%%%%%%%%%
\begin{lemma}\label{determinantcalculationbeta2edge}
Let $Y$ be a complex $K_1\times K_1$ matrix,  set  \begin{small}
\begin{equation}
B_2=\begin{bmatrix}
\mathbb{I}_r \otimes Z-A_0\otimes \mathbb{I}_{K_1}   &  -\mathbb{I}_r \otimes Y^*  \\
\mathbb{I}_r\otimes Y  &  \mathbb{I}_r \otimes W^*-A_{0}^*\otimes \mathbb{I}_{K_1}
\end{bmatrix},
\end{equation}
\end{small}
where   $A_0$ is given by \eqref{J2detail} and  $Z,W$ are defined in \eqref{Symbol} with  $|z_0|=1$ and  $\omega=\frac{1}{2}$.
%Under assumption (\ref{thetaexpression}),  \eqref{beta2edgethetaexpression}
Given an integer $0\leq m\leq \gamma$,  assume that    $ \theta_{j}\neq z_0$ is fixed  for all $j=m+1, \ldots, \gamma$ and  %m+1}, \ldots, \theta_{\gamma}$ are fixed and                              
\begin{equation}\label{beta2edgethetaexpression}
\begin{aligned}
&\theta_i=z_0+N^{-\frac{1}{4p_{i,1}}}\hat{\theta}_i,\quad
i=1,2,\cdots,m.            
\end{aligned}
\end{equation}
Then 
\begin{small}
\begin{equation}\label{determinantcalcuequa}
\begin{aligned}
\det\left( B_{2} \right)&= \prod_{i=m+1}^\gamma   |z_{0}-\theta_{i}|^{2K_{1} \sum_{j=1}^{\alpha_{i}} 
p_{i,j}\beta_{i,j} }\\
%\left[\det\left( z_0\mathbb{I}_s-J_2^{(r)} \right)\right]^{K_1}\left[\det\left( \overline{z_0}\mathbb{I}_s-{J_2^{(r)}}^* \right)\right]^{K_2}                                    \\
&\times\bigg\{   \det\begin{bmatrix}
N^{-\frac{1}{4}}\Omega_{11} & \Omega_{12} \\ \Omega_{21} & N^{-\frac{1}{4}}\Omega_{22}
\end{bmatrix}+                                                       
O\Big(
\sum_{(\alpha_1,\alpha_2)}N^{-\alpha_1}\| Y \|^{\alpha_2}
\Big)\bigg\},
\end{aligned}
\end{equation}
\end{small}
where the finitely many integer pairs $(\alpha_1, \alpha_2)$ satisfy the restriction  
\begin{equation}\label{errorsumcondition}
\alpha_1+\frac{\alpha_2}{4}\geq
\frac{K_1}{2}\sum_{i=1}^m\sum_{j=1}^{\alpha_i}\beta_{ij}+\frac{1}{4\kappa},
\end{equation}
where  $\Omega_{ij}$ are defined in  \eqref{F11denotion} and  \eqref{F21F12denotion} with
$\phi_k =( -\hat{\theta}_k)^{p_{k,1}}$ for all $1\leq k\leq m$. 
\end{lemma}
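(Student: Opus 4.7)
The plan is to reduce $\det(B_2)$ to the form stated by conjugating to Jordan normal form and then performing block-wise Schur complements. First I would conjugate $B_2$ by $T=\mathrm{diag}(P\otimes\mathbb{I}_{K_1},(P^*)^{-1}\otimes\mathbb{I}_{K_1})$: this replaces $A_0$ by $J_2$ in the top-left and $A_0^*$ by $J_2^*$ in the bottom-right, while the off-diagonal blocks become $-(P^*P)^{-1}\otimes Y^*$ and $(P^*P)\otimes Y$, exactly mirroring the tensor structure of $\Omega_{12}$ and $\Omega_{21}$. After interchanging the orders in the Kronecker products via Proposition \ref{tensorproperty}, the diagonal blocks decouple into independent sub-blocks indexed by the Jordan blocks $R_{p_{i,j}}(\theta_i)$, which lets me split the analysis into a regular part ($i>m$, where $|z_0-\theta_i|$ stays bounded below) and a critical part ($i\le m$, where $\theta_i=z_0+N^{-1/(4p_{i,1})}\hat\theta_i$).

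For the regular part, each Jordan block $R_{p_{i,j}}(\theta_i)$ with $i>m$ contributes a block $C_{i,j}:=\mathbb{I}_{p_{i,j}}\otimes(Z-\theta_i\mathbb{I}_{K_1})-N_{p_{i,j}}\otimes\mathbb{I}_{K_1}$ that is block upper-triangular with diagonal $(z_0-\theta_i)\mathbb{I}_{K_1}+O(N^{-1/2})$; its determinant is therefore $(z_0-\theta_i)^{K_1 p_{i,j}}(1+O(N^{-1/2}))$, and the analogous bottom-right block contributes the complex conjugate. Taking a Schur complement with respect to these regular blocks produces, at leading order, the prefactor $\prod_{i=m+1}^\gamma|z_0-\theta_i|^{2K_1\sum_j p_{i,j}\beta_{i,j}}$; the corrections it induces on the remaining critical block are of relative order $O(N^{-1/2})$ and will be absorbed into the error term.

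For the critical part, I would next perform, within each Jordan block $R_{p_{i,j}}(\theta_i)$ with $i\le m$, a Schur complement that eliminates all intermediate rows and columns, leaving only those indexed by $I_1$ (the first column) and $I_2$ (the last column) of the block. Letting $D_i:=Z-\theta_i\mathbb{I}_{K_1}=-N^{-1/(4p_{i,1})}\hat\theta_i\mathbb{I}_{K_1}+O(N^{-1/2})$, a direct computation shows that the resulting $2K_1\times 2K_1$ reduced sub-matrix contributes, at leading order, entries built from powers of $D_i$ and $D_i^{-1}$ that combine (across the contributions of all $p$ columns of the Jordan block) to give $\det(D_i)^{p_{i,j}}$; for the smallest block ($j=1$) this equals $\phi_i\mathbb{I}_{K_1}N^{-1/4}+\cdots$ in the $I_1$-positions, precisely producing the nonzero entries of $N^{-1/4}\Omega_{11}$ and $N^{-1/4}\Omega_{22}$, while blocks with $j>1$ produce entries of strictly smaller order $N^{-p_{i,j}/(4p_{i,1})}$, consistent with the zero entries of $\Omega_{11}$ at those positions up to an admissible error. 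Simultaneously, this Schur complement restricts the off-diagonal blocks to the rows/columns indexed by $I_1,I_2$ of $(P^*P)^{\pm 1}$, yielding exactly $\Omega_{21}=Y\otimes(P^*P)_{I_1}$ and $\Omega_{12}=-Y^*\otimes((P^*P)^{-1})_{I_2}$ at leading order.

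The hardest part will be to verify that all subleading remainders conform to \eqref{errorsumcondition}, i.e.\ that every error monomial $N^{-\alpha_1}\|Y\|^{\alpha_2}$ satisfies $\alpha_1+\alpha_2/4\ge\tfrac{K_1}{2}\sum_{i,j}\beta_{i,j}+\tfrac{1}{4\kappa}$. This requires careful tracking of two sources of corrections: the $O(N^{-1/2})$ subleading term $\hat Z$ in $D_i$ propagating through both the regular and critical Schur complements; and the entries $D_i^{-k}$ arising when inverting the middle portion of each critical Jordan bidiagonal block, which carry amplification $N^{k/(4p_{i,1})}$ and couple to entries of $Y$ through the off-diagonal blocks. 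Expanding the reduced determinant as a polynomial in these subleading pieces and in the entries of $Y$, one reads off each monomial's $(N,\|Y\|)$-weight and verifies the lower bound; the parameter $\kappa=\max\{1/2,p_{1,1},\ldots,p_{m,1}\}$ appears naturally as the slowest-decaying scale among the critical blocks, so that the smallest allowed improvement beyond the leading exponent is $N^{-1/(4\kappa)}$.
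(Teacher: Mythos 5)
Your opening moves coincide with the paper's: conjugating by $\mathrm{diag}(P\otimes\mathbb{I}_{K_1},(P^*)^{-1}\otimes\mathbb{I}_{K_1})$, reordering the Kronecker products via Proposition \ref{tensorproperty}, splitting $J_2$ into the regular part ($i>m$) and the critical part ($i\le m$), and reducing everything to the rows and columns indexed by $I_1$ and $I_2$ so that the off-diagonal data becomes $Y\otimes(P^*P)_{I_1}$ and $-Y^*\otimes((P^*P)^{-1})_{I_2}$. Where you diverge is in the reduction mechanism: the paper never inverts anything. It expands $\det(H^{(0)}+H^{(1)}+H^{(2)})$ multilinearly in the columns, observes that $z_0\mathbb{I}_r-J_2^{(0)}$ has genuinely zero columns at the $I_1$-positions (after the perturbations $N^{-1/(4p_{i,1})}\hat\theta_i$ are stripped into $H^{(1)}$), and classifies which column replacements produce leading versus subleading monomials. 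This makes the error bookkeeping \eqref{errorsumcondition} a direct count of weights, with no resolvents at all.

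Your Schur-complement route contains a genuine error as described. You write that inverting "the middle portion of each critical Jordan bidiagonal block" produces entries $D_i^{-k}$ carrying amplification $N^{k/(4p_{i,1})}$. This indicates you are pivoting on a principal sub-block of $D_i\otimes\mathbb{I}-\mathbb{I}\otimes N_p$, whose inverse is $\sum_k D_i^{-k-1}N_p^k$. But $D_i=-N^{-1/(4p_{i,1})}\hat\theta_i\mathbb{I}_{K_1}+N^{-1/2}\hat Z$ need not be invertible at all — the lemma must hold uniformly for $\hat\theta_i$ and $\hat Z$ in compact sets, including $\hat\theta_i=0$ (this is exactly the case used later in the proof of Theorem \ref{2-complex-correlation}) — and even when it is invertible the amplification $N^{k/(4p_{i,1})}$ coupling to $\|Y\|$ would destroy the bound \eqref{errorsumcondition}. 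The fix is to pivot instead on the off-diagonal "shift" sub-block (rows $1,\dots,p-1$ against columns $2,\dots,p$ in each upper-bidiagonal critical block, and the mirror choice in the lower-bidiagonal ones): that pivot equals $-\mathbb{I}$ plus a nilpotent multiple of $D_i$, its inverse is a polynomial in $D_i$ with only nonnegative powers, and the reduced corner entry becomes $D_i^{p_{i,j}}$ (a matrix power, not $\det(D_i)^{p_{i,j}}$ as you wrote), whose leading term $N^{-1/4}(-\hat\theta_i)^{p_{i,1}}$ for $j=1$ reproduces $N^{-1/4}\Omega_{11}$. A second, smaller inaccuracy: eliminating the regular blocks induces corrections on the critical block of order $O(\|Y\|^2)$ (through $Y\cdot(\text{regular block})^{-1}\cdot Y^*$ terms coupled by $P^*P$), not $O(N^{-1/2})$; this happens to be admissible for \eqref{errorsumcondition} when $m\ge1$, but as stated it shows the error propagation has not actually been carried out.
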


\begin{proof}

Since   $A_0=PJ_2P^{-1}$,    we   have   matrix decomposition
\begin{small}
\begin{equation*} 
\begin{aligned} B_2=
&\begin{bmatrix}
P\otimes \mathbb{I}_{K_1}   &  \\
&  \left( P^{-1} \right)^*\otimes \mathbb{I}_{K_1}
\end{bmatrix}
 \begin{bmatrix}
\mathbb{I}_r \otimes Z-J_2\otimes \mathbb{I}_{K_1}   &  -\left( P^*P \right)^{-1} \otimes Y^*  \\
(P^*P)\otimes Y  &  \mathbb{I}_r \otimes W^*-J_{2}^*\otimes \mathbb{I}_{K_1}
\end{bmatrix}  \\
&\times  \begin{bmatrix}
P^{-1}\otimes \mathbb{I}_{K_1}   &  \\
&  P^*\otimes \mathbb{I}_{K_1}
\end{bmatrix},          
%&\times\begin{bmatrix}
%P^{-1}\otimes \mathbb{I}_{K_1}   &  \\
%&  P^*\otimes \mathbb{I}_{K_1}
%\end{bmatrix}.
\end{aligned}
\end{equation*}
from which, together with Proposition \ref{tensorproperty}  we further obtain 
\begin{equation*}\label{B112trans}
\det\left( B_{2} \right)=
\det\begin{bmatrix}
Z \otimes \mathbb{I}_r-\mathbb{I}_{K_1}\otimes J_2   &  - Y^*\otimes (P^*P)^{-1}  \\
Y\otimes (P^*P)  &   W^*\otimes \mathbb{I}_r-\mathbb{I}_{K_1}\otimes J_2^*
\end{bmatrix}.
\end{equation*}
\end{small}

Next, we will make use  of column replacement operations to calculate the above  determinant
\begin{small}
\begin{equation}\label{detcolumnreplace}
\begin{aligned}
&\det\left( H^{(0)}+H^{(1)}+H^{(2)} \right)=
\sum_{i_1, \ldots, i_q=0}^{2}\det\big[ h^{(i_1)}_{1}\big| h^{(i_2)}_{2}\big|\cdots \big| h^{(i_q)}_{q}  \big],  
%\det\left( M \right)+
%\sum_{k=1}^f\det\left( M_1\big| M_2\big| \cdots \big| H_k\big| \cdots \big|M_f \right) \\
%&+\sum_{ k_1<k_2}\det\left( M_1\big| \cdots \big| H_{k_1}\big| \cdots \big|H_{k_2}\big|\cdots\big| M_f \right)+
%\sum_{k_1<\cdots<k_s}\det\left( M_1\big| \cdots \big| H_{k_1}\big| \cdots \big|H_{k_s}\big|\cdots\big| M_f \right)                \\
%&+\cdots+\sum_{k=1}^m\det\left( H_1\big| H_2\big| \cdots \big| M_k\big| \cdots \big|H_f \right)
%+\det\left( H \right),
\end{aligned}
\end{equation}
\end{small}
where  $H^{(i)}=\big[ h^{(i)}_{1}\big| h^{(i)}_{2}\big|\cdots \big| h^{(i)}_{q}  \big]$ is a  $q\times q$ matrix with columns $h_{j}^{(i)}$, $i=0,1,2$.

Recalling  the definition of $J_2$ in \eqref{J2detail} and  the assumptions in \eqref{beta2edgethetaexpression},  rewrite $J_2$ as  a sum 
\begin{equation}\label{J2decom}
J_2=J_2^{(0)}+J_2^{(1)},
\end{equation}
where
\begin{small}
\begin{equation}\label{J2mdeno}
J_2^{(0)}=\Big(\bigoplus_{i=1}^m  \bigoplus_{j=1}^{\alpha_{i}}  
\overbrace{ R_{p_{i,j}}\left(z_0 \right)   \bigoplus \cdots \bigoplus  R_{p_{i,j}}\left(z_0 \right)
}^{\beta_{i,j}\ blocks}\Big)
\bigoplus J_2^{(2)}
\end{equation}
\end{small}
with 
\begin{small}
\begin{equation}\label{J2rdeno}
J_2^{(2)}=\bigoplus_{i=m+1}^\gamma  \bigoplus_{j=1}^{\alpha_{i}} \overbrace{ R_{p_{i,j}}\left(\theta_i \right)   \bigoplus \cdots \bigoplus  R_{p_{i,j}}\left(\theta_i \right)
}^{\beta_{i,j}\ blocks},
\end{equation}
and 
\end{small}
\begin{equation}\label{J2adeno}
J_2^{(1)}=\Big(\bigoplus_{i=1}^m
N^{-\frac{1}{4p_{i,1}}}\hat{\theta}_i \mathbb{I}_{\sum_{j=1}^{\alpha_i}\beta_{i,j}p_{i,j}}\Big)\bigoplus 0_{r_0},
\end{equation}
 with  $r_0=r-\sum_{i=1}^m\sum_{j=1}^{\alpha_i}\beta_{i,j}p_{i,j}$. Furthermore,  by choosing
\begin{equation*}\label{Mdenotion}
H^{(0)}=\begin{bmatrix}
 \mathbb{I}_{K_1}\otimes \big(z_0\mathbb{I}_r - J_2^{(0)}\big)    &    \\
  & 
   \mathbb{I}_{K_1}\otimes\big ( \overline{z_0}\mathbb{I}_r - {J_{2}^{(0)}}^*\big)  
\end{bmatrix},
\end{equation*}
\begin{equation*}\label{H2denotion}
H^{(1)}=\begin{bmatrix}
-\mathbb{I}_{K_1}\otimes J_2^{(1)} +N^{-\frac{1}{2}}\hat{Z}\otimes \mathbb{I}_r    &    \\
  &  -\mathbb{I}_{K_1}\otimes {J_2^{(1)}}^* +N^{-\frac{1}{2}}\hat{W}^*\otimes \mathbb{I}_r
\end{bmatrix},
\end{equation*}
and \begin{equation*}\label{H1denotion}
H^{(2)}=\begin{bmatrix}
 &  -Y^* \otimes  V \\
Y\otimes U & 
\end{bmatrix} ,
\end{equation*}
 we obtain  
%Then from (\ref{Symbol}) and (\ref{J2decom}) we can rewrite $\det\left( B_{11}^{(2)} \right)$ as 
\begin{equation*}\label{detB112MH}
\det\left( B_{2} \right)=
\det\left( H^{(0)}+H^{(1)}+H^{(2)} \right).
\end{equation*}

Now note that
\begin{small}
\begin{equation*}\label{upperleftdecom}
z_0 \mathbb{I}_{r}-J_2^{(0)}=\Big(-\bigoplus_{i=1}^m  \bigoplus_{j=1}^{\alpha_{i}}   \overbrace{ R_{p_{i,j}}(0)   \bigoplus \cdots \bigoplus  R_{p_{i,j}}(0)
}^{\beta_{i,j}\ blocks}\Big)
\bigoplus \left(
z_0 \mathbb{I}_{r_0}-J_2^{(2)}
\right)
\end{equation*}
\end{small}
and 
\begin{small}
\begin{equation*}\label{lowerbottomdecom}
\overline{z_0} \mathbb{I}_{r}-{J_2^{(0)}}^*=
\Big(-\bigoplus_{i=1}^m  \bigoplus_{j=1}^{\alpha_{i}}   \overbrace{ R_{p_{i,j}}(0)^*   \bigoplus \cdots \bigoplus  R_{p_{i,j}}(0)^*
}^{\beta_{i,j}\ blocks}\Big)
\bigoplus \left(
\overline{z_0} \mathbb{I}_{r_0}-{J_{2}^{(2)}}^*
\right)
\end{equation*}
\end{small}
have some zero columns  with indexes respectively  corresponding to  
  $I_{1}$ and $I_{2}$, which are    the  index sets consisting of  numbers corresponding to   first and last  columns of all 
 blocks  
 $R_{p_{i,j}}\left(\theta_i \right)$    from $J_2$   where  $ 1\leq i \leq m, 1\leq j \leq \alpha_i$.
 % It is easy to see that they have  the same  cardinality $\sum_{i=1}^m\sum_{j=1}^{\alpha_i}\beta_{ij}$.
Therefore,  some of the determinants  on  the right-hand side of \eqref{detcolumnreplace} are obviously zero and we need to find the leading non-zero terms.  For this,   we can take $H^{(0)}$ as a leading matrix and 
%apply the  column replacement rule  established on the right-hand side of \eqref{detcolumnreplace},
 replace zero columns with corresponding non-zero columns from $H^{(1)}$  or $H^{(2)}$ by following the following rules: 
\begin{itemize}
\item[(i)]
Do not replace any column of $z_0\mathbb{I}_{r_0}-J_2^{(2)}$ and $\overline{z_0}\mathbb{I}_{r_0}-{J_2^{(2)}}^*$;
\item[(ii)]
For  each  of the  blocks $-R_{p_{i,j}}(0)$ and   $-R_{p_{i,j}}(0)^*$ where   $ 1\leq i \leq m$ and $1\leq j \leq \alpha_i$,

\begin{itemize}
\item[(ii-1)]

when  $j=1$, replace the first  column  with  either that of   $H^{(1)}$  or $H^{(2)}$; in the former case, 
we need to replace the whole block, otherwise,  there exist  two columns proportional to each other; 
in the latter case,  just replace first column of $-R_{p_{i,1}}(0)$  or  the  last column of $-R_{p_{i,1}}(0)^*$; 
\item[(ii-2)]  when $j>1$, just replace the first column of $-R_{p_{i,j}}(0)$ or  the last column of $-R_{p_{i,j}}(0)^*$) with that of   $H^{(2)}$ because of $p_{i,1}<p_{i,j}$. 
\end{itemize}
\end{itemize}

Use the Laplace expansion  after the above replacement  procedure,  and we  then  separate  both  the determinants
$
z_0 \mathbb{I}_{r_0}-J_2^{(2)}
$ and $\overline{z_0} \mathbb{I}_{r_0}-{J_{2}^{(2)}}^*$. 
 Finally,   use  the expansion formula \eqref{detcolumnreplace} again,  collect the leading terms  and we thus rebuild  them into   the  first term on  the right-hand side of \eqref{determinantcalcuequa}.  All the other are left  in the error term.
\end{proof}
%%%%%%%%%%%%%%%%%%%%%%%%%%%%%%%%%%%%%%%%%%%%%%%%%%%%%%%%%%%%%%%%%%%%%%%%%%%%%%%%%%
\begin{lemma} \label{complexbeta2Jordan}
With  the Jordan block   $R_{p}\left( \theta\right)$    in  \eqref{1Rpirealedge}, let 
\begin{equation}
\theta=z_0+N^{-\frac{1}{2\kappa}} \hat{\theta},
\end{equation}
and 
  \begin{small} 
\begin{equation}\label{Ppibeta2edge}
T_{p}(\theta)=\begin{bmatrix}
-\mathbb{I}_{p}\otimes Y^*  &   \mathbb{I}_{p}\otimes Z-
R_{p}(\theta) 
\otimes \mathbb{I}_{{K_1}}  \\
 \mathbb{I}_{p}\otimes W^*-
R_{p}(\theta)^* 
\otimes \mathbb{I}_{{K_2}}
  &  \mathbb{I}_{p}\otimes Y
\end{bmatrix}.
\end{equation}
\end{small}
With the assumptions  in  \eqref{Symbol} where $\omega=\frac{1}{2\xi}$,  if  $|z_0|>1$, then 
as $N\to \infty$  
the following  hold   uniformly for  
$\hat{\Lambda}$
in a compact subset of $\mathbb{C}^{K_{1}+K_2}$.
%For  $|z_0|>1$,   let   $\omega=\frac{1}{2\xi}$ and $\chi=\frac{1}{2\kappa}$ 
%with positive integers $\gamma$ and $\kappa$, then  
\begin{small}
\begin{equation} 
 \det\left( T_{p}(\theta)\right)=\big(
(-1)^{p\left(K_1K_2+K_1+K_2\right)+K_2^2} +O(\|Y\|)\big)
 \det\!\Big(S^{(2,\mathrm{o})}_N(\hat{\theta};\frac{p}{\kappa})+\Delta^{(2,\mathrm{o})}(Y) \Big),
\end{equation}
\end{small}
where 
%\begin{small}
\begin{equation}\label{S2outlier}
S^{(2,\mathrm{o})}_N(\hat{\theta};\frac{p}{\kappa})=
\begin{cases}
\begin{bmatrix}
 ( -1)^{p}  N^{-\frac{p}{2\xi}}  \big(\overline{\hat{W}}^p+O(N^{\frac{\kappa-\xi}{2\kappa\xi}})\big)
 & -Y  \\ Y^* &
 ( -1)^{p}  N^{-\frac{p}{2\xi}}  \big(\hat{Z}^p+O(N^{\frac{\kappa-\xi}{2\kappa\xi}})\big)
\end{bmatrix}, & \xi>\kappa;\\
\begin{bmatrix}
  N^{-\frac{p}{2\xi}}( \overline{\hat{\theta}}\mathbb{I}_{K_2}  - \overline{\hat{W}})^{p}
 & -Y  \\ Y^* &
 N^{-\frac{p}{2\xi}} (\hat{\theta}\mathbb{I}_{K_1}  -\hat{Z})^{p}
\end{bmatrix}, & \xi=\kappa;\\
\begin{bmatrix}
N^{-\frac{p}{2\kappa}}   \big(\overline{\hat{\theta}}^{p}\mathbb{I}_{K_2} +O(N^{\frac{\xi-\kappa}{2\kappa\xi}})\big)
 & -Y  \\ Y^* &
 N^{-\frac{p}{2\kappa}}  \big ( \hat{\theta}^{p} \mathbb{I}_{K_1}   +O(N^{\frac{\xi-\kappa}{2\kappa\xi}})\big)
\end{bmatrix}, & \xi<\kappa,
\end{cases}
\end{equation}
and  $$\Delta^{(2,\mathrm{o})}(Y)=O(\|Y\|^2)+O(N^{-\frac{1}{2\xi\vee \kappa}}\|Y\|).$$

\end{lemma}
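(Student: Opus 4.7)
The plan is to exploit the Jordan/tensor structure of $T_p(\theta)$, reduce its $2p(K_1+K_2)$-dimensional determinant to an effective $(K_1+K_2)$-dimensional one via two successive block Schur complements, and then extract the leading asymptotics in the outlier regime $|z_0|>1$. First I would use the splitting $R_p(\theta)=\theta\mathbb{I}_p+N_p$, where $N_p$ is the standard nilpotent shift of index $p$, so that the upper-right block of $T_p(\theta)$ becomes $\mathbb{I}_p\otimes M-N_p\otimes\mathbb{I}_{K_1}$ with $M:=Z-\theta\mathbb{I}_{K_1}$ (block upper bidiagonal, with $M$ on the diagonal and $-\mathbb{I}_{K_1}$ on the superdiagonal), while the lower-left block is $\mathbb{I}_p\otimes M'-N_p^{T}\otimes\mathbb{I}_{K_2}$ with $M':=W^{*}-\bar\theta\mathbb{I}_{K_2}$. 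Swapping the two block columns (which introduces a factor $(-1)^{pK_1K_2}$) realigns $T_p(\theta)$ into a block $2\times 2$ matrix whose top-left block $A$ is the upper-bidiagonal matrix above, whose bottom-right block $B$ is the lower-bidiagonal one, and whose off-diagonals are $-\mathbb{I}_p\otimes Y^{*}$ and $\mathbb{I}_p\otimes Y$; $A$ is invertible throughout the range of $M$ permitted by \eqref{Symbol} and \eqref{beta2edgethetaexpression}.

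The first block Schur complement, taken on $A$, uses the nilpotency $N_p^{p}=0$ to give the explicit resolvent $A^{-1}=\sum_{j=0}^{p-1}N_p^{j}\otimes M^{-j-1}$, and yields $\det T_p(\theta)=(-1)^{pK_1K_2}\det(M)^{p}\det H$, where $H$ is the block Hessenberg matrix of size $pK_2\times pK_2$ with $-\mathbb{I}_{K_2}$ on the subdiagonal, $M'+YM^{-1}Y^{*}$ on the diagonal, and $YM^{-j-1}Y^{*}$ on the $j$-th superdiagonal for $1\le j\le p-1$. A second block Schur complement on $H$---or, equivalently, a block Gaussian elimination that uses the $-\mathbb{I}_{K_2}$ subdiagonal to telescope the higher superdiagonals---collapses the $p$ copies and produces an effective $(K_1+K_2)\times(K_1+K_2)$ matrix. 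The central algebraic identity to verify is that this effective matrix has top-left block proportional to $(-M')^{p}$, bottom-right block proportional to $(-M)^{p}$, and off-diagonal blocks $\pm Y$, $Y^{*}$, modulo an additive error of the stated order $O(\|Y\|^{2})+O(N^{-1/(2\xi\vee\kappa)}\|Y\|)$.

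A case analysis then reproduces \eqref{S2outlier}: substituting the scalings $M,M'=O(N^{-1/(2\xi)})+O(N^{-1/(2\kappa)})$ into the effective matrix and retaining the dominant contribution in each of the three regimes $\xi>\kappa$, $\xi=\kappa$, $\xi<\kappa$ recovers the three displayed alternatives for $S^{(2,\mathrm{o})}_{N}(\hat\theta;p/\kappa)$, while the subleading contributions are absorbed into $\Delta^{(2,\mathrm{o})}(Y)$. The overall prefactor is assembled by combining $(-1)^{pK_1K_2}\det(M)^{p}$ with the conversion $\det(M)^{p}\det(M')^{p}=(-1)^{p(K_1+K_2)}\det((-M)^{p})\det((-M')^{p})$ and the orientation sign coming from the arrangement of the blocks in $S^{(2,\mathrm{o})}_{N}$; the $O(\|Y\|)$ correction inside the prefactor is inherited from the non-commutativity of the Gaussian-elimination steps on $H$, which generate terms linear in $Y$ that cannot be absorbed into an additive perturbation of the effective matrix alone.

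The hard part will be the collapse of $\det H$. Because $H$ carries $p$ non-trivial superdiagonals of geometrically growing order in $M^{-1}$---the topmost involving $M^{-p}$, which exactly balances the target $M^{p}$ once the prefactor $\det(M)^{p}$ is extracted---the resummation between the shift powers $N_p^{j}$ and the inverse powers $M^{-j-1}$ must be tracked exactly; a naive truncation would either kill the leading $(-M)^{p}$ block of $S^{(2,\mathrm{o})}_{N}$ or introduce $N$-divergent spurious terms. A secondary technical nuisance is that the dominance between $N^{-1/(2\xi)}$ (from $M$) and $N^{-1/(2\kappa)}$ (from $\hat\theta$) flips across the three sub-cases of \eqref{S2outlier}, so the identification of the effective $(K_1+K_2)\times(K_1+K_2)$ matrix and the precise content of $\Delta^{(2,\mathrm{o})}(Y)$ must be carried out regime by regime.
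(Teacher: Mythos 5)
Your reduction goes in the opposite direction from the paper's, and this is where it breaks. You propose to Schur-complement first on the block $A=\mathbb{I}_p\otimes M-N_p\otimes\mathbb{I}_{K_1}$ with $M=Z-\theta\mathbb{I}_{K_1}$, using $A^{-1}=\sum_{j=0}^{p-1}N_p^{j}\otimes M^{-j-1}$. But under the scaling \eqref{Symbol}, $M=E_1=-N^{-\frac{1}{2\kappa}}\hat{\theta}\mathbb{I}_{K_1}+N^{-\frac{1}{2\xi}}\hat{Z}$ is a vanishing matrix of size $O(N^{-\frac{1}{2(\xi\vee\kappa)}})$, and it need not be invertible at all on the compact parameter set over which the lemma must hold uniformly: in the case $\xi=\kappa$ one has $M=N^{-\frac{1}{2\xi}}(\hat{Z}-\hat{\theta}\mathbb{I}_{K_1})$, which is exactly singular whenever some $\hat{z}_i=\hat{\theta}$ --- precisely the points where the block $(\hat{\theta}\mathbb{I}_{K_1}-\hat{Z})^{p}$ in \eqref{S2outlier} degenerates, so they cannot be excluded. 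Even off the singular set, $\|M^{-1}\|\gtrsim N^{\frac{1}{2(\xi\vee\kappa)}}\to\infty$, so the superdiagonal entries $YM^{-j-1}Y^{*}$ of your Hessenberg matrix $H$ diverge with $N$; the identity $\det T_p=(-1)^{pK_1K_2}\det(M)^{p}\det H$ survives by polynomial continuation, but the asymptotic step --- identifying an effective $(K_1+K_2)\times(K_1+K_2)$ matrix ``modulo an additive error of the stated order'' --- cannot be done entrywise, because it requires exact cancellation between individually divergent terms and the compensating prefactor $\det(M)^{p}$. You correctly flag this resummation as ``the hard part,'' but it is not a technical nuisance to be deferred: as set up, no uniform bound of the form $O(\|Y\|^{2})+O(N^{-\frac{1}{2(\xi\vee\kappa)}}\|Y\|)$ can be extracted from quantities that are not even defined on part of the parameter range.

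The paper eliminates in the other order. After interleaving the $p$ tensor copies, $T_p(\theta)$ becomes a $p\times p$ block tridiagonal matrix (with corners) whose \emph{diagonal} blocks are $\left[\begin{smallmatrix}-Y^{*}&-\mathbb{I}_{K_1}\\-\mathbb{I}_{K_2}&Y\end{smallmatrix}\right]$ --- uniformly invertible for $\|Y\|$ small, with determinant $(-1)^{K_1K_2+K_1+K_2}+O(\|Y\|)$ --- while $E_1,E_2$ sit only on the off-diagonals and corners. The recursion \eqref{recursiondetbeta2} then Schur-complements these $O(1)$ blocks one at a time, so the small matrices $E_1,E_2$ only ever accumulate multiplicatively in numerators ($E_i^{(r)}=E_i^{r+1}+O(\|Y\|^{2})$), producing the target blocks $E_1^{p},E_2^{p}$ in $T^{(p-1)}$ with error bounds that are manifestly uniform in $\hat{\Lambda}$ and require no invertibility of $E_1$ or $E_2$. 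If you want to salvage your route, you would need to replace the inversion of $A$ by an elimination pivoting on the $-\mathbb{I}$ superdiagonal entries of $A$ (which is essentially what the interleaving accomplishes); as written, the proposal has a genuine gap at its first step.
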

\begin{proof} Let $K=K_1+K_2$.  
We proceed in two steps.

 {\bf Step 1: Recursion.}
 Since
 %%%%%%%%%%%%%%%%%%%%%%%%%%%%%%%%%%%%%%%%%%%%%%%%%%%%%%%%%%%%%%%%%%%%%%%%%%%%%%%%%%%%%% 
\begin{small}
\begin{equation*}\label{beta2simplification4}
\begin{aligned}
&\mathbb{I}_{p}\otimes Z-R_p\left( \theta \right)\otimes \mathbb{I}_{K_1}=
\begin{bmatrix}
E_1 & -\mathbb{I}_{K_1} & \cdots & 0 \\  & \ddots & \ddots & \vdots \\
&& \ddots & -\mathbb{I}_{K_1} \\ &&& E_1
\end{bmatrix},                        \\
&\mathbb{I}_{p}\otimes W^*-R_p\left( \theta \right)^*\otimes \mathbb{I}_{K_2}=
\begin{bmatrix}
E_2 &&& \\
-\mathbb{I}_{K_2} & E_2 && \\
\vdots & \ddots & \ddots   & \\
0 & \cdots & -\mathbb{I}_{K_2} & E_2
\end{bmatrix},
\end{aligned}
\end{equation*}
\end{small}
where
\begin{small}
\begin{equation}\label{E1E2beta2complexedge}
E_1=Z-\left( z_0+N^{-\frac{1}{2\kappa}}\hat{\theta} \right)\mathbb{I}_{K_1},\quad 
E_2=W^*-\left( \overline{z_0}+N^{-\frac{1}{2\kappa}}\overline{\hat{\theta}} \right)\mathbb{I}_{K_2}.
\end{equation}
\end{small}
Elementary matrix operations allow us to  rewrite the determinant of $T_{p}(\theta)$ as  that of a $p\times p$  block tridiagonal  matrix with corners 
\begin{small}
\begin{equation*}
\det\left(T_p(\theta) \right)=(-1)^{(p-1)(K_1^2+K_2^2)}\det\left(T^{(0)}\right),\quad 
T^{(0)}=\left[ t_{ij}^{(0)} \right],
\end{equation*}
\end{small}
where 
%%%%%%%%%%%%%%%%%%%%%%%%%%%%%%%%%%%%%%%%%%%%%%%%%%%%%%%%%%
\begin{small}
\begin{equation*}\label{tij0beta2edge}
\begin{aligned}
&t_{ii}^{(0)}=\begin{bmatrix}
-Y^* & -\mathbb{I}_{K_1}  \\  -\mathbb{I}_{K_2}  &  Y
\end{bmatrix},\quad  
 t_{i,i+1}^{(0)}=\begin{bmatrix}
0  &  0  \\  E_2  &  0
\end{bmatrix},\quad
t_{i+1,i}^{(0)}=\begin{bmatrix}
0  &  E_1  \\  0  &  0
\end{bmatrix},\quad  i=1,\cdots,p-1, \\
&t_{1,p}^{(0)}=\begin{bmatrix}
0  &  E_1  \\  0  &  0
\end{bmatrix},\quad
t_{p,1}^{(0)}=\begin{bmatrix}
0  &  0  \\  E_2  &  0
\end{bmatrix},\quad
t_{pp}^{(0)}=\begin{bmatrix}
-Y^* &  \\ &  Y
\end{bmatrix}.
\end{aligned}
\end{equation*}
\end{small}
% Assume $p\geq p_0$, then  
%\begin{small}
%\begin{equation}\label{pfatrid}
%{\rm Pf}\left(T^{(0)}\right)=,
%\end{equation}
%\end{small}

We  thus deduce  from 
the  recursion  
\begin{small}
\begin{equation*}\label{inductionTprbeta2edge}
\det\left(T^{(r-1)}\right)=\det\left(t_{11}^{(r-1)}\right)\det\left(T^{(r)}\right),  \ T^{(r)}=\Big[t_{i,j}^{(r)} \Big]_{i,j=1}^{p-r}, \quad r=1,2,\dots, p-1,
\end{equation*}
\end{small}
that 
\begin{small}
\begin{equation}\label{recursiondetbeta2}
\det\left( T_{p}(\theta)\right) =(-1)^{(p-1)(K_1^2+K_2^2)}
\det\left( T^{(p-1)} \right)\prod_{i=0}^{p-2}\det\left( t_{1,1}^{(i)} \right),
\end{equation}
\end{small}
where $T^{(r)}$ is a block tridiagonal  matrix with  corners
%%%%%%%%%%%%%%%%%%%%%%%%%%%%%%%%%%%%%%%%%%%%%%%%%%%%%%%%%%%%%%%%%%%%%%%%%%%%%%%%%%%%%5
\begin{small}
\begin{equation}\label{tijrbeta2edge}
\begin{aligned}
&t_{11}^{(r)}=\begin{bmatrix}
-Y^*-E_1A_{22}^{(r-1)}E_2 & -\mathbb{I}_{K_1}  \\  -\mathbb{I}_{K_2}  &  Y
\end{bmatrix},\quad
t_{ii}^{(r)}=\begin{bmatrix}
-Y^* & -\mathbb{I}_{K_1}  \\  -\mathbb{I}_{K_2}  &  Y
\end{bmatrix},\quad i=2,\cdots,p-r-1,  \\
&t_{i,i+1}^{(r)}=\begin{bmatrix}
0  &  0  \\  E_2  &  0
\end{bmatrix},    
t_{i+1,i}^{(r)}=\begin{bmatrix}
0  &  E_1  \\  0  &  0
\end{bmatrix},\quad  i=1,\cdots,p-r-1,                      \\
&t_{1,p-r}^{(r)}=\begin{bmatrix}
0  &  E_1^{(r)}  \\  0  &  0
\end{bmatrix},\quad
t_{p-r,1}^{(r)}=\begin{bmatrix}
0  &  0  \\  E_2^{(r)}  &  0
\end{bmatrix},\quad
t_{p-r,p-r}^{(r)}=\begin{bmatrix}
-Y^* &  \\ &  Y-\sum_{i=0}^{r-1}E_2^{(i)}A_{11}^{(i)}E_1^{(i)}
\end{bmatrix}.
\end{aligned}
\end{equation}
\end{small}
Here 
\begin{small}
\begin{equation}\label{E0A0beta2}
E_i^{(0)}:=E_i,\quad i=1,2,\quad   A^{(0)}:=
\begin{bmatrix}
A_{11}^{(0)}  &  A_{12}^{(0)}  \\  A_{21}^{(0)}  &  A_{22}^{(0)}
\end{bmatrix}
:=\begin{bmatrix}
-Y^* & -\mathbb{I}_{K_1}  \\  -\mathbb{I}_{K_2}  &  Y
\end{bmatrix}^{-1},
\end{equation}
\end{small}
and for $1\leq r\leq p-1$
  \begin{small}
\begin{equation*}\label{ErArbeta2edge}
\begin{aligned}
&E_1^{(r)}:=-E_1A_{21}^{(r-1)}E_1^{(r-1)},\quad E_2^{(r)}:=-E_2^{(r-1)}A_{12}^{(r-1)}E_2,  \\
&A^{(r)}:=
\begin{bmatrix}
A_{11}^{(r)}  &  A_{12}^{(r)}  \\  A_{21}^{(r)}  &  A_{22}^{(r)}
\end{bmatrix}
:=\begin{bmatrix}
-Y^*-E_1A_{22}^{(r-1)}E_2 & -\mathbb{I}_{K_1}  \\  -\mathbb{I}_{K_2}  &  Y
\end{bmatrix}^{-1}.
\end{aligned}
\end{equation*}
\end{small}

 {\bf Step 2:  Taylor expansions}.
Using   \eqref{Symbol} and 
   \eqref{E1E2beta2complexedge}, we can easily get  
%\begin{small}
\begin{equation}\label{Eibeta2edgeasmptotic}
E_1=-N^{-\frac{1}{2\kappa}}\hat{\theta}\mathbb{I}_{K_1}+N^{-\frac{1}{2\xi}}\hat{Z},\quad
E_2=-N^{-\frac{1}{2\kappa}}\overline{\hat{\theta}}\mathbb{I}_{K_2}+N^{-\frac{1}{2\xi}}\hat{W}^*.
\end{equation}
%%%%%%%%%%%%%%%%%%%%%%%%%%%%%%%%%%%%%%%%%%%%%%%%%%%%%%%%%%%%%%%%%%%%%%%%%%%%%%%%%5
 %%%%%%%%%%%%%%%%%%%%%%%%%%%%%%%%%%%%%%%%%%%%%%%%%%%%%%%%%%%%%%%%%%%%%%%%%%%%%%%%%%%%%%%%%%%%%%%5
Note that as $\|Y\| \to 0$   the leading term of $A^{(0)}$ in  \eqref{E0A0beta2}  is given by
\begin{small}
\begin{equation*}\label{A0asymptoticbeta2edge}
A^{(0)}=\begin{bmatrix}
-Y & -\mathbb{I}_{K_2}  \\  -\mathbb{I}_{K_1}  &  Y^*
\end{bmatrix}+O(\|Y\|^2),
\end{equation*}
\end{small}
by induction  we can prove that 
\begin{small}
\begin{equation}\label{Arasymptoticbeta2edge}
A^{(r)}=\begin{bmatrix}
-Y & -\mathbb{I}_{K_2}  \\  -\mathbb{I}_{K_1}  &  Y^*+E_1A_{22}^{(r-1)}E_2
\end{bmatrix}+O(\|Y\|^2).
\end{equation}
\end{small}
Moreover, we obtain 
\begin{small}
\begin{equation}\label{A22rasymptoticbeta2edge}
A_{22}^{(r)}
=Y^*+\sum_{i=1}^r E_1^i Y^* E_2^i + O(\|Y\|^2),
\end{equation}
\end{small}
and 
\begin{small}
\begin{equation}\label{Erasymptoticbeta2edge}
E_i^{(r)}
=E_i^{r+1}+O(\|Y\|^2),\quad i=1,2,
\end{equation}
\end{small}
whenever $r=1, \ldots,p-1$.

For $ i=0, \ldots, p-2$, from (\ref{tijrbeta2edge}) and (\ref{A22rasymptoticbeta2edge}), it's easy to obtain   % since $A_{22}^{(i-1)}=O(\|Y\|)$
\begin{small}
\begin{equation*}\label{prodibeta2}
 \det\left( t_{1,1}^{(i)} \right)=(-1)^{K_1K_2+K} +O(\|Y\|).
\end{equation*}
\end{small}
So  one knows from the recursion \eqref{recursiondetbeta2} that  the remaining  task is to calculate the determinant of  $T^{(p-1)}$ which is defined as 
\begin{small}
\begin{equation*}\label{Tppminus1beta2edge}
T^{(p-1)}=\begin{bmatrix}
-Y^*-E_1A_{22}^{(p-2)}E_2      &      E_1^{(p-1)} \\    E_2^{(p-1)}
& Y-\sum_{i=0}^{p-2}E_2^{(i)}A_{11}^{(i)}E_1^{(i)}
\end{bmatrix}.
\end{equation*}
\end{small}
From (\ref{Eibeta2edgeasmptotic}), (\ref{Arasymptoticbeta2edge}), (\ref{A22rasymptoticbeta2edge}) and (\ref{Erasymptoticbeta2edge}), we observe that
\begin{equation*}\label{Tpminus1asymptoticout}
T^{(p-1)}=\begin{bmatrix}
-Y^*  &  E_1^p \\
E_2^p  &  Y
\end{bmatrix}+\Delta^{(2,\mathrm{o})}(Y),
\end{equation*}
where
 $$\Delta^{(2,\mathrm{o})}(Y)=O(\|Y\|^2)+O(N^{-\frac{1}{2\xi\vee \kappa}}\|Y\|).$$
Compare $\xi$ and $\kappa$, and perform elementary operations of columns and rows of determinant, we can complete the proof of the desired lemma.
\end{proof}

%%%%%%%%%%%%%%%%%%%%%%%%%%%%%%%%%%%%%%%%%%%%%%%%%5%%%%%%%%%%%%%%%%%%%%%%%%%%%%%%%%%%%%5
\begin{proof}[Proof of  Theorem \ref{2-complex} (i)] 
When $ {\rm Tr}(YY)^*\leq \delta$, 
 for small $\delta$ take a Taylor expansion and rewrite \eqref{fbetaYZ0} as 
\begin{small}
\begin{equation}\label{f2YZ0decomposition}
f_2(Y,Z_0)=f_{2,0}(Y)+f_{2,1}(Y),
\end{equation}
\end{small}
where
\begin{equation*}\label{f2m}
f_{2,0}(Y)=\frac{1}{2}{\rm Tr}\left( (YY^*)^2 \right),
\end{equation*}
and 
\begin{equation}\label{f2r}
f_{2,1}(Y)=O\big(\|Y\|^6\big).
\end{equation}

At the  edge we have chosen the exponent  $\omega={1}/{2}$, for $h_2(Y)$ given in \eqref{hbetaYLambda},   by Lemma \ref{Cbetacrudebound} we  obtain 
%\ref{hbetaYLambda},\ref{Cbetacrudebound}%%%%%%%%%%%%%%%%%%%%%%%%%%%%%%%%%%%%%%%%%%%%%%%%%%%%%%%%%%%%%%%%%%%%%%%%%%%%%%55
\begin{small}
\begin{equation}\label{Nh2YLambdaAsympto}
Nh_2(Y)=\sqrt{N}{\rm Tr}(H_2)-\frac{1}{2}{\rm Tr\left(H_2^2\right)}+O\left(N^{-\frac{1}{2}}\right).
\end{equation}
\end{small}
Simple manipulations upon  on  \eqref{Cbetadetailed} give rise to           %\ref{Cbetadetailed}
\begin{small}
\begin{equation}\label{TrH2}
{\rm Tr}(H_2)={\rm Tr}(Z_0^{-1}\hat{\Lambda})
-\left(z_0^{-1}{\rm Tr}(Y^*Y\hat{Z})+\overline{z_0}^{-1}{\rm Tr}(YY^*\hat{W}^*) \right)
+
O\big(\|Y\|^4\big),
\end{equation}
\end{small}
and 
\begin{small}
\begin{equation}\label{TrH2H2}
{\rm Tr}(H_2^2)={\rm Tr}(Z_0^{-2}\hat{\Lambda}^2)+O(\|Y\|^2).
\end{equation}
\end{small}
The latter  follows from the rough estimates 
%\begin{small}
%\begin{equation}\label{C1Asymptotic}
%H_1=\begin{bmatrix}
%z_0^{-1}\hat{\Lambda}+O(\|Y\|^2)  &  O(\|Y\|)
%\\     O(\|Y\|)   &     z_0^{-1}\hat{\Lambda}+O(\|Y\|^2)
%\end{bmatrix},
%\end{equation}
%\end{small}
%and 
\begin{small}
\begin{equation*}\label{H2iAsymptotic}
H_2^i=\begin{bmatrix}
z_0^{-i}\hat{Z}^i+O(\|Y\|^2)   &  O(\|Y\|) 
\\     O(\|Y\|)&     \overline{z_0}^{-i}\left(\hat{W}^*\right)^i+O(\|Y\|^2) 
\end{bmatrix},\quad i\geq 1,
\end{equation*}
\end{small}
which can be easily verified by induction.

Combining \eqref{Nh2YLambdaAsympto}, \eqref{TrH2} and \eqref{TrH2H2}, we obtain 
\begin{small}
\begin{align}\label{Nh2YLambdaAsymptofinalform}
Nh_2(Y)&=h_{2,0,0}-\sqrt{N}
h_{2,0}(Y)+h_{2,1}(Y)
\end{align}
\end{small}
where
\begin{small}
\begin{equation*}\label{h2Lambda}
h_{2,0,0}=\sqrt{N}{\rm Tr}(Z_0^{-1}\hat{\Lambda})-\frac{1}{2}{\rm Tr}(Z_0^{-2}\hat{\Lambda}^2),
 \,h_{2,0}(Y)=z_0^{-1}{\rm Tr}(Y^*Y\hat{Z})+\overline{z_0}^{-1}{\rm Tr}(YY^*\hat{W}^*), 
\end{equation*}
\end{small}
and
%\begin{small}
%\begin{equation}\label{tildeh1Y12}
%\widetilde{h}_1(Y_{12})=z_0^{-1}{\rm Tr}(Y_{12}Y_{12}^*\hat{Z})+\overline{z_0}^{-1}{\rm Tr}(Y_{12}^*Y_{12}\hat{W}^*),
%\end{equation}
%\end{small}
\begin{small}
\begin{equation}\label{Ph2Y}
h_{2,1}(Y)=
O\left( \|Y\|^2+\sqrt{N}\|Y\|^4+N^{-\frac{1}{2}} \right).
\end{equation}
\end{small}
%%%%%%%%%%%%%%%%%%%%%%%%%%%%%%%%%%%%%%%%%%%%%%%%%%%%%%%%%%%%%%%%%%%%%%%%%%%%%%%%%%%%%%%555

The remaining task is to estimate  the factors of   $g_2(Y,\Lambda)$ in \eqref{gbetaYLambda},  which are directly related to    two matrices $B_{11}^{(2)}$ and $A_2$.  For the latter,
 noting the form of $A_2$ given in  \eqref{Q2trasform0} and the rescaling of $\Lambda$  in \eqref{Symbol}, we have      %\ref{Q1transform3}
\begin{small}
\begin{equation*}\label{A2asymtobeta2edge}
A_2=Z_0+O\big(\big(N^{-\frac{1}{2}}+\|Y\|\big)\big).
\end{equation*}
\end{small}
Furthermore,  we have 
\begin{equation*}\label{detA2beta2edge}
\det(A_2)
=|z_{0}|^{2K_1}+O\big(\big(N^{-\frac{1}{2}}+\|Y\|\big)\big),
\end{equation*}
from which 
\begin{small}
\begin{equation*}\label{detA2ssquarebeta2edge}
\left( \det(A_2) \right)^{-r}=|z_{0}|^{-2r K_1}+O\big(\big(N^{-\frac{1}{2}}+\|Y\|\big)\big).
\end{equation*}
\end{small}
It remains to calculate the determinant of $B_{11}^{(2)}$  defined in  \eqref{B112denotion}. This is already 
done in Lemma \ref{determinantcalculationbeta2edge}.
%\begin{remark}
%Finally, we will change integration variables $(Y_{11},Y_{22})$ to $N^{-\frac{1}{2}}(Y_{11},Y_{22})$, $Y_{12}$ to $N^{-\frac{1}{4}}Y_{12}$. If we do this directly in the analysis of $g_1(Y,\Lambda)$, it will bring us much convenience.
%\end{remark}
%Some  direct  calculations show that  
%\begin{small}
%\begin{equation*}
%\det\left( z_0\mathbb{I}_s-J_2^{(r)} \right)=
%\prod_{i=m+1}^{\gamma}\prod_{j=1}^{\alpha_i}(z_0-\theta_i)^{p_{ij}\beta_{ij}}.
%\end{equation*}
%\end{small}
Using    Lemma  \ref{determinantcalculationbeta2edge}  and  the same notation  therein we arrive at  \begin{small}
\begin{equation}\label{g2YLambdabeta2edge}
g_2(Y,\Lambda)= \Big(D^{(2)}+O\big(N^{-\frac{1}{4\kappa}}+\|Y\|\big)\Big)
\bigg\{   \det\begin{bmatrix}
N^{-\frac{1}{4}}\Omega_{11} & \Omega_{12} \\ \Omega_{21} & N^{-\frac{1}{4}}\Omega_{22}
\end{bmatrix}+                                                       
O\Big(
\sum_{(\alpha_1,\alpha_2)}N^{-\alpha_1}\| Y \|^{\alpha_2}
\Big)\bigg\},
\end{equation}
\end{small}
where 
\begin{small}
\begin{equation}\label{D2-2edge}
D^{(2)}=|z_{0}|^{-2r K_1} \prod_{i=m+1}^{\gamma}|z_0-\theta_i|^{2K_1  \sum_{j=1}^{\alpha_i}p_{i,j}\beta_{i,j}}.\end{equation}
\end{small}
%%%%%%%%%%%%%%%%%%%%%%%%%%%%%%%%%%%%%%%%%%%%%%%%%%%%%%%%%%%%%%%%%%%%%%%%%%%%%%%%%%%%%%%%%%%%%%%%%%%%%%%%%%%%%
Recall   \eqref{Ideltabeta}, \eqref{f2YZ0decomposition} and \eqref{Nh2YLambdaAsymptofinalform},  we rewrite 
\begin{equation}\label{Idelta2decompositionbeta2edge}
  e^{-h_{2,0,0}}I_{\delta,N}^{(2)}= J_{1,N}^{(2)}+J_{2,N}^{(2)},
\end{equation}
 where 
\begin{small}
\begin{equation*}\label{Idelta2decompositionbeta2edge1}
 J_{1,N}^{(2)}=
\int\limits_{{\rm Tr}(YY^*)\leq \delta}\ g_{2}(Y,\Lambda)
e^{ -Nf_{2,0}(Y)-\sqrt{N}h_{2,0}(Y) } {\rm d}Y,
\end{equation*}
\end{small}
and 
\begin{small}
\begin{equation*}\label{Idelta2decompositionbeta2edge2}
 J_{2,N}^{(2)}=
\int\limits_{{\rm Tr}(YY^*)\leq \delta}\ g_{2}(Y,\Lambda)
e^{ -Nf_{2,0}(Y)-\sqrt{N}h_{2,0}(Y) } \left(
e^{-Nf_{2,1}(Y)+h_{2,1}(Y)}-1\right) {\rm d}Y.
\end{equation*}
\end{small}
%\begin{small}
%\begin{equation}\label{Idelta1decompositionbeta1edge}
% J_{2,N}=
%\int\limits_{{\rm Tr}(YY^*)\leq \delta}\ g_{1}(Y,\Lambda)
%e^{ -Nf_{1,0}(Y)-\sqrt{N}h_{1,0}(Y_{12}) } {\rm d}Y,
%\end{equation}
%\end{small}
%Now denote
%\begin{small}
%\begin{equation}\label{I1delta1beta1edge}
%I_{1,\delta}^{(1)}=\int\limits_{{\rm Tr}\left(Y'\left( Y' \right)^* \right)\leq \delta}
%{\rm d}Y'\ g_1\left( Y',\Lambda \right)\exp\left\{ -\frac{N}{2}f_{1,m}(Y')-\sqrt{N}\widetilde{h}_1(Y_{12}')\right\}.
%\end{equation}
%\end{small}
We claim that    
$J_{1,N}^{(2)}$  plays a leading role while   $J_{2,N}^{(2)}$  gives rise  to 
a relatively minor  contribution  when compared.  To be precise,   we have an estimate
 \begin{equation} \label{J21beta2} 
 J_{2,N}^{(2)}=O(N^{-\frac{1}{4}})J_{1,N}^{(2)}.  
 \end{equation}
%\begin{lemma}\label{Idelta1minusI1delta1beta1edge}
%\begin{small}
%\begin{equation}\label{Idelta1minusI1delta1beta1edgeequation}
%\exp\left\{ -h_1(\Lambda) \right\}I_{\delta}^{(1)}-I_{1,\delta}^{(1)}=N^{-\frac{1}{2}[K(K-1)+L(L-1)+KL]-\frac{\widetilde{K}}{4}
%\sum_{i=1}^{\gamma}\beta_i-\frac{1}{4}}O(1).
%\end{equation}
%\end{small}
%\end{lemma}
For $J_{1,N}^{(2)}$,  after using   the  change of variables 
\begin{equation} \label{beta2Yscaling}
Y\to N^{-\frac{1}{4}}Y,
\end{equation}
 we can remove all restrictions on the integration domains of  $Y$ up to some decaying  exponential term. Also noting  \eqref{g2YLambdabeta2edge}, the standard asymptotic analysis shows that 
\begin{small}
\begin{equation}\label{I1delta2minusI2delta2beta2edge}
 J_{1,N}^{(2)}=D^{(2)}  N^{-\frac{1}{2}K_{1}^2- \frac{1}{2}K_{1}
 \sum_{i=1}^m\sum_{j=1}^{\alpha_i}\beta_{ij}  }
 \left( J_{\infty}^{(2)}+O\big( N^{-\frac{1}{4\kappa}} \big) \right),
\end{equation}
\end{small}
%where
%\begin{small}
%\begin{equation}\label{I2delta1beta1edge}
%I_{2,\delta}^{(1)}=\int\limits_{\Omega_{N,\delta}}{\rm d}Y\ \prod_{i=1}^{\gamma}\left\{ {\rm Pf}\left( A_{p_i}^{(1)} \right) \right\}^{\beta_i} \exp\left\{ -\frac{1}{2}f_{1,m}(Y)-\widetilde{h}_1(Y) \right\},
%\end{equation}
%\end{small}
%\begin{small}
%\begin{equation}\label{OmegaNdeltabeta1edge}
%\Omega_{N,\delta}=\left\{  
%N^{-1}{\rm Tr}\left( Y_{11}Y_{11}^* \right)+N^{-1}{\rm Tr}\left( Y_{22}Y_{22}^* \right)+2N^{-\frac{1}{4}}
%{\rm Tr}\left( Y_{12}Y_{12}^* \right)\leq \delta
%\right\}.
%\end{equation}
%\end{small}
%By the exponential property, we have:
%\begin{small}
%\begin{equation}\label{I2delta1minusI0delta1beta1edge}
%I_{2,\delta}^{(1)}=I_{0}^{(1)}+O\left( e^{-CN} \right).
%\end{equation}
%\end{small}
where
\begin{small}
\begin{equation*}\label{I0delta2beta2edge}
J_{\infty}^{(2)}=I^{(2)}\left(  
\big( -\hat{\theta}_1 \big)^{p_{1,1}},\cdots,\big( -\hat{\theta}_m \big)^{p_{m,1}};
z_0^{-1}\hat{Z},z_0^{-1}\hat{W}
\right).
\end{equation*}
\end{small}
For $J_{2,N}^{(2)}$,  %This  can be easily verified as follows.
by the inequality
$
\left| e^z-1 \right|\leq \left| z \right|e^{\left| z \right|}$ we have 
\begin{equation*}\label{Idelta2minusI2delta2beta2edgeestimate1}
\left| e^{-Nf_{2,1}(Y)+h_{2,1}(Y)}-1 \right|\leq 
\big(N|f_{2,1}(Y)|+|h_{2,1}(Y)|\big)
e^{N|f_{2,1}(Y)|+|h_{2,1}(Y)|}.
\end{equation*}
Choose a sufficiently small $\delta>0$,   we can obtain from \eqref{f2r} and \eqref{Ph2Y} that 
\begin{small}
\begin{equation}\label{Idelta2minusI2delta2beta2edgeestimate2}
e^{N|f_{2,1}(Y)|+|h_{2,1}(Y)|} \leq e^{\frac{1}{2}N|f_{2,0}(Y)|+
\sqrt{N}
 O\big(\|Y\|^2\big) 
} O\left(1 \right).
\end{equation}
\end{small}
Moreover,   after   the  change of variables 
 \eqref{beta2Yscaling}   we observe that 
\begin{equation}\label{Idelta2minusI2delta2beta2edgeestimate3}
N|f_{2,1}(Y)|+|h_{2,1}(Y)| \to   N^{-\frac{1}{4}}O(1+\|Y\|^6).
\end{equation}
 Therefore,  as done  in  $J_{1,N}^{(2)}$   the desired estimate  \eqref{J21beta2} immediately follows from \eqref{Idelta2minusI2delta2beta2edgeestimate2} and  \eqref{Idelta2minusI2delta2beta2edgeestimate3}. 
%by (\ref{P4AviaTrAAAA}) and (\ref{P2A1A2viaTrA1A1TrA2A2}), %\ref{P4AviaTrAAAA}, \ref{P2A1A2viaTrA1A1TrA2A2}
%\begin{small}
%\begin{equation}\label{Idelta1minusI1delta1beta1edgeestimate4}
%N\left| f_{1,r}(Y') \right|\leq C\delta \left[ {\rm Tr}\left(Y_{12}Y_{12}^*Y_{12}Y_{12}^*  \right) 
%+{\rm Tr}\left( Y_{11}Y_{11}^* \right)+{\rm Tr}\left( Y_{22}Y_{22}^* \right)  \right],
%\end{equation}
%\end{small}
%Similarly,
%\begin{small}
%\begin{equation}\label{Idelta1minusI1delta1beta1edgeestimate5}
%P_{h_1}(Y')\leq CN^{-\frac{1}{4}}\left[1+ {\rm Tr}\left(Y_{12}Y_{12}^*Y_{12}Y_{12}^*  \right) 
%+{\rm Tr}\left( Y_{11}Y_{11}^* \right)+{\rm Tr}\left( Y_{22}Y_{22}^* \right)  \right].
%\end{equation}
%\end{small}
At last,   combine   \eqref{analysisform}  in  Proposition \ref{Analysisform},   
   \eqref{Idelta2decompositionbeta2edge},   \eqref{J21beta2} and \eqref{I1delta2minusI2delta2beta2edge},    we  thus complete the proof  at the  edge.  
\end{proof}

\begin{proof}[Proof of  Theorem \ref{2-complex} (ii)] 
Repeat  the similar procedure as in  the proof of Part (i)  and   we can  verify Part (ii). Here we just point out a few of signifiant differences.  %Using the same notation, the first  two  of  significant differences, which correspond to  

When $|z_0|>1$,  for small $\delta$ take a   Taylor expansion and rewrite    \eqref{fbetaYZ0} as 
\begin{small}
\begin{equation*}\label{f2outlier}
f_2(Y,Z_0)=f_{2,0}(Y)+O(\|Y \|^4)
\end{equation*}
\end{small}
where
\begin{small}
\begin{equation*}\label{f2mYbeta2outlier}
f_{2,0}(Y)=\left( 1-|z_0|^{-2} \right){\rm Tr}\left( YY^* \right).
\end{equation*}
\end{small}

Use the induction argument as in  the edge case and we can similarly obtain 
\begin{small}
\begin{equation*}\label{H2out}
H_2^i=\begin{bmatrix}
z_0^{-i}\hat{Z}^i+O(\|Y\|^2)   &  O(\|Y\|) 
\\     O(\|Y\|)&     \overline{z_0}^{-i}\left( \hat{W}^* \right)^i+O(\|Y\|^2) 
\end{bmatrix},\quad i\geq 1.
\end{equation*}
\end{small}
Hence,  we see 
 from \eqref{hbetaYLambda} that 
\begin{small}
\begin{equation*}\label{Nh2YLambbdabeta2outlier}
Nh_{2}(Y,\Lambda)=h_{2,0}(\Lambda)+O\big(N^{-\frac{1}{2\xi}}(1+N\|Y\|^2)\big)
\end{equation*}
\end{small}
where
\begin{small}
\begin{equation}\label{h2Lambdabeta2outlier}
h_{2,0}(\Lambda)=\sum_{j=1}^{2\xi}\frac{1}{j}(-1)^{j-1}N^{1-\frac{j}{2\xi}}{\rm Tr}\big( Z_0^{-j}\hat{\Lambda}^j \big).
\end{equation}
\end{small}
As in the edge case we can similarly obtain
\begin{small}
\begin{equation}\label{detA2ssquarebeta2outlier}
\left( \det(A_2) \right)^{-r}=|z_{0}|^{-2r K_1} 
+O\big(\big(N^{-\frac{1}{2\xi}}+\|Y\|\big)\big).
\end{equation}
\end{small}

%%%%%%%%%%%%%%%%%%%%%%%%%%%%%%%%%%%%%%%%%%%%%%%%%%%%%%%%%%%%%%%%%%%%%%%%%%%%%%%%%%%%%%%%%%
%%%%%%%%%%%%%%%%%%%%%%%%%%%%%%%%%%%%%%%%%%%%%%%%%%%%%%%%%%%%%%%%%%%%%%%%%%%%%%%%%%%%%%%%%%%%%%%%%
The remaining task is to calculate the determinant of $B_{11}^{(2)}$  defined in  \eqref{B112denotion}.
%\begin{remark}
%Finally, we will change integration variables $(Y_{11},Y_{22})$ to $N^{-\frac{1}{2}}(Y_{11},Y_{22})$, $Y_{12}$ to $N^{-\frac{1}{4}}Y_{12}$. If we do this directly in the analysis of $g_1(Y,\Lambda)$, it will bring us much convenience.
%\end{remark}
Firstly, note that
\begin{equation*}\label{B112B02}
\det\left( B_{11}^{(2)} \right)=(-1)^{r^2K_{1}^2}
\det\left( B_{0}^{(2)} \right),
\end{equation*}
where
\begin{small}
\begin{equation*}\label{B02denotionout}
B_{0}^{(2)}=\begin{bmatrix}
-\mathbb{I}_r \otimes Y^*  &  \mathbb{I}_r \otimes Z-J_2\otimes \mathbb{I}_{K_1}  \\
\mathbb{I}_r \otimes W^*-J_2^*\otimes \mathbb{I}_{K_1}   &   \mathbb{I}_r\otimes Y
\end{bmatrix}.
\end{equation*}
\end{small}
Use the block structure of $J_2$  and rewrite  $B_{0}^{(2)}$  as a block diagonal matrix,   
%in which each block is associated with a Jordan block,   
together with   basic properties of  the determinant we  obtain 

\begin{small}
\begin{equation*}\label{g2complexoutproduct}
\det\!\left(B_{0}^{(2)}\right)=
\prod _{i=1}^\gamma  \prod _{j=1}^{\alpha_{i}}   
\left(\det\! \left(T_{p_{i,j}}\left(\theta_i \right)\right)\right)
^{ \beta_{i,j} } ,
\end{equation*}
\end{small}
where $T_{p}(\theta)$ is defined in \eqref{Ppibeta2edge}.

Some  direct  calculations show that when  $m+1\leq i\leq \gamma$,
\begin{small}
 \begin{equation*}
\begin{aligned}
\det\! \left(T_{p_{i,j}}(\theta_{i} ) \right)
&=(-1)^{p_{ij}^2 K_{1}^2}
|\det\left(
z_0\mathbb{I}_{p_{ij}}\otimes \mathbb{I}_{K_1}-
R_{p_{ij}}(\theta_i) 
\otimes \mathbb{I}_{{K_1}} \right) |^2  
+O\big(\big(N^{-\frac{1}{2\xi}}+\|Y\|\big)\big) \\
&=(-1)^{p_{ij}^2K_{1}^2}|z_0-\theta_i |^{2p_{ij}K_1}
+O\big(\big(N^{-\frac{1}{2\xi}}+\|Y\|\big)\big).
\end{aligned}
 \end{equation*}
\end{small}

Together  with \eqref{detA2ssquarebeta2outlier},  using of Lemma  \ref{complexbeta2Jordan}  gives    
\begin{small}
\begin{align}\label{g2YLambdabeta2outlier}
g_2(Y,\Lambda)&= \Big(D^{(2,\mathrm{o})}+O\big(N^{-\frac{1}{2\xi}}+\|Y\|\big)\Big)
\prod_{i=1}^{m } \prod_{j=1}^{{\alpha}_{i}} 
\Big(
{\det}\,\Big(
S^{(2,\mathrm{o})}_N\big(\hat{\theta}_{i};\frac{p_{i,j}}{p_{i,1}}\big)+\Delta^{(2,\mathrm{o})}(Y) \Big)
\Big)^ {\beta_{i,j} },
\end{align}
\end{small}
%%%%%%%%%%%%%%%%%%%%%%%%%%%%%%%%%%%%%%%%%%%%%%%%%%%%%%%%%%%%%
where 
\begin{small}
\begin{align}\label{D2-2outlier}
D^{(2,\mathrm{o})}=&|z_{0}|^{-2r K_1}  
(-1)^{K_1^2\sum_{i=1}^m\sum_{j=1}^{\alpha_i}
 \beta_{i,j}} 
\prod_{i=m+1}^{\gamma}\prod_{j=1}^{\alpha_i} |z_0-\theta_i|^{2p_{i,j}\beta_{i,j}K_1},
\end{align}
\end{small}
$$\Delta^{(2,\mathrm{o})}(Y)=O(\|Y\|^2)+O(N^{-\frac{1}{2 \xi}}\|Y\|),$$
and
$$ \xi=\left\{ p_{i,1}:i=1,\cdots,m \right\}. $$

Finally,    following the almost same procedure as in the proof of Part (i) and using    the  change of variables 
\begin{equation*} 
Y\to N^{-\frac{1}{2}}Y,
\end{equation*}
we can complete the proof.
\end{proof}

%%%%%%%%%%%%%%%%
%%%%%%%%%%%%%%%%%%%%%%%%%%%%%%%%%%%%%%%%%%%%%%%%%%%%%%%%%%%%%%%
\subsection{Deformed GinSE ensemble}%{Deformed quaternion Ginibre ensemble}

%%%%%%%%%%%%%%%%%%%%%%%Delete
%%%%%%%%%%%%%%%%%

In order to provide the proof of  Theorem \ref{4-complex}  we also need   one key lemma.

%加引理
\begin{lemma}\label{4lemma}

Let  $R, S$ and $T$ be  block matrices given in \eqref{RSTdeno},  
 $\kappa=\max\left\{ 1/2, p_{1,1},\cdots,p_{m,1} \right\}$ and  $t=\sum_{i=1}^m\sum_{j=1}^{\alpha_i}\beta_{i,j}$. 
Given an integer $0\leq m\leq \gamma$, assume that $ \theta_{j}\neq z_0$ is fixed  for all $j=m+1, \ldots, \gamma$ and                              
 \begin{equation}\label{lemma4zi}
\theta_i=z_0+N^{-\frac{1}{4p_{i,1}}}\hat{\theta}_i,\quad
i=1,\cdots,m.
\end{equation}

(i) If $z_0=\pm 1$, then
\begin{small}
\begin{equation}\label{4Rlemmaequ}
\begin{aligned}
{\rm Pf}&\begin{bmatrix}
R & S  \\  -S^t  &  T
\end{bmatrix}=\prod_{i=m+1}^{\gamma}
\left| z_0-\theta_i \right|^{4K_1 \sum_{j=1}^{\alpha_i} p_{i,j}\beta_{i,j}}\\
&\times {\rm Pf}\begin{bmatrix}
R\left( Y;\left(V_1\right)_{I_2},\left(U_1\right)_{I_1} \right) & N^{-\frac{1}{4}}S\left( N^{\frac{1}{4}}Y \right)  \\
-N^{-\frac{1}{4}}S\left( N^{\frac{1}{4}}Y \right)^t  &  R\left( Y^*;\left(U_1\right)_{I_1},\left(V_1\right)_{I_2} \right)
\end{bmatrix}
+O\bigg( \sum_{(\alpha_1,\alpha_2,\alpha_3)} N^{-\alpha_1}\| Y \|^{\alpha_2} \Xi^{\alpha_3} \bigg),
\end{aligned}
\end{equation}
\end{small}
where    $R\left( Y;A,B \right)$ and $S\left( Y \right)$ are  given respectively  in \eqref{4RRdeno} and \eqref{4RSdeno} with $\phi_i=( -\hat{\theta}_i)^{p_{i,1}}$, and 
the finitely many integer triples  $(\alpha_1,\alpha_2,\alpha_3)$  satisfy 
\begin{equation}\label{4RSumcondi1}
4\alpha_1+\alpha_2+\alpha_3\geq 4K_1t +\frac{1}{\kappa} \delta_{\alpha_3,0},
\end{equation}
%if $\alpha_3\geq 1$, or
%\begin{equation}\label{4RSumcondi2}
%4\alpha_1+\alpha_2\geq 4K_1t+\frac{1}{\kappa},
%\end{equation}
where   $\delta_{\alpha_3,0}=1$ when  $\alpha_3=0$, otherwise $0$.

(ii) If    $ \Im z_0 >0$ and   $|z_0|=1$,  then
\begin{small}
\begin{equation}\label{4Clemmaequ}
\begin{aligned}
&{\rm Pf}\begin{bmatrix}
R & S  \\  -S^t  &  T
\end{bmatrix}=(-1)^{K_1t}
\left| \overline{z}_0-z_0 \right|^{2K_1(r-r_0)}
\prod_{i=m+1}^{\gamma}\prod_{j=1}^{\alpha_i}
\left| \left( z_0-\theta_i \right)\left( z_0-\overline{\theta_i} \right) \right|^{2K_1p_{i,j}\beta_{i,j}}
\\
&\times\det\begin{bmatrix}
N^{-\frac{1}{4}}\Omega_{11} & Y_{12}\otimes \left(V_2\right)_{I_2}
\\  
-Y_{12}^*\otimes \overline{\left(U_2\right)_{I_1}}   &  -N^{-\frac{1}{4}}\Omega_{11}^*
\end{bmatrix}
+O\bigg( \sum_{(\alpha_1,\alpha_2,\alpha_3,\alpha_4)} N^{-\alpha_1}\| Y_{11},Y_{22} \|^{\alpha_2} \| Y_{12} \|^{\alpha_3}   \Xi^{\alpha_4} \bigg),
\end{aligned}
\end{equation}
\end{small}
where $\Omega_{11}$ is defined in \eqref{F11denotion} with $\phi_i=( -\hat{\theta}_i)^{p_{i,1}}$,  and the finitely many  quartets $(\alpha_1,\alpha_2,\alpha_3,\alpha_4)$ satisfy 
\begin{equation}\label{4CSumcondi1}
4\alpha_1+\alpha_2+\alpha_3+\alpha_4\geq 2K_1t+\frac{1}{\kappa} \delta_{\alpha_2+\alpha_4,0}.
\end{equation} Here  for  two  $K_1\times K_1$ matrices $Y_{11}$ and  $Y_{22}$,  $Y$ has the form \begin{small} $$Y=\begin{bmatrix} Y_{11} & Y_{12} \\ Y_{12}^t & Y_{22} \end{bmatrix}.$$ \end{small}
 \end{lemma}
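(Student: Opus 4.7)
The strategy parallels the proof of Lemma \ref{determinantcalculationbeta2edge} closely, with the determinantal expansion \eqref{detcolumnreplace} replaced by the Pfaffian expansion coming from Proposition \ref{differentiationPfaffian}. I would first conjugate the block matrix $\begin{bmatrix} R & S \\ -S^t & T \end{bmatrix}$ by an appropriate block matrix built from $P$ and the tensor permutation of Proposition \ref{tensorproperty}, so that Property 1 of the Pfaffian reduces the computation to one involving $J_2$ and $J_2^*$ directly and the coupling submatrices $U_1, V_1, U_2, V_2$ from \eqref{4UV}. I would then decompose $J_2 = J_2^{(0)} + J_2^{(1)}$ exactly as in \eqref{J2decom}, with $J_2^{(0)}$ combining the critical blocks at $z_0$ with the non-critical blocks $J_2^{(2)}$ at the $\theta_i$, $i>m$, and $J_2^{(1)}$ collecting the rescaled shifts of order $N^{-1/(4p_{i,1})}\hat\theta_i$.

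The next step is to eliminate from the Pfaffian all indices not belonging to $I_1 \cup I_2$. For the blocks associated with $J_2^{(2)}$, each difference $z_0 - \theta_i$ is bounded away from zero, so the corresponding submatrix is invertible and a block Schur complement peels off the explicit factor $\prod_{i=m+1}^\gamma |z_0 - \theta_i|^{4K_1 \sum_j p_{i,j}\beta_{i,j}}$ appearing in \eqref{4Rlemmaequ}. In case (ii), since $\Im z_0 > 0$ and $z_0 \neq \bar z_0$, the entire $\bar J_2$-block is additionally invertible at the spectral scale $z_0$, so its elimination contributes the extra determinant $|\bar z_0 - z_0|^{2K_1(r-r_0)} \prod_{i=m+1}^\gamma\prod_{j} |z_0 - \overline{\theta_i}|^{2K_1 p_{i,j}\beta_{i,j}}$; this decoupling is precisely what turns the remaining Pfaffian of a block-antidiagonal matrix into a determinant via identity \eqref{Pfaffian0}, in which $V_2$ and $U_2$ couple only to the off-diagonal component $Y_{12}$.

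On the critical rows/columns indexed by $I_1 \cup I_2$, the zero columns of $z_0 \mathbb{I} - J_2^{(0)}$ must be filled either by columns drawn from $J_2^{(1)}$ (which, after consuming the full Jordan chain of length $p_{i,1}$, contribute the collective factor $\phi_i = (-\hat\theta_i)^{p_{i,1}}$ together with the rescaling $N^{-1/4} = (N^{-1/(4p_{i,1})})^{p_{i,1}}$ visible in $\Omega_{11}^{(\mathrm{re})}$) or by columns drawn from the $Y$-entries of $R, T$ and the $Y \otimes V_2$, $-Y^* \otimes \bar U_2$ off-diagonal entries of $S$. Applying the column-replacement rules (i)-(ii) from the proof of Lemma \ref{determinantcalculationbeta2edge}, together with the Pfaffian analogue of the Laplace expansion, the surviving leading terms reassemble into the block Pfaffian on the right-hand side of \eqref{4Rlemmaequ} for case (i), or into the determinantal block in \eqref{4Clemmaequ} for case (ii) where $Y_{11}, Y_{22}$ are forced to be subleading.

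The main technical obstacle will be verifying the multi-index error constraints \eqref{4RSumcondi1} and \eqref{4CSumcondi1}. Every replacement combination not already captured in the leading expression must cost at least one additional factor of $\|Y\|$, $\|Y_{11}, Y_{22}\|$, $\|Y_{12}\|$, $\Xi$, or $N^{-1/(4\kappa)}$; the first four come from inserting extra $Y$-columns or $O(\Xi)$-columns, while the last comes from retaining a subleading Taylor term of a critical Jordan block whose first column was replaced by an $S$-column rather than the full $J_2^{(1)}$-chain. Exhaustively inventorying these insertion patterns and tracking Pfaffian sign changes under row/column permutations is the bulk of the remaining combinatorial work; once the weight balance is in place for each pattern, the bounds \eqref{4RSumcondi1} and \eqref{4CSumcondi1} follow directly.
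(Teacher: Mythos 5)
Your overall strategy is the one the paper follows: decompose $J_2=J_2^{(0)}+J_2^{(1)}$ as in \eqref{J2decom}, separate the non-critical blocks (which contribute the explicit products of $|z_0-\theta_i|$ factors), and fill the zero columns of the critical blocks either by the full Jordan chain of $J_2^{(1)}$ (producing $\phi_i=(-\hat\theta_i)^{p_{i,1}}$ and the $N^{-1/4}$ scaling) or by $Y$-columns, with the error exponents tracked per insertion pattern. Case (ii) as you sketch it is essentially the paper's argument: there the only surviving term is a determinant and the column-replacement machinery of Lemma \ref{determinantcalculationbeta2edge} applies directly.

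For case (i), however, there is a concrete gap. First, the expansion tool you invoke is wrong: Proposition \ref{differentiationPfaffian} is a differentiation identity and gives no expansion of ${\rm Pf}\begin{bmatrix} R & S \\ -S^t & T\end{bmatrix}$. What is actually needed — and what the paper imports from Okada — is the block Laplace expansion ${\rm Pf}\begin{bmatrix} R & S \\ -S^t & T\end{bmatrix}=\sum_{I,J}\epsilon_r(I,J)\,{\rm Pf}(R[I])\,{\rm Pf}(T[J])\det S[\,\cdot\,;\,\cdot\,]$ over pairs of even-cardinality index subsets. Second, and more importantly, in case (i) the leading contribution is not a single column-replacement pattern but the entire family of terms with $I\subset I_{\mathrm{row}}$, $J\subset I_{\mathrm{col}}$; to "reassemble" these into the target Pfaffian one must exhibit a bijection with the analogous Laplace expansion of ${\rm Pf}\begin{bmatrix} R_n & S_n \\ -S_n^t & T_n\end{bmatrix}$ and prove the sign identity $\epsilon_r(I,J)\,\widetilde\epsilon_{2k}(I,J)=\epsilon_t(I',J')$, which the paper establishes by induction on $\#I$ by tracking how deleting a pair of rows/columns shifts indices across the Jordan blocks. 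You name this as "remaining combinatorial work" but supply no mechanism, and without it \eqref{4Rlemmaequ} is not reached. Your Schur-complement elimination of the non-critical blocks is a plausible alternative to the paper's index-restriction argument, but it too requires verifying that the complement's corrections to the surviving block are absorbed by the error terms in \eqref{4RSumcondi1}.
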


\begin{proof} 
A useful starting point is  the Laplace-type expansion formulas for the  Pfaffian  \cite[Proposition 2.3]{Ok}
\begin{small} 
\begin{equation}\label{lemmaproofkey1}
\begin{aligned}
{\rm Pf}\begin{bmatrix}
R & S  \\  -S^t  &  T
\end{bmatrix}=\sum_{I,J}\epsilon_r(I,J)
{\rm Pf}\left( R[I] \right){\rm Pf}\left( T[J] \right)
\det S\left[ [4rK_1]\setminus I;[4rK_1]\setminus J \right],
\end{aligned}
\end{equation} \end{small} 
where  the sum is taken over   all pairs of even-element  subsets    $[4rK_1]:=\{1,2,\ldots, 4rK_1\}$ such that $\#I=\#J$, and  the coefficient is given by
\begin{small}
 \begin{equation*}\label{4sign1}
\epsilon_r(I,J)=(-1)^{\sum(I)+\sum(J)+\binom{4rK_1-\#I}{2}}, \quad \sum(I)=\sum_{i\in I}i. 
\end{equation*}
\end{small} 
Note that here  we  use the following same  notation  for submatrix  as before but with a different meaning:   For a  $n\times n$ matrix $X$ and for   two subsets $I,J \subset  [n]=\{1,2\ldots,K\}$,   we denote by  $X[I;J]$  the
submatrix  of $X$ whose    rows and columns  are indexed by $I$ and $J$ respectively.

{\bf Case (i)}: $z_0=\pm 1$.  Introduce two   index sets
\begin{equation*}\label{poorfsetR}
\begin{aligned}
&I_{row}=\big\{ jr+ I_2:  j=0, \ldots, 2K_1-1\big\}
\cup \big\{ (2K_{1}+j)r+ I_1:  j=0, \ldots, 2K_1-1\big\},
\\&I_{col}=\big\{ jr+ I_1:  j=0, \ldots, 2K_1-1\big\}
\cup \big\{ (2K_{1}+j)r+ I_2:  j=0, \ldots, 2K_1-1\big\}.
\end{aligned}
\end{equation*}
We claim that  all the leading  terms  on the right-hand side of \eqref{lemmaproofkey1}  have  row and column indexes that satisfy  
\begin{equation}\label{4Ronly}
I\subset I_{row},\quad
J\subset I_{col},
\end{equation}
and the other terms can be treated as error. 

In fact, for $\#I=\#J=2k$ with  integer $k>0$, if $J\cap \left( [4rK_1]\setminus I_{col} \right)\not=\emptyset$, or $I\cap \left( [4rK_1]\setminus I_{row} \right)\not=\emptyset$,  we can use the column replacement approach as in Lemma \ref{determinantcalculationbeta2edge} to obtain 
$$ \det S\left[ [4rK_1]\setminus I;[4rK_1]\setminus J \right]=O\left( \sum N^{-\alpha_1}\| Y \|^{\alpha_2}   \Xi^{\alpha_3} \right), $$
where the  triple satisfies  the restriction  $4\alpha_1+\alpha_2+\alpha_3\geq 4K_1t-2k+\frac{1}{\kappa}$. And meanwhile, we have  $$ {\rm Pf}\left( R[I] \right){\rm Pf}\left( T[J] \right)=O\Big( \sum_{\alpha_2+\alpha_3=2k}\| Y \|^{\alpha_2}   \Xi ^{\alpha_3} \Big). $$ 
Hence,  the product of these two  is contained in the error term.

Now  consider the terms on the right-hand side of  \eqref{lemmaproofkey1}  with indexes  satisfying   \eqref{4Ronly}.  Take a similar expansion of the Pfaffian on the right-hand side of  
\eqref{4Rlemmaequ}
\begin{small}
\begin{equation}\label{lemmaproofkeyR}
\begin{aligned}
{\rm Pf}\begin{bmatrix}
R_n & S_n  \\  -S_n^t  &  T_n
\end{bmatrix}=\sum_{I',J'}\epsilon_t(I',J')
{\rm Pf}\left( R_n[I'] \right){\rm Pf}\left( T_n[J'] \right)
\det S_n\left[ [4tK_1]\setminus I';[4tK_1]\setminus J' \right],
\end{aligned}
\end{equation}
\end{small}
where for simplicity
$$R_n:=R\left( Y;\left(V_1\right)_{I_2},\left(U_1\right)_{I_1} \right),
\quad T_n:=R\left( Y^*;\left(U_1\right)_{I_1},\left(V_1\right)_{I_2} \right),\quad
S_n:=N^{-\frac{1}{4}}S\big( N^{\frac{1}{4}}Y \big).
$$ 
%and
%\begin{equation}\label{4sign2}
%\epsilon_t(I',J')=(-1)^{\sum(I')+\sum(J')+\binom{4tK_1-\#I}{2}}.
%\end{equation}
Note that $\#I_{row}=\#I_{col}=4tK_1$,  it is easy to find a natural bijection   between $(I,J)$ in \eqref{4Ronly}%\eqref{lemmaproofkey1} 
and $(I',J')$ in \eqref{lemmaproofkeyR}. 
For example,  $I=\left\{ p_{11},r+p_{11},2K_1r+1 \right\}$ corresponds to $I'=\left\{ 1,t+1,2K_1t+1 \right\}$, and  $J=\left\{ r+p_{11}+1,(2K_1+1)r+2p_{11} \right\}$  to $J'=\left\{ t+2,(2K_1+1)t+2 \right\}$.

Therefore,   it suffices to prove  that 
%the corresponding terms in \eqref{lemmaproofkey1} and \eqref{lemmaproofkeyR} equal asymptotically. That is, to prove
\begin{small} 
\begin{equation*}
\begin{aligned}
&\epsilon_r(I,J) 
{\rm Pf}\left( R[I] \right)  {\rm Pf}\left( T[J] \right)
\det S\left[ [4rK_1]\setminus I;[4rK_1]\setminus J \right]=    \prod_{i=m+1}^{\gamma}
\left| z_0-\theta_i \right|^{4K_1\sum_{j=1}^{\alpha_i}p_{i,j}\beta_{i,j}} \\
&\times \epsilon_t(I',J')
{\rm Pf}\left( R_n[I'] \right){\rm Pf}\left( T_n[J'] \right)    \det S_n\left[ [4tK_1]\setminus I';[4tK_1]\setminus J' \right]+O\left( \sum N^{-\alpha_1}\| Y \|^{\alpha_2} \Xi ^{\alpha_3} \right).
\end{aligned}
\end{equation*}\end{small} 
For the natural bijection   we have
$$ {\rm Pf}\left( R[I] \right)={\rm Pf}\left( R_n[I'] \right),\quad
{\rm Pf}\left( T[J] \right)={\rm Pf}\left( T_n[J'] \right).
 $$
On the other hand,  we proceed as in Lemma \ref{determinantcalculationbeta2edge} and the  column replacement approach gives rise to   
\begin{small} 
\begin{equation*}
\begin{aligned}
\det S&\left[ [4rK_1]\setminus I;[4rK_1]\setminus J \right]=\widetilde{\epsilon}_{2k}(I,J)
   \prod_{i=m+1}^{\gamma} \left| z_0-\theta_i \right|^{4K_1\sum_{j=1}^{\alpha_i}p_{i,j}\beta_{i,j}}  \\
 &\times   \det S_n\left[ [4tK_1]\setminus I';[4tK_1]\setminus J' \right]
+O\left( \sum N^{-\alpha_1}\| Y \|^{\alpha_2} \Xi^{\alpha_3} \right),
\end{aligned}
\end{equation*} \end{small} 
where $\widetilde{\epsilon}_{2k}(I,J)$ takes  value  $\pm 1$  for $\#I=\#I'=\#J=\#J'=2k$,  and the finitely many  triples  $(\alpha_1,\alpha_2,\alpha_3)$ satisfy \eqref{4RSumcondi1}.

Finally, the problem  can be reduced to verifying  an identity
\begin{equation}\label{Rsignequal}
\epsilon_r(I,J)\widetilde{\epsilon}_{2k}(I,J)=\epsilon_t(I',J').
\end{equation}
This can  be proved  by induction on $k$ where $2k$ is the cardinality of  $I$. In this case, let's  rewrite it  as $I_k$.  

The $k=0$ case is trivial since all  three signs  equal to  1 because of  the evenness of $2K_1$.
Suppose that    \eqref{Rsignequal} holds  for  $k-1$,  take some $I_{k-1}\subset I_{k}$ and $J_{k-1}\subset J_{k}$. Note that  \begin{small} 
$$ (-1)^{\binom{4rK_1-2k}{2}}=(-1)^{\binom{4tK_1-2k}{2}}=(-1)^k,$$ \end{small} 
it suffices   to verify \begin{small} 
\begin{equation*}\label{4Rinductionkey}
\frac{\widetilde{\epsilon}_{2k}(I_k,J_k)}{\widetilde{\epsilon}_{2k-2}(I_{k-1},J_{k-1})}
=\frac{(-1)^{\sum(I'_k \setminus I'_{k-1})+\sum(J'_k \setminus J'_{k-1})}}{(-1)^{\sum(I_k \setminus I_{k-1})+\sum(J_k \setminus J_{k-1})}}.
\end{equation*} 
\end{small} 
The left-hand side  refers to  the sign change by deleting two rows and columns.  
We  just consider the change by deleting  two rows since the  column case is similar.

 Let   
$$ I_k \setminus I_{k-1}=\left\{ a_1,a_2 \right\},\quad a_1<a_2,  $$
then there are three cases: (i)
 $1\leq a_1<a_2\leq 2rK_1$; (ii) $1\leq a_1 \leq 2rK_1<a_2\leq 4rK_1$; (iii)
$2rK_1< a_1<a_2\leq 4rK_1$. We only consider case (i) since the other  two cases are very similar.

Now suppose that  between row  $a_1$ and  row $a_2$ there are  $t_2$  $J_2^{(2)}$'s  (as a whole,  cf.  \eqref{J2rdeno}) and $t_1$ Jordan blocks   of the form $R_{p_{ij}}\left( \theta_i \right)$,    $1\leq i \leq m$. 
% $t_2$ denote by $t_1$ and    the number of Jordan blocks  respectively 
%of the form   and .
  In the late case these  blocks contain $a_2$ but  not $a_1$  in the unique way,  and the block  sizes  are denoted  by $p_1,p_2,\cdots, p_{t_1}$.  According to the natural correspondence we have 
$$ (-1)^{\sum(I_k \setminus I_{k-1})}=(-1)^{a_2-a_1}=(-1)^{t_2 r_0+p_1+\cdots+p_{t_1}},
\quad (-1)^{\sum(I_k' \setminus I_{k-1}')}=(-1)^{t_1}, 
 $$
so the sign  change   caused by deleting row $a_1$ and row  $a_2$ when  taking the  Laplace expansion is 
\begin{small} $$ \left( \frac{\widetilde{\epsilon}_{2k}(I_k,J_k)}{\widetilde{\epsilon}_{2k-2}(I_{k-1},J_{k-1})} \right)_{row}
=(-1)^{t_2 r_0+(p_1-1)+\cdots+(p_{t_1}-1)}=\frac{(-1)^{\sum(I'_k \setminus I'_{k-1})}}{(-1)^{\sum(I_k \setminus I_{k-1})}}.
 $$ \end{small} Together with the sign change caused by the  column,  we know  that \eqref{Rsignequal}  holds for  $k$. 

This concludes the proof of Case (i).
%%%%%%%%%%%%%%%%%%%%%%%%%%%%%%%%%%%%%%%%%%%%%%%%%%%%%%%%%%%%%%%%%%%%%%%%%%%%%%%%%%%%%%%%%%%%5

{\bf Case (ii)}:  $\Im z_0>0$ \& $|z_0|=1$. As in Case (i), let 
\begin{equation*}\label{poorfsetC}
\begin{aligned}
&I^{c}_{row}=\big\{ jr+ I_2:  j=0, \ldots, K_1-1\big\}
\cup \big\{ (3K_{1}+j)r+ I_1:  j=0, \ldots, K_1-1\big\},
\\&I^{c}_{col}=\big\{ jr+ I_1:  j=0, \ldots, K_1-1\big\}
\cup \big\{ (3K_{1}+j)r+ I_2:  j=0, \ldots, K_1-1\big\},
\end{aligned}
\end{equation*}
%\begin{equation}\label{poorfsetR}
%\begin{aligned}
%&I_{col}=\left( \bigcup_{i=0}^{K_1-1}\left( ir+I_1 \right) \right)
%\bigcup \left( \bigcup_{i=0}^{K_1-1}\left( (3K_1+i)r+I_2 \right) \right),
%\\
%&I_{row}=\left( \bigcup_{i=0}^{K_1-1}\left( ir+I_2 \right) \right)
%\bigcup \left( \bigcup_{i=0}^{K_1-1}\left( (3K_1+i)r+I_1 \right) \right).
%\end{aligned}
%\end{equation}
  all the leading  terms  on the right-hand side of \eqref{lemmaproofkey1}  have  row and column indexes that satisfy  
$ I\subset I^{c}_{row}$ and $J\subset I^{c}_{col} $
and the other terms can be treated as error. 
We only need to consider the cases in \eqref{4Ronly}, terms in other cases are all contained in the error terms. If  $\#I=\#J=2k>0$,  then  we have
$$ {\rm Pf}\left( R[I] \right) {\rm Pf}\left( T[J] \right)=O\big( \| Y_{11},Y_{22} \|^{2k} \big),$$
and 
$$ \det S\left[ [4rK_1]\setminus I;[4rK_1]\setminus J \right]=O\left( \sum N^{-\alpha_1}\| Y_{11},Y_{22} \|^{\alpha_2} \| Y_{12} \|^{\alpha_3}  \Xi^{\alpha_4} \right), $$
where $4\alpha_1+\alpha_2+\alpha_3+\alpha_4\geq 2K_1t-2k$, from which 
the product of these two   is contained in the error terms. 

Therefore, the only leading term occurs when   $I=J=\emptyset$.  Again we can proceed  as in Lemma \ref{determinantcalculationbeta2edge} and obtain  the desired result.
\end{proof}
%%%%%%%%%%%%%%%%%%%%%%%%%%%%%%%%%%%%%%%%%%%%%

Let's first prove   part (ii) of Theorem \ref{4-complex}  in detail and then  part (i) quickly.
\begin{proof}[Proof of Theorem \ref{4-complex} (ii)]
%In this case  we put  $z_0=x_0+{\rm i}y_0$ where  $y_0 \neq0$.  
For simplicity,  let's introduce a new matrix  notation 
\begin{small}
\begin{equation*}\label{tildeA1}
\widetilde{A}_1=YZ_0^{-1}Y^*Z_0^{-1},
\end{equation*}
\end{small}
then we  see  from (\ref{fbetaYZ0}) that  %\ref{fbetaYZ0}
\begin{small}
\begin{equation}\label{f4YZ0tildeA1form}
f_4(Y,Z_0)={\rm Tr}(YY^*)-\log\det\big( \mathbb{I}_{2K_1}+\widetilde{A}_1 \big).
\end{equation}
\end{small}
Rewrite  Y as  a $2\times 2$  block matrix 
\begin{small}
\begin{equation*}\label{Yblockform4}
Y=\begin{bmatrix}
Y_{11} & Y_{12}  \\   Y_{21} &  Y_{22}
\end{bmatrix},  \ Y_{11}^t=Y_{11},  \ Y_{22}^t=Y_{22}, \ Y_{12}^t=Y_{21}, 
\end{equation*}
\end{small}
where each block has the same size,   since  $|z_0|=1$ we have 
\begin{small}
\begin{equation}\label{tildeA1detailedform}
\widetilde{A}_1=\begin{bmatrix}
z_0^{-2}Y_{11}Y_{11}^*+Y_{12}Y_{12}^*  &
Y_{11}Y_{21}^*+\overline{z_0}^{-2}Y_{12}Y_{22}^*   \\
z_0^{-2}Y_{21}Y_{11}^*+Y_{22}Y_{12}^*   &
Y_{21}Y_{21}^*+\overline{z_0}^{-2}Y_{22}Y_{22}^*
\end{bmatrix}.
\end{equation}
\end{small}

%%%%%%%%%%%%%%%%%%%%%%%%%%%%%%%%%%%%%%%%%%%%%%%%%%%%%%%%%%%%%%%%%%%%%%%%%%%%%%%%%%%%%%%%%%%%%%%%%
Noting  $\widetilde{A}_1=O(\delta)$  whenever  $ {\rm Tr}(YY^*)\leq \delta$, 
 for small $\delta$ take a   Taylor expansion and rewrite    \eqref{f4YZ0tildeA1form} as 
 \begin{equation*}%\label{f1YZ0tildeA1form}
f_4(Y,Z_0)={\rm Tr}(YY^*)-
{\rm Tr}(\widetilde{A}_1)+\frac{1}{2}{\rm Tr}(\widetilde{A}_{1}^2)+O(\| \widetilde{A}_1 \|^3).
\end{equation*}
The following two terms 
\begin{small}
\begin{equation}\label{TrYYminusTrtildeA1}
{\rm Tr}(YY^*)-{\rm Tr}(\widetilde{A}_1)=(1-z_0^{-2}){\rm Tr}(Y_{11}Y_{11}^*)+(1-\overline{z_0}^{-2}){\rm Tr}(Y_{22}Y_{22}^*)
+2(1-\left| z_0 \right|^{-2}){\rm Tr}(Y_{12}Y_{12}^*).
\end{equation}
\end{small}
play a major role in the following asymptotic analysis.

Particularly when  $|z_0|=1$  the last term  on the right-hand side of \eqref{TrYYminusTrtildeA1} vanishes, 
we need to find the next leading term. Noting  a useful  approximation    from \eqref{tildeA1detailedform} 
\begin{small}
\begin{equation}\label{tildeA1Asympto}
\widetilde{A}_1=\begin{bmatrix}
Y_{12}Y_{12}^*+O(\|Y_{11}\|^2)  &
O(\|Y_{11},Y_{22}\|\|Y_{12}\|)  \\
O(\|Y_{11},Y_{22}\|\|Y_{12}\|)  &
Y_{21}Y_{21}^*+O(\|Y_{22}\|^2)
\end{bmatrix},
\end{equation}
\end{small}
and 
\begin{equation*}
{\rm Tr}\big((Y_{21}Y_{21}^*)^2 \big)={\rm Tr}\big((Y_{12}^t\overline{Y_{12}})^2 \big)
={\rm Tr}\big((Y_{12}Y_{12}^*)^2 \big),
\end{equation*}
since $Y_{21}^{t}=Y_{12}$,
it's easy to obtain  
  \begin{equation}\label{TrtildeA1A1}
{\rm Tr}(\widetilde{A}_1^2)=2{\rm Tr}(Y_{12}Y_{12}^*Y_{12}Y_{12}^*)+
O\big(\|Y_{11},Y_{22}\|^2(\|Y_{12}\|^2+\|Y_{11},Y_{22}\|^2)\big).
\end{equation}

On the other hand,   one can easily   arrive at an estimate   from (\ref{tildeA1Asympto})
 \begin{equation}\label{TrA1A1A1}
\| \widetilde{A}_1 \|^3 =
O\big(\big(\|Y_{12}\|^2+ \|Y_{11},Y_{22}\|^2\big)^3\big).
\end{equation}
  Hence, combining  \eqref{f4YZ0tildeA1form}, \eqref{TrYYminusTrtildeA1}, \eqref{TrtildeA1A1} and \eqref{TrA1A1A1},    we get 
\begin{small}
\begin{equation}\label{f4YZ0decomposition}
f_4(Y,Z_0)=f_{4,0}(Y)+f_{4,1}(Y),
\end{equation}
\end{small}
where
\begin{equation*}\label{f4m}
f_{4,0}(Y)=(1-z_0^{-2}){\rm Tr}(Y_{11}Y_{11}^*)+(1-\overline{z_0}^{-2}){\rm Tr}(Y_{22}Y_{22}^*)
+{\rm Tr}((Y_{12}Y_{12}^*)^2),
\end{equation*}
and 

\begin{equation*}\label{f4r}
f_{4,1}(Y)=O\big(\|Y_{11},Y_{22}\|^2(\|Y_{12}\|^2+\|Y_{11},Y_{22}\|^2)+\|Y_{12}\|^6\big)
.
\end{equation*}

At the  edge we have chosen the exponent  $\omega=1/2$, for $h_4(Y)$ given in \eqref{hbetaYLambda},   we use Lemma \ref{Cbetacrudebound} to obtain 
%\ref{hbetaYLambda},\ref{Cbetacrudebound}
\begin{small}
\begin{equation}\label{Nh4YLambdaAsympto}
Nh_4(Y)= \sqrt{N}{\rm Tr}(H_4)-\frac{1}{2}{\rm Tr(H_4^2)}+O(N^{-\frac{1}{2}}).
\end{equation}
\end{small}
Simple manipulations based on  \eqref{Cbetadetailed} give rise to           %\ref{Cbetadetailed}
\begin{small}
\begin{equation}\label{TrC4}
\begin{aligned}
&{\rm Tr}(H_4)=2 {\rm Tr}(Z_0^{-1}\hat{\Lambda})
-2\left(z_0^{-1}{\rm Tr}(Y_{12}Y_{12}^*\hat{Z})+\overline{z_0}^{-1}{\rm Tr}(Y_{12}^*Y_{12}\hat{W}^*) \right)\\
&+
O\big(\|Y_{11},Y_{22}\|(\|Y_{12}\|+\|Y_{11},Y_{22}\|)+\|Y_{12}\|^4\big),
\end{aligned}
\end{equation}
\end{small}
and 
\begin{small}
\begin{equation}\label{TrC4C4}
{\rm Tr}(H_4^2)=2{\rm Tr}(Z_0^{-2}\hat{\Lambda}^2)+O(\|Y\|^2).
\end{equation}
\end{small}
The latter  follows from the  rough estimates 
\begin{small}
\begin{equation*}\label{C4iAsymptotic}
H_4^i=\begin{bmatrix}
z_0^{-i}\hat{\Lambda}^i+O(\|Y\|^2)   &  O(\|Y\|) 
\\     O(\|Y\|)&     z_0^{-i}\hat{\Lambda}^i+O(\|Y\|^2) 
\end{bmatrix},\quad i\geq 1,
\end{equation*}
\end{small}
which can be easily verified by induction.

Combining \eqref{Nh4YLambdaAsympto}, \eqref{TrC4} and \eqref{TrC4C4}, we obtain 
\begin{small}
\begin{align}\label{Nh4YLambdaAsymptofinalform}
Nh_4(Y)&=h_{4,0,0}-\sqrt{N}
h_{4,0}(Y_{12})+h_{4,1}(Y)
\end{align}
\end{small}
where
\begin{small}
\begin{equation*}\label{h4Lambda}
 \begin{aligned}
&h_{4,0,0}=2\sqrt{N}{\rm Tr}(Z_0^{-1}\hat{\Lambda})-{\rm Tr}(Z_0^{-2}\hat{\Lambda}^2)      \\
&h_{4,0}(Y_{12})=2z_0^{-1}{\rm Tr}(Y_{12}Y_{12}^*\hat{Z})+2\overline{z_0}^{-1}{\rm Tr}(Y_{12}^*Y_{12}\hat{W}^*),
\end{aligned}
\end{equation*}
\end{small}

and
\begin{small}
\begin{equation*}\label{Ph4Y}
h_{4,1}(Y)=O(N^{-\frac{1}{2}})+O(\|Y\|)+\sqrt{N}
\left( O\big(\|Y_{11},Y_{22}\|(\|Y_{12}\|+\|Y_{11},Y_{22}\|)+\|Y_{12}\|^4\big) \right).
\end{equation*}
\end{small}

The remaining task is to estimate  the factors of   $g_4(Y,\Lambda)$ in \eqref{gbetaYLambda}.%,  which are directly related to    two matrices $\begin{bmatrix}R & S  \\  -S^t  &  T \end{bmatrix}$ and $A_4$.  
For $A_4$ given in  \eqref{Q4transform1}, noting the rescaling of $Z$  in \eqref{Symbol}, we have      %
\begin{equation*}\label{A4asymto4edge}
A_4=\begin{bmatrix}
0  &  Z_0  \\ -Z_0  &  0
\end{bmatrix}+O\big(N^{-\frac{1}{2}}+\|Y\|\big),
\end{equation*}
from  
\begin{equation*}\label{detA4ssquare4edge}
\left( \det(A_4) \right)^{-r}=1
+O\big(N^{-\frac{1}{2}}+\|Y\|\big).
\end{equation*}

Use Lemma  \ref{4lemma} and take  $\Xi=0$ there,  we obtain     
\begin{small}
\begin{align}\label{g4YLambda4edge}
g_4(Y,\Lambda)= &\Big(D^{(4)}+O\big(N^{-\frac{1}{4\kappa}}+\|Y\|\big)\Big)         \\
&\times \Bigg( \det\begin{bmatrix}
N^{-\frac{1}{4}}\Omega_{11} & Y_{12}\otimes (V_2)_{I_2} 
\\  
-Y_{12}^*\otimes \overline{(U_2)_{I_1}}   &  -N^{-\frac{1}{4}}\Omega_{11}^*
\end{bmatrix}
    +O\bigg( \sum_{(\alpha_{1}',\alpha_{2}',\alpha_{3}')} N^{-\alpha_{1}'}\| Y_{11},Y_{22} \|^{\alpha_{2}'} \| Y_{12} \|^{\alpha_{3}'} \bigg) \Bigg)
\end{align}
\end{small}
where  $\kappa=\max\{1/2,    p_{1,1}, \ldots,  p_{m,1}\},$
\begin{small}
\begin{align}\label{D-4edge}
D^{(4)}=(-1)^{K_1t}
\left| \overline{z}_0-z_0 \right|^{2K_1(r-r_0)}
\prod_{i=m+1}^{\gamma}\prod_{j=1}^{\alpha_i}
\left| \left( z_0-\theta_i \right)\left( z_0-\overline{\theta_i} \right) \right|^{2K_1p_{ij}\beta_{ij}}
\end{align}
\end{small}
and 
 the triple  in the error term satisfies 
\begin{equation*}\label{4CCSumcondi2}
4\alpha_{1}'+\alpha_{2}'+\alpha_{3}'\geq 2K_1t +\frac{1}{\kappa} \delta_{\alpha_{2}',0}.
\end{equation*}

Recalling   \eqref{Ideltabeta}, \eqref{f4YZ0decomposition} and \eqref{Nh4YLambdaAsymptofinalform},  we rewrite  
\begin{equation}\label{Idelta4decomposition4edge}
  e^{-h_{4,0,0}}I_{\delta,N}^{(4)}= J_{1,N}^{(4)}+J_{2,N}^{(4)}
\end{equation}
 where 

\begin{small}
\begin{equation*}\label{Idelta4decomposition4edge1}
 J_{1,N}^{(4)}=
\int\limits_{{\rm Tr}(YY^*)\leq \delta}\ g_{4}(Y,\Lambda)
e^{ -Nf_{4,0}(Y)-\sqrt{N}h_{4,0}(Y_{12}) } {\rm d}Y,
\end{equation*}
\end{small}
and 
\begin{small}
\begin{equation*}\label{Idelta4decomposition4edge2}
 J_{2,N}^{(4)}=
\int\limits_{{\rm Tr}(YY^*)\leq \delta}\ g_{4}(Y,\Lambda)
e^{ -Nf_{4,0}(Y)-\sqrt{N}h_{4,0}(Y_{12}) } \left(
e^{-Nf_{4,1}(Y)+h_{4,1}(Y)}-1\right) {\rm d}Y.
\end{equation*}
\end{small}
As in the proof of Theorem \ref{2-complex}, we  can get  an estimate
 \begin{equation} \label{4J2-1} J_{2,N}^{(4)}=O(N^{-\frac{1}{4}})J_{1,N}^{(4)}.  
 \end{equation}

For $J_{1,N}$,  after using   the  change of variables 
\begin{equation*} \label{4Yscaling}Y_{11}\to N^{-\frac{1}{2}}Y_{11}, \   Y_{22}\to N^{-\frac{1}{2}}Y_{22}, \ Y_{12}\to N^{-\frac{1}{4}}Y_{12},
\end{equation*}
 we can remove all restrictions on the integration domains of  $Y_{11}, Y_{22}$ and $Y_{12}$  up to some decaying  exponential term. Also noting  \eqref{g4YLambda4edge}, 
 the standard asymptotic analysis shows that 
\begin{equation}\label{I1delta4minusI2delta4edge}
 J_{1,N}^{(4)}=D^{(4)}  N^{-(K_1+1)K_1-\frac{1}{2}(K_1^2+K_1t)} \left( J_{\infty}^{(4)}+O\big( N^{-\frac{1}{4\kappa}} \big) \right),
\end{equation}
where
\begin{small}
\begin{equation*}\label{I0delta14edge}
J_{\infty}^{(4)}=\int\limits_{Y=Y^t}
   \det\begin{bmatrix}
\Omega_{11} & Y_{12}\otimes \left(V_2\right)_{I_2} \\ -Y_{12}^*\otimes \overline{\left(U_2\right)_{I_1}} & -\Omega_{11}^*
\end{bmatrix}
\exp\left\{ -f_{4,0}(Y)-h_{4,0}(Y_{12}) \right\} {\rm d}Y.
\end{equation*}
\end{small}
Integrate out $Y_{11}$ and $Y_{22}$,   we see from  $J_{\infty}^{(4)}$  that 
\begin{small}
\begin{equation*}\label{J4edge}
J_{\infty}^{(4)}=2^{-(K_1-1)K_1}\left( \frac{\pi}{\left| z_0-\overline{z}_0 \right|} \right)^{(K_1+1)K_1}
 I^{(4)}\left(  
( -\hat{\theta}_1)^{p_{1,1}},\cdots,( -\hat{\theta}_m )^{p_{m,1}};
2z_0^{-1}\hat{Z},2z_0^{-1}\hat{W}\right).
\end{equation*}
\end{small}

At last,   combine   Proposition \ref{Analysisform},   
   \eqref{Idelta4decomposition4edge},   \eqref{4J2-1} and \eqref{I1delta4minusI2delta4edge},   we  complete the proof  at the complex edge.  
\end{proof}

\begin{proof}[Proof of Theorem \ref{4-complex} (i)]
Repeating almost the same procedures   as  in the proof of  Theorem \ref{4-complex} (i)  and applying  Lemma \ref{4lemma},  the proof is straightforward.  We just point out some key differences. 

When $z_0=\pm 1$,  we  see from    \eqref{fbetaYZ0} that 
\begin{equation*}\label{f4YZ0edgerealcase}
f_4(Y,Z_0)=\frac{1}{2}{\rm Tr}\left( (YY^*)^2 \right)+O(\|Y\|^6).
\end{equation*}
Recalling  \eqref{Cbetadetailed} simple  calculation shows        
\begin{equation}\label{TrC4realedgecase}
{\rm Tr}(H_4)=2z_0^{-1}{\rm Tr}\left( \hat{\Lambda} \right)-2z_0^{-1}{\rm Tr}\left( Y^*Y\hat{\Lambda} \right)+O(\|Y\|^4),
\end{equation}
and
\begin{equation}\label{TrC4C4realedge}
{\rm Tr}(H_4^2)=2{\rm Tr}\left( \hat{\Lambda}^2 \right)+O(\|Y\|^2).
\end{equation}

Combination of   \eqref{hbetaYLambda},  \eqref{TrC4realedgecase} and \eqref{TrC4C4realedge} yields 
\begin{small}
\begin{equation*}\label{Nh4YLambdaAsymptofinalformrealedge}
Nh_4(Y,\Lambda)=2z_0^{-1}\sqrt{N}{\rm Tr}( \hat{\Lambda} )-{\rm Tr}(\hat{\Lambda}^2)-2\sqrt{N}z_0^{-1}{\rm Tr}\big( Y^*Y\hat{\Lambda} \big)+h_{4,1}(Y),
\end{equation*}
\end{small}
where
\begin{equation*}%\label{Ph1RYedge}
h_{4,1}(Y)=O(\sqrt{N}\|Y\|^4)+O(\|Y\|^2)+O(N^{-\frac{1}{2}}).
\end{equation*}

After similar  calculation as in the proof of     Theorem \ref{4-complex} (ii)  and application of Lemma \ref{4lemma} where  $\Xi=0$, we can obtain 
   \begin{small}
\begin{multline}
%\label{g4YLambda4edger}
g_4(Y,\Lambda)= \Big(D^{(4,{\rm re})}+O\big(N^{-\frac{1}{4\kappa}}+\|Y\|\big)\Big)   \nonumber     \\
\times
\Bigg(
{\rm Pf}\begin{bmatrix}
R( Y;(V_1)_{I_2},(U_1)_{I_1} ) & N^{-\frac{1}{4}}S\left( N^{\frac{1}{4}}Y \right)  
\\  
-N^{-\frac{1}{4}}S\left( N^{\frac{1}{4}}Y \right)^t  &  R( Y^*;(U_1)_{I_1},(V_1)_{I_2} )
\end{bmatrix}
+O\bigg( \sum_{(\alpha_{1}',\alpha_{2}')} N^{-\alpha_{1}'}\| Y \|^{\alpha_{2}'} \bigg)
\Bigg),
\end{multline}
\end{small}
where 
\begin{small}
\begin{align}\label{D-4Redge}
D^{(4,{\rm re})}=\prod_{i=m+1}^{\gamma}\prod_{j=1}^{\alpha_i}\left| z_0-\theta_i \right|^{4K_1p_{ij}\beta_{ij}}.
\end{align}
\end{small}
 
Finally,    %following the almost same procedure as in the proof of   Theorem \ref{4-complex} (ii) and 
use     the  change of variables $Y\to N^{-\frac{1}{4}}Y$ and  we can complete the proof.  
\end{proof}

%%%%%%%%%%%%%%%%%%%%%%Delete
%%%%%%%%%%%%%%%%%%

\section{Proofs of main results}  \label{proofs}
 \subsection{ Proof of Proposition \ref{intrep}}

\begin{proof}[Proof of Proposition \ref{intrep}]  {\bf Step 1.} To verify   \eqref{densityeigen0},  we  use  Dyson’s approach  that any matrix can be  triangularized by a unitary matrix; see e.g. \cite[Chapter 6]{HKPV}.

 First, 
by Schur decomposition  
\begin{equation}\label{Schurdecom}
X=V^*\left(\Lambda+T\right)V,
\end{equation}
where $V=[v_{ij}]$ is unitary, $T$ is strictly upper triangular and 
$\Lambda=\left( z_1,\cdots,z_{N} \right)$ 
is diagonal. The decomposition is  unique in the following sense: 
  using  the lexicographic order on complex numbers to arrange the eigenvalues in increasing order  requiring that all diagonal entries $v_{ii} \geq 0$
   from $V$ and omitting  a lower dimensional set.  The Jacobian determinant formula can thus  be  obtained as follows 
\begin{equation}\label{SchurJacobian}
{\rm d}X=    \prod_{1\leq i<j\leq N}\left| z_i-z_j \right|^2  {\rm d}\Lambda
{\rm d}T{\rm d}V,
\end{equation}
see  \cite[Section 6.3]{HKPV} for more specific descriptions.  

 Put 
\begin{equation*}\label{Bdenotion}
B=[ b_{ij}]:=VX_0V^*,\quad X_0={\rm diag}\left( A_0,0_{N-r} \right), \quad V=(V_1,V_2),
%\quad B=\left[ b_{ij} \right]_{i,j=1}^{N+n}
\end{equation*}
where $V_1$ and $V_2$  are  of size $N\times r$ and   $N\times (N- r)$ respectively, then 
$$B=V_{1}A_{0}V_{1}^*.$$
By \eqref{Schurdecom}  we  arrive at     
%$$ {\rm Tr}\left( \Lambda T^*+T\Lambda^* \right)=0, $$
\begin{equation*}\label{traceexpension}
\begin{aligned}
{\rm Tr}\left( X-X_0 \right)\left( X-X_0 \right)^*&={\rm Tr}\left( \Lambda\Lambda^* \right)-{\rm Tr}\left( \Lambda B^*+B\Lambda^* \right)+{\rm Tr}\left( (T-B)(T-B)^* \right).
%&+{\rm Tr}\left( (T-B)(T-B)^* \right).
\end{aligned}
\end{equation*}
Integrate out  $V_2$ and the strictly upper triangular 
matrix $T$ which is  indeed a Gaussian matrix  integral,  we get 

 \begin{equation}\label{densityeigen01}
\begin{aligned}
&f_{N}\left( A_0; z\right)=
\left( \frac{N}{\pi} \right)^{N^2}
 \Big(\frac{\pi}{N}\Big)^{\frac{1}{2}N(N-1)} C_{N,N-r}
\exp\bigg\{  
-N\sum_{k=1}^{N}\left| z_k \right|^2
\bigg\} \prod_{1\leq i<j\leq N}\left| z_i-z_j \right|^2  \\
&\times \int \delta\left( V_1^*V_1-\mathbb{I}_r \right)
\exp\bigg\{ 
N  \sum_{i=1}^N\left( z_i\overline{b_{ii}}+\overline{z_i}b_{ii} \right)
+N\sum_{N\geq i\geq j\geq 1}\left| b_{ij} \right|^2
\bigg\}{\rm d}V_1,
\end{aligned}
\end{equation}
from  which \eqref{densityeigen0} follows. Here $C_{N,N-r}$ denotes the normalization from the integral over $V_2$. To compute $Z_{N,r}$, note that it is independent from $A_0$. Hence, we can take $A_0=0$, reducing the integral in (\ref{densityeigen0}) to $\int \delta\left( V_1^*V_1-\mathbb{I}_r \right){\rm d}V_1$, which can be calculated through the formula\cite[Section 4.1]{ZL}:
\begin{equation*}
\int \delta\left( Y^*Y-\mathbb{I}_m \right){\rm d}Y=
\frac{\pi^{\frac{1}{2}m(2n-m+1)}}{\prod_{j=1}^m(n-j)!},
\end{equation*}
where $Y$ is any complex matrix of size $n\times m$, $m\leq n$. Combining the normalization condition
\begin{equation*}
\int \cdots \int f_{N}\left( 0; z_1,\cdots,z_{N}\right)\prod_{i=1}^N{\rm d}\Re z_i{\rm d}\Im z_i=1
\end{equation*}
and the integration formula
\begin{equation*}
\int \exp\left\{
-N\sum_{i=1}^N|z_i|^2
\right\}\prod_{1\leq i <j \leq N}|z_i-z_j|^2
\prod_{i=1}^N{\rm d}\Re z_i{\rm d}\Im z_i
=N^{-\frac{N(N+1)}{2}}\pi^N
\prod_{j=1}^N j!,
\end{equation*}
we arrive at (\ref{correZNr}).

{\bf Step 2. }  The goal is to obtain an  integral representation for correlation functions, that is, 
\begin{equation*}\label{correlationdef}
R_{N}^{(n)}\left(A_0;  z_1,\cdots,z_{n}\right)=\frac{N!}{(N-n)!}
\int \cdots \int f_{N}\left(A_0; z_1,\cdots,z_{N}\right){\rm d}z_{n+1}\cdots {\rm d}z_{N},
\end{equation*} 
where  ${\rm d}z={\rm d}\Re z  {\rm d}\Im z
$ for a complex variable $z$. 
 
Rewrite   
\begin{equation*}\label{Gdecompose}
%G=\left[ G_1,G_0 \right],\quad
V_1=\begin{bmatrix}
Q \\ V_{21}
\end{bmatrix}, \quad 
B=\begin{bmatrix}
QA_{0}Q^* & QA_{0}V_{21}^* \\ V_{21}A_{0}Q^* & V_{21}A_{0}V_{21}^*
\end{bmatrix},\quad Q=[q_{ij}],
\end{equation*}
where $Q$ is  of size $n\times r$.   
%\begin{equation}\label{GFQelement}
%F=\left[ f_{ij} \right]_{i,j=1}^n,\quad
%G=\left[ g_{ij} \right]_{i,j=1}^{N+n},\quad
%Q=\left[ q_{ij} \right],\ i=1,\cdots,n;j=1,\cdots,r.
%\end{equation}
%
%In view of (\ref{Bdenotion}) and (\ref{Gdecompose}), we have:
%\begin{equation}\label{BGrelation}
%F=QA_0Q^*,\quad
%B_{22}=G_2A_0G_2^*,\quad
%B=G_1A_0G_1^*;
%\end{equation}
%\begin{equation}\label{bgrelation}
%b_{ij}=\sum_{k,l=1}^rg_{ik}\overline{g_{jl}}a_{kl},\quad 1\leq i \leq j \leq N+n.
%\end{equation}
%Especially, $b_{ii}$ only concerns with $i-th$ row of $G_1$.
%
Next, let's split the summation $\sum_{i\geq j}$ in  \eqref{densityeigen0} into two parts, one only depends on the bottom-right block   $V_{21}A_{0}V_{21}^*$, the other can be reduced by the orthogonality  to 
 \begin{equation*}\label{secondpart}
\begin{aligned}
\sum_{j=1}^n\sum_{i=j}^{N}\left| b_{ij} \right|^2&=
\sum_{j=1}^n\sum_{l_1,l_2,k_1,k_2=1}^r \overline{v_{jl_1}}v_{jl_2} a_
{k_1l_1}\overline{a_{k_2l_2}} \sum_{i=j}^{N} v_{ik_1}\overline{v_{ik_2}}
 \\
&= \sum_{j=1}^n\sum_{l_1,l_2,k_1,k_2=1}^r \overline{v_{jl_1}}v_{jl_2} a_
{k_1l_1}\overline{a_{k_2l_2}} 
\Big( \delta_{k_1,k_2}-\sum_{i=1}^{j-1} v_{ik_1}\overline{v_{ik_2}} \Big).
\end{aligned}
\end{equation*}
On the other hand, noting that 
 \begin{equation*}\label{trans00}
 V_1^*V_1-\mathbb{I}_r =V_{21}^*V_{21}
 -(\mathbb{I}_r -Q^* Q), 
\end{equation*}
%\begin{equation}\label{trans00}
%\int {\rm d}G_1 \delta\left( G_1^*G_1-\mathbb{I}_r \right)\cdots
%=\int_{Q^*Q\leq \mathbb{I}_r}{\rm d}Q\int {\rm d}G_2
%\delta\left( Q^*Q+G_2^*G_2-\mathbb{I}_r \right)\cdots.
%\end{equation}
after a change of variables  $V_{21}=\widetilde{V}_1\sqrt{\mathbb{I}_r -Q^* Q}$  
 the integral   
in \eqref{densityeigen0}  can be rewritten as 
\begin{multline}\label{trans11}
\int \int {\rm d}V_{1}\delta\left( V_1^*V_1-\mathbb{I}_r \right)\Big(\cdot\Big)=\\
\int_{Q^*Q\leq \mathbb{I}_r}{\rm d}Q\left( \det\left( \mathbb{I}_r -Q^* Q \right) \right)^{N-n-r}
\int {\rm d}\widetilde{V}_1
\delta\left( \widetilde{V}_1^*\widetilde{V}_1-\mathbb{I}_r \right)\Big(\cdot\Big).
\end{multline}

Introduce \begin{small}
\begin{equation}\label{regularization}
\widetilde{A}_0= \sqrt{(N-n)/N}  \sqrt{\mathbb{I}_r -Q^* Q}A_{0} \sqrt{\mathbb{I}_r -Q^* Q}, 
z_{k+n}=\sqrt{(N-n)/N}\tilde{z}_k,\ k=1,\cdots,N-n, 
\end{equation}
\end{small}
treat $\{\tilde{z}_k\}$ as eigenvalues of a Ginibre $(N-n) \times (N-n)$ matrix  $\widetilde{X}$  perturbed by  an $r\times r$ matrix $\widetilde{A}_0$,  we see from \eqref{densityeigen0} and    \eqref{trans11}   that the correlation functions can be expressed in terms of  averaged products of characteristic polynomials. 

This  thus completes the proof.% of  \eqref{algebraequa}.
\end{proof}

\subsection{Proof of Theorem \ref{2-complex-correlation}}
%Now we consider the correlation point function in edge case $|z_0|=1$, where the variables $Z={\rm diag}\left( z_1,\cdots,z_n \right)$ are defined in (\ref{Symbol}) and $\omega=\frac{1}{2}$, that is
%\begin{equation}\label{coreedgeZ}
%z_i=z_0+N^{-\frac{1}{2}}\hat{z_i},\quad
%i=1,2,\cdots,m;\quad
%\hat{Z}={\rm diag}\left( \hat{z}_1,\cdots,\hat{z}_n \righ
%\end{equation}

%To proceed Laplace method in (\ref{algebraequa}), 
To verify  Theorem \ref{2-complex-correlation}, we will use the following lemma.
\begin{lemma}\label{maximumlemma}  Let $A$ and $Q$ be  $r\times r$  and  $n\times r$ complex matrices, respectively.   
Given $z_0 \in \mathbb{C}$ such that  $|z_0|\leq 1$, put
\begin{small}
\begin{equation}\label{JQdefinition}
J_n(Q;A)={\rm Tr}\big( Q(\overline{z}_0 A+z_0A^*-A^*A)Q^* \big)
+\log\det\left( \mathbb{I}_n-QQ^* \right) +\sum_{1\leq i<j\leq n}\big| (QAQ^*)_{i,j} \big|^2.
\end{equation}
\end{small}
%where
%\begin{equation}\label{Blemmadenotion}
%B:=\left[ b_{ij} \right]_{i,j=1}^n
%:=QAQ^*.
%\end{equation}
Then $J_n(Q;A)\leq 0$ whenever   $QQ^*<\mathbb{I}_n$, and   equality holds if and only if $Q=0$.

\end{lemma}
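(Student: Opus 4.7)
The plan is to prove Lemma \ref{maximumlemma} by induction on $n$, reducing the statement to a clean one-dimensional estimate where $|z_0|\le 1$ is used. The key algebraic reduction is the identity
$$
\bar z_0 A + z_0 A^* - A^*A = -(A - z_0\mathbb{I}_r)^*(A - z_0\mathbb{I}_r) + |z_0|^2 \mathbb{I}_r,
$$
which rewrites the first term of $J_n$ as $|z_0|^2\,\mathrm{Tr}(QQ^*) - \|(A-z_0\mathbb{I}_r)Q^*\|^2$ and makes the hypothesis $|z_0|\le 1$ play its natural role in the base case.

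For the base case $n=1$, $Q=q$ is a row vector and the off-diagonal sum is empty; setting $\lambda=qq^*\in[0,1)$ and dropping the manifestly nonpositive term $-\|(A-z_0\mathbb{I}_r)q^*\|^2$ yields $J_1(q;A)\le \lambda|z_0|^2+\log(1-\lambda)$. The scalar function $g(\lambda)=\lambda|z_0|^2+\log(1-\lambda)$ satisfies $g(0)=0$ and $g'(\lambda)=|z_0|^2-1/(1-\lambda)\le 0$, so $g\le 0$ on $[0,1)$ with strict inequality for $\lambda>0$; hence $J_1\le 0$ with equality iff $q=0$.

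For the inductive step, split $Q=\bigl(\begin{smallmatrix} Q'\\ q\end{smallmatrix}\bigr)$ into its top $(n-1)\times r$ block and its last row $q$. Since $(QAQ^*)_{i,n}=(Q'Aq^*)_i$ for $i<n$, the off-diagonal sum decomposes as $\sum_{i<j,\,i,j\le n-1}|(Q'AQ'^*)_{i,j}|^2+\|Q'Aq^*\|^2$, and the trace term splits trivially. For the log-determinant, a Schur-complement computation together with the Woodbury identity
$$
(\mathbb{I}_r - Q'^*Q')^{-1} = \mathbb{I}_r + Q'^*(\mathbb{I}_{n-1}-Q'Q'^*)^{-1}Q'
$$
yields $\log\det(\mathbb{I}_n-QQ^*)=\log\det(\mathbb{I}_{n-1}-Q'Q'^*)+\log\bigl(1-q(\mathbb{I}_r-Q'^*Q')^{-1}q^*\bigr)$; one checks also that $QQ^*<\mathbb{I}_n$ translates into $Q'Q'^*<\mathbb{I}_{n-1}$ and $q(\mathbb{I}_r-Q'^*Q')^{-1}q^*<1$, which sustains positivity throughout the induction.

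Setting $\Sigma=\mathbb{I}_r-Q'^*Q'>0$ and $u=q\Sigma^{-1/2}$, using $\|Q'Aq^*\|^2=qA^*Q'^*Q'Aq^*$ to combine with $-qA^*Aq^*$ into $-qA^*\Sigma Aq^*$, the "last-row" contribution to $J_n(Q;A)-J_{n-1}(Q';A)$ becomes $u(\bar z_0\widetilde A+z_0\widetilde A^*-\widetilde A^*\widetilde A)u^*+\log(1-uu^*)$ with $\widetilde A=\Sigma^{1/2}A\Sigma^{1/2}$, which is precisely $J_1(u;\widetilde A)$. Thus $J_n(Q;A)=J_{n-1}(Q';A)+J_1(u;\widetilde A)$; by the inductive hypothesis and the base case, both summands are nonpositive with equality forcing $Q'=0$ and $u=0$ (equivalently $q=0$), giving $J_n\le 0$ with equality iff $Q=0$. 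The main technical subtlety is just the conspiracy that makes the additive decomposition close: the positive term $\|Q'Aq^*\|^2$ must be absorbed into $-qA^*\Sigma Aq^*$, and then the whole expression must match $J_1$ at the next level via the $\Sigma^{1/2}$-conjugation; once this is set up the induction is immediate.
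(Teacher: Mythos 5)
Your proof is correct and follows essentially the same route as the paper's: induction on $n$, a completed-square identity plus the elementary bound on $\log(1-\lambda)$ for the base case, the Schur-complement splitting of $\log\det(\mathbb{I}_n-QQ^*)$ (your $\Sigma^{-1}$ is the paper's $F$ via the Woodbury identity), and the $\Sigma^{1/2}$-conjugation that identifies the last-row contribution with $J_1(u;\widetilde A)$. No gaps.
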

\begin{proof}
We proceed  by induction on $n$. 
 For  $n=1$,  note that  $Q$  is a row vector, by the inequality $\log(1-x)\leq -x-x^2/2$ we arrive at \begin{small}
\begin{equation*}
\begin{aligned}
J_1(Q;A)&= \overline{z}_0QAQ^*+z_0QA^*Q^* 
-QA^*AQ^*
+\log\left( 1-QQ^* \right) \\
&\leq
 \overline{z}_0QAQ^*+z_0QA^*Q^* 
-QA^*AQ^*-QQ^*-\frac{1}{2}\left( QQ^* \right)^2  \\
&=-Q\left( z_0\mathbb{I}_r-A \right)^*\left( z_0\mathbb{I}_r-A \right)Q^*
-(1-|z_0|^2)QQ^*-\frac{1}{2}\left( QQ^* \right)^2\\
&
\leq 0.
\end{aligned}
\end{equation*}
\end{small}
It's easy to see that   $J_1(Q;A)=0$ if and only if $Q=0$.

%{\bf Step 2: case $n\geq 2$.}
Suppose that the lemma is true for  $1, \ldots, n-1$. % Assume that for any $Q_1\in \mathbb{C}^{(n-1)\times r}$, $J(Q_1)\leq 0$ with equality if and only if $Q_1=0$.
 Rewrite $Q$  as  a $2 \times 1$  block matrix  
\begin{equation*}\label{Qdecompositionlemma}
Q=\begin{bmatrix}
Q_1 \\ Q_2
\end{bmatrix},\quad
Q_1\in \mathbb{C}^{(n-1)\times r},\quad
Q_2\in \mathbb{C}^{1\times r}, 
\end{equation*}
then  %$B=\left[ b_{ij} \right]_{i,j=1}^n:=QAQ^*$  reduces to  
\begin{equation*}
%B:=\left[ b_{ij} \right]_{i,j=1}^n
Q^{*}Q=Q_1^*Q_1+Q_2^*Q_2, \quad QAQ^*=\begin{bmatrix}
Q_1AQ_1^* & Q_1AQ_2^* \\
Q_2AQ_1^* & Q_2AQ_2^*
\end{bmatrix}.
\end{equation*}
%Especially, note that
%\begin{equation}\label{inductionkey1}
%\sum_{i=1}^{n-1} { | b_{in}|}^2=Q_{2} A^{*} Q_{1}^{*}   Q_{1} A Q_{2}^{*},  %Q_{2}^{*}，
%\end{equation}
%and
%\begin{small}
%\begin{equation}\label{inductionkey2}
%\begin{aligned}
%{\rm Tr}&\left( \overline{z}_0QAQ^*+z_0QA^*Q^* \right)
%-{\rm Tr}\left( QA^*AQ^* \right)=                   \\
%&{\rm Tr}\left( \overline{z}_0Q_1AQ_1^*+z_0Q_1A^*Q_1^* \right)  
%-{\rm Tr}\left( Q_1A^*AQ_1^* \right)+
%\overline{z}_0Q_2AQ_2^*+z_0Q_2A^*Q_2^* 
%-Q_2A^*AQ_2^*.
%\end{aligned}
%\end{equation}
%\end{small}
Since $QQ^*<\mathbb{I}_n$,  we see that $Q_1Q_1^*<\mathbb{I}_{n-1}$, from which  use of  Schur complement technique  gives 
\begin{small}
\begin{equation*}\label{inductionkey3}
\log\det\left( \mathbb{I}_n-QQ^* \right)=
\log\det\left( \mathbb{I}_{n-1}-Q_1Q_1^* \right)+
\log\left( 1-Q_2FQ_2^* \right),
\end{equation*}
\end{small}
where
\begin{equation*}\label{Flemmadenotion}
F:=\mathbb{I}_r+Q_1^*\left( \mathbb{I}_{n-1}-Q_1Q_1^* \right)^{-1}Q_1.
\end{equation*}
Hence,  we can divide $J_{n}(Q;A)$ into two parts 
\begin{equation*}\label{inductionstep}
J_{n}(Q;A)=J_{n-1}(Q_1;A)+\widetilde{J}(Q),
\end{equation*}
where
\begin{small}
\begin{equation*}\label{widetildeJQ}
\widetilde{J}(Q)=Q_2(\overline{z}_0 A +z_0A^*)Q_2^* 
-Q_2A^*\left( \mathbb{I}_{r}-Q_1^*Q_1 \right)AQ_2^*+\log\left( 1-Q_2FQ_2^* \right).
\end{equation*}
\end{small}

We claim that 
\begin{equation} \label{claimJ1}
\widetilde{J}(Q)=J_{1}(Q_{2}F^{\frac{1}{2}};   F^{-\frac{1}{2}}AF^{-\frac{1}{2}}).
\end{equation}
If so, then  the desired result immediately from   
the induction hypothesis.

To prove \eqref{claimJ1}, let 
\begin{equation*}
\widetilde{Q}_2=Q_2F^{\frac{1}{2}},\quad
\widetilde{A}=F^{-\frac{1}{2}}AF^{-\frac{1}{2}},
\end{equation*} 
then
\begin{small}
\begin{equation*}\label{widetildeJQchange}
\widetilde{J}(Q)=\widetilde{Q}_2(\overline{z}_0\widetilde{A}
+z_0\widetilde{A}^*)\widetilde{Q}_2^* 
-\widetilde{Q}_2\widetilde{A}^*F^{\frac{1}{2}}\left( \mathbb{I}_{r}-Q_1^*Q_1 \right)F^{\frac{1}{2}}\widetilde{A}\widetilde{Q}_2^*
+\log\big( 1-\widetilde{Q}_2\widetilde{Q}_2^* \big).
\end{equation*}
\end{small}
It's easy to see from the simple fact $$ Q_1\left( \mathbb{I}_{r}-Q_1^*Q_1 \right)=
%Q_1-Q_1Q_1^*Q_1=
\left( \mathbb{I}_{n-1}-Q_1Q_1^* \right)Q_1
 $$
that 
$$
F^{\frac{1}{2}}\left( \mathbb{I}_{r}-Q_1^*Q_1 \right)F^{\frac{1}{2}}=\mathbb{I}_{r},
$$
from which  \eqref{claimJ1}  immediately follows.

\end{proof}

\begin{proof}[Proof of Theorem \ref{2-complex-correlation}]
  In \eqref{algebraequa} of Proposition \ref{intrep},  
 let $A_0=PJ_2 P^{-1}$, set  %$B=[b_{ij}]_{i,j=1}^n:=QA_0Q^*$ and
   $U=P^*P$. 
Make a change of variables $ Q\to QP^*$,  and then  by Lemma \ref{maximumlemma}  we can proceed similarly  as   in Proposition \ref{Analysisform}     and  further prove that   for  sufficiently small $\delta>0$ there exists $\widetilde{\Delta}^{(2)}>0$ such that
\begin{equation}\label{correlationbeta2expansion}
R_N^{(n)}\left( A_0;z_1,\cdots,z_n \right)=
\widetilde{C}_N^{(2)}(z_1,\ldots,z_n) \left(
\widetilde{I}_{\delta,N}^{(2)}+O\big(e^{-N\widetilde{\Delta}^{(2)}}\big)\right)
\end{equation}
where $z_k=z_0+ \hat{z}_{k}/\sqrt{N}$, $k=1,2,\ldots,n$,
\begin{small}
\begin{equation*}\label{tildeCN2algebra}
\widetilde{C}_N^{(2)}(z_1,\ldots,z_n)=
\frac{1}{C_N}
\prod_{1\leq i<j\leq n}\big| z_i-z_j \big|^2
e^{-N\sum_{k=1}^n|z_k|^2}
\left( \det U \right)^n,
\end{equation*}
\end{small}
and
\begin{small}
\begin{equation}\label{INdeltac}
\widetilde{I}_{\delta,N}^{(2)}=\int_{{\rm Tr}Q^*Q\leq \delta} \widetilde{g}_2( Q)
\exp\!\big\{
-N \widetilde{f}_2(Q)+\sqrt{N}\widetilde{h}_2( Q)
\big\} {\rm d}Q.
\end{equation}
\end{small}
Here %where $Q\in \mathbb{C}^{n\times r}$,
\begin{small}
\begin{equation*}\label{gcQ}
\widetilde{g}_2( Q)=\big( 
\det\left(
\mathbb{I}_n-QUQ^*
\big)
\right)^{-n-r}
{ {\mathbb{E}}_{\mathrm{GinUE}_{N-n}}( \widetilde{A}_0)}
\bigg[
\prod_{i=1}^n\left|
\det\left(
\sqrt{\frac{N}{N-n}}z_i-\widetilde{X}_{N-n}
\right)
\right|^2
\bigg],
\end{equation*}
\end{small}
in which
\begin{equation*}\label{tildeA0integrationchange}
\widetilde{A}_0=\sqrt{N/(N-n)}\sqrt{\mathbb{I}_r-PQ^*QP^*}A_0\sqrt{\mathbb{I}_r-PQ^*QP^*},
\end{equation*}
\begin{small}
\begin{equation*}\label{algebrafcQ}
\begin{aligned}
\widetilde{f}_2(Q)=&-{\rm Tr}\left(
 Q\big( \overline{z}_{0}UJ_{2}+z_0J_{2}^*U\big)Q^* 
 \right)
+{\rm Tr}\left( QJ_{2}^*UJ_{2}Q^* \right)    -\log\det\left(
\mathbb{I}_n-QUQ^*
\right)
\\
&-\sum_{1\leq i<j\leq n}\big| (QUJ_2Q^*)_{i,j} \big|^2    
\end{aligned}
\end{equation*}
\end{small} % \begin{equation}\label{B11integrationchange}
%$B:=[ b_{ij}]_{i,j=1}^n=QUJ_2Q^*$
%=\left[ b_{ij} \right]_{i,j=1}^n.
%\end{equation}  
and 
\begin{small}
\begin{equation}\label{algebrahcQ}
\widetilde{h}_2(Q)=
{\rm Tr}\big( \hat{Z}^* QUJ_2Q^*+\hat{Z}Q J_{2}^*UQ^* \big).
\end{equation}
\end{small}
Moreover,  by the Taylor expansion for the log determinant,  we have 
 \begin{small}
\begin{equation}\label{algebrafcQ2}
\begin{aligned}
\widetilde{f}_2(Q) 
=&{\rm Tr}\big( Q\left( z_0\mathbb{I}_r-J_2 \right)^*U\left( z_0\mathbb{I}_r-J_2\right)Q^* \big)+\frac{1}{2}{\rm Tr}( QUQ^* )^2\\
&
- \sum_{1\leq i<j\leq n}\big| (QUJ_2Q^*)_{i,j} \big|^2
+O\left( \| Q \|^6 \right).
\end{aligned}
\end{equation}
\end{small}

 On the other hand, 
% for $\widetilde{C}_N^{(c)}$ defined in (\ref{tildeCN2algebra}), from (), (\ref{correedgeZ}) 
 by 
  the Stirling formula 
$$ N!=\sqrt{2\pi N}\Big( \frac{N}{e} \Big)^N\Big( 1+O\big( N^{-1} \big) \Big), $$
 it is easy to see from   \eqref{correZNr} that 
\begin{multline}\label{tildeCNcexpan}
\widetilde{C}_N^{(2)}(z_1,\ldots,z_n)=\Big( 1+O\Big( N^{-1} \Big) \Big)
N^{n\left(r+1-\frac{1}{2}n\right)}e^{-\sqrt{N}{\rm Tr}\big(z_0^{-1}\hat{Z}+\overline{z_{0}^{-1}\hat{Z}}\big)}
\\ \times
e^{-n^2}(2\pi)^{-\frac{n}{2}}\pi^{-n-nr}\left( \det(U) \right)^n
e^{-\sum_{i=1}^n|\hat{z}_i|^2}\prod_{1\leq i<j \leq n}|\hat{z}_i-\hat{z}_j|^2,
\end{multline}
which holds 
uniformly for   all 
$\hat{z}_{k}$
in a compact subset of $\mathbb{C}$.

The remaining task is to   analysize $\widetilde{I}_{\delta,N}^{(2)}$. For this, we proceed in three steps. 

{\bf Step 1:  Taylor expansions  of $ \widetilde{f}_2(Q)$  and $\widetilde{h}_2(Q)$.}

Noting   $J_2$ in \eqref{J2detail} and   the assumption,  consider \eqref{beta2edgethetaexpression}
where $\hat{\theta}_i=0$, $i=1,\cdots,m$.  Combine \eqref{J2decom}-\eqref{J2adeno} and   use the notation therein, we get $J_2=J_2^{(0)}$.  Next, we  are devoted to  establishing  leading terms in \eqref{algebrahcQ} and  \eqref{algebrafcQ2}.  

According to   the Jordan block structure of  $J_2^{(0)}$,   both $U$ and  
\begin{equation*}\label{tildeUdenotion}
\widetilde{U}:=\big( z_0\mathbb{I}_r-J_2^{(0)} \big)^*U
\big( z_0\mathbb{I}_r-J_2^{(0)} \big)
\end{equation*}
admit  a three-layer  structure  %with row and column  indices $(k,a,c)$ and  $(l,b,d)$. Explicitly,  put  

\begin{equation*}\label{Ublockform1}
U=[ U_{k;l} ]_{k,l=1}^{m+1},  \quad \widetilde{U}=[ \widetilde{U}_{k;l} ]_{k,l=1}^{m+1}%U_{k;l}=\left[ U_{k,a;l,b} \right]_{\alpha_k\times \alpha_l}%_{a=1,b=1}^{\alpha_k,\alpha_l}
\end{equation*}
where  for the former  $U_{m+1;m+1}$  is of size $r_0\times r_0$  and for $k,l=1,\cdots,m$ %block matrices  are defined as follows 
\begin{equation*}\label{Ublockform23}
\begin{aligned}
&U_{k;l}=\left[ U_{k,a;l,b} \right]_{\alpha_{k}\times \alpha_{l}},\quad
U_{k,a;l,b}=\left[ U_{k,a,c;l,b,d} \right]_{\beta_{k,a}\times \beta_{l,b}}, \\
&
U_{m+1;l}=\left[ U_{m+1;l,b} \right]_{1\times  \alpha_{l}}, 
 \quad
U_{m+1;l,b}=\left[ U_{m+1;l,b,d} \right]_{1\times \beta_{l,b}},\\
&U_{k;m+1}=\left[ U_{k,a;m+1} \right]_{\alpha_{k} \times  1},\quad
U_{k,a;m+1}=\left[ U_{k,a,c;m+1} \right]_{\beta_{k,a}\times 1},
\end{aligned}
\end{equation*}
with   $U_{k,a,c;l,b,d}$, $U_{m+1;l,b,d}$ and $U_{k,a,c;m+1}$ being  of sizes $p_{k,a}\times p_{l,b}$, $r_0\times p_{l,b}$ and $p_{k,a}\times r_0$ respectively.    For the latter, the notations are  similar. 
It's easy to find  the following identities 
%Denote divide of $\widetilde{U}$ same as in (\ref{Ublockform1}) and (\ref{Ublockform23}), with U replaced with $\widetilde{U}$. From (\ref{upperleftdecom}) and (\ref{tildeUdenotion}), we find the relation between U and $\widetilde{U}$:
% For $k,l=1,\cdots,m$, $a(resp.\ b)=1,\cdots,\alpha_k(resp.\ \alpha_l)$, $c(resp.\ d)=1,\cdots,\beta_{ka}(resp.\ \beta_{lb})$,
\begin{equation*}\label{UtildeUrelation}
\begin{aligned}
&\widetilde{U}_{k,a,c;l,b,d}=R_{p_{k,a}}(0)^*U_{k,a,c;l,b,d}R_{p_{l,b}}(0), \\
&\widetilde{U}_{m+1;l,b,d}=-\big( z_0\mathbb{I}_{r_0}-J_2^{(2)} \big)^*U_{m+1;l,b,d}R_{p_{l,b}}(0);  \\
&\widetilde{U}_{k,a,c;m+1}=-R_{p_{k,a}}(0)^*U_{k,a,c;m+1}\big( z_0\mathbb{I}_{r_0}-J_2^{(2)} \big);   \\
&\widetilde{U}_{m+1;m+1}=\big( z_0\mathbb{I}_{r_0}-J_2^{(2)} \big)^*
U_{m+1;m+1}\big( z_0\mathbb{I}_{r_0}-J_2^{(2)} \big).
\end{aligned}
\end{equation*}
Furthermore,  by the structure of  $R_{p_{k,a}}(0)$,    for $k,l=1,\cdots,m$ set 
\begin{small}
\begin{equation}\label{Ublockform4}
\begin{aligned}
&U_{k,a,c;l,b,d}=\begin{bmatrix}
U_{kac;lbd} ^{(11)}& U_{kac;lbd} ^{(12)} \\
U_{kac;lbd} ^{(21)}& U_{kac;lbd} ^{(22)}
\end{bmatrix},        \quad
U_{k,a,c;m+1}=\begin{bmatrix}
U_{kac;m+1}^{(U)} \\ U_{kac;m+1}^{(D)}
\end{bmatrix},\\
&U_{m+1;l,b,d}=\begin{bmatrix}
U_{m+1;lbd}^{(L)} & U_{m+1;lbd}^{(R)}
\end{bmatrix},\quad
\end{aligned}
\end{equation}
\end{small}
where $U_{kac;lbd} ^{(11)}$, $U_{m+1;lbd}^{(L)}$ and $U_{kac;m+1}^{(U)}$ are of sizes $(p_{k,a}-1)\times(p_{l,b}-1)$, ${r_0}\times(p_{l,b}-1)$ and $(p_{k,a}-1)\times {r_0}$, respectively.
Then simple   calculations show that 
\begin{small}
\begin{equation}\label{UtildeUrelation1}
\begin{aligned}
&\widetilde{U}_{k,a,c;l,b,d}=\begin{bmatrix}
0 & 0 \\
0 & U_{kac;lbd}^{(11)}
\end{bmatrix},   \quad  \widetilde{U}_{k,a,c;m+1}=\begin{bmatrix}
0 \\ -U_{kac;m+1}^{(U)}\big( z_0\mathbb{I}_{r_0}-J_2^{(2)} \big)
\end{bmatrix},   \\
&\widetilde{U}_{m+1;l,b,d}=\begin{bmatrix}
0 & -\big( z_0\mathbb{I}_{r_0}-J_2^{(2)} \big)^*U_{m+1;lbd}^{(L)}
\end{bmatrix}, \quad  \widetilde{U}_{m+1;m+1}=U_{m+1;m+1}.
\end{aligned}
\end{equation}
\end{small}

As previously introduced, 
put 
\begin{equation*}\label{Qblock1}
Q=\begin{bmatrix}
Q_1 & \cdots & Q_m & Q_{m+1}
\end{bmatrix},
\end{equation*}
where $Q_{m+1}$ is of size $n\times r_0$ and for $l=1,\cdots,m$ \begin{small}
\begin{equation*}\label{Qblock2}
Q_l=[
Q_{l,b}]_{1\times  \alpha_{l}},\quad
Q_{l,b}=\big[
Q_{l,b,d}\big]_{1\times  \beta_{l,b}},\quad
Q_{l,b,d}=\begin{bmatrix}
Q_{lbd}^{(L)} & Q_{lbd}^{(R)} 
\end{bmatrix},
\end{equation*}
\end{small}
where $Q_{l,b,d}$ and $Q_{lbd}^{(L)} $ are of sizes $n\times p_{l,b}$ and $n\times 1$, respectively.
Now   we can draw some blocks from $Q$ and  restructure it into  two new  matrices 
%denote first compressions of Q and $\widetilde{U}$:
\begin{small}
\begin{equation*}\label{QUcompression1}
Q_R=\begin{bmatrix}
Q_1^{(L)} & \cdots & Q_m^{(R)} & Q_{m+1} 
\end{bmatrix},\quad   
Q_L=\begin{bmatrix}
Q_1^{(L)} & \cdots & Q_{m}^{(L)}
\end{bmatrix},
\end{equation*}
\end{small}
where 
\begin{small}
\begin{equation}\label{QUcompression2}
 Q_{l}^{(L)}=[Q_{l,b}^{(L)}],  Q_{l,b}^{(L)}=[Q_{lbd}^{(L)}], \quad 
  Q_{l}^{(R)}=[Q_{l,b}^{(R)}],  Q_{l,b}^{(R)}=[Q_{lbd}^{(R)}].
 \end{equation}
\end{small}

Similarly,  with \eqref{UtildeUrelation1} in mind, define 
   \begin{equation} \label{Ut22}\widetilde{U}^{(22)}=\big[ \widetilde{U}_{kl}^{(22)} \big]_{k,l=1}^{m+1},\end{equation}  where for $k,l=1,\cdots,m$\begin{small}
\begin{equation*}\label{QUcompressiondetail1}
\begin{aligned}
&\widetilde{U}_{kl}^{(22)}=\begin{bmatrix}
U_{kac;lbd}^{(11)}
\end{bmatrix}, \quad
\widetilde{U}_{m+1,l}^{(22)}=\begin{bmatrix}
-\left( z_0\mathbb{I}_{r_0}-J_2^{(2)} \right)^*U_{m+1;lbd}^{(L)}
\end{bmatrix}, \\
&\widetilde{U}_{k,m+1}^{(22)}=\begin{bmatrix}
-U_{kac;m+1}^{(U)}\left( z_0\mathbb{I}_s-J_2^{(2)} \right)
\end{bmatrix},\quad
\widetilde{U}_{m+1,m+1}^{(22)}=\widetilde{U}_{m+1,m+1}.
\end{aligned}
\end{equation*}
\end{small}
Also,  in \eqref{Ublockform4},  choose the   (1,1)-entry of   $U_{kac;lbd} ^{(11)}$ and construct  a matrix $\hat{U}^{(1,1)}$ as follows 
\begin{small}
\begin{equation} \label{U11}
\hat{U}^{(1,1)}=[ \hat{U}_{k,l}],\quad
\hat{U}_{k,l}=\left[  \big(U_{kac;lbd} ^{(11)}\big)_{1,1}
\right]_{\alpha_{k}\times \alpha_l}.
\end{equation}
\end{small}
It's worth emphasizing at this point that $\hat{U}^{(1,1)}=U_{I_1}$, which is 
a  sub-matrix by deleting all other rows and columns except for those  in  $I_1$.  Here   $I_{1}$ is the  index set consisting of  numbers corresponding to   first columns of all 
 blocks  
 $R_{p_{i,j}}\left(\theta_i \right)$    from $J_2$  in  \eqref{J2detail}  where  $ 1\leq i \leq m, 1\leq j \leq \alpha_i$.  

We are ready to do some calculation. 
First,  it's easy to see  that
\begin{equation}\label{TrQtildeQ}
{\rm Tr}\big( Q\left( z_0\mathbb{I}_r-J_2 \right)^*U\left( z_0\mathbb{I}_r-J_2\right)Q^* \big)={\rm Tr}( Q\widetilde{U}Q^*)=
{\rm Tr}\big( Q_R\widetilde{U}^{(22)}Q_R^*\big).
\end{equation}
On the other hand, if we rewrite 
%It remains to calculate $\frac{1}{2}{\rm Tr}\left( QUQ^* \right)^2
%-\sum_{i<j}^n\big| b_{i,j} \big|^2$. For $B_{11}=QUJ_2^{(0)}Q^*=\left[ b_{ij} \right]_{i,j=1}^n$, 
$J_2^{(0)}$ as
\begin{equation*}
J_2^{(0)}:=z_0\mathbb{I}_{r-r_0}\bigoplus 0_{r_0}+J_2^{(3)},
\end{equation*}
then   we arrive at 
\begin{equation*}\label{J2jerrorcontrol}
QUJ_2^{(3)}Q^*=O\left( \|Q_R\|\|Q\| \right)
\end{equation*}
and 
\begin{small}
\begin{equation*}\label{J2jmaincontrol}
QU\left( J_2^{(0)}-J_2^{(3)} \right)Q^*=z_0Q_L\hat{U}^{(1,1)}Q_L^*+
O\left( \|Q_R\|\|Q\| \right),
\end{equation*}
\end{small}
from which 
\begin{equation}\label{B11asymptoexpan}
QUJ_2^{(0)}Q^*=z_0Q_L \hat{U}^{(1,1)}Q_L^*+
O\left( \|Q_R\|\|Q\| \right).
\end{equation}
Noting that  combination of  \eqref{QUcompression2} and  \eqref{U11} gives   
 \begin{equation*}\label{QUQasymptoexpan}
QUQ^*=Q_L \hat{U}^{(1,1)}Q_L^*+
O\left( \|Q_R\|\|Q\| \right), 
\end{equation*}
we immediately see from $|z_0|=1$ and $( \hat{U}^{(1,1)})^{*}= \hat{U}^{(1,1)}$ that 
\begin{small}
\begin{equation}\label{fcQsecondexpan}
\frac{1}{2}{\rm Tr}\left( QUQ^* \right)^2
-\sum_{i<j}^n\big| \big(QUJ_2^{(0)}Q^*\big)_{i,j} \big|^2=
\frac{1}{2}\sum_{i=1}^n
\left( Q_L \hat{U}^{(1,1)}Q_L^* \right)_{i,i}^2
+O\left( \|Q_R\|\|Q\|^3 \right).
\end{equation}
\end{small}
Combining  \eqref{algebrafcQ2}, \eqref{TrQtildeQ} and \eqref{fcQsecondexpan}, we  obtain 
\begin{equation*}\label{fcQexpan}
\widetilde{f}_2(Q)={\rm Tr}\big( Q_R\widetilde{U}^{(22)}Q_R^*\big)+\frac{1}{2}\sum_{i=1}^n
\left( Q_L \hat{U}^{(1,1)}Q_L^* \right)_{i,i}^2+O\left( \|Q_R\|\|Q\|^3+\|Q\|^6 \right).
\end{equation*}
%where
%\begin{equation}\label{fc0Q}
%f_{c,0}(Q)={\rm Tr}\left( Q_R\widetilde{U}_{11}Q_R^*\right)
%+\frac{1}{2}\sum_{i=1}^n
%\left( Q_L\hat{U}_{11}Q_L^* \right)_{ii}^2,
%\end{equation}
%and
%\begin{equation}\label{fc1Q}
%f_{c,1}(Q)=O\left( \|Q_R\|\|Q\|^3+\|Q\|^6 \right).
%\end{equation}
Meanwhile,  by   \eqref{B11asymptoexpan} we get 
\begin{equation*}\label{hcQexpan}
\widetilde{h}_2(Q)={\rm Tr}
\Big( \big( z_0\hat{Z}^*+\overline{z_0}\hat{Z} \big)
Q_L \hat{U}^{(1,1)} Q_L^* \Big)+O\left(\|Q_R\|\|Q\| \right).
\end{equation*}
 
{\bf Step 2: Modified estimate of 
$\widetilde{g}_2(Q)$}.  For the first
factor  in  \eqref{gcQ},  it's easy to see  
%Now, in view of (\ref{correlationbeta2expansion}), it remains to compute $g_c(Q)$ and $\widetilde{C}_N^{(c)}$. Firstly, from (\ref{detcolumnreplace}) it is easy to get that
\begin{equation*}\label{detcoeexpan}
\left( 
\det\left(
\mathbb{I}_n-QUQ^*
\right)
\right)^{-n-r}=
1+O\left( \| Q \|^2 \right).
\end{equation*}
As for the second factor,   the averaged product of characteristic polynomials has actually been verified in  Theorem \ref{2-complex} (i), except that   some minor correction  will be needed because of $Q$ and the scaling factor $(N-n)/N$.   So we  repeat almost the same procedure quickly,  with emphasis on  the differences.    

Note that
\begin{small}
\begin{equation*}\label{gcQdiffexpan1}
%\sqrt{\frac{N}{N-n}}=1+\frac{n}{2N}+O\left( N^{-2} \right),\quad
\sqrt{\mathbb{I}_r-PQ^*QP^*}=\mathbb{I}_r+O\left( \| Q \|^2 \right),
\end{equation*}
\end{small} 
we have
\begin{small}
\begin{equation}\label{gcQdiffexpan2}
\widetilde{A}_0=A_0
+O\left( N^{-1}+\| Q \|^2 \right).
\end{equation}
\end{small}
Obviously,  
\begin{equation}\label{gcQdiffexpan3}
\sqrt{\frac{N}{N-n}} z_i=z_0+\frac{\hat{z}_i}{\sqrt{N}}+\frac{nz_0}{2N}+
O\big( N^{-\frac{3}{2}} \big).
\end{equation}

Using the  duality relation \eqref{Charac} in Corollary \ref{characteristic}, we have
\begin{equation*}\label{Characcorrelation}
\mathbb{E}\bigg(
\prod_{i=1}^n\Big|
\det\Big(
\sqrt{\frac{N}{N-n}}z_i-\widetilde{X}_{N-n}
\Big)
\Big|^2
\bigg)=\int_{\mathbb{C}^{n^2}} Q_{2}\big(A;\widetilde{X}_0,Y\big) \widehat{P}_{K,2}(1;Y,0){\rm d}Y,
\end{equation*}
where $Q_{2}\big(A;\widetilde{X}_0,Y\big)$, $\widehat{P}_{K,2}(1;Y,0)$ and  $A$ are defined in (\ref{Q2}), (\ref{dualdensity}), \eqref{A124} respectively,  but with $N\to N-n$,  % being replaced by $N-n$
 $\widetilde{X}_0={\rm diag}( \widetilde{A}_0,0_{N-n-r})$ and 
$Z=\sqrt{N/(N-n)}{\rm diag}(z_1,\cdots,z_n)$.
%\begin{equation}\label{gcQdiffexpan3}
%\widetilde{X}_0={\rm diag}( \widetilde{A}_0,0_{N-n-r}),\quad
%Z=\sqrt{N/(N-n)}{\rm diag}\left(z_1,\cdots,z_n \right).
%\end{equation}

Proceeding  as in the proof of  Proposition \ref{Analysisform},   we  can prove that  for sufficiently   small $\delta>0$ and  for $|z_0|=1$ 
\begin{small}
\begin{equation*}\label{correlationanalysisform}
F_{n}^{(2)}(Z,Z)=\frac{1}{\widehat{Z}_{K,2}}\Big(
I_{\delta,N}^{(2)}+O\big( e^{-\frac{1}{2}N\Delta^{(2)}} \big) \Big),
\end{equation*}
\end{small}
with
\begin{equation}\label{Idelta2correlation}
I_{\delta,N}^{(2)}=\int\limits_{{\rm Tr}(YY^*)\leq \delta}\ g_{2}(Y)
\exp\left\{ -Nf_{2}(Y,Z_0)+Nh_{2}(Y) \right\} {\rm d}Y,
\end{equation}
where
\begin{small}
\begin{equation}\label{g2YLambdacorrelation}
g_2( Y)=\det(
B_{11}^{(2)})\left( \det\left( A_2 \right) \right)^{-n-r},
\end{equation}
\end{small} 
\begin{small}
\begin{equation*}\label{fbetaYZ0correlation}
f_{2}(Y)=
\Big( 1-\frac{n}{N} \Big)
{\rm Tr}(YY^*)-\log\det\left( \mathbb{I}_{n}+YY^* \right).
\end{equation*}
\end{small}
%%%%%%%%%%%%%%%%%%%%%%%%%%%%%%%%%%%%%%%%%%%%%%%%%%%%%%%%%%%%%%%%%%%%%
Noting that we need to  replace $\hat{Z}$ with $\hat{Z}+\frac{1}{2}nz_0 N^{-\frac{1}{2}}\mathbb{I}_n+O\left( N^{-1} \right)$ both  in (\ref{Cbeta}) and (\ref{Cbetadetailed}), we then get
\begin{small}
\begin{equation*}\label{hbetaYLambdacorrelation}
h_{2}(Y)= 
\log\det\big( \mathbb{I}_{2n}+N^{-\frac{1}{2}}H_2 \big).
\end{equation*}
\end{small}
Moreover,  we can obtain 
\begin{equation*}
f_{2}(Y)= \frac{1}{2}{\rm Tr}\left( (YY^*)^2 \right)+O\left(
\| Y \|^6+N^{-1}\| Y \|^2
\right), 
\end{equation*}
and %From \eqref{Nh2YLambdaAsympto}, \eqref{TrH2} and \eqref{TrH2H2}, we have
\begin{small}
\begin{align}\label{correNh2YLambdaAsymptofinalform}
Nh_2(Y)=h_{2,0,0}-\sqrt{N}
h_{2,0}(Y)+O\left( \|Y\|^2+\sqrt{N}\|Y\|^4+N^{-\frac{1}{2}} \right)%h_{2,1}(Y)
\end{align}
\end{small}
where
\begin{small}
\begin{equation*}\label{correh2Lambda}
h_{2,0,0}=\sqrt{N}{\rm Tr}(Z_0^{-1}\hat{\Lambda})+n^2-\frac{1}{2}{\rm Tr}(Z_0^{-2}\hat{\Lambda}^2),
 \,h_{2,0}(Y)=z_0^{-1}{\rm Tr}(Y^*Y\hat{Z})+\overline{z_0}^{-1}{\rm Tr}(YY^*\hat{Z}^*).
\end{equation*}
\end{small}

%\begin{small}
%\begin{equation}\label{correPh2Y}
%h_{2,1}(Y)=
%O\left( \|Y\|^2+\sqrt{N}\|Y\|^4+N^{-\frac{1}{2}} \right).
%\end{equation}
%\end{small}
For the factors of $g_2(Y)$ in \eqref{g2YLambdacorrelation}, noting (\ref{gcQdiffexpan3}), we have
\begin{small}
\begin{equation*}\label{correA2asymtobeta2edge}
A_2=Z_0+O\big(N^{-\frac{1}{2}}+\|Y\|\big).
\end{equation*}
\end{small}
from which
\begin{small}
\begin{equation}\label{corredetA2ssquarebeta2edge}
\left( \det(A_2) \right)^{-n-r}=1
+O\big(N^{-\frac{1}{2}}+\|Y\|\big).
\end{equation}
\end{small}
 On the other hand, like  Lemma \ref{determinantcalculationbeta2edge}, taking $\hat{\theta}_i=0$ ($i=1,\cdots,m$), and with \eqref{gcQdiffexpan2} in mind,  
we have 
\begin{small}
\begin{equation}\label{corredeterminantcalcuequa}
\begin{aligned}
\det\left( B_{11}^{(2)} \right)&= \prod_{i=m+1}^\gamma   |z_{0}-\theta_{i}|^{2n \sum_{j=1}^{\alpha_{i}} 
p_{i,j}\beta_{i,j} }                             \\
&\times\bigg\{   \det\begin{bmatrix}
N^{-\frac{1}{4}}\Omega_{11} & \Omega_{12} \\ \Omega_{21} & N^{-\frac{1}{4}}\Omega_{22}
\end{bmatrix}+                                                       
O\Big(
\sum_{(\alpha_1,\alpha_2, \alpha_3)}N^{-\alpha_1}\| Y \|^{\alpha_{2}} \| Q \|^{2\alpha_{3}}
\Big)\bigg\},
\end{aligned}
\end{equation}
\end{small}
where $\alpha_1, \alpha_2, \alpha_3$ satisfy the restriction  
\begin{equation*}
4\alpha_1+\alpha_2+\alpha_3\geq
2n\sum_{i=1}^m\sum_{j=1}^{\alpha_i}\beta_{i,j}+\delta_{\alpha_3,0},
\end{equation*}
where  $\Omega_{ij}$ are defined in  \eqref{F11denotion} and  \eqref{F21F12denotion}. 
Also noting $\Omega_{11}=\Omega_{22}=0$ and
\begin{equation*}\label{corredeterminant}
 \det\begin{bmatrix}
0 & \Omega_{12} \\ \Omega_{21} & 0
\end{bmatrix}=
\Big(\det\big( U_{I_1}\big)  \det\big(\big(U^{-1})_{I_2} \big) \Big)^n
\big(\det(Y^*Y )\big)^{\sum_{i=1}^m\sum_{j=1}^{\alpha_i}\beta_{i,j}},
\end{equation*}
where   $I_{2}$ is the  index set consisting of  numbers corresponding to   last columns of all 
 blocks  
 $R_{p_{i,j}}\left(\theta_i \right)$    from $J_2$  in  \eqref{J2detail}  where  $ 1\leq i \leq m, 1\leq j \leq \alpha_i$.  

Combine \eqref{g2YLambdacorrelation},  \eqref{corredetA2ssquarebeta2edge}   and \eqref{corredeterminantcalcuequa}, we get 
\begin{equation*}\label{g2YLambdacorreexpan}
g_2(Y,\Lambda)=\left(
D^{(2,c)}+O\big(N^{-\frac{1}{4}}+\|Y\|\big)
\right)   \Big(
\big( \det(Y^*Y) \big)^t+
O\Big(
\sum_{(\alpha_1,\alpha_2, \alpha_3)}N^{-\alpha_1}\| Y \|^{\alpha_{2}} \| Q \|^{2\alpha_{3}}
\Big)
\Big),
\end{equation*}
where  $t=\sum_{i=1}^m\sum_{j=1}^{\alpha_i}\beta_{i,j}$ and 
\begin{small}
\begin{equation*}\label{D2ct}
D^{(2,c)}=  \Big(\det\big( U_{I_1}\big)  \det\big(\big(U^{-1})_{I_2} \big) \Big)^n
  |z_{0}-\theta_{i}|^{2n \sum_{j=1}^{\alpha_{i}} 
p_{i,j}\beta_{i,j} }.
\end{equation*}
\end{small}

By   the change of variable $Y\to
N^{-\frac{1}{4}}Y$ in \eqref{Idelta2correlation}, 
as   in the proof of Theorem \ref{2-complex} (i) we can get 
\begin{multline} \label{corre2edgecomplex}
F^{(2)}_{n}(Z,Z)=\frac{e^{n^2}}{\hat{Z}_{K,2}}  |z_{0}-\theta_{i}|^{2n \sum_{j=1}^{\alpha_{i}} 
p_{i,j}\beta_{i,j} }
N^{-\frac{1}{2}n^2- \frac{1}{2}\sum_{i=1}^{m}\sum_{j=1}^{\alpha_{i}}n\beta_{i,j}  }  e^{\sqrt{N}{\rm Tr}(z_0^{-1}\hat{Z}+\overline{z_{0}^{-1}\hat{Z}})}\\
\times  e^{ -\frac{1}{2}{\rm Tr}\big(z_{0}^{-2}\hat{Z}^2+\overline{z_{0}^{-2}\hat{Z}^2}\big) }
\Big( I^{(2)}(z_0^{-1}\hat{Z})
+O\big( \sum_{(\alpha_1,\alpha_3)}N^{-\alpha_1}\| Q \|^{2\alpha_3} \big) \Big),
\end{multline}
where  $I^{(2)}(z_0^{-1}\hat{Z}):= I^{(2)}(0,\ldots,0;z_0^{-1}\hat{Z},z_0^{-1}\hat{Z}) $  is defined  in \eqref{I2GI}
and 
 $\alpha_1, \alpha_3$ satisfy the restriction  
 $4\alpha_1+\alpha_3\geq \delta_{\alpha_3,0}$.

So this completes a modified estimate of 
$\widetilde{g}_2(Q)$.

{\bf Step 3: Summary and matrix integrals}.

For the matrix  integral \eqref{INdeltac},   make the change of variables  
$$
Q_R\to N^{-\frac{1}{2}}Q_R,\quad
Q_L\to N^{-\frac{1}{4}}Q_L  (\hat{U}^{(1,1)})^{-\frac{1}{2}},
$$
and then  integrate out  $Q_R$,     through the calculation we  obtain  
\begin{multline} \label{midcorre2edgecomplex}
\frac{1}{N^n}R_N^{(n)}\left( A_0;z_1,\cdots,z_n \right)
=
(2\pi)^{-\frac{n}{2}}\pi^{-tn-n-n^2}      \rho^n
% \bigg(  |z_{0}-\theta_{i})|^{2 \sum_{j=1}^{\alpha_{i}} 
%p_{i,j}\beta_{i,j} }
%  \frac{\det(U)      
%   \det\big(\big(U^{-1})_{I_2} \big)
%  }{ \det(\widetilde{U}^{(22)})
% }\bigg)^n
\\ \times 
\prod_{1\leq i<j \leq n}\left|\hat{z}_i-\hat{z}_j\right|^2
e^{ -\frac{1}{2}{\rm Tr}\left(z_{0}^{-1}\hat{Z}+\overline{z_{0}^{-1}\hat{Z}} \right)^2 }
\left(  I^{(2)}(z_0^{-1}\hat{Z})
\tilde{I}^{(2)}(  z_0^{-1}\hat{Z})
+O\big( N^{-\frac{1}{4}} \big) \right),
\end{multline}
where  for an $n\times t$ matrix $Q$
% \begin{small}
%\begin{equation}\label{I2GC00}
%I^{(2,g,c)}\left(\hat{Z}\right)= \int \left( \det\left( Y^*Y \right) \right)^t
%\exp\Big\{-\frac{1}{2}{\rm Tr}\big((YY^*)^2\big) -{\rm Tr}\big(\hat{Z}Y^*Y\big)-{\rm Tr}\big(\hat{Z}^*YY^*\big)\Big\} {\rm d}Y,
%\end{equation}
%\end{small}
\begin{small}
\begin{equation}\label{I2C00}
\tilde{I}^{(2)}(\hat{Z})=  \int 
\exp\!\Big\{
-\frac{1}{2}\sum_{i=1}^n
\left( QQ^* \right)_{ii}^2+
{\rm Tr}\left( \big( \hat{Z}^*+\hat{Z} \big) QQ^* \right)
\Big\} {\rm d}Q,
\end{equation}
\end{small}
and $$
\rho:=  |z_{0}-\theta_{i}|^{2 \sum_{j=1}^{\alpha_{i}} 
p_{i,j}\beta_{i,j} }
  \frac{1
  }{ \det(\widetilde{U}^{(22)})
 } \det(U) 
   \det\big(\big(U^{-1})_{I_2} \big)=1.$$
Here the second identity in the latter  follows  from  $\widetilde{U}^{(22)}$ in  \eqref{Ut22},  $(U^{-1})_{I_2}$ and the 
Schur complement  technique. 
 
 Now we need to evaluate the two matrix integrals   $\tilde{I}^{(2)}$ and $I^{(2)}$. For 
 $\tilde{I}^{(2)}$  in \eqref{I2C00},  treated as $n$  integrals separately according  to  its rows  it is simplified to 
\begin{small}
\begin{equation*}
\begin{aligned}
\tilde{I}^{(2)}\big(  z_0^{-1}\hat{Z} \big)&=
\prod_{i=1}^n \int \cdots \int 
\exp\Big\{
-\frac{1}{2}\big( \sum_{j=1}^t\left| q_{1j} \right|^2 \big)^2
+\left(
z_0^{-1}\hat{z}_i+\overline{z_0}^{-1}\overline{\hat{z}_i}
\right)\sum_{j=1}^t\left| q_{1j} \right|^2
\Big\}        \prod_{j=1}^t{\rm d}\Re q_{1j}{\rm d}\Im q_{1j}.
%&=\prod_{i=1}^n\int \prod_{j=1}^t{\rm d}\Re y_{j}{\rm d}\Im y_{j}
%\exp\left\{
%-\frac{1}{2}\left( \vec{y}\vec{y}^* \right)^2
%+\left(
%z_0^{-1}\hat{z}_i+\overline{z_0}^{-1}\overline{\hat{z}_i}
%\right)\vec{y}\vec{y}^*
%\right\},
\end{aligned}
\end{equation*}
\end{small}
Apply the  spherical polar coordinates and we can get 
\begin{small}
\begin{equation}\label{midcorre2}
\tilde{I}^{(2)}\big(  z_0^{-1}\hat{Z} \big)= \pi^{(t+\frac{1}{2})n}
\prod_{i=1}^n\Big( e^{\frac{1}{2}( z_0^{-1}\hat{z}_i +\overline{z_0}^{-1}\overline{\hat{z}_i})^2} \mathrm{IE}_{t-1}\big( z_0^{-1}\hat{z}_i +\overline{z_0}^{-1}\overline{\hat{z}_i}\big)\Big),
\end{equation}
\end{small}
where   $\mathrm{IE}_{n}(z) $ is defined in \eqref{IEF}.

For  $I^{(2)}(z_0^{-1}\hat{Z}):= I^{(2)}(0,\ldots,0;z_0^{-1}\hat{Z},z_0^{-1}\hat{Z}) $   defined  in \eqref{I2GI},  by the singular value decomposition:
\begin{small}
\begin{equation}\label{singularcorre}
Y=U\sqrt{R}V,\quad
R={\rm diag}\left( r_1,\cdots,r_n \right),\quad
r_1\geq \cdots \geq r_n \geq 0,
\end{equation}
\end{small}
%where $U,V$ are integrated over the Haar probability measure of the unitary group $\mathcal{U}(n)$, and
 the Jacobian reads 
\begin{equation}\label{singularjacobian}
{\rm d}Y=\pi^{n^2}\Big( \prod_{i=1}^{n-1}i! \Big)^{-2}
%\Delta^2(R)
\prod_{1\leq i<j\leq n}(r_{j}-r_{i})^2 
{\rm d}R{\rm d}U{\rm d}V,
\end{equation}
where 
$U$ and $ V$ are chosen from the unitary group  $\mathcal{U}(n)$ with the Haar measure. 
%in which the normalization constant can be calculated through Laguerre integral.
 Use  Harish-Chandra-Itzykson-Zuber integration formula \cite{HC,IZ}  (see also \cite{Me})
\begin{equation}\label{HCIZ}
\int_{\mathcal{U}(n)}\exp\!\left\{
{\rm Tr}\left( AUBU^* \right)
\right\}{\rm d}U=
\frac{\det\big( [ e^{a_i b_j} ]_{i,j=1}^n \big)}
{\prod_{1\leq i<j\leq n}(a_{j}-a_{i}) (b_{j}-b_{i})} \prod_{i=1}^{n-1}i!,
\end{equation}
with  $A={\rm diag}\left( a_1,\cdots,a_n \right)$ and  $B={\rm diag}\left( b_1,\cdots,b_n \right)$, and also
Andr\'{e}ief's   integration formula \cite{And}   \begin{small}
\begin{equation}\label{Cauchybinet}
 \int  \det\big( [ f_i(x_j) ]_{i,j=1}^n \big)
\det\big( [ g_i(x_j) ]_{i,j=1}^n \big) \prod_{i=1}^n{\rm d}\mu(x_i)=
n!\det\Big( \Big[ \int f_i(x)g_j(x){\rm d}\mu(x) \Big]_{i,j=1}^n \Big),
\end{equation}
\end{small}
we obtain 
\begin{small}
\begin{equation}\label{corremid1}
\begin{aligned}
(2\pi)^{-\frac{n}{2}}\pi^{-n^2} &
\prod_{1\leq i<j \leq n}|\hat{z}_i-\hat{z}_j|^2
e^{ -\frac{1}{2}{\rm Tr}\big(z_{0}^{-1}\hat{Z}+\overline{z_{0}^{-1}\hat{Z}} \big)^2 }
I^{(2)}(  z_0^{-1}\hat{Z}) \\
&
=
\det\left( \left[ \Gamma(t+1) 
e^{-\frac{1}{2}
\left( \left| \hat{z}_i \right|^2+\left| \hat{z}_j \right|^2 \right)
+\hat{z}_i\overline{\hat{z}_j}}
\mathrm{IE}_{t}\left( z_0^{-1}\hat{z}_i+\overline{z_0}^{-1}\overline{\hat{z}_j} \right) 
\right]_{i,j=1}^n \right).
\end{aligned}
\end{equation}
\end{small}

 Combining  \eqref{midcorre2edgecomplex},  \eqref{midcorre2} and \eqref{corremid1}, 
 we thus  give a complete proof of Theorem  \ref{2-complex-correlation}. \end{proof}

%%%%%%%%%%%%%%%%%New

%%%%%%%%%%%%%%%%%%%%%%%%%%%%%%%%%%%%%%%%%%%%%%
 \subsection{ Proof of Proposition \ref{4intrep}}
\begin{proof}[Proof of Proposition \ref{4intrep}]  
{\bf Step 1.} To verify   \eqref{4densityeigen0},  we  use  the fact that any real quaternion matrix can be  triangularized by a unitary quaternion matrix; see e.g. \cite{HCL}. 
 That is,  the Schur decomposition   gives rise to 
\begin{equation}\label{4Schurdecom}
X=V^*\left(\begin{bmatrix}
\Lambda &  \\  & \overline{\Lambda}
\end{bmatrix}+T\right)
V,
\end{equation}
where  $\Lambda=\left( z_1,\cdots,z_{N} \right)$ is diagonal  and $$V= \begin{bmatrix} V^{(1)} & V^{(2)} \\ -\overline{V^{(2)}} & \overline{V^{(1)}}
\end{bmatrix},\quad T= \begin{bmatrix} T_1 & T_2 \\ -\overline{T}_2 & \overline{T}_1 \end{bmatrix},$$ in which     $V$ is a unitary matrix, both    $T_1$ and  $T_2$ are strictly upper triangular.
The Jacobian is given by 
\begin{equation}\label{4SchurJacobian}
{\rm d}X=    
\prod_{i=1}^N\left| z_i-\overline{z}_i\right|^2
\prod_{1\leq i<j\leq N}\left| z_i-z_j \right|^2\left| z_i-\overline{z}_j\right|^2  {\rm d}\Lambda
{\rm d}T_1{\rm d}T_2{\rm d}V,
\end{equation}
see e.g. \cite{AIS}.

 Put 
\begin{equation*}\label{4Bdenotion}
B=VX_0V^*,\quad V^{(i)}=\begin{bmatrix} V_{i1} & V_{i2} \end{bmatrix},\quad
V_1=\begin{bmatrix} V_{11} & V_{21} \\ -\overline{V_{21}} & \overline{V_{11}}\end{bmatrix},
\end{equation*}
where for $i=1,2$, $V_{i1}$ and $V_{i2}$  are  of sizes $N\times r$ and   $N\times (N- r)$ respectively, then 
\begin{equation*}\label{4BS}
B=V_{1}A_{0}V_{1}^*.
\end{equation*}
%This means the exponential in the integral does not depend on $V_{i2}$.

By \eqref{4Schurdecom}  we  arrive at     
\begin{equation*}\label{4traceexpension}
\begin{aligned}
{\rm Tr}\left( X-X_0 \right)&\left( X-X_0 \right)^*=2{\rm Tr}\left( 
\Lambda\Lambda^* 
\right)            +{\rm Tr}\left( (T-B)(T-B)^* \right)                                 \\
&-{\rm Tr}\left( \begin{bmatrix} \Lambda & \\ & \Lambda^* \end{bmatrix}
 B^*+B\begin{bmatrix} \Lambda^* & \\ & \Lambda \end{bmatrix} \right).
\end{aligned}
\end{equation*}
Integrate out  $V_{12}$ and  $V_{22}$, we get 
\begin{small}
 \begin{equation}\label{4densityMid}
\begin{aligned}
&f_{N,4}\left( A_0; z_1,\cdots,z_{N}\right)=\frac{1}{\widetilde{Z}_{N,r;4}}
e^{-2N\sum_{k=1}^{N}\left| z_k \right|^2} \prod_{i=1}^N\left| z_i-\overline{z_i} \right|^2
\prod_{1\leq i<j\leq N}\left| z_i-z_j \right|^2 \left| z_i-\overline{z_j} \right|^2       \\
& \times 
 \int     \delta\left( V_1^*V_1-\mathbb{I}_{2r} \right) e^{N\Phi_N( \Lambda,B,T)}
  {\rm d}V_{11}{\rm d}V_{21}{\rm d}T_1{\rm d}T_2, 
\end{aligned}
\end{equation}
\end{small}
where 
\begin{equation*}\label{PhiN}
\Phi_N( \Lambda,B,T)=
 -{\rm Tr}\big( (T-B)(T-B)^* \big)                
+{\rm Tr}\bigg(
\begin{bmatrix} \Lambda & \\ & \Lambda^* \end{bmatrix}
 B^*+B\begin{bmatrix} \Lambda^* & \\ & \Lambda \end{bmatrix} \bigg).
\end{equation*}

%Although this is not the final result, it is useful to prove \eqref{4algebraequa}.
Again, integrate out the strictly upper triangular 
matrices $T_1$ and $T_2$,  which are indeed Gaussian matrix integrals,  we  immediately get \eqref{4densityeigen0}.

 To compute constants $Z_{N,r;4}$ and $\widetilde{Z}_{N,r;4}$, noting  that they are independent from $A_0$,  choose  $A_0=0$.  For  $Z_{N,r;4}$,  the integral in (\ref{4densityeigen0}) reduces to   the volume  
\begin{equation*}
\int \delta\left( V_1^*V_1-\mathbb{I}_{2r} \right){\rm d}V_{11}{\rm d}V_{21}=
\frac{ \pi^{2Nr+r-r^2}  }
{ \prod_{j=0}^{r-1} \Gamma(2(N-j)) }
\end{equation*}
 which  follows from  the formula \cite[Proposition 3.3]{KL}. 
Combining the fact that $f_{N}\left( 0; z_1,\cdots,z_{N}\right)$ is  a density    
%\begin{equation*}
%\int \cdots \int_{\Im z_1, \ldots, \Im z_N\geq 0} f_{N}\left( 0; z_1,\cdots,z_{N}\right)\prod_{i=1}^N{\rm d}\Re z_i{\rm d}\Im z_i=1
%\end{equation*}
and  the integration formula (see e.g. \cite{Me})
\begin{equation*}
\begin{aligned}
\int_{\Im z_1, \ldots, \Im z_N\geq 0} 
e^{-2N\sum_{k=1}^{N}\left| z_k \right|^2} &\prod_{i=1}^N\left| z_i-\overline{z_i} \right|^2
\prod_{1\leq i<j\leq N}\left| z_i-z_j \right|^2 \left| z_i-\overline{z_j} \right|^2        
d^N z                 \\
&=(2N)^{-N(N+1)}\pi^N N!
\prod_{k=1}^N (2k-1)!,
\end{aligned}
\end{equation*}
we thus arrive at \eqref{4correZNr}. Similarly, we can obtain   \begin{equation*}
\widetilde{Z}_{N,r;4}=(2N)^{-2N^2}N! \pi^{N(N+2r)+r-r^2}\prod_{k=1}^{N-r}\Gamma(2k).
\end{equation*}

{\bf Step 2. }  The goal is to obtain  integral representations for correlation functions.
%\begin{equation}\label{correlationdef}
%R_{N4}^{(n)}\left(A_0;  z_1,\cdots,z_{n}\right)=\frac{N!}{(N-n)!}
%\int \cdots \int f_{N4}\left(A_0; z_1,\cdots,z_{N}\right){\rm d}z_{n+1}\cdots {\rm d}z_{N}
%\end{equation} 
%where  ${\rm d}z={\rm d}\Re z  {\rm d}\Im z
%$ for a complex variable $z$.
%It becomes much more complicated than   the $\beta=2$ case.

For $i=1,2$, set 
\begin{equation*}\label{4Gdecompose}
\begin{aligned}
&B=\begin{bmatrix} B_1 & B_2 \\ -\overline{B_2} & \overline{B_1}  \end{bmatrix},\quad
B_i=\left[ B_{i;k,l} \right]_{k,l=1}^2;  V_{i1}=\begin{bmatrix}
Q_i \\ V_{D;i1}
\end{bmatrix},      \\
& \quad                 
 V_D=\begin{bmatrix} V_{D;11} & V_{D;21} \\ -\overline{V_{D;21}} & \overline{V_{D;11}} \end{bmatrix};  \quad \Lambda=\begin{bmatrix} \Lambda_1 & \\  & \Lambda_2  \end{bmatrix};\quad
T_i=\begin{bmatrix} T_{i;11} & T_{i;12} \\  0_{(N-n)\times n} & T_{i;22}  \end{bmatrix},
\end{aligned}
\end{equation*}
where $B_1$ and $B_{i;11}$ are of size $N\times N$ and $n\times n$ respectively; $Q_i$ is  of size $n\times r$; $T_{i;11}$ is  of size $n\times n$ and $\Lambda_1={\rm diag}\left( z_1,\cdots,z_n \right)$.

In view of \eqref{4densityMid}, integrate out  $T_{i;12}$ and we get an extra factor $( 2N/\pi)^{2n(n-N)}$. 
If  we  set \begin{equation*}
\begin{aligned}
&B_U=\begin{bmatrix} B_{1;11} & B_{2;11} \\ -\overline{B_{2;11}} & \overline{B_{1;11}} \end{bmatrix},\quad
B_D=\begin{bmatrix} B_{1;22} & B_{2;22} \\ -\overline{B_{2;22}} & \overline{B_{1;22}} \end{bmatrix},\quad
B_{21}=\begin{bmatrix} B_{1;21} & B_{2;21} \\ -\overline{B_{2;21}} & \overline{B_{1;21}} \end{bmatrix};
\\
&T_U=\begin{bmatrix} T_{1;11} & T_{2;11} \\ -\overline{T_{2;11}} & \overline{T_{1;11}} \end{bmatrix},\quad
T_D=\begin{bmatrix} T_{1;22} & T_{2;22} \\ -\overline{T_{2;22}} & \overline{T_{1;22}} \end{bmatrix},
\end{aligned}
\end{equation*}
then   $\Phi_N( \Lambda,B,T)$ can be divided  
 into three  parts  
 \begin{equation*}\label{PhiN}
\Phi_N( \Lambda,B,T)=\Phi_n( \Lambda_1,B_U,T_U)+ \Phi_{N-n}( \Lambda_1,B_D,T_D)                
-{\rm Tr}\left( B_{21}B_{21}^* \right).
\end{equation*} 

%divide we get, by the relation with $\left\{ z_1,\cdots,z_n \right\}$ and $\left\{ z_{n+1},\cdots,z_N \right\}$, the term
%\begin{small}
%\begin{equation*}
%\begin{aligned}
%&-2{\rm Tr}\left( \Lambda_1\Lambda_1^* \right)+
%{\rm Tr}\left( \begin{bmatrix} \Lambda_1 & \\ & \overline{\Lambda_1} \end{bmatrix}B_U^*+
%\begin{bmatrix} \Lambda_1 & \\ & \overline{\Lambda_1} \end{bmatrix}^*B_U
%\right)
%-{\rm Tr}\left( T_U-B_U \right)\left( T_U-B_U  \right)^*
%  \\
%&-{\rm Tr}\left( B_{21}B_{21}^* \right)
%-2{\rm Tr}\left( \Lambda_2\Lambda_2^* \right)+
%{\rm Tr}\left( \begin{bmatrix} \Lambda_2 & \\ & \overline{\Lambda_2} \end{bmatrix}B_D^*+
%\begin{bmatrix} \Lambda_2 & \\ & \overline{\Lambda_2} \end{bmatrix}^*B_D
%\right)
%-{\rm Tr}\left( T_D-B_D \right)\left( T_D-B_D  \right)^*
%\end{aligned}
%\end{equation*}
%\end{small}
 It's easy to see 
$$ B_D=V_DA_0V_D^*, $$
and 
\begin{equation*}\label{4key}
{\rm Tr}\left( B_{21}B_{21}^* \right)=
{\rm Tr}\left( V_D^*V_DA_0Q^*QA_0^* \right),\quad
V_D^*V_D+Q^*Q=\mathbb{I}_{2r}.
\end{equation*}

After a change of variables  $V_{D}=\widetilde{V}_D\sqrt{\mathbb{I}_{2r} -Q^* Q}$   the integral in \eqref{4densityMid}  can be rewritten as 
\begin{small}
\begin{multline}\label{trans41}
\int \int {\rm d}V_{11}{\rm d}V_{21}
\delta\left( V_1^*V_1-\mathbb{I}_{2r} \right)\Big(\cdot\Big)=\\
\int_{Q^*Q\leq \mathbb{I}_{2r}}{\rm d}Q_1{\rm d}Q_2\left( \det\left( \mathbb{I}_{2r} -Q^* Q \right) \right)^{N-n-r+\frac{1}{2}}
\int {\rm d}\widetilde{V}_{D;11}{\rm d}\widetilde{V}_{D;21}
\delta\left( \widetilde{V}_D^*\widetilde{V}_D-\mathbb{I}_{2r} \right)\Big(\cdot\Big).
\end{multline}
\end{small}
So if we set  $\tilde{z}_k= \sqrt{N/(N-n)} z_{k+n}, k=1,\cdots,N-n,$ and  \begin{small}
\begin{equation*}\label{4regularization}
\begin{aligned}
&\widetilde{A}_0= \sqrt{N/(N-n)}  \sqrt{\mathbb{I}_{2r} -Q^* Q}A_{0} \sqrt{\mathbb{I}_{2r} -Q^* Q},\quad
\widetilde{T}_D= \sqrt{N/(N-n)}  T_D,                   
\end{aligned}
\end{equation*}
\end{small} 
and treat $\{\tilde{z}_k\}$ as eigenvalues of a $(N-n) \times (N-n)$  quaternion  Ginibre matrix  $\widetilde{X}$  perturbed by  a $2r\times 2r$ matrix $\widetilde{A}_0$,  we immediately obtain the desired integral representation from \eqref{4densityMid} and    \eqref{trans41}. 

This  thus completes the proof. 
\end{proof}

 %%%%%%%%%%%%%%%%%%%
 \subsection{Proof of Theorem \ref{4-complex-correlation}}
To verify  Theorem \ref{4-complex-correlation}, we will use the following lemma.
\begin{lemma}\label{maximumlemmabeta4} Given $z_0\in \mathbb{C}$  such that $|z_0|\leq 1$,  let 
\begin{equation}\label{Qlemmadefbeta4}
Z_{0,n}=\begin{bmatrix}
z_0\mathbb{I}_n &  \\   & \overline{z_0}\mathbb{I}_n
\end{bmatrix},\quad
A_0=\begin{bmatrix}
A_1 & A_2 \\ -\overline{A_2} & \overline{A_1}
\end{bmatrix}, \quad Q=\begin{bmatrix}
Q_1 & Q_2 \\ -\overline{Q_2} & \overline{Q_1}
\end{bmatrix}, 
\end{equation}
where  $Q_1$ and  $Q_2$ are  $n\times r$ complex matrices,   and  both $A_1$ and $A_2$  are $r\times r$ complex matrices.
Put 
\begin{small}
\begin{equation}\label{beta4JQdefinition}
\begin{aligned}
J_n(Q,A_0):={\rm Tr}&\big(Z_{0,n}^* QA_0Q^*+Z_{0,n} QA_0^*Q^* \big)
-{\rm Tr}\left( QA_0^*A_0Q^* \right) \\
&+2\sum_{i<j}^n\left(
\big| (QA_0Q^*)_{i,j} \big|^2+\big| (QA_0Q^*)_{i,j+n} \big|^2
\right)
+\log\det\left( \mathbb{I}_{2n}-QQ^* \right),
\end{aligned}
\end{equation}
\end{small}
%where
%\begin{small}
%\begin{equation}\label{beta4ABlemmadenotion}
%\begin{aligned}
%&B_u=\begin{bmatrix}
%B_u^{(1)} & B_u^{(2)} \\ -\overline{B_u^{(2)}} & \overline{B_u^{(1)}}
%\end{bmatrix}:=QA_0Q^*,\quad
%B_u^{(\alpha)}=\left[ b_{ij}^{(\alpha)} \right]_{i,j=1}^n,\quad
%\alpha=1,2.     \\    
%&Z_0^{(n)}=\begin{bmatrix}
%z_0\mathbb{I}_n &  \\   & \overline{z_0}\mathbb{I}_n
%\end{bmatrix},\quad
%A_0=\begin{bmatrix}
%A_1 & A_2 \\ -\overline{A_2} & \overline{A_1}
%\end{bmatrix}.
%\end{aligned}
%\end{equation}
%\end{small}
then $J_n(Q)\leq 0$  whenever 
$QQ^*<\mathbb{I}_{2n}$,  with equality if and only if $Q=0$.

\end{lemma}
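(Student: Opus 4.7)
The plan is to prove Lemma \ref{maximumlemmabeta4} by induction on $n$, mirroring the strategy of Lemma \ref{maximumlemma}, with the quaternion block structure doubling the relevant dimensions and generating a ``mixed'' off-diagonal sum. The starting point is the completion-of-squares identity obtained by expanding $\|A_0 Q^* - Q^* Z_{0,n}\|_F^2 = {\rm Tr}\bigl((QA_0^* - Z_{0,n}^*Q)(A_0Q^* - Q^*Z_{0,n})\bigr)$ and using $Z_{0,n}^*Z_{0,n} = |z_0|^2\mathbb{I}_{2n}$:
\begin{equation*}
{\rm Tr}(Z_{0,n}^*QA_0Q^*) + {\rm Tr}(Z_{0,n}QA_0^*Q^*) - {\rm Tr}(QA_0^*A_0Q^*) = -\|A_0Q^* - Q^*Z_{0,n}\|_F^2 + |z_0|^2\,{\rm Tr}(QQ^*).
\end{equation*}

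For the base case $n=1$, the sum over $i<j$ is empty, and since $Q_1, Q_2$ are row vectors of length $r$, the identity $Q_1 Q_2^t = Q_2 Q_1^t$ (both sides are equal scalars) forces the off-diagonal blocks of $QQ^*$ to vanish, giving $QQ^* = c\,\mathbb{I}_2$ with $c = |Q_1|^2 + |Q_2|^2 = \tfrac12\,{\rm Tr}(QQ^*)$. Hence $\log\det(\mathbb{I}_2 - QQ^*) = 2\log(1-c) \leq -2c - c^2$, and combining with the identity above yields $J_1(Q;A_0) \leq -\|A_0Q^* - Q^*Z_{0,1}\|_F^2 - 2(1-|z_0|^2)c - c^2 \leq 0$ whenever $|z_0|\leq 1$, with equality iff $Q = 0$.

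For the inductive step, I would split off the last quaternion row of $Q$: write $Q_i$ as the vertical stack of $Q_i'$ (of size $(n-1)\times r$) and $q_i$ (of size $1\times r$), and assemble the $2(n-1)\times 2r$ quaternion matrix $\tilde{Q}'$ from $(Q_1', Q_2')$ together with the $2\times 2r$ quaternion matrix $\tilde{q}$ from $(q_1, q_2)$, both following the pattern in \eqref{Qlemmadefbeta4}. Under a row permutation $P$ sending complex rows $n$ and $2n$ to the last two positions, $PZ_{0,n}^*P^* = Z_{0,n-1}^* \oplus Z_{0,1}^*$, so each trace term and the term $-{\rm Tr}(QA_0^*A_0Q^*)$ split additively between $(\tilde{Q}', A_0)$ and $(\tilde{q}, A_0)$; the Schur complement yields
\begin{equation*}
\log\det(\mathbb{I}_{2n} - QQ^*) = \log\det(\mathbb{I}_{2(n-1)} - \tilde{Q}'\tilde{Q}'^*) + \log\det(\mathbb{I}_2 - \tilde{q}F\tilde{q}^*),
\end{equation*}
with $F = (\mathbb{I}_{2r} - \tilde{Q}'^*\tilde{Q}')^{-1}$. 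The off-diagonal sum splits into the reduced sum (contributions with $j\leq n-1$) and the $j=n$ contributions, which by the quaternion structure of $\tilde{Q}'A_0\tilde{q}^*$ accumulate to $\|\tilde{Q}'A_0\tilde{q}^*\|_F^2 = {\rm Tr}(\tilde{q}A_0^*\tilde{Q}'^*\tilde{Q}'A_0\tilde{q}^*)$. Combining this with $-{\rm Tr}(\tilde{q}A_0^*A_0\tilde{q}^*)$ produces $-{\rm Tr}(\tilde{q}A_0^*F^{-1}A_0\tilde{q}^*)$, and the Woodbury identity $F^{1/2}(\mathbb{I}_{2r} - \tilde{Q}'^*\tilde{Q}')F^{1/2} = \mathbb{I}_{2r}$ together with the substitution $\tilde{q} \mapsto \tilde{q}F^{1/2}$, $A_0 \mapsto F^{-1/2}A_0F^{-1/2}$ gives the clean decomposition
\begin{equation*}
J_n(Q;A_0) = J_{n-1}(\tilde{Q}';A_0) + J_1\bigl(\tilde{q}F^{1/2};\, F^{-1/2}A_0F^{-1/2}\bigr),
\end{equation*}
closing the induction; since $F$ is positive definite, equality forces $\tilde{Q}' = 0$ and $\tilde{q} = 0$, hence $Q = 0$.

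The main obstacle will be the bookkeeping of the off-diagonal sum: I need to verify that $(QA_0Q^*)_{i,n}$ and $(QA_0Q^*)_{i,2n}$ for $i<n$ coincide with the appropriate entries of $\tilde{Q}'A_0\tilde{q}^*$, and that the quaternion structure of this $2(n-1)\times 2$ matrix makes the sum of squared moduli over its top $n-1$ rows equal to exactly half of $\|\tilde{Q}'A_0\tilde{q}^*\|_F^2$, so that the factor of $2$ in front of the off-diagonal sum in \eqref{beta4JQdefinition} absorbs cleanly. One must also check that $\tilde{q}F^{1/2}$ and $F^{-1/2}A_0F^{-1/2}$ remain quaternion-structured, which is automatic because $F$ inherits the quaternion form from $\tilde{Q}'$ and the positive-definite square root of a quaternion matrix is itself quaternion.
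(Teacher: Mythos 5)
Your proposal is correct and follows essentially the same route as the paper's proof: induction on $n$, completion of squares plus the logarithm inequality for the base case, and in the inductive step the split of the last quaternion row, the Schur-complement factorization of $\log\det(\mathbb{I}_{2n}-QQ^*)$, the identification of the $j=n$ off-diagonal contributions with ${\rm Tr}(\tilde q A_0^*\tilde Q'^*\tilde Q' A_0\tilde q^*)$, and the substitution $\tilde q\mapsto\tilde q F^{1/2}$, $A_0\mapsto F^{-1/2}A_0F^{-1/2}$ yielding $J_n=J_{n-1}+J_1$. The only differences are cosmetic: you write $F=(\mathbb{I}_{2r}-\tilde Q'^*\tilde Q')^{-1}$ where the paper uses the equivalent form $\mathbb{I}_{2r}+\hat Q_1^*(\mathbb{I}_{2n-2}-\hat Q_1\hat Q_1^*)^{-1}\hat Q_1$, and your base case makes explicit the observation $QQ^*=c\,\mathbb{I}_2$, which the paper leaves implicit.
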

%%%%%%%%%%%%%%%%%%%%%%%%%%%%%%%%%%%%%%%%%%%%%%%%%%%%%%%%%%%%%%%%%%%%%%%%%%%%%%%%%%%%%%%%%%%%%%%%%%%%%%%%%%%%%%5
%%%%%%%%%%%%%%%%%%%%%%%%%%%%%%%%%%%%%%%%%%%%%%%%%%%%%%%%%%%%%%%%%%%%%%%%%%%%%%%%%%%%%%%%%%%%%%%%%%%%%%%%%%%%%%5
\begin{proof}

We proceed  by induction on $n$. 
 For  $n=1$,  note that both  $Q_1$  and $Q_2$ are  row vectors, so the summation  term vanishes   and  we have 
\begin{equation*}
\begin{aligned}
J_{1}(Q;A_0)&\leq
{\rm Tr}\big(Z_{0,n}^* QA_0Q^*+Z_{0,n} QA_0^*Q^* \big)
-{\rm Tr}\left( QA_0^*A_0Q^* \right) -{\rm Tr}\left( QQ^* \right)-\frac{1}{2}{\rm Tr}\left( QQ^* \right)^2    \\
&=-{\rm Tr}\big(\left(   Z_{0,n} ^*Q-QA_{}^* \right)\left(  Z_{0,n} ^*Q-QA_{0}^* \right)^* \big)-(1-|z_0|^2){\rm Tr}( QQ^*)
-\frac{1}{2}{\rm Tr}\left( QQ^* \right)^2\\
&\leq 0.
\end{aligned}
\end{equation*}
  Moreover, 
$J_1(Q;A_0)=0$ if and only if $Q=0$.
 
 Suppose that the lemma is true for  $1, \ldots, n-1$.  Rewrite $Q_1$ and $Q_2$   as   $2 \times 1$  block matrices 
\begin{equation*}\label{beta4Qdecompositionlemma}
Q_i=\begin{bmatrix}
Q_{i1} \\ Q_{i2}
\end{bmatrix},\quad
Q_{11}\, \&\,  Q_{21}\in \mathbb{C}^{(n-1)\times r},\quad
Q_{12}\,\&\, Q_{22}\in \mathbb{C}^{1\times r},
\end{equation*}
 and set 
 \begin{equation*}\label{beta4inductionkey2deno}
\hat{Q}_{i}=\begin{bmatrix}
Q_{1i} & Q_{2i} \\ -\overline{Q_{2i}} & \overline{Q_{1i}}
\end{bmatrix},\quad
i=1,2.
\end{equation*}
One can directly verify  that 
\begin{small}
\begin{equation}\label{4lemma-2}
\begin{aligned}
2\sum_{i<j}^n\left(
\big| (QA_0Q^*)_{i,j} \big|^2+\big| (QA_0Q^*)_{i,j+n} \big|^2
\right)=
& 2\sum_{i<j}^n\left(
\big| (\hat{Q}_{1}A_0 \hat{Q}_{1}^*)_{i,j} \big|^2+\big| 
(\hat{Q}_{1}A_0 \hat{Q}_{1}^*)_{i,j+n} \big|^2
\right)\\
& +{\rm Tr}\left(
\hat{Q}_2 A_0^*   \hat{Q}_{1}^* \hat{Q}_1 A_0 \hat{Q}_{2}^{*} 
\right), \end{aligned}
\end{equation}
\end{small}
and
\begin{small}
\begin{equation}\label{4lemma-3}
\begin{aligned}
{\rm Tr}&\left( Z_{0,n} ^*QA_0Q^*+Z_{0,n} QA_0^*Q^* \right)
-{\rm Tr}\left( QA_0^*A_0Q^* \right)=                   \\
&{\rm Tr}\left( Z_{0,n-1} ^*\hat{Q}_1A_0\hat{Q}_1^*+Z_{0,n-1} \hat{Q}_1A_0^*\hat{Q}_1^* \right)
-{\rm Tr}\left( \hat{Q}_1A_0^*A_0\hat{Q}_1^* \right)       \\
&+{\rm Tr}\left( Z_{0,1} ^*\hat{Q}_2A_0\hat{Q}_2^*+Z_{0,1} \hat{Q}_2A_0^*\hat{Q}_2^* \right)
-{\rm Tr}\left( \hat{Q}_2A_0^*A_0\hat{Q}_2^* \right).
\end{aligned}
\end{equation}
\end{small}

%%%%%%%%%%%%%%%%%%%%%%%%%%%%%%%%%%%%%%%%%%%%%%%%%%%%%%%%%%%%%%%%%%%%%%%%%%%%%%%%%%%%%%%%%%%%%%%%%%%%%%%%%%%%%%%%%%%%%%%%%%%%%%%%%%%%%%%%%%%%%%5%%%%%%%%%%%%%%%%%%%%%%%%%%%%%%%%%%%%%%%%%%%%%%%%%%%%%%%%%%%%%%%%%%%%%%%%%%%%%%%%%%%%%%
As $QQ^*<\mathbb{I}_{2n}$, we know that  $\hat{Q}_1\hat{Q}_1^*<\mathbb{I}_{2n-2}$,
 from which  use of  Schur complement technique  shows
 
\begin{small}
\begin{equation}\label{4lemma-4}
\log\det\left( \mathbb{I}_{2n}-QQ^* \right)=
\log\det\left( \mathbb{I}_{2n-2}-\hat{Q}_1\hat{Q}_1^* \right)+
\log\det\left( \mathbb{I}_{2}-\hat{Q}_2F\hat{Q}_2^* \right),
\end{equation}
\end{small}
where
\begin{equation*}\label{Flemmadenotion}
F=\mathbb{I}_{2r}+\hat{Q}_1^*\left( \mathbb{I}_{2n-2}-\hat{Q}_1\hat{Q}_1^* \right)^{-1}\hat{Q}_1.
\end{equation*}
Combine\eqref{4lemma-2},  \eqref{4lemma-3} and \eqref{4lemma-4}, 
we   arrive at  \begin{equation} \label{4lemma-5}
J_{n}(Q;A_0)=J_{n-1}(\hat{Q}_1;A_0)+\widetilde{J}(Q),
\end{equation}
 where 
\begin{small}
\begin{equation*}\label{beta4widetildeJQ}
\begin{aligned}
\widetilde{J}(Q)=\log\det\left( \mathbb{I}_{2}-\hat{Q}_2F\hat{Q}_2^* \right) 
&-{\rm Tr}\hat{Q}_2A_0^*\left( \mathbb{I}_{2r}-\hat{Q}_1^*\hat{Q}_1 \right)A_0\hat{Q}_2^*
\\
&+{\rm Tr}\left( Z_{0,1}^*\hat{Q}_2 A_0\hat{Q}_2^*+Z_{0,1} \hat{Q}_2A_0^*\hat{Q}_2^* \right).
\end{aligned}
\end{equation*}
\end{small}

Noting that 
\begin{equation*} 
F^{\frac{1}{2}}\left( \mathbb{I}_{2r}-\hat{Q}_1^*\hat{Q}_1 \right)F^{\frac{1}{2}}=\mathbb{I}_{2r},
\end{equation*}
which can be verified through  the identity
$$ \hat{Q}_1\left( \mathbb{I}_{2r}-\hat{Q}_1^*\hat{Q}_1 \right)=
\hat{Q}_1-\hat{Q}_1\hat{Q}_1^*\hat{Q}_1=\left( \mathbb{I}_{2n-2}-\hat{Q}_1\hat{Q}_1^* \right)\hat{Q}_1,$$
if introducing  
\begin{equation*}
\widetilde{Q}_2=\hat{Q}_2F^{\frac{1}{2}},\quad
\widetilde{A_0}=F^{-\frac{1}{2}}A_0F^{-\frac{1}{2}},
\end{equation*} 
then 
\begin{equation*} 
\widetilde{J}(Q)=J_{1}(\hat{Q}_1;\widetilde{A_0}).
\end{equation*}
 
 Finally,  we see immediately from   \eqref{4lemma-5} and the induction hypothesis that 
  the lemma holds true for $n$. \end{proof}

%%%%%%%%%%%%%%%%%%%%%%%%%%%%%%%%%%%%%%%%%%%%%%%%%%%%%%%%%%%%%%%%%%%%%%%%%%%%%%%

Now we are ready to prove  Theorem \ref{4-complex-correlation}. 

\begin{proof}[Proof of Theorem \ref{4-complex-correlation}]
  In Proposition \ref{4intrep},  
 let $A_0=PJ_{q}  P^{-1}$ where $J_{q}= J_2  \bigoplus   \overline{J}_2 $ and set     $D=P^*P$. 
First, make a change of variables $ Q\to QP^*$,  and then  by Lemma \ref{maximumlemmabeta4}  we can proceed   as   in Proposition \ref{Analysisform}    to  show  that   for  sufficiently small $\delta>0$ there exists $\widetilde{\Delta}^{(4)}>0$ such that
\begin{equation*}\label{correlationbeta4expansion}
R_{N,4}^{(n)}\left( A_0;z_1,\cdots,z_n \right)=
\widetilde{C}_N^{(4)}(z_1,\ldots,z_n) \left(
\widetilde{I}_{\delta,N}^{(4)}+O\big(e^{-N\widetilde{\Delta}^{(4)}}\big)\right)
\end{equation*}
where 
\begin{equation}\label{zk4C}
z_k:=z_0+ \hat{z}_{k}/\sqrt{N},\quad k=1,2,\ldots,n,
\end{equation}
\begin{equation*}\label{tildeCN4algebra}
\widetilde{C}_N^{(4)}(z_1,\ldots,z_n)=
\frac{1}{C_{N,4}}
 e^{-2N\sum_{k=1}^{N}\left| z_k \right|^2} \prod_{i=1}^N\left| z_i-\overline{z_i} \right|^2
\prod_{1\leq i<j\leq N}\left| z_i-z_j \right|^2 \left| z_i-\overline{z_j} \right|^2
\left( \det D \right)^n,
\end{equation*}
and
\begin{equation}\label{INdeltac4}
\widetilde{I}_{\delta,N}^{(4)}=\int_{{\rm Tr}Q^*Q\leq \delta} \widetilde{g}_4( Q)
\exp\!\big\{
-N \widetilde{f}_4(Q)+\sqrt{N}\widetilde{h}_4( Q)
\big\} {\rm d}Q.
\end{equation}
Here 
\begin{small}
\begin{equation}\label{gcQ4}
\widetilde{g}_4( Q)=\big( 
\det\left(
\mathbb{I}_{2n}-QDQ^*
\big)
\right)^{-n-r+\frac{1}{2}}
{ {\mathbb{E}}_{\mathrm{GinSE}_{N-n}}( \widetilde{A}_0)}
\bigg[
\prod_{i=1}^n\left|
\det\left(
\sqrt{\frac{N}{N-n}}z_i-\widetilde{X}_{N-n}
\right)
\right|^2
\bigg],
\end{equation}
\end{small}
in which
\begin{equation*}\label{tildeA0integrationchange4}
\widetilde{A}_0=\sqrt{N/(N-n)}\sqrt{\mathbb{I}_{2r}-PQ^*QP^*}A_0\sqrt{\mathbb{I}_{2r}-PQ^*QP^*},
\end{equation*}
%%%%%%%%%%%%%%%%%%%%%%%%%%%%%%%%%%%%%%%%%%%%%%%%%%%%%%%%%%%%%%%%%%%%%%%%

\begin{small}
\begin{equation*}\label{algebrafcQ4}
\begin{aligned}
\widetilde{f}_4(Q)=&-{\rm Tr}\left(Z_0QJ_{q}^*DQ^*\right)
-{\rm Tr}\left(Z_0^*QDJ_{q}Q^*\right)
+{\rm Tr}\left( QJ_{q}^*DJ_{q}Q^* \right)  \\  
&
-\log\det\left(
\mathbb{I}_{2n}-QDQ^*
\right)
-2\sum_{1\leq i<j\leq n} \left( \big| (QDJ_{q}Q^*)_{i,j} \big|^2   
+ \big| (QDJ_{q}Q^*)_{i,j+n} \big|^2
\right)
\end{aligned}
\end{equation*} 
\end{small}
and 
\begin{equation}\label{algebrahcQ4}
\widetilde{h}_4(Q)=
{\rm Tr}\big( \hat{\Lambda}^* QDJ_{q}Q^*+\hat{\Lambda}QJ_{q}^*DQ^* \big).
\end{equation}
Moreover,  by the Taylor expansion for the log determinant,  we have 
 \begin{small}
\begin{equation}\label{algebrafcQ24}
\begin{aligned}
\widetilde{f}_4(Q) 
=&{\rm Tr}\left( Z_0^*Q-QJ_{q}^* \right)D\left( Z_0^*Q-QJ_{q}^* \right)^*+\frac{1}{2}{\rm Tr}( QDQ^* )^2\\
&
-2\sum_{1\leq i<j\leq n} \left( \big| (QDJ_{q}Q^*)_{i,j} \big|^2   
+ \big| (QDJ_{q}Q^*)_{i,j+n} \big|^2
\right)
+O\left( \| Q \|^6 \right).
\end{aligned}
\end{equation}
\end{small}

By  the Stirling formula 
%$$ N!=\sqrt{2\pi N}\Big( \frac{N}{e} \Big)^N\Big( 1+O\big( \frac{1}{N} \big) \Big), $$
we easily  see from   \eqref{CN4cor} that  the following estimates hold: for $z_0=\pm 1$, 
\begin{small}
\begin{multline}\label{tildeCNcexpan4R}
\widetilde{C}_N^{(4)}(z_1,\ldots,z_n)=\Big( 1+O\Big( N^{-1} \Big) \Big)
N^{n(2r+1-n)}e^{-2\sqrt{N}{\rm Tr}\big(z_0^{-1}\hat{Z}+\overline{z_{0}^{-1}\hat{Z}}\big)}
e^{-2n^2}(\pi N)^{-\frac{n}{2}}
\\ \times
\pi^{-n-2nr}2^{2nr-n}\left( \det(D) \right)^n
e^{-2\sum_{i=1}^n|\hat{z}_i|^2}\prod_{1\leq i<j \leq n}|\hat{z}_i-\hat{z}_j|^2|\hat{z}_i-\overline{\hat{z}}_j|^2
\prod_{i=1}^n |\hat{z}_i-\overline{\hat{z}}_i|^2,
\end{multline}
\end{small}
and for  $|z_0|=1$ and $\Im z_0>0$,
\begin{small}
\begin{multline}\label{tildeCNcexpan4C}
\widetilde{C}_N^{(4)}(z_1,\ldots,z_n)=
N^{n\left(2r+1-\frac{n-1}{2}\right)}e^{-2\sqrt{N}{\rm Tr}\big(z_0^{-1}\hat{Z}+\overline{z_{0}^{-1}\hat{Z}}\big)}
e^{-2n^2}(\pi N)^{-\frac{n}{2}}\pi^{-n-2nr}
\\ \times
2^{2nr-n}\left( \det(D) \right)^n
e^{-2\sum_{i=1}^n|\hat{z}_i|^2}\Big( |z_0-\overline{z}_0|^{n(n+1)}\prod_{1\leq i<j \leq n}|\hat{z}_i-\hat{z}_j|^2+O\Big( N^{-\frac{1}{2}} \Big) \Big),
\end{multline}
\end{small}
uniformly for   all 
$\hat{z}_{k}$
in a compact subset of $\mathbb{C}$.

The remaining task is to   analyze $\widetilde{I}_{\delta,N}^{(4)}$.  We mainly  focus on  the real case of $z_0=\pm 1$. For this, we proceed in three steps as in the proof of Theorem \ref{2-complex-correlation}.

{\bf Step 1:  Taylor expansions  of $ \widetilde{f}_4(Q)$  and $\widetilde{h}_4(Q)$.}

Noting   $J_2$ in \eqref{J2detail} and   the assumption \eqref{beta2edgethetaexpression}
where $\hat{\theta}_i=0$, $i=1,\cdots,m$.  Combine \eqref{J2decom}-\eqref{J2adeno} and   use the notation therein, we get $J_2=J_2^{(0)}$.  Next, we  are devoted to  establishing  leading terms in \eqref{algebrahcQ4} and  \eqref{algebrafcQ24}.  

For $ \widetilde{f}_4(Q)$  in   \eqref{algebrafcQ24},  noting 
$$ Z_0^*Q-QJ_{q}^*=Q\left( z_0\mathbb{I}_{2r}-J_{q} \right)^*, $$
for convenience let's  introduce 
\begin{small}
\begin{equation*}\label{tildeUdenotion4}
D:=\begin{bmatrix}
D_1 & D_2 \\
-\overline{D}_2 & \overline{D}_1
\end{bmatrix}, \quad 
\widetilde{D}=\big( z_0\mathbb{I}_{2r}-J_{q} \big)^*D
\big( z_0\mathbb{I}_{2r}-J_{q} \big):=\begin{bmatrix}
\widetilde{D}_1 & \widetilde{D}_2 \\
-\overline{\widetilde{D}}_2 & \overline{\widetilde{D}}_1
\end{bmatrix},
\end{equation*}
\end{small}
then we have 
\begin{equation*}\label{DtilDrela}
\widetilde{D}_1=\big( z_0\mathbb{I}_{r}-J_2^{(0)} \big)^*D_1\big( z_0\mathbb{I}_{r}-J_2^{(0)} \big),\quad
\widetilde{D}_2=\big( z_0\mathbb{I}_{r}-J_2^{(0)} \big)^*D_2\big( z_0\mathbb{I}_{r}-\overline{J_2^{(0)}} \big).
\end{equation*}

According to   the Jordan block structure of  $J_2^{(0)}$,  for $\alpha=1,2$, both $D_{\alpha}$ and  $\widetilde{D}_{\alpha}$
admit  a three-layer  structure  %with row and column  indices $(k,a,c)$ and  $(l,b,d)$. Explicitly,  put  
\begin{equation*}\label{Ublockform14}
D_{(\alpha)}=[ D_{k;l}^{(\alpha)} ]_{k,l=1}^{m+1},  
\quad \widetilde{D}_{(\alpha)}=[ \widetilde{D}_{k;l}^{(\alpha)} ]_{k,l=1}^{m+1},
\end{equation*}
where  for the former  $D_{m+1;m+1}^{(\alpha)}$  is of size $r_0\times r_0$ with $r_0=r-\sum_{i=1}^m\sum_{j=1}^{\alpha_i}\beta_{i,j}p_{i,j}$,  and for $k,l=1,\cdots,m$ %block matrices  are defined as follows 
\begin{equation*}\label{Ublockform234}
\begin{aligned}
&D_{k;l}^{(\alpha)}=\left[ D_{k,a;l,b}^{(\alpha)} \right]_{\alpha_{k}\times \alpha_{l}},\quad
D_{k,a;l,b}^{(\alpha)}=\left[ D_{k,a,c;l,b,d}^{(\alpha)} \right]_{\beta_{k,a}\times \beta_{l,b}}, \\
&
D_{m+1;l}^{(\alpha)}=\left[ D_{m+1;l,b}^{(\alpha)}  \right]_{1\times  \alpha_{l}}, 
 \quad
D_{m+1;l,b}^{(\alpha)}=\left[ D_{m+1;l,b,d}^{(\alpha)} \right]_{1\times \beta_{l,b}},\\
&D_{k;m+1}^{(\alpha)}=\left[ D_{k,a;m+1}^{(\alpha)} \right]_{\alpha_{k} \times  1},\quad
D_{k,a;m+1}^{(\alpha)}=\left[ D_{k,a,c;m+1}^{(\alpha)} \right]_{\beta_{k,a}\times 1},
\end{aligned}
\end{equation*}
with   $D_{k,a,c;l,b,d}^{(\alpha)}$, $D_{m+1;l,b,d}^{(\alpha)}$ and $D_{k,a,c;m+1}^{(\alpha)}$ being  of sizes $p_{k,a}\times p_{l,b}$, $r_0\times p_{l,b}$ and $p_{k,a}\times r_0$ respectively.    For the latter, the notations are  similar.  Moreover, it's easy to establish  the following identities for $\alpha=1,2$,
%Denote divide of $\widetilde{U}$ same as in (\ref{Ublockform1}) and (\ref{Ublockform23}), with U replaced with $\widetilde{U}$. From (\ref{upperleftdecom}) and (\ref{tildeUdenotion}), we find the relation between U and $\widetilde{U}$:
% For $k,l=1,\cdots,m$, $a(resp.\ b)=1,\cdots,\alpha_k(resp.\ \alpha_l)$, $c(resp.\ d)=1,\cdots,\beta_{ka}(resp.\ \beta_{lb})$,
\begin{equation*}\label{UtildeUrelation4}
\begin{aligned}
&\widetilde{D}_{k,a,c;l,b,d}^{(\alpha)}=R_{p_{k,a}}(0)^*D_{k,a,c;l,b,d}^{(\alpha)}R_{p_{l,b}}(0), 
\\
&\widetilde{D}_{m+1;l,b,d}^{(\alpha)}=-\big( z_0\mathbb{I}_{r_0}-J_2^{(2)} \big)^*D_{m+1;l,b,d}^{(\alpha)}R_{p_{l,b}}(0);  \\
&\widetilde{D}_{k,a,c;m+1}^{(1)}=-R_{p_{k,a}}(0)^*D_{k,a,c;m+1}^{(1)}\big( z_0\mathbb{I}_{r_0}-J_2^{(2)} \big);   
\\
&\widetilde{D}_{k,a,c;m+1}^{(2)}=-R_{p_{k,a}}(0)^*D_{k,a,c;m+1}^{(2)}\big( z_0\mathbb{I}_{r_0}-\overline{J_2^{(2)}} \big);   
\\
&\widetilde{D}_{m+1;m+1}^{(1)}=\big( z_0\mathbb{I}_{r_0}-J_2^{(2)} \big)^*
D_{m+1;m+1}^{(1)}\big( z_0\mathbb{I}_{r_0}-J_2^{(2)} \big);              
\\
&\widetilde{D}_{m+1;m+1}^{(2)}=\big( z_0\mathbb{I}_{r_0}-J_2^{(2)} \big)^*
D_{m+1;m+1}^{(2)}\big( z_0\mathbb{I}_{r_0}-\overline{J_2^{(2)}} \big);
\end{aligned}
\end{equation*}
Again,  by the structure of  $R_{p_{k,a}}(0)$,    for $k,l=1,\cdots,m$ set 
\begin{equation}\label{Ublockform44}
\begin{aligned}
&D_{k,a,c;l,b,d}^{(\alpha)}=\begin{bmatrix}
D_{kac;lbd} ^{(\alpha;11)}& D_{kac;lbd} ^{(\alpha;12)} \\
D_{kac;lbd} ^{(\alpha;21)}& D_{kac;lbd} ^{(\alpha;22)}
\end{bmatrix},        \quad
D_{k,a,c;m+1}^{(\alpha)}=\begin{bmatrix}
D_{kac;m+1}^{(\alpha;U)} \\ D_{kac;m+1}^{(\alpha;D)}
\end{bmatrix},\\
&D_{m+1;l,b,d}=\begin{bmatrix}
D_{m+1;lbd}^{(\alpha;L)} & D_{m+1;lbd}^{(\alpha;R)}
\end{bmatrix},\quad
\end{aligned}
\end{equation}
where $D_{kac;lbd} ^{(\alpha;11)}$, $D_{m+1;lbd}^{(\alpha;L)}$ and $D_{kac;m+1}^{(\alpha;U)}$ are of sizes $(p_{k,a}-1)\times(p_{l,b}-1)$, ${r_0}\times(p_{l,b}-1)$ and $(p_{k,a}-1)\times {r_0}$, respectively.
Then simple   calculations show that 
\begin{small}
\begin{equation}\label{UtildeUrelation14}
\begin{aligned}
&\widetilde{D}_{k,a,c;l,b,d}^{(\alpha)}=\begin{bmatrix}
0 & 0 \\
0 & D_{kac;lbd}^{(\alpha;11)}
\end{bmatrix},   \quad  \widetilde{D}_{k,a,c;m+1}^{(1)}=\begin{bmatrix}
0 \\ -D_{kac;m+1}^{(1;U)}\big( z_0\mathbb{I}_{r_0}-J_2^{(2)} \big)
\end{bmatrix},  \\
&\widetilde{D}_{k,a,c;m+1}^{(2)}=\begin{bmatrix}
0 \\ -D_{kac;m+1}^{(2;U)}\big( z_0\mathbb{I}_{r_0}-\overline{J_2^{(2)}} \big)
\end{bmatrix},
\widetilde{D}_{m+1;l,b,d}^{(\alpha)}=\begin{bmatrix}
0 & -\big( z_0\mathbb{I}_{r_0}-J_2^{(2)} \big)^*D_{m+1;lbd}^{(\alpha;L)}
\end{bmatrix}.
\end{aligned}
\end{equation}
\end{small}

%Note that in view of \eqref{algebrafcQ24} and \eqref{tildeUdenotion4}, according to the structure of 
For 
$$Q=\begin{bmatrix}
Q_1 & Q_2 \\-\overline{Q}_2 & \overline{Q}_1
\end{bmatrix},$$ %set 
%\begin{equation}\label{MidR}
%{\rm Tr}\left( Q\widetilde{D}Q^* \right)=2{\rm Tr}\begin{bmatrix} Q_1 & Q_2  \end{bmatrix}
%\widetilde{D}\begin{bmatrix} Q_1^* \\ Q_2*  \end{bmatrix}.
%\end{equation}
%As previously introduced, 
put for $\alpha=1,2$,
\begin{equation*}\label{Qblock14}
Q_{\alpha}=\begin{bmatrix}
Q_1^{(\alpha)} & \cdots & Q_m^{(\alpha)} & Q_{m+1}^{(\alpha)}
\end{bmatrix},
\end{equation*}
where $Q_{m+1}^{(\alpha)}$ is of size $n\times r_0$ and for $l=1,\cdots,m$ 
\begin{small}
\begin{equation*}\label{Qblock2}
Q_l^{(\alpha)}=[
Q_{l,b}^{(\alpha)}]_{1\times  \alpha_{l}},\quad
Q_{l,b}^{(\alpha)}=\big[
Q_{l,b,d}^{(\alpha)}\big]_{1\times  \beta_{l,b}},\quad
Q_{l,b,d}^{(\alpha)}=\begin{bmatrix}
Q_{lbd}^{(\alpha;L)} & Q_{lbd}^{(\alpha;R)} 
\end{bmatrix},
\end{equation*}
\end{small}
while $Q_{l,b,d}^{(\alpha)}$ and $Q_{lbd}^{(\alpha;L)} $ are of size $n\times p_{l,b}$ and $n\times 1$, respectively.
Now   we can draw some blocks from $Q_{\alpha}$ and  restructure it into  two new  matrices 
%denote first compressions of Q and $\widetilde{U}$:
\begin{small}
\begin{equation}\label{QUcompression14}
\begin{aligned}
&Q_R=\begin{bmatrix}
Q_{1}^{(1;R)} & \cdots & Q_m^{(1;R)} & Q_{m+1}^{(1)}
& Q_{1}^{(2;R)} & \cdots & Q_m^{(2;R)} & Q_{m+1}^{(2)}
\end{bmatrix}, 
\\
&Q_L=\begin{bmatrix}
Q_{1;L} & Q_{2;L} \\ -\overline{Q_{2;L}} & \overline{Q_{1;L}}
\end{bmatrix},\quad
Q_{\alpha;L}=\begin{bmatrix}
Q_1^{(\alpha;L)} & \cdots & Q_{m}^{(\alpha;L)}
\end{bmatrix},
\end{aligned}
\end{equation}
\end{small}
where 
\begin{small}
\begin{equation*}\label{QUcompression24}
 Q_{l}^{(\alpha;L)}=[Q_{l,b}^{(\alpha;L)}],\  Q_{l,b}^{(\alpha;L)}=[Q_{lbd}^{(\alpha;L)}], \quad 
  Q_{l}^{(\alpha;R)}=[Q_{l,b}^{(\alpha;R)}],\  Q_{l,b}^{(\alpha;R)}=[Q_{lbd}^{(\alpha;R)}].
 \end{equation*}
\end{small}

Similarly,  with \eqref{UtildeUrelation14} in mind, define 
\begin{equation} \label{Ut11R}
 \widetilde{D}^{(11)}=\begin{bmatrix}
 \widetilde{D}_{1;11} & \widetilde{D}_{2;11}  \\
 -\overline{\widetilde{D}_{2;11}} & \overline{\widetilde{D}_{1;11}}
 \end{bmatrix},\quad
\widetilde{D}_{\alpha;11}=\big[ \widetilde{D}_{k,l}^{(\alpha;11)} \big]_{k,l=1}^{m+1},
\end{equation} 
    where for $k,l=1,\cdots,m$,
    \begin{small}
\begin{equation*}\label{QUcompressiondetail14}
\begin{aligned}
&\widetilde{D}_{k,l}^{(\alpha;11)}=\begin{bmatrix}
D_{kac;lbd}^{(\alpha;11)}
\end{bmatrix}, \quad
\widetilde{D}_{m+1,l}^{(\alpha;11)}=\begin{bmatrix}
-\left( z_0\mathbb{I}_{r_0}-J_2^{(2)} \right)^*D_{m+1;lbd}^{(\alpha;L)}
\end{bmatrix}, \quad
\widetilde{D}_{m+1,m+1}^{(\alpha;11)}=\widetilde{D}_{m+1,m+1}^{(\alpha)}
\\
&\widetilde{D}_{k,m+1}^{(1;11)}=\begin{bmatrix}
-D_{kac;m+1}^{(1;U)}\left( z_0\mathbb{I}_s-J_2^{(2)} \right)
\end{bmatrix},\quad
\widetilde{D}_{k,m+1}^{(2;11)}=\begin{bmatrix}
-D_{kac;m+1}^{(2;U)}\left( z_0\mathbb{I}_s-\overline{J_2^{(2)}} \right)
\end{bmatrix}.
\end{aligned}
\end{equation*}
\end{small}
Also,  in \eqref{Ublockform44},  choose the   (1,1)-entry of   $D_{kac;lbd} ^{(\alpha;11)}$ and construct  a matrix $\hat{D}^{(1,1)}$ as follows 
\begin{small}
\begin{equation} \label{U114}
\widehat{D}_{11}=\begin{bmatrix}
\widehat{D}_{1;11} & \widehat{D}_{2;11} \\
-\overline{\widehat{D}_{2;11}} & \overline{\widehat{D}_{1;11}}
\end{bmatrix},\quad
\widehat{D}_{\alpha;11}=[ \hat{D}_{k,l}^{(\alpha)}],\quad
\hat{D}_{k,l}^{(\alpha)}=\left[  \big(D_{kac;lbd} ^{(\alpha; 11)}\big)_{1,1}
\right]_{\alpha_{k}\times \alpha_l}.
\end{equation}
\end{small}

If we set 
%It's worth emphasizing at this point that if we
$$
D^{-1}:=\begin{bmatrix}
D_1^{(-1)} & D_2^{(-1)}  \\ -\overline{D^{(-1)}_2} & \overline{D^{(-1)}_1}
\end{bmatrix}, $$ 
then 
    $U$ given in \eqref{4UV} and $D=P^*P$ satisfy the following  relations 
\begin{equation}\label{UVDrelationequa}
\begin{aligned}
&\mathbb{J}_rU=-\mathrm{i}D,\quad
\overline{U}_2=\mathrm{i}D_1,\ \ \overline{U}_1=-\mathrm{i}D_2;     \\
&V\mathbb{J}_r=-\mathrm{i}D^{-1},\quad
V_2=\mathrm{i}D_1^{(-1)},\ \ V_1=-\mathrm{i}D_2^{(-1)}.
\end{aligned}
\end{equation}
 Therefore,  we have 
\begin{equation}\label{4RUUU}
-{\rm i}\widehat{D}_{11}=\mathbb{J}_t
\begin{bmatrix}
(U_1)_{I_1} & (U_2)_{I_1}  \\ -\overline{(U_2)_{I_1}} & \overline{(U_1)_{I_1}}
\end{bmatrix}; 
\end{equation}
see Section  \ref{sectnotation} for the meaning of the relevant symbols. 
%where $A_{I_1}$ 
%a  sub-matrix of A by deleting all other rows and columns except for those  in  $I_1$.  Here   $I_{1}$ is the  index set consisting of  numbers corresponding to   first columns of all 
% blocks  
% $R_{p_{i,j}}\left(\theta_i \right)$    from $J_2$  in  \eqref{J2detail}  where  $ 1\leq i \leq m, 1\leq j \leq \alpha_i$.  
%

We are ready to do some calculation. 
First,  
\begin{equation}\label{TrQtildeQ4}
{\rm Tr}\left( Z_0^*Q-QJ_{q}^* \right)D\left( Z_0^*Q-QJ_{q}^* \right)^*
%{\rm Tr}\big( z_0\mathbb{I}_{2r}-J_{q} \big)^*D
%\big( z_0\mathbb{I}_{2r}-J_{q} \big)
={\rm Tr}( Q\widetilde{D}Q^*)=
2{\rm Tr}\big( Q_R\widetilde{D}^{(11)}Q_R^*\big).
\end{equation}
Secondly,  as in  the $\beta=2$ case we can obtain 
\begin{equation}\label{B11asymptoexpan4}
QDJ_qQ^*=z_0Q_L \hat{D}^{(1,1)}Q_L^*+
O\left( \|Q_R\|\|Q\| \right).
\end{equation}
Finally, noting that  combination of  \eqref{QUcompression14} and  \eqref{U114} gives    
\begin{equation*}\label{QUQasymptoexpan4}
QDQ^*=Q_L \hat{D}^{(1,1)}Q_L^*+
O\left( \|Q_R\|\|Q\| \right), 
\end{equation*}
we immediately see from $|z_0|=1$ and $( \hat{D}^{(1,1)})^{*}= \hat{D}^{(1,1)}$ that 
\begin{small}
\begin{equation}\label{fcQsecondexpan4}
\begin{aligned}
&\frac{1}{2}{\rm Tr}\left( QDQ^* \right)^2
-2\sum_{i<j}^n \left( \big| (QDJ_{q}Q^*)_{i,j} \big|^2   
+ \big| (QDJ_{q}Q^*)_{i,j+n} \big|^2
\right)=      \\
&\frac{1}{2}\sum_{i=1}^{2n}
\left( Q_L \hat{D}^{(1,1)}Q_L^* \right)_{i,i}^2
+O\left( \|Q_R\|\|Q\|^3 \right),
\end{aligned}
\end{equation}
\end{small}
since  $ \left( Q_L \hat{D}^{(1,1)}Q_L^* \right)_{i,i+n}=0 $, $i=1,\cdots,n$.

Combining  \eqref{algebrafcQ24}, \eqref{TrQtildeQ4} and \eqref{fcQsecondexpan4}, we  obtain 
\begin{equation*}\label{fcQexpan4}
\widetilde{f}_4(Q)=2{\rm Tr}\big( Q_R\widetilde{D}^{(11)}Q_R^*\big)+\frac{1}{2}\sum_{i=1}^{2n}
\left( Q_L \hat{D}^{(1,1)}Q_L^* \right)_{i,i}^2+O\left( \|Q_R\|\|Q\|^3+\|Q\|^6 \right).
\end{equation*}

Meanwhile,  by   \eqref{B11asymptoexpan4} we get 
\begin{equation*}\label{hcQexpan4}
\widetilde{h}_4(Q)=z_0{\rm Tr}
\Big( \big( \hat{\Lambda}+\hat{\Lambda}^* \big)
Q_L \hat{D}^{(1,1)} Q_L^* \Big)+O\left(\|Q_R\|\|Q\| \right).
\end{equation*}
 
{\bf Step 2: Modified estimate of 
$\widetilde{g}_4(Q)$}.  For the first
factor  in  \eqref{gcQ4},  it's easy to see  
%Now, in view of (\ref{correlationbeta2expansion}), it remains to compute $g_c(Q)$ and $\widetilde{C}_N^{(c)}$. Firstly, from (\ref{detcolumnreplace}) it is easy to get that
\begin{equation*}\label{detcoeexpan4}
\left( 
\det\left(
\mathbb{I}_{2n}-QDQ^*
\right)
\right)^{-n-r+\frac{1}{2}}=
1+O\left( \| Q \|^2 \right).
\end{equation*}
As for the second factor,   the averaged product of characteristic polynomials has actually been verified in  Theorem \ref{4-complex} (ii), except that   some minor correction  will be needed because of $Q$ and the scaling factor $(N-n)/N$.   So we  repeat almost the same procedure quickly,  with emphasis on  the differences.    
As in the $\beta=2$ case and Proposition \ref{Analysisform}, denote $Z=\sqrt{N/(N-n)}{\rm diag}\left( z_1,\cdots,z_n \right)$ with $K_1=n$, we  can prove that  for sufficiently   small $\delta>0$ and  for $|z_0|=1$,
\begin{small}
\begin{equation*}\label{correlationanalysisform4}
F_{n}^{(4)}(Z,Z)=\frac{1}{\widehat{Z}_{2K_1,1}}\Big(
I_{\delta,N}^{(4)}+O\big( e^{-\frac{1}{2}N\Delta^{(4)}} \big) \Big),
\end{equation*}
\end{small}
with
\begin{equation}\label{Idelta4correlation}
I_{\delta,N}^{(4)}=\int\limits_{{\rm Tr}(YY^*)\leq \delta}\ g_{4}(Y)
\exp\left\{ -Nf_{4}(Y,Z_0)+Nh_{4}(Y) \right\} {\rm d}Y,
\end{equation}
where
\begin{equation}\label{g4YLambdacorrelation}
g_4( Y)={\rm Pf}\begin{bmatrix}
R & S \\ -S^t & T
\end{bmatrix}
\left( \det\left( A_4 \right) \right)^{-n-r},
\end{equation}
\begin{equation*}\label{fbetaYZ0correlation4}
f_{4}(Y)=
\Big( 1-\frac{n}{N} \Big)
{\rm Tr}(YY^*)-\log\det\left( \mathbb{I}_{2n}+YY^* \right).
\end{equation*}

%%%%%%%%%%%%%%%%%%%%%%%%%%%%%%%%%%%%%%%%%%%%%%%%%%%%%%%%%%%%%%%%%%%%%
Noting that we need to  replace $\hat{Z}$ with $\hat{Z}+\frac{1}{2}nz_0 N^{-\frac{1}{2}}\mathbb{I}_n+O\left( N^{-1} \right)$ both  in (\ref{Cbeta}) and (\ref{Cbetadetailed}), we then get
\begin{equation*}\label{hbetaYLambdacorrelation4}
h_{4}(Y)= 
\log\det\big( \mathbb{I}_{4n}+N^{-\frac{1}{2}}H_4 \big).
\end{equation*}
Moreover,  we can obtain 
\begin{equation*}
f_{4}(Y)= \frac{1}{2}{\rm Tr}\left( (YY^*)^2 \right)+O\left(
\| Y \|^6+N^{-1}\| Y \|^2
\right), 
\end{equation*}
and %From \eqref{Nh2YLambdaAsympto}, \eqref{TrH2} and \eqref{TrH2H2}, we have
\begin{small}
\begin{align}\label{correNh4YLambdaAsymptofinalform}
Nh_4(Y)=h_{4,0,0}-\sqrt{N}
h_{4,0}(Y)+O\left( \|Y\|^2+\sqrt{N}\|Y\|^4+N^{-\frac{1}{2}} \right)%h_{2,1}(Y)
\end{align}
\end{small}
where
\begin{small}
\begin{equation*}\label{correh4Lambda}
h_{4,0,0}=2\sqrt{N}{\rm Tr}(Z_0^{-1}\hat{\Lambda})+2n^2-{\rm Tr}(Z_0^{-2}\hat{\Lambda}^2),
 \,h_{4,0}(Y)=2z_0^{-1}{\rm Tr}(YY^*\hat{\Lambda}).
\end{equation*}
\end{small}

For the factors of $g_4(Y)$ in \eqref{g4YLambdacorrelation}, we have
\begin{equation}\label{corredetA4ssquareedgeR}
\left( \det(A_2) \right)^{-n-r}=1
+O\big(N^{-\frac{1}{2}}+\|Y\|\big).
\end{equation}
 On the other hand, with  Lemma \ref{4lemma} taking $\Xi=N^{-1}+\| Q \|^2$ and $\hat{\theta}_i=0$ ($i=1,\cdots,m$), 
we have 
\begin{small}
\begin{equation*}\label{lemmaequ4R}
\begin{aligned}
{\rm Pf}\begin{bmatrix}
R & S  \\  -S^t  &  T
\end{bmatrix}=\prod_{i=m+1}^{\gamma}\prod_{j=1}^{\alpha_i}
\left| z_0-\theta_i \right|^{4np_{i,j}\beta_{i,j}}
&{\rm Pf}\begin{bmatrix}
R\left( Y;(V_1)_{I_2},(U_1)_{I_1} \right) & N^{-\frac{1}{4}}S\left( N^{\frac{1}{4}}Y \right)  
\\  
-N^{-\frac{1}{4}}S\left( N^{\frac{1}{4}}Y \right)^t  &  R\left( Y^*;(U_1)_{I_1},(V_1)_{I_2} \right)
\end{bmatrix}
\\
&+O\left( \sum N^{-\alpha_1}\| Y \|^{\alpha_2} \Xi ^{\alpha_3} \right),
\end{aligned}
\end{equation*}
\end{small}
where  $R\left( Y;A,B \right)$ and $S\left( Y \right)$ are defined in  \eqref{4RRdeno} and  \eqref{4RSdeno}, and  $(\alpha_1,\alpha_2,\alpha_3)$ satisfies  
\begin{equation*}\label{Sumcondi14R}
4\alpha_1+\alpha_2+\alpha_3\geq 4nt+\delta_{\alpha_3,0}.
\end{equation*}
Noting $\Omega_{11}^{{\rm (re)}}=0$, using the Pfaffian identity 
in  Section \ref{sectnotation} we can get
\begin{small}
\begin{equation}\label{corredeterminant4R}
\begin{aligned}
 {\rm Pf}&\begin{bmatrix}
R\left( Y;(V_1)_{I_2},(U_1)_{I_1} \right) & N^{-\frac{1}{4}}S\left( N^{\frac{1}{4}}Y \right)  
\\  
-N^{-\frac{1}{4}}S\left( N^{\frac{1}{4}}Y \right)^t  &  R\left( Y^*;(U_1)_{I_1},(V_1)_{I_2} \right)
\end{bmatrix}=\left(\det (YY^*)\right)^t 
\\
&\times \left(
\det\begin{bmatrix}
(V_1)_{I_2} & (V_2)_{I_2}  \\ -\overline{(V_2)}_{I_2} & \overline{(V_1)}_{I_2}
\end{bmatrix}
\det\begin{bmatrix}
(U_1)_{I_1} & (U_2)_{I_1}  \\ -\overline{(U_2)}_{I_1} & \overline{(U_1)}_{I_1}
\end{bmatrix}
\right)^n,
\end{aligned}
\end{equation}
\end{small}
where  $t=\sum_{i=1}^m\sum_{j=1}^{\alpha_i}\beta_{i,j}$. 

Combine \eqref{g4YLambdacorrelation},  \eqref{corredetA4ssquareedgeR}   and \eqref{corredeterminant4R}, we get 
\begin{equation*}\label{g4YLambdacorreexpan}
g_4(Y,\Lambda)=\left(
D^{(4,{\rm re},c)}+O\big(N^{-\frac{1}{4}}+\|Y\|\big)
\right)   \Big(
\big( \det(Y^*Y) \big)^t+
O\Big(
\sum_{(\alpha_1,\alpha_2, \alpha_3)}N^{-\alpha_1}\| Y \|^{\alpha_{2}} \Xi^{\alpha_{3}}
\Big)
\Big),
\end{equation*}
where 
\begin{small}
\begin{equation*}\label{D4ctR}
D^{(4,{\rm re},c)}=  \left(
\det\begin{bmatrix}
(V_1)_{I_2} & (V_2)_{I_2}  \\ -\overline{(V_2)}_{I_2} & \overline{(V_1)}_{I_2}
\end{bmatrix}
\det\begin{bmatrix}
(U_1)_{I_1} & (U_2)_{I_1}  \\ -\overline{(U_2)}_{I_1} & \overline{(U_1)}_{I_1}
\end{bmatrix}
\right)^n
 \prod_{i=m+1}^{\gamma} |z_{0}-\theta_{i}|^{4n \sum_{j=1}^{\alpha_{i}} 
p_{i,j}\beta_{i,j} }.
\end{equation*}
\end{small}

By   the change of variable $Y\to
N^{-\frac{1}{4}}Y$ in \eqref{Idelta4correlation}, 
as   in the proof of Theorem \ref{4-complex} (i) we  get 
\begin{multline} \label{corre2edgecomplex}
F^{(4)}_{n}(Z,Z)=\frac{e^{2n^2}}{\hat{Z}_{2K_1,1}} D^{(4,{\rm re},c)}
N^{-n^2-\frac{n}{2}- nt  }  e^{2z_0^{-1}\sqrt{N}{\rm Tr}(\hat{Z}+\overline{\hat{Z}})}\\
\times  e^{ -{\rm Tr}\big(\hat{Z}^2+\overline{\hat{Z}^2}\big) }
\Big( 
I^{(4,{\rm re},c)}\left( 2z_0^{-1}\hat{\Lambda} \right)+O\big( \sum_{(\alpha_1,\alpha_3)}N^{-\alpha_1}\Xi^{\alpha_3} \big) \Big),
\end{multline}
where   $(\alpha_1, \alpha_3)$ satisfies  the restriction  
 $$4\alpha_1+\alpha_3\geq \delta_{\alpha_3,0}$$ and  \begin{small}
\begin{equation}\label{IC4R}
I^{(4,{\rm re},c)}(\hat{\Lambda})=
 \int\left(\det (YY^*)\right)^t 
 \exp\Big\{-\frac{1}{2}{\rm Tr}(YY^*)^2 -{\rm Tr}\big(\hat{\Lambda}YY^*\big)\Big\} {\rm d}Y.
\end{equation}
\end{small}

So this completes a modified estimate of 
$\widetilde{g}_4(Q)$.

{\bf Step 3: Summary and matrix integrals}.

For the matrix  integral \eqref{INdeltac4},    using  the change of variables  
$$
Q_R\to N^{-\frac{1}{2}}Q_R,\quad
Q_L\to N^{-\frac{1}{4}}Q_L  (\hat{D}^{(1,1)})^{-\frac{1}{2}},
$$
  integrating  out  $Q_R$,    we  obtain  
\begin{multline} \label{midcorre4edgeR}
\frac{1}{N^n}R_N^{(n)}\left( A_0;z_1,\cdots,z_n \right)
=
(\pi)^{-\frac{n}{2}}\pi^{-2(tn+n+n^2)}2^{2(tn-n+n^2)}      
\rho^n e^{ -{\rm Tr}(\hat{Z}+\overline{\hat{Z}} )^2 } \prod_{i=1}^n |\hat{z}_i-\overline{\hat{z}}_i|^2
\\ \times 
\prod_{1\leq i<j \leq n}|\hat{z}_i-\hat{z}_j|^2|\hat{z}_i-\overline{\hat{z}}_j|^2
\left(  I^{(4,{\rm re},c)}( 2z_0^{-1}\hat{\Lambda} )
\tilde{I}^{(4)}(  z_0^{-1}\hat{\Lambda})
+O\big( N^{-\frac{1}{4}} \big) \right),
\end{multline}
where  for $$Q_L=\begin{bmatrix}
Q_{1;L} & Q_{2;L} \\ -\overline{Q_{2;L}} &  \overline{Q_{1;L} } &
\end{bmatrix}, \quad Q_{1;L}, Q_{2;L}: n\times t, $$,
\begin{small}
\begin{equation}\label{I4C00}
\tilde{I}^{(4)}(\hat{\Lambda})=  \int 
\exp\!\Big\{
-\frac{1}{2}\sum_{i=1}^{2n}
\left( Q_LQ_L^* \right)_{ii}^2+
{\rm Tr}\left( \big( \hat{\Lambda}^*+\hat{\Lambda} \big) Q_LQ_L^* \right)
\Big\} {\rm d}Q_{1;L}{\rm d}Q_{2;L},
\end{equation}
\end{small}
and $$
\begin{aligned}
\rho:=  \prod_{i=m+1}^{\gamma}
&|z_{0}-\theta_{i}|^{2 \sum_{j=1}^{\alpha_{i}} 
p_{i,j}\beta_{i,j} }
  \frac{1}{ \det(\widetilde{D}^{(11)})\det(\hat{D}^{(1,1)})
 } \det(D)    \\
  & \times \det\begin{bmatrix}
(V_1)_{I_2} & (V_2)_{I_2}  \\ -\overline{(V_2)}_{I_2} & \overline{(V_1)}_{I_2}
\end{bmatrix}
\det\begin{bmatrix}
(U_1)_{I_1} & (U_2)_{I_1}  \\ -\overline{(U_2)}_{I_1} & \overline{(U_1)}_{I_1}
\end{bmatrix} =1.
\end{aligned}
$$
Here the second identity in the latter  follows  from  $\widetilde{D}^{(11)}$ in  \eqref{Ut11R},  \eqref{4RUUU}, \eqref{UVDrelationequa} and the 
Schur complement  technique. 
 
 Now we need to evaluate the two matrix integrals   $\tilde{I}^{(4)}$ and $I^{(4,{\rm re},c)}$. For 
 $\tilde{I}^{(4)}$  in \eqref{I4C00},  treated as $n$  integrals separately according  to  its rows  it is simplified to 
\begin{small}
\begin{equation*}
\begin{aligned}
\tilde{I}^{(4)}\big(  z_0^{-1}\hat{\Lambda} \big)&=
\prod_{i=1}^n \int \cdots \int 
\exp\Big\{
-\big( \sum_{j=1}^{2t}\left| \widehat{q}_{1,j} \right|^2 \big)^2
+2z_0^{-1}\left(
\hat{z}_i+\overline{\hat{z}_i}
\right)\sum_{j=1}^{2t}\left|  \widehat{q}_{1,j} \right|^2
\Big\}        
\prod_{j=1}^{2t}{\rm d}\Re \widehat{q}_{1,j}{\rm d}\Im \widehat{q}_{1,j},
\end{aligned}
\end{equation*}
\end{small}
where
$$ \widehat{Q}_L:=\left[ Q_{1;L}, Q_{2;L} \right]:=\left[ \widehat{q}_{1,j} \right]
,\quad l=1,\cdots,n,\ j=1,\cdots,2t.
 $$
Apply the  spherical polar coordinates and we can get 
\begin{small}
\begin{equation}\label{midcorre4}
\tilde{I}^{(4)}\big(  z_0^{-1}\hat{\Lambda} \big)= \pi^{(2t+\frac{1}{2})n}
2^{(-t+\frac{1}{2})n}
\prod_{i=1}^n\Big( e^{ (\hat{z}_i +\overline{\hat{z}_i})^2} 
\mathrm{IE}_{2t-1}\big( z_0^{-1}\sqrt{2}\hat{z}_i +\overline{z_0}^{-1}\sqrt{2}\overline{\hat{z}_i}\big)\Big),
\end{equation}
\end{small}
where   $\mathrm{IE}_{n}(z) $ is defined in \eqref{IEF}.

For  $I^{(4,{\rm re},c)}\left( 2z_0^{-1}\hat{\Lambda} \right)$  defined  in \eqref{IC4R},  by the singular value decomposition:
\begin{small}
\begin{equation}\label{singularcorre4R}
Y=U\sqrt{R}U^t,\quad
R={\rm diag}\left( r_1,\cdots,r_{2n} \right),\quad
r_1\geq \cdots \geq r_{2n} \geq 0,
\end{equation}
\end{small}
%where $U,V$ are integrated over the Haar probability measure of the unitary group $\mathcal{U}(n)$, and
 the Jacobian reads 
\begin{equation}\label{singularjacobian4}
{\rm d}Y=\frac{\pi^{n(n+1)}}{\prod_{i=1}^{2n-1}i!}
\left| \Delta (r) \right| {\rm d}R{\rm d}U,
\end{equation}
where 
$U$ is chosen from the unitary group  $\mathcal{U}(2n)$ with the Haar measure, and $\Delta (r)=\prod_{1\leq i<j\leq 2n}(r_j-r_i)$. 
%in which the normalization constant can be calculated through Laguerre integral.
 Use  Harish-Chandra-Itzykson-Zuber integration formula \eqref{HCIZ}
 and also  the Pfaffian  integration formula  \cite{de}
   \begin{small}
\begin{equation}\label{Cauchybinet4R}
 \int  \det\big( [ f_i(x_j) ]_{i,j=1}^{2n} \big)
{\rm sgn}\left( \Delta(x) \right)\prod_{i=1}^{2n} {\rm d}\mu(x_i)=
(2n)!{\rm Pf}\Big( \Big[ a_{i,j} \Big]_{i,j=1}^n \Big),
\end{equation}
\end{small}
where
$$ a_{i,j}=\int_{x\leq y}\left(  
f_i(x)f_j(y)-f_j(x)f_i(y)
\right){\rm d}\mu(x){\rm d}\mu(y), $$
we obtain 
\begin{small}
\begin{equation}\label{corremid14}
\begin{aligned}
I^{(4,{\rm re},c)}\left( 2z_0^{-1}\hat{\Lambda} \right)=&
\frac{\pi^{n(2n+1)}(-z_0)^n
}
{2^{n(2n-1)}\prod_{1\leq i<j \leq n}|\hat{z}_i-\hat{z}_j|^2|\hat{z}_i-\overline{\hat{z}}_j|^2
\prod_{i=1}^n (\overline{\hat{z}}_i-\hat{z}_i)
}                  \\
&\times
{\rm Pf}\begin{bmatrix}
\widetilde{f}(\hat{z}_i,\hat{z}_j) & \widetilde{f}(\hat{z}_i,\overline{\hat{z}}_j) \\
\widetilde{f}(\overline{\hat{z}}_i,\hat{z}_j)
& \widetilde{f}(\overline{\hat{z}}_i,\overline{\hat{z}}_j)
\end{bmatrix}_{i,j=1}^n,
\end{aligned}
\end{equation}
\end{small}
where
$$
\widetilde{f}(z,w)=\int\int_{0\leq x\leq y<\infty} (xy)^t\exp\{-\frac{1}{2}(x^2+y^2)\}
\left(  
e^{-2z_0(zx+wy)}-e^{-2z_0(wx+zy)}
\right){\rm d}x{\rm d}y.
$$
After a change of  variables $x\to (u-v)/\sqrt{2}$, $y\to (u+v)/\sqrt{2}$,  recalling \eqref{fn2} we have
\begin{equation*}\label{F4CR}
\widetilde{f}(z,w)=
2^{2-t}\pi f_{t}(\sqrt{2}z_{0}^{-1}z, \sqrt{2}z_{0}^{-1}w).
%4\int_0^{+\infty}
%e^{-v^2}\sinh (2z_0^{-1}v(z-w))\int_v^{+\infty}(u^2-v^2)^t
%e^{-(u^2+2z_0^{-1}u(z+w))}{\rm d}u{\rm d}v.
\end{equation*}
 Combining  \eqref{midcorre4edgeR},  \eqref{midcorre4} and \eqref{corremid14}, changing 
 $ \hat{z}_i\to \hat{z}_i/\sqrt{2},\quad i=1,\cdots,n, $
in \eqref{zk4C},  
  we  can thus  give a complete proof of Theorem  \ref{4-complex-correlation} at the real edge.

 At the complex edge,  although   $Z_0$ is no longer a scalar matrix,   we still have
\begin{equation*}
\begin{aligned}
&{\rm Tr}\left( Z_0^*Q-QJ_{q}^* \right)D\left( Z_0^*Q-QJ_{q}^* \right)^*  \\
&=2{\rm Tr}\left[
Q_1\left( \overline{z}_0\mathbb{I}_r-J_2^* \right),Q_2\left( \overline{z}_0\mathbb{I}_r-J_2^t \right)
\right]D
\left[
Q_1\left( \overline{z}_0\mathbb{I}_r-J_2^* \right),Q_2\left( \overline{z}_0\mathbb{I}_r-J_2^t \right)
\right]^*             \\
&=2{\rm Tr}\left[ Q_1,Q_2 \right]\widetilde{D}
\left[ Q_1,Q_2 \right]^*,
\end{aligned}
\end{equation*}
where
\begin{equation*}
\widetilde{D}=\begin{bmatrix}
z_0\mathbb{I}_r-J_2 & \\ & z_0\mathbb{I}_r-\overline{J_2}
\end{bmatrix}^*D\begin{bmatrix}
z_0\mathbb{I}_r-J_2 & \\ & z_0\mathbb{I}_r-\overline{J_2}
\end{bmatrix}.
\end{equation*}
Therefore,  we can still admit  a three-layer  structure according to   the Jordan block structure of  $J_2^{(0)}$. 
Based on this observation, following almost the same procedure as in the proof of  Theorem \ref{2-complex-correlation},  we can  complete the proof of Theorem  \ref{4-complex-correlation} at the complex edge. Here we omit the  detailed derivation.  
% Although there is some difference, the procedure is completely the same as in $\beta=2$ case or as in $\beta=4$ real case. After that, we can get a product of two matrix integrals which are essentially   the same type as in $\beta=2$ case. So 
 \end{proof}

%\section{Extension}%Concluding remarks} \label{conclusion}
%The real question is the real sample covariance. In the null cases, by comparing (5) and (8), we note that even though the limiting distributions are different, the scalings are identical. In view of this, we conjecture the following:
%CONJECTURE. For real sample covariance, Theorem 1.1 still holds true for different limiting distributions but with the same scaling. In particular, the critical value of distinguished eigenvalues lj of the covariance matrix is again expected to be 1 + γ −1.
%1.4. Around the transition point; interpolating distributions. We also investi- gate the nature of the transition at lj = 1 + γ −1 . The following result shows that if lj themselves scale properly in M, there are interpolating limiting distributions.
%We first need more definitions.

\section{Interpolating  point processes} \label{conclusion}
% Around the transition point; interpolating distributions.
  In this section we  further   investigate the nature of the transition at, when  some  eigenvalues of the perturbed matrix $A_0$ go to the transition point $z_0$ at a certain rate,     there are interpolating  point processes.  For this, we need more definitions. %For $m = 1,2,3,...,$ and for w1,...,wm ∈ C, set

% Suppose that the Jordan form  $J_2$ in \eqref{J2detail}  is diagonal, that is,  $\alpha_i=p_{i,1}=1$ for all  $i=1,\cdots,\gamma$, so 
%\begin{equation}\label{diagonalJ}
%J_2=\mathrm{diag}\left(
%\theta_1\mathbb{I}_{\beta_{1,1}},\cdots,\theta_{\gamma}\mathbb{I}_{\beta_{\gamma,1}}
%\right).
%\end{equation}
%Same as \eqref{J2decom}, rewrite
%rewrite $J_2$ as  a sum 
%\begin{equation}\label{EJ2decom}
%J_2=J_2^{(0)}+N^{-\frac{1}{4}} J_2^{(1)}\bigoplus 0_{r_0},
%\end{equation}
%where
%\begin{equation}\label{EJ2mdeno}
%J_2^{(0)}=z_0\mathbb{I}_t
%\bigoplus J_2^{(2)}
%\end{equation}
%with 
%\begin{equation}\label{EJ2rdeno}
%J_2^{(2)}={\rm diag}\left(
%\theta_{m+1}\mathbb{I}_{\beta_{m+1,1}},\cdots,\theta_{\gamma}\mathbb{I}_{\beta_{\gamma,1}}
%\right),
%\end{equation}
%and 
Set \begin{equation}\label{EJ2adeno}
J_2^{(1)}={\rm diag}\Big(
\hat{\theta}_1\mathbb{I}_{\beta_{1,1}},\cdots,\hat{\theta}_m\mathbb{I}_{\beta_{m,1}}
\Big),
\end{equation}
% with $t=\sum_{i=1}^m\beta_{i,1}$, $r_0=r-t$.
and $t=\sum_{i=1}^m\beta_{i,1}$. Define two more matrix integrals over the space of $n\times t$ and $n\times 2t$ complex matrices   respectively
\begin{small}
\begin{multline}\label{EI2GI}
\widetilde{I}^{(2)}( \hat{Z} )=
 \int 
 \exp\Big\{
 -{\rm Tr}\Big(
 Y
 \big( (P^*P)_{I_1} \big)^{-\frac{1}{2}} \big( J_2^{(1)} \big)^*
 \big( \big( (P^*P)^{-1} \big)_{I_1}  \big)^{-1} J_2^{(1)}
 \big( (P^*P)_{I_1} \big)^{-\frac{1}{2}}
 Y^*
 \Big)
 \Big\}
 \\     \times
\exp\Big\{
-\frac{1}{2}\sum_{i=1}^n\big( YY^* \big)_{i,i}^2+\mathrm{Tr}\big( \big( \hat{Z}+\hat{Z}^* \big)YY^* \big)
\Big\} {\rm d}Y,
\end{multline}
\end{small} with $ I_1=\left\{ 1,\cdots,t \right\}$ and 
%Note that in view of \eqref{F21F12denotion}, $ I_1=I_2=\left\{ 1,\cdots,t \right\}$ now.
%In view of \eqref{J2detail}, for $\beta=4$, Define matrix integrals  
\begin{small}
\begin{multline}\label{EI4GIR}
\widetilde{I}^{(4,\mathrm{re})}\big( \hat{Z} \big)=
 \int 
\exp\Big\{
-\sum_{i=1}^n\big( YY^* \big)_{i,i}^2+\mathrm{Tr}\Big( \big( \hat{Z}+\hat{Z}^* \big)YY^* \Big)
\Big\} \times
\\
 \exp\bigg\{
 -2{\rm Tr}\bigg(
 Y
 \big( (P^*P)_{\widetilde{I}_1} \big)^{-\frac{1}{2}}
\begin{bmatrix}
\overline{J_2^{(1)}}   &  \\ &  J_2^{(1)}
 \end{bmatrix}
 \Big( \big( (P^*P)^{-1} \big)_{\widetilde{I}_1}  \Big)^{-1}
  \begin{bmatrix}
 J_2^{(1)}  &  \\ & \overline{J_2^{(1)}}
 \end{bmatrix}
  \big( (P^*P)_{\widetilde{I}_1} \big)^{-\frac{1}{2}}
 Y^*
 \bigg)
 \bigg\}
 {\rm d}Y,
\end{multline}
with $\widetilde{I}_1=\left\{ 1,2,\cdots,t \right\}\bigcup\left\{ n+1,n+2,\cdots,n+t \right\}.$
\end{small}
Here  it should be noted  that    $P$ in \eqref{EI2GI} can be  defined as in \eqref{J2detail}  or \eqref{J4detail}, depending on  the complex or quaternion ensembles,  but  $P$ in \eqref{EI4GIR} only     as in \eqref{J4detail}.

We also need more assumptions.

{\bf Assumption-D}:  For the Jordan form   \eqref{J2detail},  all $\alpha_i=p_{i,1}=1$ for $i=1,\ldots,\gamma$, 
and for a  given  integer $0\leq m\leq \gamma$,     $ \theta_{j}\neq z_0$ is fixed  for $j=m+1, \ldots, \gamma$ but                             
\begin{equation}\label{beta2edgethetaexpressionInter}
\begin{aligned}
&\theta_i=z_0+N^{-\frac{1}{4}}\hat{\theta}_i,\quad
i=1,2,\cdots,m.            
\end{aligned}
\end{equation}

\begin{theorem}\label{Extrathm2}
For the $ {\mathrm{GinUE}}_{N}(A_0)$ ensemble  satisfying {\bf Assumption-D},  let $|z_0|=1$, as $N\to \infty$ scaled eigenvalue correlations      hold uniformly for   all 
$\hat{z}_{k}$
%all $z_j$ and $w_j$ 
in a compact subset of $\mathbb{C}$
\begin{multline} \label{E4edgecomplex}
\frac{1}{N^n}R_N^{(n)}\Big( A_0;z_0+ \frac{\hat{z}_1}{\sqrt{N}},\cdots, z_0+ \frac{\hat{z}_n}{\sqrt{N}} \Big)=
\left( \det\big( (P^*P)_{I_1} \big)\det\big(\big( (P^*P)^{-1} \big)_{I_1} \big) \right)^{-n}
\\ \times
(2\pi)^{-\frac{n}{2}}\pi^{-n(n+1+t)}
e^{-\frac{1}{2}{\rm Tr}\left( z_0^{-1}\hat{Z}+\overline{z_0^{-1}\hat{Z}} \right)^2}
\prod_{1\leq i<j\leq n}\left| \hat{z_i}-\hat{z}_j \right|^2       
\\\times
\widetilde{I}^{(2)}\left( z_0^{-1}\hat{Z} \right) I^{(2)}\left(  
 -\hat{\theta}_1,\cdots,-\hat{\theta}_m ;
z_0^{-1}\hat{Z},z_0^{-1}\hat{Z}
\right)
+O\big( N^{-\frac{1}{4}} \big),
\end{multline}
where $I^{(2)}$ is defined in \eqref{I2GI}.
\end{theorem}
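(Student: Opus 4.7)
The plan is to follow the same three-step strategy developed in the proof of Theorem \ref{2-complex-correlation}, but with careful tracking of the $N^{-1/4}$ shift of the critical eigenvalues, which under \textbf{Assumption-D} enters at precisely the order needed to survive the scaling limit and produce the interpolating factor $\widetilde{I}^{(2)}$.

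First, apply Proposition \ref{intrep} and the change of variables $Q \to QP^*$ to arrive at the representation
\begin{equation*}
R_N^{(n)}(A_0;z_1,\ldots,z_n) = \widetilde{C}_N^{(2)}(z_1,\ldots,z_n)\,\big(\widetilde{I}^{(2)}_{\delta,N} + O(e^{-N\widetilde{\Delta}^{(2)}})\big),
\end{equation*}
exactly as in \eqref{correlationbeta2expansion}--\eqref{INdeltac}. Lemma \ref{maximumlemma} localizes the $Q$-integration to ${\rm Tr}(Q^*Q)\leq\delta$. The Stirling expansion of $\widetilde{C}_N^{(2)}$ in \eqref{tildeCNcexpan} is unchanged.

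Next, carry out the Taylor analysis of $\widetilde{f}_2(Q)$ and $\widetilde{h}_2(Q)$. Under \textbf{Assumption-D} we have $\alpha_i=p_{i,1}=1$, so the three-layer block structure collapses: $I_1=I_2=\{1,\ldots,t\}$, the matrix $\hat{U}^{(1,1)}$ becomes simply $(P^*P)_{I_1}$, and $Q_L$ is the left block $Q|_{I_1}$ of $Q$. The decisive novelty is that, writing $J_2 = (z_0\mathbb{I}_{r-r_0}+N^{-1/4}J_2^{(1)})\oplus J_2^{(2)}$ with $J_2^{(1)}$ as in \eqref{EJ2adeno}, the term ${\rm Tr}\big(Q(z_0\mathbb{I}_r-J_2)^*U(z_0\mathbb{I}_r-J_2)Q^*\big)$ no longer vanishes on the critical block: it produces a contribution
\begin{equation*}
N^{-1/2}\,{\rm Tr}\big(Q_L\, (J_2^{(1)})^*\,(P^*P)_{I_1}\, J_2^{(1)}\, Q_L^*\big) + O(\|Q_R\|\|Q\|),
\end{equation*}
which, after the same rescaling $Q_R\to N^{-1/2}Q_R$ and $Q_L\to N^{-1/4}Q_L\big((P^*P)_{I_1}\big)^{-1/2}$ used in Theorem \ref{2-complex-correlation}, becomes the order-one quadratic form appearing in the exponent of \eqref{EI2GI}. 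All other terms in the Taylor expansion, including $\widetilde{h}_2$ and the quartic $\tfrac{1}{2}\sum_i(Q_L\hat{U}^{(1,1)}Q_L^*)_{ii}^2$, behave as in Theorem \ref{2-complex-correlation}.

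For the prefactor $\widetilde{g}_2(Q)$, the averaged product of characteristic polynomials appearing in \eqref{gcQ} is evaluated via Theorem \ref{2-complex}(i) specialized to $p_{i,1}=\alpha_i=1$; this furnishes the factor $I^{(2)}\bigl(-\hat\theta_1,\ldots,-\hat\theta_m;z_0^{-1}\hat Z,z_0^{-1}\hat Z\bigr)$, with the determinantal factors $\det((P^*P)_{I_1})$ and $\det(((P^*P)^{-1})_{I_1})$ arising as in Step~3 of that proof (through $\widetilde{U}^{(22)}$ and the Schur-complement identity $\det(U)\det((U^{-1})_{I_2})/\det(\widetilde{U}^{(22)})=1$ of the unperturbed case, now modified by the extra $J_2^{(1)}$). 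After integrating out $Q_R$, the integration over $Q_L$ yields $\widetilde{I}^{(2)}(z_0^{-1}\hat Z)$ defined in \eqref{EI2GI}, multiplied by a combinatorial normalization that cancels the remaining powers of $\det((P^*P)_{I_1})$ inherited from the Jacobian of $Q_L\to N^{-1/4}Q_L((P^*P)_{I_1})^{-1/2}$.

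The main obstacle will be the bookkeeping at the matrix-integral level: one must verify that, after the rescaling of $Q_L$ by $((P^*P)_{I_1})^{-1/2}$, the quadratic form $Q_L(J_2^{(1)})^*(P^*P)_{I_1}J_2^{(1)}Q_L^*$ transforms precisely into the form $Y((P^*P)_{I_1})^{-1/2}(J_2^{(1)})^*(((P^*P)^{-1})_{I_1})^{-1}J_2^{(1)}((P^*P)_{I_1})^{-1/2}Y^*$ appearing in \eqref{EI2GI}; this uses the identity $(P^*P)_{I_1}\cdot((P^*P)^{-1})_{I_1}\neq\mathbb{I}$ in general, and the correct simplification comes from the Schur-complement relation between $\hat{U}^{(1,1)}$, $\widetilde{U}^{(22)}$ and $U$ established in Step~3 of the proof of Theorem \ref{2-complex-correlation}. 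Once this identification is pinned down, the uniformity and the $O(N^{-1/4})$ remainder follow mechanically from the already-established error estimates on $\widetilde{f}_2$, $\widetilde{h}_2$ and $\widetilde{g}_2$.
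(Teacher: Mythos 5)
Your overall strategy is the same as the paper's (redo the three steps of the proof of Theorem \ref{2-complex-correlation}, tracking the $N^{-1/4}$ shift of the critical eigenvalues), but there is a concrete gap in your Taylor expansion of $\widetilde{f}_2(Q)$: you discard the cross terms between the critical block $Q_L=Q_1$ and the complementary block $Q_R=Q_2$ as an error $O(\|Q_R\|\|Q\|)$, when in fact they are exactly of order one after rescaling and are the mechanism that produces the Schur complement in \eqref{EI2GI}. Expanding $(z_0\mathbb{I}_r-J_2)^*U(z_0\mathbb{I}_r-J_2)$ with $J_2=(z_0\mathbb{I}_t+N^{-1/4}J_2^{(1)})\oplus J_2^{(2)}$ gives, besides your direct term $N^{-1/2}\,{\rm Tr}\,Q_1(J_2^{(1)})^*U_{11}J_2^{(1)}Q_1^*$, the cross terms
\begin{equation*}
-N^{-\frac{1}{4}}\,{\rm Tr}\,Q_1\big(J_2^{(1)}\big)^*U_{12}\big(z_0\mathbb{I}_{r_0}-J_2^{(2)}\big)Q_2^*
-N^{-\frac{1}{4}}\,{\rm Tr}\,Q_2\big(z_0\mathbb{I}_{r_0}-J_2^{(2)}\big)^*U_{21}J_2^{(1)}Q_1^*.
\end{equation*}
In the exponent $-N\widetilde{f}_2(Q)$ these carry a factor $N^{3/4}$, and after the rescalings $Q_2\to N^{-1/2}Q_2$, $Q_1\to N^{-1/4}Q_1(U_{11})^{-1/2}$ they are $O(1)$, not $O(N^{-1/4})$. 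They act as a linear source for the Gaussian integration over $Q_2$; completing the square and integrating out $Q_2$ against the quadratic form ${\rm Tr}\,Q_2(z_0\mathbb{I}_{r_0}-J_2^{(2)})^*U_{22}(z_0\mathbb{I}_{r_0}-J_2^{(2)})Q_2^*$ produces $+{\rm Tr}\,Q_1(J_2^{(1)})^*U_{12}U_{22}^{-1}U_{21}J_2^{(1)}Q_1^*$, which combines with the direct term to give
\begin{equation*}
-{\rm Tr}\,Q_1\big(J_2^{(1)}\big)^*\big(U_{11}-U_{12}U_{22}^{-1}U_{21}\big)J_2^{(1)}Q_1^*
=-{\rm Tr}\,Q_1\big(J_2^{(1)}\big)^*\Big(\big((P^*P)^{-1}\big)_{I_1}\Big)^{-1}J_2^{(1)}Q_1^*,
\end{equation*}
by the block-inverse identity. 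This is exactly the exponent of $\widetilde{I}^{(2)}$ in \eqref{EI2GI}.

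Your proposed remedy in the last paragraph cannot work: the substitution $Q_L\to Q_L\big((P^*P)_{I_1}\big)^{-1/2}$ only conjugates the quadratic form by $\big((P^*P)_{I_1}\big)^{-1/2}$ on the outside, so it leaves $U_{11}=(P^*P)_{I_1}$ (rather than the Schur complement $\big(((P^*P)^{-1})_{I_1}\big)^{-1}$) sitting in the middle; no rescaling of $Q_L$ alone can convert one into the other when $U_{12}\neq 0$. The Schur-complement relation you invoke enters through the $Q_R$-integration of the retained cross terms, which is precisely what the paper's expansion of $\widetilde{f}_2(Q)$ keeps explicitly. Once you restore these two cross terms to the leading part of $\widetilde{f}_2$ (and check that the genuinely negligible remainders are $O(\|Q_2\|\|Q\|^3+\|Q\|^6)$ as in the paper), the rest of your argument goes through as you describe.
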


\begin{proof}[A sketched proof]
Denote $U:=P^*P$, similar to 
 \eqref{J2decom} we 
rewrite $J_2$ as  a sum 
\begin{equation*}\label{EJ2decom}
J_2=J_2^{(0)}+N^{-\frac{1}{4}} J_2^{(1)}\bigoplus 0_{r_0},
\end{equation*}
where
\begin{equation*}\label{EJ2mdeno}
J_2^{(0)}=z_0\mathbb{I}_t
\bigoplus J_2^{(2)}
\end{equation*}
with 
\begin{equation*}\label{EJ2rdeno}
J_2^{(2)}={\rm diag}\left(
\theta_{m+1}\mathbb{I}_{\beta_{m+1,1}},\cdots,\theta_{\gamma}\mathbb{I}_{\beta_{\gamma,1}}
\right).
\end{equation*}
 Not as in the proof of Theorem \ref{2-complex-correlation}, now we have
\begin{equation*}
\begin{aligned}
&(z_0\mathbb{I}_r-J_2)^*U(z_0\mathbb{I}_r-J_2)       \\
&=(z_0\mathbb{I}_r-J_2^{(0)})^*U(z_0\mathbb{I}_r-J_2^{(0)})
-N^{-\frac{1}{4}}\Big(\big( J_2^{(1)} \big)^*\bigoplus 0_{r_0}\Big)U(z_0\mathbb{I}_r-J_2)      \\
&-N^{-\frac{1}{4}}(z_0\mathbb{I}_r-J_2^{(0)})^*U \Big(J_2^{(1)} \bigoplus 0_{r_0}\Big)
+N^{-\frac{1}{2}}\Big(\big( J_2^{(1)} \big)^*\bigoplus 0_{r_0}\Big)
 U\Big(J_2^{(1)} \bigoplus 0_{r_0}\Big).
\end{aligned}
\end{equation*}
Denote 
\begin{equation*}\label{EQUdeno}
U=\begin{bmatrix}
U_{11} & U_{12} \\
U_{21} & U_{22}
\end{bmatrix},\quad
Q=\left[ Q_1,Q_2 \right],
\end{equation*}
where $U_{11}$, $U_{22}$, Q and $Q_1$ are of size $t\times t$, $r_0\times r_0$, $n\times r$ and $n\times t$ respectively. In view of \eqref{correlationbeta2expansion}, we have
\begin{small}
\begin{equation*}
\begin{aligned}
&\widetilde{f}_2(Q)={\rm Tr}Q_2(z_0\mathbb{I}_{r_0}-J_2^{(2)})^*U_{22}(z_0\mathbb{I}_{r_0}-J_2^{(2)})Q_2^* 
\\
&-N^{-\frac{1}{4}}{\rm Tr}Q_1\big( J_2^{(1)} \big)^*U_{12}(z_0\mathbb{I}_{r_0}-J_2^{(2)})Q_2^*
-N^{-\frac{1}{4}}{\rm Tr}Q_2(z_0\mathbb{I}_{r_0}-J_2^{(2)})^*U_{21}J_2^{(1)}Q_1^*
\\
&+N^{-\frac{1}{2}}{\rm Tr}Q_1\big( J_2^{(1)} \big)^*U_{11}J_2^{(1)}Q_1^*
+\frac{1}{2}\sum_{i=1}^n\big( Q_1U_{11}Q_1^* \big)_{i,i}^2+
O\big( \| Q_2 \|\| Q \|^3+\| Q \|^6 \big),
\end{aligned}
\end{equation*}
\end{small}
and
\begin{small}
\begin{equation*}
\widetilde{h}_2( Q)={\rm Tr}\left(
\big( \overline{z_0^{-1}\hat{Z}}+z_0^{-1}\hat{Z} \big)Q_1U_{11}Q_1^*
\right)+N^{-\frac{1}{2}}O\big( \| Q_2 \|\| Q \|+N^{-\frac{1}{4}}\| Q \|^2 \big).
\end{equation*}
\end{small}

Repeat  almost the same  procedure  as in the proof of Theorem \ref{2-complex-correlation}, we can get the conclusion.
\end{proof}

 Similarly, following almost  the same procedure in the proof of Theorem  \ref{4-complex-correlation}, we can verify   %Under the assumptions \eqref{diagonalJ}-\eqref{EJ2adeno}, we have
\begin{theorem}\label{Extrathm4}
For the $ {\mathrm{GinSE}}_{N}(A_0)$ ensemble  satisfying {\bf Assumption-D}, as $N\to \infty$ scaled eigenvalue correlations      hold uniformly for   all 
$\hat{z}_{k}$
in a compact subset of $\mathbb{C}$.

(i)If $z_0=\pm 1$, then
%$\hat{\Lambda}$ in a compact set of $\mathbb{C}^K$. 
\begin{multline} \label{E4edgecomplex4}
\frac{1}{(2N)^n}R_N^{(n)}\Big( A_0;z_0+ \frac{\hat{z}_1}{\sqrt{2N}},\cdots, z_0+ \frac{\hat{z}_n}{\sqrt{2N}} \Big)=
\left( \det\big( (P^*P)_{\widetilde{I}_1} \big)\det\big( (P^*P)^{-1} \big)_{\widetilde{I}_1}  \right)^{-n}
\\ \times
\pi^{-\frac{n}{2}}\pi^{-2n(n+1+t)}2^{n(n-3+2t)}
e^{-\frac{1}{2}{\rm Tr}\left( \hat{Z}+\overline{\hat{Z}} \right)^2}
\prod_{1\leq i<j\leq n}\left| \hat{z_i}-\hat{z}_j \right|^2  \left| \hat{z_i}-\overline{\hat{z}}_j \right|^2   
\prod_{i=1}^n\left| \hat{z_i}-\overline{\hat{z}}_i \right|^2   
\\\times
\widetilde{I}^{(4,\mathrm{re})}\left( \sqrt{2} z_0^{-1}\hat{Z} \right) I^{(4,\mathrm{re})}\left(  
 -\hat{\theta}_1,\cdots,-\hat{\theta}_m ;
\sqrt{2}z_0^{-1}\hat{\Lambda}
\right)
+O\big( N^{-\frac{1}{4}} \big),
\end{multline}
where $I^{(4,\mathrm{re})}$ is defined in \eqref{I4GIR}.

(ii)If $\Im z_0>0$ and $ |z_0|=1$, then
\begin{multline} \label{E4edgecomplex4C}
\frac{1}{(2N)^n}R_N^{(n)}\Big( A_0;z_0+ \frac{\hat{z}_1}{\sqrt{2N}},\cdots, z_0+ \frac{\hat{z}_n}{\sqrt{2N}} \Big)=
\left( \det\big( (P^*P)_{I_1} \big)\det\big( (P^*P)^{-1} \big)_{I_1}  \right)^{-n}
\\ \times
\pi^{-\frac{n}{2}}\pi^{-2n(n+1)-nt}2^{\frac{n(n-1-t)}{2}-n}(-1)^{nt}
e^{-\frac{1}{2}{\rm Tr}\left( z_0^{-1}\hat{Z}+\overline{z_0^{-1}\hat{Z}} \right)^2}
\prod_{1\leq i<j\leq n}\left| \hat{z_i}-\hat{z}_j \right|^2   
\\\times
\widetilde{I}^{(2)}\left( z_0^{-1}\hat{Z} \right) 
I^{(4)}\left(  
 -\hat{\theta}_1,\cdots,-\hat{\theta}_m ;
\sqrt{2}z_0^{-1}\hat{Z},\sqrt{2}z_0^{-1}\hat{Z}
\right)
+O\big( N^{-\frac{1}{4}} \big),
\end{multline}
where $I^{(4)}$ is defined in \eqref{I4GI}, $\widetilde{I}^{(2)}$ is defined in \eqref{EI2GI}, while we need to replace $J_2^{(1)}$ with $2^{\frac{1}{4}}J_2^{(1)}$.%, and P in the sense of $\beta=2$ with P in the sense of $\beta=4$, which is of size $2r\times 2r$ and has the structure in \eqref{42rep}. 
\end{theorem}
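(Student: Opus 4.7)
The plan is to mimic the proof of Theorem \ref{4-complex-correlation}, adapting the three-step scheme (asymptotic localization, Taylor expansion of $\widetilde f_4$ and $\widetilde h_4$, matrix-integral evaluation) to accommodate the slow drift $\theta_i=z_0+N^{-1/4}\hat\theta_i$ permitted by {\bf Assumption-D}. Since all $\alpha_i=p_{i,1}=1$, the intermediate three-layer reduction in that proof collapses dramatically: the blocks labelled by ``$R_{p_{i,j}}(0)^{*} U\,R_{p_{l,b}}(0)$'' are absent, and the cumbersome objects $\widetilde U^{(22)}$, $\hat U^{(1,1)}$ of Step~1 become simply a block decomposition of $D=P^*P$ along the first $t$ coordinates and the remaining $r-t$ coordinates.

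More precisely, I would write $J_q=J_2\oplus \overline{J_2}$ and, paralleling the decomposition used in Theorem \ref{Extrathm2}, split
\begin{equation*}
J_2=J_2^{(0)}+N^{-1/4}\bigl(J_2^{(1)}\oplus 0_{r_0}\bigr),\qquad J_2^{(0)}=z_0\mathbb I_t\oplus J_2^{(2)},
\end{equation*}
with $J_2^{(1)}$ as in \eqref{EJ2adeno}. Plug this into $\widetilde f_4(Q)$ and $\widetilde h_4(Q)$ from \eqref{algebrafcQ4}--\eqref{algebrahcQ4} and Taylor expand. Writing the quaternion $Q=[Q_1,Q_2;-\overline{Q_2},\overline{Q_1}]$ with each $Q_\alpha$ partitioned as $[Q_\alpha^{(L)},Q_\alpha^{(R)}]$ corresponding to $I_1=\{1,\dots,t\}$ and its complement, the leading behaviour becomes
\begin{align*}
\widetilde f_4(Q)&=2\,\mathrm{Tr}\!\bigl(Q_R\widetilde D^{(11)}Q_R^{*}\bigr)+N^{-1/2}\,\mathrm{Tr}\!\bigl(Q_L\,\widetilde{\mathcal J}\,Q_L^{*}\bigr)+\tfrac12\!\sum_{i=1}^{2n}(Q_L\widehat D^{(11)}Q_L^{*})_{i,i}^2\\
&\quad+O(\|Q_R\|\|Q\|^3+\|Q\|^6),
\end{align*}
where $\widetilde{\mathcal J}$ denotes the relevant block constructed from $J_2^{(1)}\oplus\overline{J_2^{(1)}}$ and the corresponding entries of $D$. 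The term $\widetilde h_4(Q)$ is handled exactly as before. The new feature compared to Theorem \ref{4-complex-correlation} is the extra order-$N^{-1/2}$ quadratic piece in $Q_L$, which after the rescaling $Q_L\to N^{-1/4}Q_L(\widehat D^{(11)})^{-1/2}$ survives into the limit and produces the $\widetilde I^{(4,\mathrm{re})}$ or $\widetilde I^{(2)}$ factor.

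The execution then proceeds in three passes. First, integrate out $Q_R$ via the quadratic Gaussian factor $e^{-2N\mathrm{Tr}(Q_R\widetilde D^{(11)}Q_R^{*})}$, producing a Jacobian $\det(\widetilde D^{(11)})^{-1}$ that telescopes with $\det D$ and $\det\widehat D^{(11)}$ through the Schur complement identity (relating $(D)_{I_1}$ and $(D^{-1})_{I_1}$) exactly as in Theorem \ref{4-complex-correlation}; identities \eqref{UVDrelationequa} then reorganise these determinants into the factor $(\det(P^*P)_{\widetilde I_1}\det((P^*P)^{-1})_{\widetilde I_1})^{-n}$ in \eqref{E4edgecomplex4} (or $(\det(P^*P)_{I_1}\det((P^*P)^{-1})_{I_1})^{-n}$ at the complex edge). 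Second, after integrating out $Q_R$, the remaining $Q_L$-integral decouples into the product of $\widetilde I^{(4,\mathrm{re})}$ (respectively $\widetilde I^{(2)}$) and the same matrix integral $I^{(4,\mathrm{re})}$ (respectively $I^{(4)}$) that already appeared in Theorem \ref{4-complex}. Third, combine with the Stirling-type asymptotics of $\widetilde C_N^{(4)}$ derived in \eqref{tildeCNcexpan4R}--\eqref{tildeCNcexpan4C}, and use Lemma \ref{4lemma} (with $\Xi=\|Q\|^2+N^{-1/2}$) for the averaged products of characteristic polynomials; the $O(N^{-1/4})$ error from Theorem \ref{4-complex} propagates unchanged.

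The main obstacle I anticipate is the complex-edge case $\Im z_0>0$: one must verify that the cross terms between $J_2^{(1)}$ and $\overline{J_2^{(1)}}$ (which are naturally coupled through the quaternion structure of $D$) do not survive in the limit, so that the answer reduces to the single $\widetilde I^{(2)}$ rather than the paired $\widetilde I^{(4,\mathrm{re})}$; this hinges on the cancellation between the $(1,2)$ and $(2,1)$ quaternion blocks of $D$ when $z_0$ is non-real, mirroring the reason why the complex edge for $\mathrm{GinSE}$ reproduces a determinantal (rather than Pfaffian) process in Theorem \ref{4-complex-correlation}. Once this structural point is settled, the computation is essentially parallel to the one already performed for Theorem \ref{4-complex-correlation}, with the sole new ingredient being the extra Gaussian factor carrying $J_2^{(1)}$.
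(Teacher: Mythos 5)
Your overall framework is the same as the paper's: you split $J_2=J_2^{(0)}+N^{-1/4}\bigl(J_2^{(1)}\oplus 0_{r_0}\bigr)$, recognize that Assumption-D collapses the three-layer structure, rescale $Q_L\sim N^{-1/4}$ and $Q_R\sim N^{-1/2}$, integrate out $Q_R$ first, and telescope determinants via Schur complements and \eqref{UVDrelationequa}. Your reading of the complex-edge case (reduction to the $\beta=2$ structure, hence $\widetilde I^{(2)}$ rather than $\widetilde I^{(4,\mathrm{re})}$) also aligns with the paper's remark at the end of the proof of Theorem~\ref{4-complex-correlation}.

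There is, however, a genuine gap in the explicit expansion of $\widetilde f_4(Q)$ that you write down. You list only a $Q_R$-quadratic term, an $N^{-1/2}$ $Q_L$-quadratic term, and the quartic $Q_L$-diagonal term, with all else in $O(\|Q_R\|\|Q\|^3+\|Q\|^6)$. This drops the order-$N^{-1/4}$ cross-terms coupling $Q_L$ and $Q_R$, which the paper records explicitly in the sketched proof of Theorem~\ref{Extrathm2}: terms of the shape $-N^{-1/4}\,\mathrm{Tr}\bigl(Q_1 (J_2^{(1)})^{*}U_{12}(z_0\mathbb{I}-J_2^{(2)})Q_2^{*}\bigr)$ and its conjugate. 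These are not subleading. Under the relevant scaling, $N\cdot N^{-1/4}\cdot\|Q_L\|\,\|Q_R\|\sim N\cdot N^{-1/4}\cdot N^{-1/4}\cdot N^{-1/2}=O(1)$, so they survive into the limit. Their role is essential: the $Q_R$-integral is not a pure Gaussian but has a linear-in-$Q_R$ shift. Completing the square and integrating contributes an additional $Q_L$-quadratic factor that, combined with the direct $N^{-1/2}\,\mathrm{Tr}\bigl(Q_L(J_2^{(1)})^{*}U_{11}J_2^{(1)}Q_L^{*}\bigr)$ piece, produces the Schur-complement weight
\begin{equation*}
(J_2^{(1)})^{*}\bigl(U_{11}-U_{12}U_{22}^{-1}U_{21}\bigr)J_2^{(1)}
=(J_2^{(1)})^{*}\bigl((U^{-1})_{I_1}\bigr)^{-1}J_2^{(1)},
\end{equation*}
which, after the change of variable $Q_L\to Q_L U_{11}^{-1/2}$, is exactly the kernel $\bigl((P^*P)_{I_1}\bigr)^{-1/2}(J_2^{(1)})^{*}\bigl(((P^*P)^{-1})_{I_1}\bigr)^{-1}J_2^{(1)}\bigl((P^*P)_{I_1}\bigr)^{-1/2}$ appearing in the definitions \eqref{EI2GI} and \eqref{EI4GIR}. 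Without the cross-terms you would obtain $(J_2^{(1)})^{*}U_{11}J_2^{(1)}$ in place of the Schur complement, i.e.\ $(P^*P)_{I_1}$ where the theorem requires $\bigl(((P^*P)^{-1})_{I_1}\bigr)^{-1}$. So the first-pass ``integrate out $Q_R$ as a pure Gaussian'' step in your outline is an oversimplification that feeds through to a wrong interpolating weight; you need to carry the cross-terms and complete the square as in the paper's Theorem~\ref{Extrathm2} sketch.
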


 \noindent{\bf Acknowledgements}  
%We acknowledge support by the National Natural Science Foundation of China #11771417, the Youth Innovation Promotion Association CAS #2017491 (D.-Z. Liu), and by the National Natural Science Foundation of China #11901161 (Y. Wang).
 We  would like to thank     Elton P. Hsu for  his  encouragement and support, and also  to thank Y.V. Fyodorov,  J. Grela and Y. Wang for useful comments on the  first arXiv version. This  work 
 was   supported by  the National Natural Science Foundation of China \#12090012.
 %of D.-Z.~Liu  
 
 %We also thank the referee for several other useful comments.

%\begin{Acknowledgments}
%%%The work of P.J.~Forrester was supported by the Australian Research Council, for the project DP140102613.  

%   and the Youth Innovation Promotion Association CAS \#2017491.
%%%%, the Fundamental Research Funds for the Central Universities \#WK0010450002, and Anhui Provincial Natural Science Foundation \#1708085QA03.
%%%The authors are grateful to Alexander Soshnikov for many useful discussions and Yan Fyodorov for references. They are particularly thankful to Terry Tao for helpful discussions and enthusiastic encouragement. The authors would also like to thank the anonymous referees for their valuable comments and corrections
% \end{Acknowledgments}
%

 %%%%%%%%%%%%%%%%%%%

%\providecommand{\bysame}{\leavevmode\hbox to3em{\hrulefill}\thinspace}
%\providecommand{\MR}{\relax\ifhmode\unskip\space\fi MR }
%% \MRhref is called by the amsart/book/proc definition of \MR.
%\providecommand{\MRhref}[2]{%
%  \href{http://www.ams.org/mathscinet-getitem?mr=#1}{#2}
%}
%\providecommand{\href}[2]{#2}

%\setlength{bibsep}{0em}

\end{document}